\title{A decomposition formula for J-stability and its applications}
\author{Masafumi Hattori}
\address{Department of Mathematics, Faculty of Science, Kyoto University, Kyoto, 606-8502, Japan}
\email{hattori.masafumi.47z@st.kyoto-u.ac.jp}
\def\coloneq{\mathrel{\mathop:}=}
\newtheorem{thm}{Theorem}[section]
\newtheorem{lem}[thm]{Lemma}
\newtheorem{prop}[thm]{Proposition}
\newtheorem*{claim}{Claim}
\newtheorem{cor}[thm]{Corollary}
\newtheorem{teiri}{Theorem}
\newtheorem{kei}[teiri]{Corollary}
\newtheorem{fact}[teiri]{Fact}
\theoremstyle{definition}
\newtheorem{de}[thm]{Definition}
\newtheorem{ex}[thm]{Example}
\newtheorem{conj}[thm]{Conjecture}
\newtheorem{rem}[thm]{Remark}
\begin{document}

 \maketitle

 \begin{abstract}
For algebro-geometric study of J-stability, a variant of K-stability, we prove a decomposition formula of non-archimedean $\mathcal{J}$-energy of $n$-dimensional varieties into $n$-dimensional intersection numbers rather than $(n+1)$-dimensional ones, and show the equivalence of slope $\mathrm{J}^H$-(semi)stability and $\mathrm{J}^H$-(semi)stability for surfaces when $H$ is pseudoeffective.
Among other applications, we also give a purely algebro-geometric proof of a uniform K-stability of minimal surfaces due to \cite{JSS}, and provides examples which are J-stable (resp., K-stable) but not uniformly J-stable (resp., uniformly K-stable).
\end{abstract}

 \tableofcontents

 \section{Introduction}\label{Intro}
 
 Let $(X,L)$ be a polarized complex manifold. It is conjectured that the K-polystability of $(X,L)$ is equivalent to the existence of a constant scalar curvature K\"{a}hler (cscK for short) metric in $\mathrm{c}_1(L)$ (the Yau-Tian-Donaldson conjecture). $(X,L)$ is K-stable if the non-Archimedean Mabuchi functional $M^\mathrm{NA}>0$ for any non-almost trivial ample test configuration over $(X,L)$. We can decompose $M^\mathrm{NA}$ into the non-Archimedean entropy and the $(\mathcal{J}^{K_X})^\mathrm{NA}$-energy as non-Archimedean functional (cf. \cite{BHJ}). Chi Li \cite{Li} proved that if $(\mathcal{J}^{K_X})^\mathrm{NA}$-energy is non-negative, the Mabuchi functional in differential geometry is coercive and hence $(X,L)$ has a cscK metric. Thus $(\mathcal{J}^H)^\mathrm{NA}$-energy plays an important role in studying K-stability. On the other hand, Simon K. Donaldson \cite{D1} introduced J-flow and so-called J-equation (see Fact \ref{h}). Donaldson proposed the question when there is a solution to J-equation, i.e. there is a stationary solution of J-flow. Xiuxiong Chen \cite{xC} defined independently J-flow, which he called J-equation, and pointed out the solvability of this equation implies the coerciveness of Mabuchi functional of K\"{a}hler surfaces with ample canonical classes. This question is studied in \cite{SW}, \cite{LS}, \cite{CS}, \cite{DK}, \cite{HK}, \cite{G}, \cite{DP}, \cite{S}. Moreover, Mehdi Lejmi and G\'{a}bor Sz\'{e}kelyhidi \cite{LS} conjectured that the solvability of J-equation to $\mathrm{J}^{H}$-stability (i.e. $(\mathcal{J}^{H})^\mathrm{NA}(\mathcal{X},\mathcal{L})>0$ for any non-almost trivial ample test configuration $(\mathcal{X},\mathcal{L})$ over $(X,L)$, the notion of J-stability is introduced in \cite{LS}, \cite{HK}) and to another algebro-geometric condition. The conjecture is proved in \cite{G}, \cite{DP} and \cite{S}. We prove that if $X$ is an integral surface and $H$ is pseudoeffective, $(X,L)$ is $\mathrm{J}^H$-semistable iff $2\frac{H\cdot L}{L^{\cdot 2}}L-H$ is nef.
 
 On the other hand, examples in \cite{A} show that K-stability may not ensure the existence of a cscK metric and G. Sz\'{e}kelyhidi \cite{SP} proposed that uniform notion of K-stability should be equivalent to the existence of a cscK metric. Ruadha\'{i} Dervan \cite{D} and Boucksom-Hisamoto-Jonsson \cite{BHJ} define the uniform K-stability independently. In \cite{BHJ}, $(X,L)$ is uniformly K-stable iff there exists a positive real number $\delta$ such that $M^\mathrm{NA}(\mathcal{X},\mathcal{L})\ge\delta J^\mathrm{NA}(\mathcal{X},\mathcal{L})$ for any semiample test configuration $(\mathcal{X},\mathcal{L})$ over $(X,L)$. This definition is equivalent to the definition of uniform K-stability in \cite{D}. S. K. Donaldson \cite{skD} extend the definition of K-stability to log pairs. There is a folklore conjecture that the uniform K-stability is equivalent to the K-stability for log pairs. There is a partial consequence: a log Fano pair $(X,B,-K_{(X,B)})$ is K-stable if and only if it is uniformly K-stable due to \cite{LXZ} (for smooth Fano manifolds, cf. \cite{CDS}, \cite{T} and \cite{BBJ}).
 
 The notations are as in \S \ref{Notat}. In this paper, our main results on possibly singular surfaces are the followings with a purely algebro-geometric proof:

 \begin{teiri}[Theorem \ref{d2}]\label{c}
 Let $(X,L)$ be a polarized integral surface and $H$ be a pseudoeffective $\mathbb{Q}$-Cartier divisor such that
 \[
 2\frac{L\cdot H}{L^{\cdot 2}}L-H
 \]
 is nef. Then $(X,L)$ is $\mathrm{J}^H$-semistable.
 \end{teiri}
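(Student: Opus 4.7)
The plan is to apply the decomposition formula for $(\mathcal{J}^H)^{\mathrm{NA}}$ established earlier in the paper, which for an $n$-dimensional variety rewrites the energy as $n$-dimensional intersection numbers rather than $(n+1)$-dimensional ones. For a surface this converts the problem from $3$-dimensional intersection theory on the total space of a test configuration into $2$-dimensional intersection theory on $X$ (or on a birational model), where the nefness hypothesis on $2\tfrac{L\cdot H}{L^{\cdot 2}}L-H$ can be used directly.

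Concretely, I would first reduce to a normal semiample test configuration $(\mathcal{X},\mathcal{L})$ admitting a birational morphism $\rho \colon \mathcal{X} \to X \times \mathbb{P}^1$, since $\mathrm{J}^H$-semistability needs only be tested on such models. I would then invoke the decomposition formula to write something of the shape
\[
(\mathcal{J}^H)^{\mathrm{NA}}(\mathcal{X},\mathcal{L}) \;=\; \int \bigl( (2\tfrac{L\cdot H}{L^{\cdot 2}}L - H) \cdot D_t \bigr)\, dt
\]
(up to normalization), where $\{D_t\}$ is a one-parameter family of divisors on $X$ (or a fixed resolution) extracted from the vertical structure of $\bar{\mathcal{L}}$ along the central fibre, typically via a filtration, slope or Zariski/Okounkov-type decomposition of $\bar{\mathcal{L}}$. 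The pseudoeffectivity of $H$ enters at this stage to ensure that the $\mathbb{Q}$-constant $\tfrac{L\cdot H}{L^{\cdot 2}}$ has the right sign and that the cutoffs defining $D_t$ are well-posed.

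The second step is to check that each divisor $D_t$ produced by the decomposition is effective (or at least nef) on $X$, reflecting the fact that it measures the deviation of $(\mathcal{X},\mathcal{L})$ from the product test configuration. Once this is established, the nefness hypothesis on $2\tfrac{L\cdot H}{L^{\cdot 2}}L - H$ makes each integrand $(2\tfrac{L\cdot H}{L^{\cdot 2}}L - H) \cdot D_t$ non-negative, and the conclusion $(\mathcal{J}^H)^{\mathrm{NA}}(\mathcal{X},\mathcal{L}) \geq 0$ follows by integration.

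The main obstacle will be ensuring that the decomposition actually produces only pairings of the single nef class $2\tfrac{L\cdot H}{L^{\cdot 2}}L - H$ against positive divisors, with no leftover terms of indeterminate sign. Since $H$ is only pseudoeffective, intermediate expressions may involve $H\cdot D_t$ alone, which need not be non-negative; a Zariski-type decomposition of $H$ or an approximation of $H$ by nef classes with a limit argument may be needed to absorb such contributions into the single nef combination. A secondary technical point is handling the singularities of $X$ and of the central fibre of $\mathcal{X}$, which I would deal with by passing to a resolution and invoking the projection formula to recover intersection numbers on $X$.
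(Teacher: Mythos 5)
Your high-level strategy matches the paper: reduce to semiample test configurations given by flag ideals, invoke the decomposition formula to rewrite $(\mathcal{J}^H)^{\mathrm{NA}}$ as a sum of intersection numbers on a birational model of $X$ itself, and then try to conclude by nefness of $2\frac{L\cdot H}{L^2}L - H$. However, there are two substantive gaps.

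First, the decomposition does \emph{not} produce only pairings of the nef class against effective divisors. Already for a single deformation to the normal cone along an invertible ideal (Example \ref{33}), the contribution is
\[
c^2\left(2\frac{H\cdot L}{L^2}L-H\right)\cdot D \;-\; c^3\frac{2H\cdot L}{3L^2}\,D^2,
\]
and in the general decomposition (Theorem \ref{sl} with $n=2$) each summand has the shape
\[
6\bigl((D_i+D_{i+1})\cdot L\bigr)(L\cdot H)-3\bigl((D_i+D_{i+1})\cdot H\bigr)(L^2)-2(L\cdot H)\bigl(D_i^2+D_i\cdot D_{i+1}+D_{i+1}^2\bigr).
\]
The quadratic self-intersection terms $D_i^2$ are an intrinsic part of the non-Archimedean $\mathcal{J}$-energy, not a leftover to be absorbed, and they carry the wrong sign whenever $D_i^2 > 0$. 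Your proposed fix (Zariski decomposition or nef approximation of $H$) does not touch this: the problematic term is $-(L\cdot H)\,D_i^2$, which is insensitive to how $H$ is decomposed. What actually closes the gap is a Hodge Index Theorem argument (Proposition \ref{deft}): one decomposes the test divisor $C$ along the $L^\perp$-orthogonal direction $B=\frac{L\cdot H}{L^2}L-H$, uses $B^2 < 0$ and $E^2 \le 0$, and crucially also the pseudoeffectivity of $L - C$. This step is genuinely surface-specific and is the engine of the whole proof; without it the sign of each summand is undetermined.

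Second, ``passing to a resolution'' is not enough to put the flag ideal into the normal form (condition (*) of Theorem \ref{mu}) needed for the decomposition formula. As Example \ref{yokuwakaru} shows, even for a log-resolved flag ideal like $t^4+\mathcal{O}(-3D)$ the Newton polyhedron fails to be generated as required unless $D$ becomes divisible (e.g.\ $D = 2D'$). The paper therefore constructs an \emph{alternation} $\pi: X'\to X$ (Theorem \ref{slope}), combining a log resolution, Bloch--Gieseker and cyclic coverings (Proposition \ref{cov}), and a toroidal blow up; a birational resolution alone cannot produce the needed coverings. Your outline should be amended to take an alternation rather than just a resolution, with a degree correction $1/l$ in the resulting formula (Theorem \ref{impl}), and to include the Hodge Index inequality as the positivity input rather than a direct nef pairing.
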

 
 Theorem \ref{c} tells us that the ampleness of $2\frac{L\cdot H}{L^{\cdot 2}}L-H$ is equivalent to uniform $\mathrm{J}^H$-stability if $H$ is big. To be precise, we have the following:
 \begin{kei}[Corollary \ref{jstable}, Corollary \ref{ms}, cf. Theorem 2 of \cite{xC}, Theorem 1.1 of \cite{G}]\label{d}
 For any polarized deminormal surface $(X,L)$ with a big (resp. pseudoeffective) $\mathbb{Q}$-line bundle $H$ such that for any irreducible components $X_i$, $\frac{L|_{X_i}\cdot H|_{X_i}}{(L|_{X_i})^2}=\frac{L\cdot H}{L^2}$, then the following are equivalent. 
\begin{itemize}
\item[(1)] $(X,L)$ is uniformly $\mathrm{J}^H$-stable (resp. $\mathrm{J}^H$-semistable). In other words, there exists $\epsilon>0$ such that for any semiample test configuration $(\mathcal{X},\mathcal{L})$
\[
(\mathcal{J}^H)^\mathrm{NA}(\mathcal{X},\mathcal{L})\ge \epsilon J^\mathrm{NA}(\mathcal{X},\mathcal{L})\quad (\mathrm{resp}.\, \ge0).
\] 
\item[(2)] $(X,L)$ is uniformly slope $\mathrm{J}^H$-stable (resp. slope $\mathrm{J}^H$ semistable). In other words, there exists $\epsilon>0$ such that for any semiample deformation to the normal cone $(\mathcal{X},\mathcal{L})$ along integral curve
\[
(\mathcal{J}^H)^\mathrm{NA}(\mathcal{X},\mathcal{L})\ge \epsilon J^\mathrm{NA}(\mathcal{X},\mathcal{L})\quad (\mathrm{resp}.\, \ge0).
\] 
\item[(3)] There exists $\epsilon>0$ such that for any integral curve $C$, 
\[
\left(2\frac{H\cdot L}{L^{\cdot 2}}L-H\right)\cdot C\ge \epsilon L\cdot C\quad (\mathrm{resp}.\, \ge0).
\]
\end{itemize}
 \end{kei}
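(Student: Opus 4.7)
I will prove the cycle $(1) \Rightarrow (2) \Rightarrow (3) \Rightarrow (1)$. The implication $(1) \Rightarrow (2)$ is tautological, since a semiample deformation to the normal cone along an integral curve is already a semiample test configuration, so the inequality from (1) specializes to that of (2).

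For $(3) \Rightarrow (1)$, set $D \coloneq 2\frac{H \cdot L}{L^{\cdot 2}}L - H$. The component hypothesis ensures $D|_{X_i}$ agrees with the analogous expression on each irreducible component $X_i$. In the semistable case, taking $\epsilon = 0$ in (3) and invoking Kleiman's criterion on each surface $X_i$ shows $D|_{X_i}$ is nef, so Theorem~\ref{c} applied componentwise yields $(\mathcal{J}^H)^\mathrm{NA} \geq 0$ on each integral component; the decomposition formula (the paper's main technical result) then sums these componentwise contributions into $(\mathcal{J}^H)^\mathrm{NA}(\mathcal{X}, \mathcal{L}) \geq 0$ on all of $X$. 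For the uniform version, set $H' \coloneq H - \eta L$ for $\eta \in (0, \epsilon]$. Bigness of $H$ ensures $H'$ is still big, and a direct computation gives $2\frac{H' \cdot L}{L^{\cdot 2}}L - H' = D - \eta L$, which is nef by (3). Applying the just-proved semistable case to $H'$ yields $(\mathcal{J}^{H'})^\mathrm{NA} \geq 0$; by linearity of $(\mathcal{J}^\bullet)^\mathrm{NA}$ in the twist and the standard identification of $(\mathcal{J}^L)^\mathrm{NA}$ with a positive multiple of $J^\mathrm{NA}$, this rearranges to $(\mathcal{J}^H)^\mathrm{NA} \geq \delta J^\mathrm{NA}$ for a positive constant $\delta$.

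The substantive step is $(2) \Rightarrow (3)$. Fix an integral curve $C \subset X$ and consider the deformation to the normal cone $\pi: \mathcal{X} \to X \times \mathbb{P}^1$ of $C \times \{0\}$, polarized by $\mathcal{L}_c \coloneq \pi^* L - cE$ for $c$ in the Seshadri-semiample range $(0, c_0]$. The decomposition formula rewrites $(\mathcal{J}^H)^\mathrm{NA}(\mathcal{X}, \mathcal{L}_c)$ and $J^\mathrm{NA}(\mathcal{X}, \mathcal{L}_c)$ as $2$-dimensional intersection numbers on $X$ rather than $3$-dimensional ones on $\mathcal{X}$. A direct computation then identifies these, up to strictly positive $c$-dependent factors $a(c)$ and $b(c)$, with $D \cdot C$ and $L \cdot C$ respectively. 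Feeding these into the inequality of (2) and dividing through by $b(c)\, L\cdot C$ produces $D \cdot C \geq \epsilon' L \cdot C$ with $\epsilon'$ a positive multiple of the $\epsilon$ in (2), which is (3).

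The main obstacle will be the $(2) \Rightarrow (3)$ computation: applying the decomposition formula to a test configuration over the singular (deminormal) base $X$ requires lifting $C$ to the normalization and accounting for intersections along the conductor. The component hypothesis is precisely what makes these cross-component contributions cancel, so that a single scalar inequality $D \cdot C \geq \epsilon L \cdot C$ on $X$ captures the condition on every component simultaneously. A minor but important point is the perturbation in the uniform half of $(3) \Rightarrow (1)$: bigness of $H$ is used exactly to ensure $H - \eta L$ remains inside the pseudoeffective cone so that Theorem~\ref{c} can still be invoked, which is why only the semistable, not the uniform, version is asserted when $H$ is merely pseudoeffective.
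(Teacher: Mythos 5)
Your overall cycle $(1)\Rightarrow(2)\Rightarrow(3)\Rightarrow(1)$ matches the paper's structure, and the $(1)\Rightarrow(2)$ and $(3)\Rightarrow(1)$ steps are essentially right (with a small slip: $(\mathcal{J}^{L})^\mathrm{NA}=I^\mathrm{NA}-J^\mathrm{NA}$, which is \emph{equivalent} to $J^\mathrm{NA}$ as a norm but not a scalar multiple of it — the perturbation argument still works because of the two-sided inequality $\frac{1}{n}J^\mathrm{NA}\le I^\mathrm{NA}-J^\mathrm{NA}\le nJ^\mathrm{NA}$). Also, what sums the componentwise contributions in the deminormal case is not ``the paper's main technical result'' (the mixed-multiplicity decomposition, Theorems \ref{sl} and \ref{impl}), but the much more elementary Lemma \ref{demibunkai}, which splits $(\mathcal{J}^H)^\mathrm{NA}$, $J^\mathrm{NA}$, $I^\mathrm{NA}$ over the strict transforms of the irreducible components $X_i\times\mathbb{A}^1$ under the same-average-scalar-curvature hypothesis.

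The genuine gap is in your $(2)\Rightarrow(3)$ argument. You claim that the slope $(\mathcal{J}^H)^\mathrm{NA}$-energy of the deformation to the normal cone along $C$ is, ``up to a strictly positive $c$-dependent factor $a(c)$,'' equal to $D\cdot C$ with $D=2\frac{H\cdot L}{L^{\cdot 2}}L-H$. This is false, and the paper explicitly stresses it is false. Example \ref{33} computes, for $C$ a Cartier curve on a surface,
\[
V(L)\,(\mathcal{J}^H)^\mathrm{NA}(\mathcal{X},\mathcal{L}_c)=c^2\left(2\frac{H\cdot L}{L^{\cdot 2}}L-H\right)\cdot C-c^3\frac{2H\cdot L}{3L^{\cdot 2}}C^{\cdot 2},
\]
which is a cubic polynomial in $c$, not a scalar multiple of $D\cdot C$; for non-Cartier $C$ the expression involves exceptional-divisor intersections on the blow-up. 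The introduction, right before Theorem \ref{e}, warns precisely that ``$(2\frac{H\cdot L}{L^{\cdot 2}}L-H)\cdot C$ is \emph{not} a slope $(\mathcal{J}^H)^\mathrm{NA}$-energy of the deformation to the normal cone of $C$.'' The correct argument, carried out in Lemma \ref{unst} (and its deminormal version, Lemma \ref{unst2}), is an \emph{asymptotic} one: expand the slope as a power series in the scaling parameter $\delta$ of the exceptional divisor; the lowest-order nonzero coefficient is a positive multiple of $\left(n\frac{H\cdot L^{n-1}}{L^n}L-pH\right)\cdot L^{p-1}\cdot V$ via $\pi_*\bigl((-1)^{n-p-1}D^{n-p}\bigr)=e_V X\,[V]$. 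If $(2)$ held with constant $\epsilon$ but some $C$ violated $(3)$ with the same $\epsilon$, then for $\delta$ small enough the leading term would force $(\mathcal{J}^{H-\epsilon L})^\mathrm{NA}(\mathcal{X},L_{\mathbb{A}^1}-\delta E)<0$, a contradiction. Replacing your claimed exact identification with this limiting argument is the missing ingredient; as written, the $(2)\Rightarrow(3)$ step is not valid.
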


 X.X. Chen \cite{xC} proves this for K\"{a}hler surfaces $X$ when $H=K_X$ is ample. We emphasize that we show that Corollary \ref{d} holds not only when $H$ is ample but big for singular surfaces.
 
 The assumption that for any irreducible components $X_i$, $\frac{L|_{X_i}\cdot H|_{X_i}}{(L|_{X_i})^2}=\frac{L\cdot H}{L^2}$ is necessary. Indeed, if the assumption does not hold, $X$ is $\mathrm{J}^H$-unstable as the following generalization of Theorem 7.1 of \cite{LW} shows.
 
 \begin{teiri}[Theorem \ref{demichan}]\label{i}
 Let $(X,\Delta;L)$ be an $n$-dimensional polarized deminormal pair with a $\mathbb{Q}$-line bundle $H$ such that $X=\bigcup _{i=1}^l X_i$ be the irreducible decomposition. Let also $L_i=L|_{X_i}$ and $H_i=H|_{X_i}$. Suppose that 
\[
\frac{H\cdot L^{\cdot n-1}}{L^{\cdot n}}\ne\frac{H_i\cdot L_i^{\cdot n-1}}{L_i^{\cdot n}}
\]
for some $1\le i\le l$. Then $(X,L)$ is $\mathrm{J}^H$-unstable.

Furthermore, let $\nu:\coprod_{i=1}^l X_i^\nu\to X$ be the normalization, $\overline{L_i}=L|_{X_i^\nu}$, $\overline{H_i}=H|_{X^\nu_i}$ and $K_{(X_i^\nu,\Delta_i)}=\nu^*(K_{(X,\Delta)})|_{X^\nu_i}$. Suppose that 
\[
\frac{K_{(X,\Delta)}\cdot L^{\cdot n-1}}{L^{\cdot n}}\ne\frac{K_{(X_i^\nu,\Delta_i)}\cdot \overline{L_i}^{\cdot n-1}}{\overline{L_i}^{\cdot n}}
\]
for some $1\le i\le l$. Then $(X,\Delta;L)$ is K-unstable.
 \end{teiri}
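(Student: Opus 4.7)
The plan is to construct an explicit semiample test configuration whose non-Archimedean $\mathcal{J}^H$-energy is strictly negative, by exploiting the imbalance between the per-component slopes $\mu_i := H_i \cdot L_i^{\cdot n-1}/L_i^{\cdot n}$ and the global slope $\mu := H \cdot L^{\cdot n-1}/L^{\cdot n}$. The principal tool will be the decomposition formula proved earlier in the paper, which expresses $(\mathcal{J}^H)^{\mathrm{NA}}$ as a sum of $n$-dimensional intersection numbers on the components of the central fiber; this will let me isolate the contribution of each irreducible component $X_i$.

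First, I would consider a family of candidate test configurations parametrized by weight vectors $\vec{c} = (c_1, \ldots, c_l)$. The natural candidate is
\[
\mathcal{X} := X \times \mathbb{A}^1, \qquad \mathcal{L}_{\vec c} := p_1^* L - \sum_{i=1}^l c_i \bigl( X_i \times \{0\} \bigr),
\]
equipped with the $\mathbb{G}_m$-action on the second factor. Because the $X_i$ need not be $\mathbb{Q}$-Cartier in the deminormal setting, one either passes to a small modification making the central-fiber components $\mathbb{Q}$-Cartier, or works with the normalization $\nu: \coprod X_i^\nu \to X$ and assembles a deminormal test configuration from the factor-wise product test configurations, glued along the conductor. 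For $\vec c$ in a small neighborhood of the origin the resulting $\mathcal{L}_{\vec c}$ is semiample (since $p_1^*L$ is ample and the perturbation is small).

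Next, I would plug this into the decomposition formula to obtain, schematically,
\[
(\mathcal{J}^H)^{\mathrm{NA}}(\mathcal{X}, \mathcal{L}_{\vec c}) = \sum_{i=1}^l P_i(c_i),
\]
where each $P_i$ is an explicit polynomial in the scalar $c_i$ whose coefficients are built from the intersection numbers $L_i^{\cdot n}$, $H_i \cdot L_i^{\cdot n-1}$, and the global slope $\mu$. A direct expansion of the formula shows that the quadratic term of $P_i$ is proportional to $c_i^2 (\mu - \mu_i) L_i^{\cdot n}$, with higher-order corrections. Consequently, if $\mu_j \ne \mu$ for some $j$, then setting $c_i = 0$ for $i \ne j$ and choosing $c_j$ of the appropriate sign and sufficiently small magnitude produces a non-trivial semiample test configuration with $(\mathcal{J}^H)^{\mathrm{NA}}(\mathcal{X}, \mathcal{L}_{\vec c}) < 0$, which gives $\mathrm{J}^H$-instability.

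For the K-instability statement, I would repeat the construction with $H$ replaced by $K_{(X,\Delta)}$, using adjunction along the conductor to identify the restriction $\nu^* K_{(X,\Delta)}|_{X_i^\nu}$ with $K_{(X_i^\nu, \Delta_i)}$, and invoking the analogous component-wise decomposition of the non-Archimedean Mabuchi energy (i.e., combining the decomposition of $(\mathcal{J}^{K_X})^{\mathrm{NA}}$ with a component-wise decomposition of the non-Archimedean entropy, which in this case reduces to an intersection with $K_{\mathcal{X}/\mathbb{A}^1}$ on each piece). The main obstacle will be setting up the test configuration rigorously in the deminormal setting: verifying that $\sum c_i (X_i \times \{0\})$ can be realized as a genuine $\mathbb{Q}$-Cartier perturbation on an appropriate modification of $\mathcal{X}$, and that the resulting $\mathcal{L}_{\vec c}$ remains semiample with the expected component-wise intersection numbers. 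Once this technical point is handled, the rest is a direct application of the decomposition formula to the explicit polynomial $P_j(c_j)$ above.
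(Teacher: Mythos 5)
Your strategy---twist the trivial test configuration by a multiple of one component of the central fibre, then use the imbalance of per-component slopes---is indeed the strategy of the paper's proof, so the high-level idea is correct. However, there are two concrete problems.

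First, your claimed expansion is wrong at the order that matters. After passing to the normalization $\coprod X_i^\nu$ and isolating the contribution of the $j$-th component, the twist by $c_j$ times (the strict transform of) $X_j\times\{0\}$ acts as a translation of the line bundle on $\mathcal{X}^\nu_j$, and the leading term of $P_j(c_j)$ is \emph{linear} in $c_j$, not quadratic: one finds
\[
P_j(c_j)=-n\,c_j\left(\overline{H}_j\cdot\overline{L}_j^{\cdot n-1}-\frac{H\cdot L^{\cdot n-1}}{L^{\cdot n}}\,\overline{L}_j^{\cdot n}\right)+O(c_j^2),
\]
because the normalization factor in $(\mathcal{J}^H)^{\mathrm{NA}}$ uses the \emph{global} slope $\mu$, which differs from the component slope $\mu_j$. (The linear terms sum to zero when all $c_i$ are equal, consistent with translation invariance by the full central fibre.) This is precisely the sign-definite quantity you need, so the conclusion survives, but the coefficient $c_j^2(\mu-\mu_j)L_j^{\cdot n}$ you state is not what the expansion produces, and the more elementary linear dependence is what the paper exploits. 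Relatedly, the decomposition formula used here is not the flag-ideal mixed-multiplicity formula of Theorem \ref{impl} (nor Lemma \ref{demibunkai}, whose hypothesis is exactly what fails in this setting); it is the much simpler splitting of intersection numbers on the normalization, combined with the $\epsilon$-expansion of \cite[Lemma 9.11, Proposition 9.12]{BHJ}.

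Second, the step you flag as ``the main obstacle'' and the entropy control in the K-instability part are where the actual content lies, and you do not resolve either. The paper resolves the $\mathbb{Q}$-Cartier issue with one explicit modification: blow up $X\times\mathbb{A}^1$ along $Z\times\{0\}$ with $Z=X_1\cap\bigcup_{i\ge 2}X_i$, after which the strict transform $F$ of $X_1\times\{0\}$ is Cartier (it is disjoint from the strict transforms of the other components), and the exceptional contribution is $O(\epsilon^2)$ because its Rees valuations have centre of codimension $\ge 1$. For K-instability you need more than ``an analogous component-wise decomposition of the entropy'': one must show $H^{\mathrm{NA}}_\Delta(\widetilde{\mathcal{X}},\widetilde{\mathcal{L}})=O(\epsilon^2)$, which the paper proves by checking that the Rees valuations of the blow-up along $Z\times\{0\}$ either come from the conductor (hence have log discrepancy $0$ for a deminormal pair) or have centre of codimension $\ge 2$. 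Without this check, the positive entropy term could a priori swamp the negative $(\mathcal{J}^{K_{(X,\Delta)}})^{\mathrm{NA}}$ term at first order, and the argument would not close.
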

 
 Compare Corollary \ref{d} with the following, which is originally conjectured by Lejmi and Sz{\' e}kelyhidi. Gao Chen \cite{G} proved the uniform version of this conjecture, Datar-Pingali \cite{DP} proved for projective manifolds and Jian Song \cite{S} proved generally:

\begin{fact}[Theorem 1.1 of \cite{G}, Theorem 1.2 of \cite{DP}, Corollary 1.2 of \cite{S}]\label{h}
Notations as in \cite{G}. Given an $n$-dimensional K{\" a}hler manifold $M$. Let $\chi $ and $\omega_0$ be K{\" a}hler metrics on $M$ and $c_0>0$ be the constant such that
\begin{eqnarray*}
\int_M \chi \wedge \frac{\omega_0^{n-1}}{(n-1)!}=c_0\int_M\frac{\omega_0^n}{n!}.
\end{eqnarray*}
Then the followings are equivalent:
\begin{itemize}
\item[(1)] There exists a smooth function $\varphi$ such that
\[
\omega_{\varphi}=\omega_0+\sqrt{-1}\partial\bar{\partial}\varphi>0
\]
 satisfies the J-equation
\[
\mathrm{tr}_{\omega_{\varphi}}\chi= c_0.
\]
Moreover, such $\varphi$ is unique up to a constant;
\item[(2)] There exists a smooth function $\varphi$ such that $\varphi$ is the critical point of the $\mathcal{J}_\chi$ functional. Moreover, such $\varphi$ is unique up to a constant;
\item[(3)] The $\mathcal{J}_\chi$ functional is coercive. In other words, there exist a positive constant $\epsilon$ and
another constant $C$ such that $\mathcal{J}_\chi(\varphi) \ge \epsilon\mathcal{J}_{\omega_0}(\varphi)-C$;
\item[(4)] $(M,[\omega_0],[\chi])$ is uniformly J-stable. In other words, there exists a positive constant $\epsilon$
such that for any K{\" a}hler test configuration $(X,\Omega)$ (cf. \cite[Definition 2.10]{DR}, \cite[Definitions 3.2 and 3.4]{SjP}), the invariant $\mathcal{J}_{[\chi]}(X,\Omega)$ (cf. \cite[Definition 6.3]{DR}) satisfies
$\mathcal{J}_{[\chi]}(X,\Omega)\ge \epsilon \mathcal{J}_{[\omega_0]}(X,\Omega)$;
\item[(5)] $(M,[\omega_0],[\chi])$ is uniformly slope J-stable. In other words, there exists a positive constant $\epsilon$
such that for any subvariety $V$ of $M$, the deformation to normal cone of $V$
$(X,\Omega)$ (cf. \cite[Example 2.11 (ii)]{DR} satisfies $\mathcal{J}_{[\chi]}(X,\Omega)\ge \epsilon \mathcal{J}_{[\omega_0]}(X,\Omega)$;
\item[(6)]There exists a positive constant $\epsilon$ such that
\[
\int_V (c_0-(n-p)\epsilon)\omega^p_0-p\chi\wedge\omega_0^{p-1}\ge 0
\]
for any $p$-dimensional subvariety $V$ with $p=1,2,\cdots,n-1$.
\item[(7)]
\[
\int_V c_0\omega^p_0-p\chi\wedge\omega_0^{p-1}> 0
\]
for any $p$-dimensional subvariety $V$ with $p=1,2,\cdots,n-1$.
\end{itemize}
\end{fact}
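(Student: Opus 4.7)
The plan is to prove the cycle $(1)\Leftrightarrow(2)\Rightarrow(3)\Rightarrow(4)\Rightarrow(5)\Rightarrow(6)\Rightarrow(7)\Rightarrow(1)$. The equivalence $(1)\Leftrightarrow(2)$ is immediate, since the J-equation is the Euler--Lagrange equation of $\mathcal{J}_\chi$, and the uniqueness of a critical point modulo constants follows from the strict convexity of $\mathcal{J}_\chi$ along smooth geodesics in the space of K\"ahler potentials. For $(2)\Rightarrow(3)$, I would exploit the convexity of $\mathcal{J}_\chi-\epsilon \mathcal{J}_{\omega_0}$ along weak geodesics emanating from a critical point to deduce the coercivity estimate for a sufficiently small $\epsilon>0$.

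For $(3)\Rightarrow(4)$, I would associate to each K\"ahler test configuration $(X,\Omega)$ a weak geodesic ray $(\varphi_t)_{t\ge 0}$ in the space of K\"ahler potentials and identify the asymptotic slopes
\[
\lim_{t\to\infty}\frac{\mathcal{J}_\chi(\varphi_t)}{t},\qquad \lim_{t\to\infty}\frac{\mathcal{J}_{\omega_0}(\varphi_t)}{t}
\]
with the non-Archimedean invariants $\mathcal{J}_{[\chi]}(X,\Omega)$ and $\mathcal{J}_{[\omega_0]}(X,\Omega)$ respectively; coercivity then yields the required uniform inequality. The implication $(4)\Rightarrow(5)$ is trivial since deformations to the normal cone are instances of K\"ahler test configurations. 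For $(5)\Rightarrow(6)$, I would compute the two non-Archimedean invariants attached to the deformation to the normal cone of a $p$-dimensional subvariety $V$; by standard intersection-number manipulations (analogous in spirit to the decomposition formula developed in this paper) they reduce, up to a common positive multiple, to $\int_V (c_0-(n-p)\epsilon)\omega_0^p-p\chi\wedge\omega_0^{p-1}$, which is the quantity in (6). The step $(6)\Rightarrow(7)$ is elementary: given (6) with parameter $\epsilon>0$, one rewrites
\[
\int_V c_0\omega_0^p-p\chi\wedge\omega_0^{p-1}\ge (n-p)\epsilon\int_V\omega_0^p>0,
\]
so (7) holds with strict inequality.

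The main obstacle is the implication $(7)\Rightarrow(1)$, where one must produce an actual solution of the J-equation from a purely numerical Nakai-type positivity condition. I would proceed by a continuity method, interpolating from an auxiliary equation whose solvability is known (for instance with $\chi$ replaced by a small multiple of $\omega_0$) to the genuine J-equation. Openness follows from the implicit function theorem via ellipticity of the linearization. Closedness requires a priori estimates: higher-order estimates reduce by standard elliptic theory to a $C^0$ estimate on the potential, which is established through a Moser-iteration or pluripotential argument. The Nakai-type positivity in (7) is then used exactly to rule out concentration of mass along proper subvarieties at the endpoint of the continuity path, through a blow-up analysis whose limit would contradict (7) on some $V$. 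This delicate analytic step is the hardest part of the proof and is the substance of \cite{G}, \cite{DP}, and \cite{S}; I would essentially follow Song's argument \cite{S} since it works in full generality and avoids the extra Fu--Yau--Zhang type identities of the earlier papers.
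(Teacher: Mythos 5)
The paper does not contain a proof of this statement. It is stated as a \textbf{Fact} and attributed verbatim to Theorem 1.1 of \cite{G}, Theorem 1.2 of \cite{DP}, and Corollary 1.2 of \cite{S}; the paper imports it as a black box (restated as Theorem \ref{modLSconj} and the display before Proposition \ref{mat}) and then builds on it. So there is no internal proof to compare your outline against, and the task of proving it from scratch is that of three substantial research papers, not something this paper reproduces.

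Read on its own, your sketch is a reasonable recollection of the strategy in the cited works, but one step is stated more strongly than it can be justified, in a way the paper explicitly warns about. In $(5)\Rightarrow(6)$ you claim the two non-Archimedean invariants of the deformation to the normal cone of a $p$-dimensional $V$ ``reduce, up to a common positive multiple,'' to $\int_V (c_0-(n-p)\epsilon)\omega_0^p-p\chi\wedge\omega_0^{p-1}$. This is false: as Example \ref{33} and the proof of Lemma \ref{unst} emphasize, the $\mathcal{J}_{[\chi]}$-slope of the deformation to the normal cone with parameter $\delta$ is a polynomial in $\delta$ whose \emph{lowest-order} $\delta^{n-p+1}$ coefficient is, up to the positive factor $e_V X$, the quantity $\int_V\bigl(c_0\omega_0^p-p\chi\wedge\omega_0^{p-1}\bigr)$, while the higher-order terms involve self-intersections of the exceptional divisor and are not proportional to it. The implication only follows by letting $\delta\to 0$ and arguing by contrapositive (and for the uniform version, additionally replacing $\chi$ by $\chi-\epsilon\omega_0$). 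Keeping this asymptotic nature explicit is not pedantry: the gap between the slope and its leading coefficient is exactly what the paper exploits in \S\ref{App2} (Theorem \ref{JJJ}, Proposition \ref{p}) to exhibit $\mathrm{J}$-stable surfaces that are not uniformly $\mathrm{J}$-stable, so conflating them would actively obscure the paper's main point. A similar caution applies to $(2)\Rightarrow(3)$: $\mathcal{J}_\chi-\epsilon\mathcal{J}_{\omega_0}$ is not obviously convex along geodesics since the subtracted term spoils convexity, and the coercivity estimate from a critical point in \cite{G} requires a more careful argument than pure convexity.
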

  
 Recall that for K-stability case, slope stability was strictly weaker (cf. \cite[Example 7.6]{PR}). On the other hand, the novelty of Fact \ref{h} and Corollary \ref{d} is to show the validity of slope type stability for J-stability case, in the spirit of Ross-Thomas \cite{RT07}. Our purely algebro-geometric proof of Theorem \ref{c} is inspired from Fact \ref{h} and from \cite[Theorems 6.1 and 6.4]{RT07}. We remark that the assumption of Theorem \ref{c} is weaker than one of Fact \ref{h} for smooth projective surfaces.
 
 On the other hand,  $(3)\Rightarrow (2)$ in Corollary \ref{d} is not so trivial since $(2\frac{H\cdot L}{L^{\cdot 2}}L-H)\cdot C$ is not a slope $(\mathcal{J}^H)^\mathrm{NA}$-energy of the deformation to the normal cone of $C$ as we see in Example \ref{33}.
Indeed, we have the following:
 
 \begin{teiri}[Theorem \ref{JJJ}, Proposition \ref{p}]\label{e}
 There exists a smooth polarized surface $(X,L)$ with an ample divisor $H$ such that $(X,L)$ is $\mathrm{J}^H$-stable but {\rm not} uniformly slope $\mathrm{J}^H$-stable. In particular, there does not necessarily exists a solution to J-equation even when $(X,L)$ is $\mathrm{J}^H$-stable.
 \end{teiri}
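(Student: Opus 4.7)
The plan is to exhibit a blow-up of $\mathbb{P}^2$ with carefully chosen polarizations for which $P := 2\frac{H\cdot L}{L^{\cdot 2}}L - H$ is nef but not ample, deduce the failure of uniform slope $\mathrm{J}^H$-stability from Corollary~\ref{d}, and use the decomposition formula behind Theorem~\ref{d2} to verify strict $\mathrm{J}^H$-stability.

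\textbf{The example.} Let $\pi\colon X = \mathrm{Bl}_p \mathbb{P}^2 \to \mathbb{P}^2$ be the blow-up at one point, with exceptional divisor $E$ and $H_0 = \pi^{\ast}\mathcal{O}_{\mathbb{P}^2}(1)$. Set
\[
L = 2H_0 - E, \qquad H = 5H_0 - 4E,
\]
both of which are ample (each of the form $aH_0-bE$ with $a>b>0$). A direct intersection calculation gives $L^{\cdot 2} = 3$ and $H\cdot L = 6$, whence $c := (H\cdot L)/L^{\cdot 2} = 2$ and
\[
P = 2cL - H = 4(2H_0-E)-(5H_0-4E) = 3H_0.
\]
Thus $P$ is nef but not ample: $P\cdot E = 0$ while $P\cdot C > 0$ for every integral curve $C\neq E$, since such a $C$ is not contracted by $\pi$.

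\textbf{Failure of uniformity and of the J-equation.} By the equivalence $(1)\Leftrightarrow(3)$ of Corollary~\ref{d}, uniform slope $\mathrm{J}^H$-stability would require $\epsilon>0$ such that $P\cdot C\ge \epsilon L\cdot C$ for every integral curve $C$; this fails already on $C=E$, where $P\cdot E = 0$ and $L\cdot E=1$. Hence $(X,L)$ is not uniformly slope $\mathrm{J}^H$-stable. Since $X$ is smooth and $L,H$ are K\"ahler classes, Fact~\ref{h} $(1)\Leftrightarrow(5)$ then rules out any smooth solution of the J-equation, giving the ``in particular'' part.

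\textbf{Strict $\mathrm{J}^H$-stability.} By Theorem~\ref{c}, $(\mathcal{J}^H)^{\mathrm{NA}}(\mathcal{X},\mathcal{L})\ge 0$ for every semiample test configuration. It remains to rule out equality on non-almost-trivial ample ones. I would invoke the decomposition formula of Theorem~\ref{d2} to write
\[
(\mathcal{J}^H)^{\mathrm{NA}}(\mathcal{X},\mathcal{L}) \;=\; \int_{\mathbb{R}} (P\cdot D_t)\, d\mu(t),
\]
where $D_t$ is a family of effective $\mathbb{R}$-divisor classes on $X$ and $\mu$ a positive Borel measure, both extracted from the filtration data of $(\mathcal{X},\mathcal{L})$. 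Because $P = 3\pi^{\ast}H_0$, we have $P\cdot D_t = 3 H_0\cdot \pi_\ast D_t\ge 0$, with equality precisely when $D_t$ is supported on $E$. The \emph{main obstacle} is the implication ``$(P\cdot D_t)=0$ for $\mu$-a.e.\ $t$ $\Rightarrow (\mathcal{X},\mathcal{L})$ is almost trivial.'' The strategy is to push the test configuration along $\pi\times\mathrm{id}_{\mathbb{A}^1}$ to obtain a test configuration on $(\mathbb{P}^2,\pi_\ast L)$; the supposed $E$-supportedness forces the pushed configuration to have trivial filtration and hence to be almost trivial, and then the rigidity of $E\cong\mathbb{P}^1 = \pi^{-1}(p)$ inside $X$ lifts this triviality back to $\mathcal{X}$. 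Combining with Theorem~\ref{c} gives strict positivity of $(\mathcal{J}^H)^{\mathrm{NA}}$ on every non-almost-trivial ample test configuration, i.e.\ $\mathrm{J}^H$-stability, completing the proof of Theorem~\ref{e}.
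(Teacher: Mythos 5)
Your example is fine: $\mathrm{Bl}_p\mathbb{P}^2$ is the Hirzebruch surface $\mathbb{F}_1=\mathbb{P}_{\mathbb{P}^1}(\mathcal{O}\oplus\mathcal{O}(1))$, and your data $L=2H_0-E$, $H=5H_0-4E$ is exactly the paper's family with $e=1,m=1,n=2,a=3$ (with $H_0=C_0+f$, $E=C_0$). The non-uniform-slope-stability part and the ``no J-equation solution'' part are both correctly argued from Corollary~\ref{d} and Fact~\ref{h}, matching the paper.

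The real gap is in the claimed strict $\mathrm{J}^H$-stability. The formula $(\mathcal{J}^H)^\mathrm{NA}(\mathcal{X},\mathcal{L})=\int_{\mathbb{R}}(P\cdot D_t)\,d\mu(t)$ that you propose to invoke does not appear in the paper and is not correct: the actual decomposition (Theorem~\ref{impl}, specialized as in the proof of Theorem~\ref{d2}) produces summands of the shape
\[
6((D_i+D_{i+1})\cdot L)(L\cdot H)-3((D_i+D_{i+1})\cdot H)L^2-2(L\cdot H)(D_i^2+D_i\cdot D_{i+1}+D_{i+1}^2),
\]
which equals $3L^2\,P\cdot(D_i+D_{i+1})-2(L\cdot H)(D_i^2+D_i\cdot D_{i+1}+D_{i+1}^2)$; the $D^2$ terms have a \emph{negative} coefficient and can overwhelm $P\cdot(D_i+D_{i+1})$ for positive-self-intersection $D$'s. (Compare Example~\ref{33}: for a deformation to the normal cone along a divisor $D$, one gets $c^2\,P\cdot D-c^3\frac{2H\cdot L}{3L^2}D^2$, not $P\cdot D$.) Nonnegativity of these summands is precisely the content of Proposition~\ref{deft}, which requires Hodge index and a nontrivial estimate, not just nefness of $P$. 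For strict positivity, the paper's Proposition~\ref{p} does not argue via almost-triviality of a pushed-forward test configuration; instead it isolates the last summand $6(D_{r-1}\cdot L')(L'\cdot H')-3(D_{r-1}\cdot H')L'^2-2(L'\cdot H')D_{r-1}^2$ with $D_{r-1}\neq 0$, uses Remark~\ref{kh} to show equality in Proposition~\ref{deft} forces $D_{r-1}\equiv tL'$ (crucially using that $H'=\pi^*H$ is nef and \emph{big}, so $H'^2>0$ gives $B^2+ (H'\cdot L')^2/L'^2>0$), and then checks that $D_{r-1}\equiv tL'$ with $0<t\le 1$ makes that summand $\ge t\,L'^2(H'\cdot L')>0$. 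So the strict positivity is genuinely about ruling out proportionality of the ideal divisor to $L$ via bigness of $H$ and Hodge index; it is not a clean ``$P$-pairing with an effective measure'' argument, and the $\pi_*$-triviality strategy you sketch is neither worked out nor parallel to what the paper does.
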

 
 J. Song \cite{S} calls the conditions (6) and (7) of Fact \ref{h} uniform J-positivity and J-positivity respectively (cf. \cite[Definition 1.1]{S}), and actually proves that J-positivity and uniform J-positivity are equivalent to uniform J-stability. However, we prove J-positivity is not equivalent to J-stability.
 
 Although K-stability and uniform K-stability are equivalent in the case of Fano manifolds, we show that the equivalence does not hold in general. In fact, we prove the following:
 
  \begin{kei}[Corollary \ref{K}, Corollary \ref{DJ}]\label{f}
 There exist a polarized normal pair $(X,\Delta;L)$ and a polarized deminormal surface $(Z,M)$ such that they are K-stable but {\rm not} uniformly K-stable.
 \end{kei}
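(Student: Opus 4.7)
The plan is to lift the $\mathrm{J}^H$-(semi)stability examples of Theorem \ref{e} to K-(semi)stability examples via the standard decomposition of the non-Archimedean Mabuchi functional
\[
M^\mathrm{NA}(\mathcal{X},\mathcal{L}) = H^\mathrm{NA}(\mathcal{X},\mathcal{L}) + (\mathcal{J}^{K_{(X,\Delta)}})^\mathrm{NA}(\mathcal{X},\mathcal{L}),
\]
where $H^\mathrm{NA}$ is the non-Archimedean entropy, which is non-negative for klt pairs and any semiample test configuration.

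For the normal pair statement (Corollary \ref{K}): start with the smooth polarized surface $(X,L)$ and ample divisor $H$ produced in Theorem \ref{e}. Choose an effective $\mathbb{Q}$-divisor $\Delta\ge 0$ with $K_X+\Delta\sim_\mathbb{Q} H$ so that $(X,\Delta)$ is klt; concretely, take $\Delta$ a small general member of an appropriate positive multiple of $H-K_X$ (if necessary, first alter the example from Theorem \ref{e} by a birational modification so that $H-K_X$ becomes effective, which should be harmless as the obstruction in Theorem \ref{e} is preserved under suitable blow-ups of points away from the distinguished curves). Then K-stability of $(X,\Delta;L)$ follows immediately from the decomposition: entropy is $\ge 0$, while $(\mathcal{J}^{K_{(X,\Delta)}})^\mathrm{NA}=(\mathcal{J}^{H})^\mathrm{NA}>0$ for every non-trivial ample test configuration by the $\mathrm{J}^H$-stability. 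To exhibit failure of uniform K-stability, feed in the sequence of deformations to normal cones along curves $C_k$ witnessing the failure of uniform slope $\mathrm{J}^H$-stability (such a sequence exists by Corollary \ref{d} and Theorem \ref{e}). Choosing $\Delta$ to be general (so that its support is disjoint from the $C_k$), the entropy term along this sequence is computed purely from the log discrepancies of the exceptional divisors in the trivial boundary case and is bounded above by a constant multiple of $J^\mathrm{NA}$, whereas $(\mathcal{J}^{H})^\mathrm{NA}/J^\mathrm{NA}\to 0$. Hence $M^\mathrm{NA}/J^\mathrm{NA}\to 0$, ruling out uniform K-stability.

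For the deminormal statement (Corollary \ref{DJ}): glue two copies of the polarized surface from Theorem \ref{e} along a compatibly matched pair of curves to form a deminormal surface $(Z,M)$. The gluing must be chosen so that the numerical compatibility condition in Theorem \ref{i} holds, namely
\[
\frac{K_Z\cdot M}{M^{\cdot 2}}=\frac{K_{X_i^\nu}\cdot L_i}{L_i^{\cdot 2}}\quad\text{for each irreducible component},
\]
to avoid the built-in K-instability obstruction; this can be arranged by placing a boundary on each piece so that the log canonical classes agree numerically across the double locus. K-stability of $(Z,M)$ then reduces via the decomposition to J-stability on each component together with non-negativity of the slc entropy. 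Failure of uniform K-stability is again deduced by transferring the curves $C_k$ on one component to slope test configurations on $Z$ that fix the other component as a product, so that the obstruction from Theorem \ref{e} lifts intact.

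The main technical hurdle is in the first step: verifying that the non-Archimedean entropy $H^\mathrm{NA}$ along the particular sequence of slope test configurations witnessing non-uniform J-stability grows at most at the order of $J^\mathrm{NA}$, rather than picking up an extra lower bound that would force uniform K-stability. This amounts to a uniform control on log discrepancies of the relative canonical sheaves of these test configurations, which should follow from choosing $\Delta$ general with respect to the curves $C_k$. A secondary but easier issue is the existence of a klt boundary $\Delta$ with $K_X+\Delta\sim_\mathbb{Q} H$, which may restrict the particular examples from Theorem \ref{e} that are directly usable but is expected to admit enough flexibility for the construction.
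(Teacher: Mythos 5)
Your high-level plan --- using the decomposition $M^\mathrm{NA}=H^\mathrm{NA}+(\mathcal{J}^{K_{(X,\Delta)}})^\mathrm{NA}$ to lift the $\mathrm{J}^H$-stable-but-not-uniformly-stable surface to a K-stable-but-not-uniformly-K-stable example --- is the right idea, and matches the paper. But your choice of boundary is fatally wrong. You propose taking $\Delta$ so that $(X,\Delta)$ is \emph{klt} with $K_X+\Delta\sim_\mathbb{Q} H$. For a klt polarized pair, however, the paper records (following \cite{BHJ}, Theorem~9.14) that $H_\Delta^\mathrm{NA}\ge \alpha(X,\Delta;L)\,I^\mathrm{NA}$ with $\alpha(X,\Delta;L)>0$, and $I^\mathrm{NA}$ is equivalent to $J^\mathrm{NA}$. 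Combined with $(\mathcal{J}^{K_{(X,\Delta)}})^\mathrm{NA}=(\mathcal{J}^H)^\mathrm{NA}\ge 0$ coming from J-semistability, this forces $M^\mathrm{NA}\ge \alpha\, I^\mathrm{NA}$ on every nontrivial metric, i.e. the pair is automatically \emph{uniformly} K-stable. No amount of generality of $\Delta$ away from the curves $C_k$ changes this, because the lower bound on $H^\mathrm{NA}$ is uniform over all test configurations. Your claim that the entropy is only ``bounded above by a multiple of $J^\mathrm{NA}$'' ignores the symmetric lower bound, which is exactly what kills the argument.

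The mechanism the paper actually uses is that the boundary is \emph{lc but not klt}, with the distinguished curve $C_0$ (the unique curve with $(2\frac{L\cdot H}{L^2}L-H)\cdot C_0=0$) placed inside the lc center of $(X,\Delta+C_0)$. With $K_{(X,\Delta+C_0)}\equiv H$, J-stability still gives $(\mathcal{J}^{K_{(X,\Delta+C_0)}})^\mathrm{NA}>0$ on nontrivial metrics, and lc still gives $H^\mathrm{NA}_{\Delta+C_0}\ge 0$, so the pair is K-stable. But now, along the deformation to the normal cone of $C_0$, the entropy $H^\mathrm{NA}_{\Delta+C_0}$ is \emph{identically zero} because the Rees valuation has log discrepancy zero --- there is no positive multiple of $J^\mathrm{NA}$ hiding in the entropy term. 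The residual $(\mathcal{J}^{K_{(X,\Delta+C_0)}})^\mathrm{NA}/J^\mathrm{NA}\to 0$ along that family of slope test configurations (Lemma~\ref{unst}), so uniform K-stability fails. This choice of an lc, non-klt boundary whose lc center is $C_0$ is the key idea missing from your proposal. Similarly, for the deminormal example the paper does not ``arrange numerical compatibility by placing a boundary on each piece''; it glues two copies of $X$ along $\Delta+C_0$ (Lemma~\ref{glue}) so that the boundary is absorbed into the double locus, $K_Z$ becomes ample, the same-average-scalar-curvature hypothesis of Theorem~\ref{i} holds automatically by symmetry, and the exceptional curves $C_0$ sit inside the conductor so that the entropy contribution along the critical slope test configuration again vanishes. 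Your sketch of the gluing step is too vague to address the conductor/log-discrepancy bookkeeping that makes this work.
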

 
 On the other hand, we extend $(4)\Leftrightarrow (6)$ in Fact \ref{h} to higher dimensional deminormal schemes over $\mathbb{C}$ (see Theorem \ref{SW} and Remark \ref{remstable}). We apply this to obtain the following extension of Jian-Shi-Song \cite[Theorem 1.1]{JSS}:
 
 \begin{teiri}[Theorem \ref{lastcor}]\label{g}
 Let $(X,\Delta;L)$ be an $n$-dimensional klt log minimal model over $\mathbb{C}$, i.e, $K_{(X,\Delta)}$ is nef. Then $(X,\Delta;K_{(X,\Delta)}+\epsilon L)$ is uniformly K-stable for sufficiently small $\epsilon>0$. Furthermore, if $X$ is smooth and $\Delta=0$, $(X,K_X+\epsilon L)$ also has a cscK metric.
 \end{teiri}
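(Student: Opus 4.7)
The plan is to reduce uniform K-stability of $(X,\Delta;L')$, where $L':=K_{(X,\Delta)}+\epsilon L$, to a uniform numerical positivity criterion, via the Mabuchi decomposition and Theorem~\ref{SW}. For any semiample test configuration $(\mathcal{X},\mathcal{L}')$ of $(X,\Delta;L')$, I would first invoke the Boucksom--Hisamoto--Jonsson decomposition
\[
M^{\mathrm{NA}}(\mathcal{X},\mathcal{L}') = S^{\mathrm{NA}}(\mathcal{X},\mathcal{L}') + (\mathcal{J}^{K_{(X,\Delta)}})^{\mathrm{NA}}(\mathcal{X},\mathcal{L}'),
\]
where $S^{\mathrm{NA}}$ denotes the non-Archimedean entropy. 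Since $(X,\Delta)$ is klt, $S^{\mathrm{NA}}\ge 0$, so uniform K-stability will follow from uniform $\mathrm{J}^{K_{(X,\Delta)}}$-stability of $(X,\Delta;L')$.

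Next, I would apply Theorem~\ref{SW}, the higher-dimensional extension of Fact~\ref{h}\,(4)$\Leftrightarrow$(6) to deminormal schemes, to translate uniform $\mathrm{J}^{K_{(X,\Delta)}}$-stability of $(X,\Delta;L')$ into uniform J-positivity: the existence of $\eta>0$ such that
\[
(c_0\,L'^p - p\,K_{(X,\Delta)}\,L'^{p-1})\cdot V \ge \eta\, L'^p\cdot V
\]
for every integral subvariety $V\subset X$ of dimension $p\in\{1,\dots,n-1\}$, with $c_0 := n\,K_{(X,\Delta)}\cdot L'^{n-1}/L'^n$. Writing $K_{(X,\Delta)}=L'-\epsilon L$, the left-hand side becomes $(c_0-p)\,L'^p\cdot V + p\epsilon\,(L\cdot L'^{p-1})\cdot V$. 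Letting $\nu$ denote the numerical dimension of $K_{(X,\Delta)}$ on $X$ and $\nu_V$ that of $K_{(X,\Delta)}|_V$ on $V$, an expansion in $\epsilon$ shows $c_0\to\nu$ as $\epsilon\to 0$, and the leading $\epsilon$-coefficient of the inequality equals $(\nu-\nu_V)\binom{p}{\nu_V}(K_{(X,\Delta)}|_V)^{\nu_V}(L|_V)^{p-\nu_V}\ge 0$.

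When $K_{(X,\Delta)}$ is big (so $\nu=n$), every proper $V\subset X$ satisfies $\nu_V\le p<n$, and uniform J-positivity follows directly with $\eta$ of order $1$. In general, the delicate subvarieties are those with $\nu_V=\nu$, where the leading $\epsilon$-contribution vanishes. For these I would examine the next-to-leading term, of the form $\binom{p}{\nu-1}(K_{(X,\Delta)}|_V)^{\nu-1}(L|_V)^{p-\nu+1}\epsilon^{p-\nu+1}$, which is strictly positive by the ampleness of $L|_V$. The main obstacle is then securing a uniform positive lower bound for the ratio of this next-order contribution to the leading $L'^p\cdot V$ across all such $V$; I would address this via semiampleness of $K_{(X,\Delta)}$ (so that subvarieties with $\nu_V=\nu$ factor through the Iitaka fibration of $K_{(X,\Delta)}$), combined with boundedness of the relevant family of subvarieties and the fixed ampleness of $L$.

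For the cscK assertion in the smooth case $\Delta=0$: uniform $\mathrm{J}^{K_X}$-stability of $(X,K_X+\epsilon L)$ in particular gives $(\mathcal{J}^{K_X})^{\mathrm{NA}}\ge 0$ on every semiample test configuration. By Chi Li \cite{Li}, this nonnegativity implies coercivity of the Mabuchi functional in $\mathrm{c}_1(K_X+\epsilon L)$, hence the existence of a cscK metric in this class.
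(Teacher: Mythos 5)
The first half of your argument is sound and matches the paper's reduction: both use $M^{\mathrm{NA}} = H^{\mathrm{NA}} + (\mathcal{J}^{K_{(X,\Delta)}})^{\mathrm{NA}}$, both pass to the numerical criterion of Theorem~\ref{SW}, and the big case is handled identically.

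The gap is in the reduction itself when $K_{(X,\Delta)}$ is not big. You claim that uniform K-stability will follow from uniform $\mathrm{J}^{K_{(X,\Delta)}}$-stability of $(X,\Delta;L')$. That implication is valid, but the hypothesis is generally \emph{false} in the non-big case: take $K_{(X,\Delta)}\equiv 0$, so that $(\mathcal{J}^{K_{(X,\Delta)}})^{\mathrm{NA}}\equiv 0$ identically, which is of course never $\ge\delta J^{\mathrm{NA}}$ for any $\delta>0$. More generally, for any $V$ with $\nu(K_{(X,\Delta)}|_V)=\nu(K_{(X,\Delta)})=:m$, the leading term of the J-positivity inequality in $\epsilon$ vanishes and the required constant $\eta$ is forced to be $O(\epsilon)$ with no a priori uniform lower bound over all such $V$. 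You identify this correctly, but the repair you propose --- factoring through the Iitaka fibration of $K_{(X,\Delta)}$ plus a boundedness argument --- requires $K_{(X,\Delta)}$ to be semiample. That hypothesis is not part of the statement (and dropping it is precisely what makes the result stronger than Jian--Shi--Song, who assume $K_X$ semiample), so this step as written does not close the gap.

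The paper avoids the difficulty by proving something strictly weaker than uniform $\mathrm{J}^{K_{(X,\Delta)}}$-stability. Writing $L'=K_{(X,\Delta)}+\epsilon L$, it shows via the claim
\[
\bigl((m-p)K_{(X,\Delta)}+m\epsilon L\bigr)\cdot L'^{p-1}\cdot V\ge 0
\]
and a uniform constant $C>0$ that $(X,\Delta;L')$ is $\mathrm{J}^{K_{(X,\Delta)}+C\epsilon L'}$-\emph{semi}stable --- only $\ge 0$, not $\ge\eta J^{\mathrm{NA}}$ --- by Theorem~\ref{SW} applied to the ample class $K_{(X,\Delta)}+C\epsilon L'$. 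This translates to
\[
(\mathcal{J}^{K_{(X,\Delta)}})^{\mathrm{NA}}\ge -C\epsilon\,(I^{\mathrm{NA}}-J^{\mathrm{NA}}) \ge -C\epsilon\, I^{\mathrm{NA}},
\]
and it is here that the klt hypothesis is used essentially: since $\alpha(X,\Delta;K_{(X,\Delta)}+\epsilon L)\ge\alpha(X,\Delta;K_{(X,\Delta)}+L)>0$, choosing $\epsilon$ small enough that $C\epsilon<\alpha$ and invoking \cite[Proposition~9.16]{BHJ} (the entropy bound $H^{\mathrm{NA}}\ge\alpha\, I^{\mathrm{NA}}$) yields uniform K-stability directly. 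You never bring in the alpha invariant, and without it the non-big case cannot be closed by J-stability considerations alone. The cscK conclusion in the smooth case also relies on the same quantitative bound, via \cite[Theorem~6.10]{Li}, rather than on bare nonnegativity of $(\mathcal{J}^{K_X})^{\mathrm{NA}}$ --- though in the end your phrasing is salvageable there once the K-stability argument is repaired.
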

 
 The existence of cscK metrics of smooth minimal models was already proved by Zakarias Sj\"{o}str\"{o}m Dyrefelt \cite{Zak} and J. Song \cite{S} but our proof is more algebro-geometric. In particular when $n=2$, we have a purely algebro-geometric proof of Theorem \ref{g}.
 
  The point of the proof of Theorem \ref{c} is the following:
 
  \begin{teiri}[Theorem \ref{slope}]\label{a}
 Given an $n$-dimensional polarized variety $(X,L)$, and an flag ideal $\mathfrak{a}=\sum_{i=0}^r\mathfrak{a}_it^i\subset \mathcal{O}_{X\times \mathbb{A}^1}$ such that $\mathfrak{a}_0\ne0$. Then there exists an alternation $\pi:X'\to X$ (i.e. $\pi$ is a generically finite and proper morphism) such that $X'$ is smooth and irreducible, $D_0$ is an snc divisor corresponding to $\pi^{-1}\mathfrak{a}_0$ and the integral closure $\overline{(\pi\times\mathrm{id}_{\mathbb{A}^1})^{-1}\mathfrak{a}}$ of the inverse image of $\mathfrak{a}$ to $X'\times\mathbb{A}^1$ satisfies the following condition (*) 
 \begin{itemize}
\item[(*)]\[
\overline{(\pi\times\mathrm{id}_{\mathbb{A}^1})^{-1}\mathfrak{a}}=\mathscr{I}_{D_0}+\mathscr{I}_{D_1}t+\cdots +\mathscr{I}_{D_{r-1}}t^{r-1}+t^r,
\]
where each $\mathscr{I}_{D_i}$ is a coherent ideal sheaf corresponding to a Cartier divisor $D_i$ of $X'$. Furthermore, for each $m\in \mathbb{Z}_{>0}$, $$\biggl(\overline{(\pi\times\mathrm{id}_{\mathbb{A}^1})^{-1}\mathfrak{a}}\biggr)^m=\sum_{k=0}^{mr} t^k\mathscr{I}_{m,k},$$ where $\mathscr{I}_{m,k}=\mathscr{I}_{D_j}^{m-i}\cdot \mathscr{I}_{D_{j+1}}^{i}$ for $j=\lfloor \frac{k}{m}\rfloor$ and $i=k-mj$.
\end{itemize}
 Moreover, if $(\mathrm{Bl}_{\mathfrak{a}}(X\times \mathbb{A}^1),L_{\mathbb{A}^1}-E)$ is a semiample test configuration, where $\Pi:\mathrm{Bl}_{\mathfrak{a}}(X\times \mathbb{A}^1)\to X\times \mathbb{A}^1$ is the blow up along $\mathfrak{a}$ with the exceptional divisor $E=\Pi^{-1}(\mathfrak{a})$, $(\pi\times\mathrm{id}_{\mathbb{A}^1})^{-1}\mathfrak{a}\cdot (\pi\times\mathrm{id}_{\mathbb{A}^1})^*L_{\mathbb{A}^1}$ is semiample and $\pi^*L-D_0$ is nef. 
 \end{teiri}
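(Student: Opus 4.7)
The plan is to produce $\pi: X' \to X$ via iterated Hironaka-type principalizations followed by toric blow-ups along snc strata, and then verify condition (*) and the power formula by an \'etale-local monomial computation.

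For the first step I apply Hironaka's theorem on simultaneous log-principalization (characteristic zero) to the ideals $\mathfrak{a}_0, \ldots, \mathfrak{a}_{r-1}$ on $X$: there exists a smooth irreducible $X_0$ and a proper birational $\pi_0: X_0 \to X$ such that each $\pi_0^{-1}\mathfrak{a}_i$ equals $\mathscr{I}_{D_i^{(0)}}$ for an effective Cartier divisor $D_i^{(0)}$, with all $D_i^{(0)}$ supported in a common snc divisor $F \subseteq X_0$. The flag property $\mathfrak{a}_0 \subseteq \mathfrak{a}_i$ forces $D_i^{(0)} \leq D_0^{(0)}$ (and $D_r^{(0)} = 0$), so $D_0 \coloneq D_0^{(0)}$ is snc.

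The heart of the proof is the integral-closure computation on $X_0 \times \mathbb{A}^1$. Since $t^r \in \mathfrak{a}$, the co-support of $\mathfrak{a}$ lies in $X \times \{0\}$, so everything is controlled locally near $F \times \{0\}$. In local coordinates $(x_1, \ldots, x_n, t)$ with $F = \bigcup\{x_j = 0\}$, the pulled-back ideal $(\pi_0 \times \mathrm{id}_{\mathbb{A}^1})^{-1}\mathfrak{a}$ becomes monomial, generated by $(\prod_j x_j^{a_{i,j}})t^i$ for $i = 0, \ldots, r$. Its integral closure is read off from the Newton polyhedron $\mathcal{N} \subseteq \mathbb{R}^{n+1}_{\ge 0}$, namely the convex hull of the exponent vectors augmented by the positive orthant: the coefficient of $t^k$ is the monomial ideal associated to the slice $\mathcal{N} \cap \{\text{last coordinate} = k\}$. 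Where a slice fails to be generated by a single monomial, I blow up $X_0$ along the strata of $F$ at which the Pareto minimum fails to be unique; since these are transverse intersections of components, the blow-ups are toric and preserve the snc structure. After finitely many such blow-ups each slice becomes principal, giving Cartier divisors $D_1, \ldots, D_{r-1}$ on $X'$ and verifying (*). The power formula $(\overline{(\pi \times \mathrm{id}_{\mathbb{A}^1})^{-1}\mathfrak{a}})^m = \sum_k t^k \mathscr{I}_{m,k}$ with $\mathscr{I}_{m,k} = \mathscr{I}_{D_j}^{m-i}\mathscr{I}_{D_{j+1}}^i$ then follows from the Minkowski identity $\mathcal{N}(\mathfrak{a}^m) = m \cdot \mathcal{N}(\mathfrak{a})$ applied slice-by-slice, combined with the concavity of the piecewise-linear interpolation through the points $(i, D_i)$ automatically produced by the blow-up procedure.

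For the \emph{Moreover} assertions, suppose $(\mathrm{Bl}_\mathfrak{a}(X \times \mathbb{A}^1), L_{\mathbb{A}^1} - E)$ is a semiample test configuration. The base change of $\Pi: \mathrm{Bl}_\mathfrak{a}(X \times \mathbb{A}^1) \to X \times \mathbb{A}^1$ along $\pi \times \mathrm{id}_{\mathbb{A}^1}$ yields a proper birational morphism to $X' \times \mathbb{A}^1$ that dominates the blow-up of $(\pi \times \mathrm{id}_{\mathbb{A}^1})^{-1}\mathfrak{a}$, and pulling back the semiample $L_{\mathbb{A}^1} - E$ gives the claimed semiampleness of $(\pi \times \mathrm{id}_{\mathbb{A}^1})^{-1}\mathfrak{a} \cdot (\pi \times \mathrm{id}_{\mathbb{A}^1})^* L_{\mathbb{A}^1}$. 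For nefness of $\pi^* L - D_0$: since $\pi^{-1}\mathfrak{a}_0 = \mathscr{I}_{D_0}$ is already invertible, the strict transform of $X' \times \{0\}$ inside the blow-up along $(\pi \times \mathrm{id}_{\mathbb{A}^1})^{-1}\mathfrak{a}$ is isomorphic to $X'$, and the restriction of the pulled-back semiample polarization to this section is exactly $\pi^* L - D_0$, which is therefore nef. The principal technical obstacle will be the iterative blow-up step for slice principalization: one must show the process terminates globally, preserves the snc structure throughout, and produces Cartier divisors $D_i$ whose piecewise-linear interpolation is genuinely concave so that the stated power formula applies.
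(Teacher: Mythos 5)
Your construction builds $\pi:X'\to X$ entirely out of birational modifications (Hironaka log-principalization followed by toric blow-ups), but this cannot produce the required Cartier divisors $D_i$ in general. The statement calls $\pi$ an \emph{alteration} --- merely generically finite --- precisely because a ramified finite covering is unavoidable; the paper's construction composes the log resolution with a Bloch--Gieseker covering and cyclic coverings (Proposition \ref{cov}) \emph{before} the final toroidal blow-up. The obstruction is arithmetic, not geometric: take $\mathfrak{a}=t^4+\mathscr{I}_{3D}$ with $D$ a smooth prime divisor, as in Example \ref{yokuwakaru}. The bounded face of the Newton polyhedron is the segment from $(3,0)$ to $(0,4)$; its slices at $t$-degree $m=1,2,3$ sit at $\mathrm{ord}_D=9/4,\,3/2,\,3/4$, none integral. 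Any birational $\pi$ pulls $D$ back to a divisor whose strict-transform component has multiplicity one, so those fractional slice divisors stay non-Cartier there no matter how many times you blow up. One sees the failure of (*) concretely: $\overline{\mathfrak{a}}=\mathscr{I}_{3D}+\mathscr{I}_{3D}t+\mathscr{I}_{2D}t^2+\mathscr{I}_{D}t^3+t^4$, and the $t^2$-coefficient of $(\overline{\mathfrak{a}})^2$ is $\mathscr{I}_{5D}$ (coming from the cross-product of the $t^0$- and $t^2$-generators), whereas the adjacent-products formula you want would give $\mathscr{I}_{D_1}^2=\mathscr{I}_{6D}$. Only after a degree-$4$ ramified cover replacing $D$ by $4D'$ do the slices become the Cartier divisors $12D',9D',6D',3D',0$, lying honestly on a line, so that the power formula holds.

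Your ``Pareto-minimum'' toric blow-ups address a separate issue --- a slice with several minimal vertices (non-principality) --- and cannot manufacture divisibility. Both steps are needed, in order: first the covering (Proposition \ref{cov}) to force integral slice vertices, then the toroidal resolution (Lemma \ref{3rd}) to force the bounded face of the Newton polyhedron to become one-dimensional so that adjacent products suffice (Lemma \ref{fa}). Your treatment of the \emph{Moreover} clause is essentially correct once the right $\pi$ is in hand.
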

 
This theorem is the technical heart of this note. In the above condition (*), we remark that $$\biggl(\overline{(\pi\times\mathrm{id}_{\mathbb{A}^1})^{-1}\mathfrak{a}}\biggr)^m\subset\sum_{k=0}^\infty t^k\mathscr{I}_{m,k}$$ is not trivial. Due to Theorem \ref{a}, we have the following decomposition formula:
 
 \begin{teiri}[Theorem \ref{sl}, Theorem \ref{impl}]\label{b}
 $(X,L)$, $\mathfrak{a}$ and $\pi$ are as in Theorem \ref{a}. Suppose that $(\mathrm{Bl}_{\mathfrak{a}}(X\times \mathbb{A}^1),L_{\mathbb{A}^1}-E)$ is a semiample test configuration, $n\ge 2$ and $\mathrm{deg}\,\pi=l$. Then we can calculate $(\mathcal{J}^H)^\mathrm{NA}(\mathrm{Bl}_{\mathfrak{a}}(X\times \mathbb{A}^1),L_{\mathbb{A}^1}-E)$ by using the mixed multiplicities of $\mathscr{I}_{D_k}$ (see Definition \ref{mazemaze}):
 \begin{align*}
L^{\cdot n}&(\mathcal{J}^{H})^\mathrm{NA}(\mathrm{Bl}_{\mathfrak{a}}(X\times \mathbb{A}^1),L_{\mathbb{A}^1}-E)\\
=&\frac{1}{l}\left(\frac{n}{n+1}\frac{H\cdot L^{\cdot n-1}}{L^{\cdot n}}\sum _{k=0}^{r-1}\sum_{j=0}^{n}e_{\pi^*L}(\mathscr{I}_{D_k}^{[j]},\mathscr{I}_{D_{k+1}}^{[n-j]})-\sum _{k=0}^{r-1}\sum_{j=0}^{n-1}e_{\pi^*L_{|\pi^*H}}(\mathscr{I}_{D_k|\pi^*H}^{[j]},\mathscr{I}_{D_{k+1}|\pi^*H}^{[n-1-j]})\right)\\
=&\frac{1}{l}\left(\frac{n}{n+1}\frac{H\cdot L^{\cdot n-1}}{L^{\cdot n}}\sum _{k=0}^{r-1}\sum_{j=0}^{n}((\pi^*L^{\cdot n})-(\pi^*L-D_k)^{\cdot j}\cdot (\pi^*L-D_{k+1})^{\cdot n-j})\right.\\
-&\left. \sum _{k=0}^{r-1}\sum_{j=0}^{n-1}\pi^*H\cdot((\pi^*L^{\cdot n-1})-(\pi^*L-D_k)^{\cdot j}\cdot (\pi^*L-D_{k+1})^{\cdot n-1-j})\right).
\end{align*}
 \end{teiri}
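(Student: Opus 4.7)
The plan is to reduce the $(n{+}1)$-dimensional intersection numbers defining $(\mathcal{J}^H)^{\mathrm{NA}}(\mathrm{Bl}_\mathfrak{a}(X\times\mathbb{A}^1), L_{\mathbb{A}^1}-E)$ to top intersections on the smooth model obtained via the alternation $\pi$, and then to evaluate those using the explicit description of $\mathfrak{b}:=\overline{(\pi\times\mathrm{id}_{\mathbb{A}^1})^{-1}\mathfrak{a}}$ provided by property (*).

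First I would compactify the test configuration over $\mathbb{P}^1$ (trivial gluing at $\infty$) and recall the standard expression
\[
L^{\cdot n}\,(\mathcal{J}^{H})^{\mathrm{NA}}(\mathcal{X},\mathcal{L})=\frac{n}{n+1}\frac{H\cdot L^{\cdot n-1}}{L^{\cdot n}}\,\mathcal{L}^{n+1}-\mathcal{L}^{n}\cdot\mathcal{H}_{\mathbb{P}^1},
\]
where $\mathcal{H}_{\mathbb{P}^1}$ is the $\mathbb{P}^1$-extension of $H$. The normalization of the base change of $\mathrm{Bl}_{\mathfrak{a}}(X\times\mathbb{P}^1)$ along $\pi\times\mathrm{id}_{\mathbb{P}^1}$ is $\mathrm{Bl}_{\mathfrak{b}}(X'\times\mathbb{P}^1)$ (integral closure corresponds to normalization of the blow-up), and $\pi\times\mathrm{id}_{\mathbb{P}^1}$ is generically finite of degree $l$; since both replacements multiply top intersection numbers by $l$, it suffices to compute $\widetilde{\mathcal{L}}^{n+1}$ and $\widetilde{\mathcal{L}}^{n}\cdot\pi^{*}H_{\mathbb{P}^1}$ on $\mathrm{Bl}_{\mathfrak{b}}(X'\times\mathbb{P}^1)$, with $\widetilde{\mathcal{L}}:=\pi^{*}L_{\mathbb{P}^1}-\widetilde{E}$, and to divide by $l$. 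Semiampleness of $\widetilde{\mathcal{L}}$, needed so that $\widetilde{\mathcal{L}}^{n+1}$ is a bona fide asymptotic volume, is supplied by Theorem \ref{a}.

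Second, I would exploit the decomposition $\mathfrak{b}^m=\sum_{k=0}^{mr}t^{k}\mathscr{I}_{m,k}$ from (*), where $\mathscr{I}_{m,k}=\mathscr{I}_{D_j}^{m-i}\mathscr{I}_{D_{j+1}}^{i}$ for $j=\lfloor k/m\rfloor$ and $i=k-mj$. The Rees algebra $\bigoplus_{m}\mathfrak{b}^m$ splits into ``slope regimes'' $k/m\in[j,j+1]$ each of which involves only the consecutive pair $(D_j,D_{j+1})$, and geometrically this stratifies the exceptional divisor of $\mathrm{Bl}_{\mathfrak{b}}(X'\times\mathbb{P}^1)$ into $r$ pieces indexed by $j=0,\ldots,r-1$. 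In each regime the $\mathbb{G}_m$-weight decomposition of $H^{0}(m\widetilde{\mathcal{L}})$ matches an interpolating $\mathbb{Q}$-family $(1-s)D_j+sD_{j+1}$ of Cartier divisors on $X'$; summing the asymptotic $h^{0}$-counts across regimes and applying the Hilbert--Samuel identity yields
\[
\widetilde{\mathcal{L}}^{n+1}=\sum_{k=0}^{r-1}\sum_{j=0}^{n}\bigl((\pi^{*}L)^{\cdot n}-(\pi^{*}L-D_k)^{\cdot j}\cdot(\pi^{*}L-D_{k+1})^{\cdot n-j}\bigr)=\sum_{k=0}^{r-1}\sum_{j=0}^{n}e_{\pi^{*}L}(\mathscr{I}_{D_k}^{[j]},\mathscr{I}_{D_{k+1}}^{[n-j]}),
\]
the last equality being Definition \ref{mazemaze}. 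The same argument, applied to the restriction of $\widetilde{\mathcal{L}}$ to the trivial extension $\pi^{*}H_{\mathbb{P}^1}$ (which lowers both the dimension and the exponent count by one), computes $\widetilde{\mathcal{L}}^{n}\cdot\pi^{*}H_{\mathbb{P}^1}$ as the analogous $(n{-}1)$-fold mixed-multiplicity sum. Assembling the two intersection numbers into the displayed expression for $L^{\cdot n}(\mathcal{J}^H)^{\mathrm{NA}}$ and dividing by $l$ gives the theorem.

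The main obstacle is step two: bridging the purely algebraic Rees-algebra decomposition of (*) with the intersection-theoretic slope splitting of $\widetilde{\mathcal{L}}^{n+1}$ on $\mathrm{Bl}_{\mathfrak{b}}(X'\times\mathbb{P}^1)$. I expect to handle this by the asymptotic identity $\widetilde{\mathcal{L}}^{n+1}=(n+1)!\lim_{m}m^{-(n+1)}h^{0}(m\widetilde{\mathcal{L}})$ together with the fact, guaranteed by the semiampleness output of Theorem \ref{a}, that this volume agrees with $\sum_{k}\dim\Gamma(X',\mathscr{I}_{m,k})$ up to lower-order terms, or alternatively by an inductive resolution of $\mathrm{Bl}_{\mathfrak{b}}$ by successive Cartier blow-ups along the $D_k$. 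Once the matching is in place, the combinatorial bookkeeping that converts the graded dimensions into the displayed double sum via mixed multiplicities follows the Ross--Thomas slope-computation pattern of \cite{RT07}, and the parallel computation for $\widetilde{\mathcal{L}}^{n}\cdot\pi^{*}H_{\mathbb{P}^1}$ is then mechanical.
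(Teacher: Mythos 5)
Your overall strategy is the same as the paper's: use the explicit structure of $\mathfrak{b}^m$ coming from (*), count graded pieces $\sum_k \dim\Gamma(X',\mathscr{I}_{m,k}L^{\otimes m})$, relate this to $\widetilde{\mathcal{L}}^{n+1}$ via a Hilbert--Samuel asymptotic, and do the analogous $(n-1)$-fold count for the $H$-twist. However, two technical points on which the paper spends real effort are glossed over in a way that leaves genuine gaps.

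First, you speak of ``applying the same argument to the restriction of $\widetilde{\mathcal{L}}$ to the trivial extension $\pi^*H_{\mathbb{P}^1}$''; but $H$ is merely a $\mathbb{Q}$-line bundle and there is no subvariety to restrict to. The paper handles this by first observing that both sides of the identity are linear in $H$, so one may reduce to $H$ ample, and then by taking a sufficiently general connected member $D\in|lH|$ (Proposition \ref{slpro}), verifying via Bertini that $\mathfrak{a}|_{D\times\mathbb{A}^1}$ still satisfies (*) and that the strict transform of $D\times\mathbb{A}^1$ in $\mathrm{Bl}_{\mathfrak{a}}(X\times\mathbb{A}^1)$ is the blow-up of $D\times\mathbb{A}^1$ along the restricted ideal. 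Without this intermediate step the restricted count has no meaning.

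Second, the mixed-multiplicity computation (Theorem \ref{mu}/Proposition \ref{muc}) is set up for an \emph{ample} $\mathbb{Q}$-line bundle, and the paper's Proposition \ref{slpro} requires $(X,L)$ polarized. On the alternation $X'$ the pullback $\pi^*L$ is only big and semiample; the corresponding test configuration over $X'$ is semiample but not ample. Your proposal proceeds as though the identity applies directly to $\pi^*L$, but in the paper this is obtained only after perturbing to $\pi^*L+\epsilon F$ with $F$ a $\pi$-ample divisor, verifying that the perturbed configuration is an ample test configuration, applying the formula there, and then passing to the limit $\epsilon\to 0$ as in the proof of Theorem \ref{impl} (and of Proposition \ref{muc}, which also uses polynomial interpolation in the scaling parameter). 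This limiting step is what makes the reduction to the ample case rigorous, and it needs to appear explicitly.

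Beyond these two points the combinatorics of the slope decomposition, the use of Fujita vanishing to bound higher cohomology (the paper's Proposition \ref{4.9}), and the Ross--Thomas-style bookkeeping are as you sketch, so the remainder of the plan is sound.
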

 
 Due to Theorem \ref{b}, we can calculate $(\mathcal{J}^H)^\mathrm{NA}$-energy by using the intersection numbers of $X$ itself rather than of its test configuration.
  
 \subsection*{Outline of this paper}
 
 In \S \ref{Notat}, we prepare many terminology and facts about K-stability and J-stability. From \S \ref{Mixed}, we state our original result.
 
 In \S \ref{Mixed}, we state the definition of the mixed multiplicities and prove the modification of the consequences of Mumford \cite{M}. Furthermore, we prove Theorem \ref{b} when $\pi$ is an isomorphism.
 
 In \S \ref{NewTro}, we prepare many terminology about toroidal embeddings in \cite{KKMS} and introduce the notion of Newton polyhedron. Moreover, we explain about our proof of Theorems \ref{a} and \ref{b} briefly in Example \ref{yokuwakaru}.
 
 In \S \ref{ThMain}, we restate Theorems \ref{a} and \ref{b}, and prove them by using the consequences of \S \ref{Mixed} and \S \ref{NewTro}. We remark that Theorem \ref{b} follows from Theorem \ref{a}.
 
 In \S \ref{App1}, we study the J-stability of surfaces. In \S \ref{App11}, we study irreducible surfaces and check that the mixed multiplicities are non-negative when $L$ and $H$ satisfy some conditions. Next, we apply Theorem \ref{b} to obtain Theorem \ref{c} and Corollary \ref{d} for irreducible and normal surfaces.
 
  In \S \ref{Demi}, we discuss about J-stability for reducible surfaces and construct a deminormal surface on which Hodge index theorem does not hold. Furthermore, we prove Theorem \ref{i} and Corollary \ref{d} completely.
  
  In \S\ref{App2}, we construct a smooth polarized surface $(X,L)$ and its ample divisor $H$ such that $(X,L)$ is slope $\mathrm{J}^H$-stable but not uniformly $\mathrm{J}^H$-stable. Then we apply Theorem \ref{b} and prove that $(X,L)$ is also $\mathrm{J}^H$-stable. Finally, Corollary \ref{f} follows immediately from Theorem \ref{e}.
  
  In \S \ref{Fib}, we extend the consequences of Hashimoto-Keller \cite[Theorem 3]{HK} and of Kento Fujita \cite[Theorem 6.5]{Fu}. On the other hand, we extend Lejmi-Sz\'{e}kelyhidi conjecture (see Theorem \ref{SW}) to non-smooth varieties over $\mathbb{C}$ and discuss on the application of J-stability for higher dimensional varieties. We apply the criteria to obtain the extension of the notion of stability threshold of Sj\"{o}str\"{o}m Dyrefelt \cite{Sj} to deminormal schemes and to prove Theorem \ref{g}.
 
 \section*{Acknowledgements}
 
 I can not express enough my sincere and deep gratitude to Associate Professor Yuji Odaka at Kyoto University who is my research advisor for a lot of suggestive advices, productive discussions and reading the draft. I also would like to thank Professors Gao Chen, Ruadha\'{i} Dervan, Kento Fujita, Julius Ross and Richard Thomas for helpful comments and warmful encouragement. I am also grateful to Professor Zakarias Sj\"{o}str\"{o}m Dyrefelt for productive discussions and letting me know his great paper \cite{Sj}, which led me to improve Theorem \ref{JJJ}.
 
 I am partially supported by Iwadare Scholarship Foundation.
 
 \section{Notations}\label{Notat}
Unless otherwise stated, we work over an algebraically closed field $k$ of characteristic $0$. We follow the notations used in \cite{BHJ}. If $V$ is a scheme, we assume $V$ to be an algebraic scheme (i.e. a scheme of finite type over $k$) that might not be proper. $V$ is a variety if $V$ is an irreducible and reduced algebraic scheme. On the other hand, $(X,L)$ be an $n$-dimensional polarized scheme if $X$ is an algebraic scheme and $L$ is an ample $\mathbb{Q}$-line bundle on $X$. Furthermore, we assume that $X$ is proper, reduced and equidimensional. Furthermore, $X$ is deminormal if $X$ satisfies that Serre's condition $S_2$ and codimension $1$ points of $X$ are either regular points or nodes (cf. Definition 5.1 of \cite{Ko}). A flat and proper family $\pi:(\mathcal{X},\mathcal{L})\to \mathbb{A}^1$ is a {\it (semi)ample test configuration over} $(X,L)$ if $\pi$ is $\mathbb{G}_m$-equivariant when $\mathbb{G}_m$ acts on $\mathbb{A}^1$ canonically, $\mathcal{L}$ is a $\mathbb{G}_m$-linearized (semi)ample $\mathbb{Q}$-line bundle, and there exists a $\mathbb{G}_m$-equivariant isomorphism $$(\mathcal{X},\mathcal{L})\times_{\mathbb{A}^1}(\mathbb{A}^1-\{0\})\cong (X\times(\mathbb{A}^1-\{0\}),L\otimes\mathcal{O}_{\mathbb{A}^1-\{0\}})$$ (cf. Definitions 2.1, 2.2 of \cite{BHJ}). Denote the trivial test configuration by $(X_{\mathbb{A}^1},L_{\mathbb{A}^1})$.
 \begin{de}
 Let $B$ be a boundary of $X$ if $X$ is deminormal. In other words, there exists $l\in \mathbb{Z}_{\ge0}$ and a finite number of $B=\sum_{i=1}^l a_iD_i$ for $0\le a_i\le 1$ such that $K_{(X,B)}=K_X+B$ is $\mathbb{Q}$-Cartier, where $D_i$'s are integral closed subschemes of codimension 1. Suppose that $(\mathcal{X},\mathcal{L})$ dominates $(X_{\mathbb{A}^1},L_{\mathbb{A}^1})$. In other words, there exists a $\mathbb{G}_m$-equivariant morphism $\rho:\mathcal{X}\to X_{\mathbb{A}^1}$. Let $(\overline{\mathcal{X}},\overline{\mathcal{L}})$ be the canonical compactification over $\mathbb{P}^1$ (cf. Definition 2.7 of loc.cit). Let also $p:\overline{\mathcal{X}}\to X$ be the canonical projection and $\overline{\mathcal{B}}$ be the closure of $B\times (\mathbb{P}^1\setminus\{0\})$ in $\mathcal{X}$.
 \[
 V(L)=L^{\cdot n}, \quad S(X,B;L)=-nV(L)^{-1}(K_{(X,B)}\cdot L^{\cdot n-1}).
 \]
 If there is no confusion, we denote $L^n=L^{\cdot n}$. Then, we define as follows:
  \begin{itemize}
 \item $E^\mathrm{NA}(\mathcal{X},\mathcal{L})=V(L)^{-1}\frac{\overline{\mathcal{L}}^{n+1}}{(n+1)}$;
 \item $R_{B}^\mathrm{NA}(\mathcal{X},\mathcal{L})=V(L)^{-1}(p^*K_{(X,B)}\cdot \overline{\mathcal{L}}^n)$;
  \item $I^\mathrm{NA}(\mathcal{X},\mathcal{L})=V(L)^{-1}(\overline{\mathcal{L}}\cdot(p^*L)^n-(\overline{\mathcal{L}}-p^*L)\cdot \overline{\mathcal{L}}^n)$;
   \item $J^\mathrm{NA}(\mathcal{X},\mathcal{L})=V(L)^{-1}(\overline{\mathcal{L}}\cdot(p^*L)^n)-E^\mathrm{NA}(\mathcal{X},\mathcal{L})$;
   \item $(\mathcal{J}^H)^\mathrm{NA}(\mathcal{X},\mathcal{L})=V(L)^{-1}(p^*H\cdot \overline{\mathcal{L}}^n-\frac{nH\cdot L^{n-1}}{(n+1)L^n}\overline{\mathcal{L}}^{n+1})$,
 \end{itemize}
 where $H$ is a $\mathbb{Q}$-line bundle on $X$. If there is no confusion, we denote $p^*L=\rho^*L_{\mathbb{P}^1}$ by $L_{\mathbb{P}^1}$.
 
 On the other hand, we can see the functionals above are invariant under taking a {\it pullback}. In other words, if $\mu:(\mathcal{X}',\mathcal{L}')\to(\mathcal{X},\mathcal{L})$ is a $\mathbb{G}_m$-equivariant morphism of test configurations and $\mathcal{L}'=\mu^*\mathcal{L}$, the functionals take the same value on them. For example, we have $\overline{\mathcal{L}}^{n+1}=\overline{\mathcal{L}'}^{n+1}$. Therefore, we can introduce the equivalence relation generated by pulling back and let $\mathcal{H}^\mathrm{NA}(L)$ be the set consists of all the equivalence class of semiample test configurations. We call $\mathcal{H}^\mathrm{NA}(L)$ the set of all {\it non-Archimedean positive metrics on} $L$ (cf. Definition 6.2 of loc.cit). According to the fact we will explain after Definition \ref{fldef} that for any metric $\phi\in\mathcal{H}^\mathrm{NA}(L)$, we can necessarily take a semiample test configuration $(\mathcal{X},\mathcal{L})$ that is a representative of $\phi$ and satisfies that $\mathcal{X}$ dominates $X_{\mathbb{A}^1}$, we can consider $E^\mathrm{NA},R_B^\mathrm{NA},I^\mathrm{NA},J^\mathrm{NA}$ and $(\mathcal{J}^H)^\mathrm{NA}$ as well-defined functionals of $\mathcal{H}^\mathrm{NA}(L)$.
  \end{de}
 
 \begin{de}[J-stability]
 The polarized pair $(X,B;L)$ is $\mathrm{J}^H$-semistable (resp. $\mathrm{J}^H$-stable, uniformly $\mathrm{J}^H$-stable) if $(\mathcal{J}^H)^\mathrm{NA}\ge0$ (resp. $(\mathcal{J}^H)^\mathrm{NA}>0$, $(\mathcal{J}^H)^\mathrm{NA}\ge \delta J^\mathrm{NA}$ for some $\delta>0$) on nontrivial metrics of $\mathcal{H}^\mathrm{NA}(L)$. If there is no confusion, we say that J-semistable (resp. J-stable, uniformly J-stable).
 \end{de}
 
 \begin{de}[K-stability for deminormal pairs]\label{genki}
  If $X$ is deminormal and $K_{(X,B)}$ is $\mathbb{Q}$-Cartier, we can define
  \[
 K^{\mathrm{log}}_{(\overline{\mathcal{X}},\overline{\mathcal{B}})/\mathbb{P}^1}=K_{(\overline{\mathcal{X}},\overline{\mathcal{B}})}-(\mathcal{X}_0-\mathcal{X}_{0,\mathrm{red}})
  \]
   for any deminormal semiample test configuration $(\mathcal{X},\mathcal{L})$ that dominates $X_{\mathbb{A}^1}$ (since $\mathcal{X}$ is Gorenstein of codimension 1 and $S_2$. see also \cite[\S 3]{Fu}). Therefore we define the {\it non-Archimedean entropy functional} as 
  \[
  H_{B}^\mathrm{NA}(\mathcal{X},\mathcal{L})=V(L)^{-1}(K^{\mathrm{log}}_{(\overline{\mathcal{X}},\overline{\mathcal{B}})/\mathbb{P}^1}\cdot \overline{\mathcal{L}}^n)-R_{B}^\mathrm{NA}(\mathcal{X},\mathcal{L})
  \]
   For any non-Archimedean metric $\phi$ and its representative $(\mathcal{X},\mathcal{L})$, take the partial normalization $(\widetilde{\mathcal{X}},\widetilde{\mathcal{L}})$ of $(\mathcal{X},\mathcal{L})$ (cf. \cite[Definition 3.7, Proposition 3.8]{Od}). Note that $\widetilde{\mathcal{X}}$ is a deminormal test configuration by \cite[Proposition 3.3]{Fu} and define
   \[
    H_{B}^\mathrm{NA}(\phi)\coloneq H_{B}^\mathrm{NA}(\widetilde{\mathcal{X}},\widetilde{\mathcal{L}}).
   \]
    Then, we also define the {\it non-Archimedean Mabuchi functional} as $$M_{B}^\mathrm{NA}=H_{B}^\mathrm{NA}+R_{B}^\mathrm{NA}+S(X,B;L)E^\mathrm{NA}.$$ The polarized pair $(X,B;L)$ is K-semistable (resp. K-stable, uniformly K-stable) if $M_{B}^\mathrm{NA}\ge0$ (resp. $M_{B}^\mathrm{NA}>0$, $M_{B}^\mathrm{NA}\ge \delta J^\mathrm{NA}$ for some $\delta>0$) on $\mathcal{H}^\mathrm{NA}(L)$.
\end{de}

 \begin{rem}
As usual, to define K-stability for general polarized schemes, we use the Donaldson-Futaki invariant (cf. \cite[Definition 3.3, Proposition 3.12]{BHJ}). Denote the Donaldson-Futaki invariant for log polarized pairs $(X,B;L)$ by $\mathrm{DF}_B$. Due to \cite[Remark 3.19, \S 7.3, Proposition 8.2]{BHJ}, $\mathrm{DF}_B\ge 0$ (resp. $>0$, $\ge\delta J^\mathrm{NA}$ for some $\delta>0$) iff $M^\mathrm{NA}_B\ge0$ (resp. $>0$, $\ge\delta J^\mathrm{NA}$), and hence we can define K-stability by using non-Archimedean Mabuchi functional. 
\end{rem}

\begin{de}[Log discrepancy cf. \S 1.5 \cite{BHJ}]\label{dfn}
For any divisorial valuation $v$ on a normal pair $(X,B)$, we can take a proper birational morphism $\mu:Y\to X$ of normal varieties such that there exists a prime divisor $F$ of $Y$ such that $v=c\,\mathrm{ord}_F$. We define the {\it log discrepancy} of $v$ as 
\[
A_{(X,B)}(v)=c(1+\mathrm{ord}_F(K_Y-K_{(X,B)})).
\] 
This is independent of the choice of $\mu:Y\to X$. Moreover, by Theorem 4.6 and Corollary 7.18 of \cite{BHJ}, 
\[
  H_{B}^\mathrm{NA}(\mathcal{X},\mathcal{L})=V(L)^{-1}\sum_Eb_EA_{(X,B)}(v_E)(E\cdot \mathcal{L}^n),
  \]
  where $E$ runs over the irreducible components of $\mathcal{X}_0$, $b_E=\mathrm{ord}_E(\mathcal{X}_0)>0$ and $v_E$ is the restriction of $b_E^{-1}\mathrm{ord}_E$ to $X$. If $E_0$ is the trivial divisor i.e. $E_0$ is the strict transformation of $X\times\{0\}$, $v_{E_0}=0$ and we define $A_{(X,B)}(v_{E_0})=0$.
\end{de}

\begin{de}[Conductor subscheme]
Let $(X,B;L)$ be a deminormal polarized pair and $\nu:(\overline{X},\overline{L})\to (X,L)$ be the normalization. The {\it conductor ideal} of $X$ is defined as 
\[
\mathfrak{cond}_{X}\coloneq \mathcal{H}om_{\mathcal{O}_X}(\nu_*\mathcal{O}_{\overline{X}},\mathcal{O}_X).
\]
This is a coherent ideal sheaf of both of $\mathcal{O}_{\overline{X}}$ and $\mathcal{O}_X$ and defines the closed subschemes
\[
D_X\subset X,\quad D_{\overline{X}}\subset\overline{X}.
\]
They are reduced of codimension 1 and supported on nodes. 

Let $\overline{B}$ be the divisorial part of $\nu^{-1}B$ and then we have
\[
K_{(\overline{X},\overline{B}+D_{\overline{X}})}\sim_{\mathbb{Q}} \nu^*(K_{(X,B)}).
\]
Here, we remark that if $(\overline{\mathcal{X}},\overline{\mathcal{L}})\to(\mathcal{X},\mathcal{L})$ is the normalization, then we have
\[
H_{\overline{B}+D_{\overline{X}}}^\mathrm{NA}(\overline{\mathcal{X}},\overline{\mathcal{L}})=H_{B}^\mathrm{NA}(\mathcal{X},\mathcal{L}).
\]
\end{de}

 Thanks to \cite{OX} and \cite{God}, if a polarized normal (resp. deminormal) pair $(X,B;L)$ is K-semistable, then $(X,B)$ has only lc (resp. slc) singularities (see \cite{Ko}). Recall that if $(X,B;L)$ is an lc polarized pair, $H_{B}^\mathrm{NA}\ge 0$ on $\mathcal{H}^\mathrm{NA}(L)$ (cf. \cite{BHJ}). Furthermore, if $(X,B;L)$ is klt, $H_{B}^\mathrm{NA}\ge \alpha(X,B;L)I^\mathrm{NA}$. Here, $$\alpha(X,B;L)=\inf_{D,v}\frac{A_{(X,B)}(v)}{v(D)}>0$$ is the alpha invariant of $(X,B;L)$ where the infimum is taken over all the effective $\mathbb{Q}$-Cartier divisor $D$ on $X$ that is $\mathbb{Q}$-linearly equivalent to $L$ and all the divisorial valuation $v$ on $X$ such that $v(D)>0$ (cf. Theorem 9.14 of loc.cit). On the other hand, $\mathrm{J}^H$-stability is irrelevant to singularities of $X$ and its boundary divisor.
 
We can also define $M_B^\mathrm{NA}$ and $H_B^\mathrm{NA}$ when $X$ is deminormal similarly. Note also that we can define $(\mathcal{J}^H)^\mathrm{NA}$-energy for any non-normal but integral polarized variety $(V,M)$. Since the $\mathrm{J}^H$-stability of $(V,M)$ coincides with one of the normalization of $(V,M)$, we have to discuss only about its normalization.
 
 Here, we remark that $I^\mathrm{NA}$ and $J^\mathrm{NA}$ are nonnegative functionals on $\mathcal{H}^\mathrm{NA}(L)$ that vanish only on the metric of the trivial test configuration and satisfy
 \[
 \frac{1}{n}J^\mathrm{NA}\le I^\mathrm{NA}-J^\mathrm{NA}\le nJ^\mathrm{NA}
 \]
 (cf. Proposition 7.8 of \cite{BHJ}). In other words, $ I^\mathrm{NA}-J^\mathrm{NA}, I^\mathrm{NA}$ and $J^\mathrm{NA}$ are equivalent norms on $\mathcal{H}^\mathrm{NA}(L)$. $E^\mathrm{NA},R_B^\mathrm{NA},I^\mathrm{NA},J^\mathrm{NA},(\mathcal{J}^H)^\mathrm{NA},H_B^\mathrm{NA}$ and $M_B^\mathrm{NA}$ are all homogeneous in $\mathcal{L}$. In other words, $E^\mathrm{NA}(\mathcal{X},t\mathcal{L})=tE^\mathrm{NA}(\mathcal{X},\mathcal{L})$ for $t>0$ for example. We can conclude that the stability of $(X,B;L)$ is equivalent to the stability of $(X,B;tL)$. $(\mathcal{J}^H)^\mathrm{NA}$ is also homogeneous in $H$.
 
 We also remark that $$(\mathcal{J}^{\lambda L})^\mathrm{NA}(\mathcal{X},\mathcal{L})=\lambda (I^\mathrm{NA}(\mathcal{X},\mathcal{L})-J^\mathrm{NA}(\mathcal{X},\mathcal{L}))$$ for $\lambda\in\mathbb{Q}$. It follows from an easy computation as in the proof of Lemma 7.25 of loc.cit. On the other hand,
 \[
 M_{B}^\mathrm{NA}-H_{B}^\mathrm{NA}=R_{B}^\mathrm{NA}+S(X,B;L)E^\mathrm{NA}=(\mathcal{J}^{K_{(X,B)}})^\mathrm{NA}.
 \]
 If $(X,B;L)$ is a klt polarized variety and there exists $\delta<\alpha(X,B;L)$ such that 
 \[
 (\mathcal{J}^{K_{(X,B)}})(\phi)^\mathrm{NA}\ge-\delta I^\mathrm{NA}(\phi)
 \]
 for all $\phi\in \mathcal{H}^\mathrm{NA}(L)$, this polarized pair is uniformly K-stable by Proposition 9.16 of loc.cit. Furthermore, it follows from \cite[Theorem 6.10]{Li} that if $X$ is smooth, $B=0$ and there exists $\delta<\alpha(X,L)$ such that 
 \[
  (\mathcal{J}^{K_{X}})(\phi)^\mathrm{NA}\ge-\delta I^\mathrm{NA}(\phi)
 \]
 for all $\phi\in \mathcal{H}^\mathrm{NA}(L)$, then there exists a cscK metric on $(X,L)$.
 
 Next, we give the definition of flag ideals in this section.
 
 \begin{de}[Flag Ideals (cf. Definition 3.1 of \cite{Od})]\label{fldef}
 Let $(X,L)$ be an $n$-dimensional polarized variety. A coherent sheaf of ideals $\mathfrak{a}$ of $X\times \mathbb{A}^1$ is called a {\it flag ideal} if 
 \[
 \mathfrak{a}=I_0+I_1t+\cdots +I_{r-1}t^{r-1}+(t^r),
 \]
 where $t$ is the coordinate function of $\mathbb{A}^1$ and $I_0\subseteq I_1\subseteq \cdots \subseteq I_{r-1}\subseteq \mathcal{O}_X$ are coherent ideals. Note that it is equivalent to that $\mathfrak{a}$ is $\mathbb{G}_m$-invariant under the natural action of $\mathbb{G}_m$ on $X\times \mathbb{A}^1$.
 \end{de}
 
 According to Proposition 3.10 of \cite{Od} or the proof of Theorem 2.9 of \cite{M} (cf. \cite{RT07}), we can see that for any ample test configuration $(\mathcal{X},\mathcal{L})$, there exists a flag ideal $\mathfrak{a}$ such that $(\mathrm{Bl}_{\mathfrak{a}}(X\times \mathbb{A}^1), L_{\mathbb{A}^1}-sE)$ is a semiample test configuration, where $s\in\mathbb{Q}_{\ge0}$ and $E$ is the exceptional divisor on $\mathrm{Bl}_{\mathfrak{a}}(X\times \mathbb{A}^1)$, which is the blow up of $X\times \mathbb{A}^1$ along $\mathfrak{a}$, and a pullback of $(\mathcal{X},\mathcal{L})$. If $(\mathcal{X},\mathcal{L})$ is semiample, there exists a sufficiently divisible integer $k$ such that $k\mathcal{L}$ is globally generated. Then, we can see that the ample model $(\mathcal{P}roj_{\mathbb{A}^1}(\bigoplus_{m=0}^\infty\pi_*\mathcal{O}_{\mathcal{X}}(km\mathcal{L})),\mathcal{O}(1))$ is an ample test configuration equivalent to $(\mathcal{X},k\mathcal{L})$ (cf. Proposition 2.17 of \cite{BHJ}). Therefore, there exists a flag ideal $\mathfrak{a}$ such that $(\mathrm{Bl}_{\mathfrak{a}}(X\times \mathbb{A}^1), L_{\mathbb{A}^1}-sE)$ is a semiample test configuration equivalent to $(\mathcal{X},\mathcal{L})$. 
 
 We can easily see that the deformations to the normal cone along $\mathfrak{a}$ and $\overline{\mathfrak{a}}$ define the same positive metric.
 
  Note that multiplying $\mathfrak{a}$ by $t^m$ coincides with replacing $(\mathcal{X},\mathcal{L})$ by $(\mathcal{X},\mathcal{L}-m\mathcal{X}_0)$. On the other hand, $(\mathcal{J}^H)^\mathrm{NA}$, $J^\mathrm{NA}$, $I^\mathrm{NA}$ and $M^\mathrm{NA}_{B}$ are {\it translation invariant}. In other words, $(\mathcal{J}^H)^\mathrm{NA}(\mathcal{X},\mathcal{L})=(\mathcal{J}^H)^\mathrm{NA}(\mathcal{X},\mathcal{L}+m\mathcal{X}_0)$ for $m\in \mathbb{Q}$. To study $\mathrm{J}^H$-stability and K-stability, we may assume that $I_0\ne 0$ for $\mathfrak{a}$.
  
  Finally, note that if $(X,L)$ is reducible, $(X_1,L_1,H_1)$ is an irreducible component of $(X,L,H)$, $(\mathcal{X}_1,\mathcal{L}_1)$ is the strict transformation of $X_1\times\mathbb{A}^1$ in $(\mathcal{X},\mathcal{L})$, then  
  \[
  (\mathcal{J}^H)^\mathrm{NA}(\mathcal{X},\mathcal{L})\ne(\mathcal{J}^H)^\mathrm{NA}(\mathcal{X},\mathcal{L}+m\mathcal{X}_{1,0})
  \]
  in general for $m\in \mathbb{Q}$, where $\mathcal{X}_{1,0}$ is the central fibre of $\mathcal{X}$ unless $\frac{L_1^{n-1}\cdot H_1}{L_1^n}=\frac{H\cdot L^{n-1}}{L^n}$.
  \begin{de}\label{denden}
  If a polarized reducible scheme $(X,L)$ with a $\mathbb{Q}$-divisor $H$ satisfies that 
  \[
  \frac{L^{n-1}\cdot H}{L^n}=\frac{(L|_{X_i})^{n-1}\cdot H|_{X_i}}{(L|_{X_i})^{n}}
  \]
   for any irreducible component $X_i$, then we say that {\it all irreducible components of} $(X,L)$ {\it have the same average scalar curvature with respect to} $H$.
  \end{de}
 
 \section{Mixed Multiplicity}\label{Mixed}
In this section $X$ is a variety. First, recall the definition of mixed multiplicities, which is used to express our decomposition formula, Theorem \ref{impl}.

\begin{de}\label{mazemaze}
Let $X$ be an $n$-dimensional variety, $L$ be a $\mathbb{Q}$-line bundle on $X$ and $\mathscr{I}_1,\cdots,\mathscr{I}_n$ be coherent ideals of $\mathcal{O}_X$ such that $\mathrm{supp}(\mathcal{O}_X/\mathscr{I}_i)$ is proper or $\emptyset$. Choose a compactification $\overline{X}$ of $X$ on which $L$ extends to a line bundle $\overline{L}$ and let $\pi:\overline{B}\to \overline{X}$ be the blowing up of $\overline{X}$ along $\prod \mathscr{I}_i$ so that $\pi^{-1}(\mathscr{I}_i)=\mathcal{O}_{\overline{B}}(-E_i)$. Let $\pi^*L=\mathcal{O}_{\overline{B}}(D)$. We define
\[
e_L(\mathscr{I}_1,\cdots,\mathscr{I}_n)=(D^n)-((D-E_1)\cdot \cdots \cdot(D-E_n)).
\]
$e_L(\mathscr{I}_1,\cdots,\mathscr{I}_n)$ is independent of the choice of $\overline{X}$ and $\overline{L}$. 
\end{de}

For the proof of the main theorem, we prepare the following modification of Proposition 4.10 of \cite{M}.

\begin{thm}\label{mu}
Given an $n$-dimensional proper variety $X$, a line bundle $L$ on $X$ and an flag ideal $\mathfrak{a}\subset \mathcal{O}_{X\times \mathbb{A}^1}$. Suppose that $\mathfrak{a}$ satisfies that the following condition (*):
\begin{itemize}
\item[(*)]\[
\mathfrak{a}=\mathscr{I}_{D_0}+\mathscr{I}_{D_1}t+\cdots +\mathscr{I}_{D_{r-1}}t^{r-1}+t^r,
\]
where each $\mathscr{I}_{D_i}$ is a coherent ideal sheaf corresponding to a Cartier divisor $D_i$ of $X$. Furthermore, for each $m\in \mathbb{Z}_{\ge0}$, $$\biggl(\overline{(\pi\times\mathrm{id}_{\mathbb{A}^1})^{-1}\mathfrak{a}}\biggr)^m=\sum_{k=0}^{mr} t^k\mathscr{I}_{m,k},$$ where $\mathscr{I}_{m,k}=\mathscr{I}_{D_j}^{m-i}\cdot \mathscr{I}_{D_{j+1}}^{i}$ for $j=\lfloor \frac{k}{m}\rfloor$ and $i=k-mj$.
\end{itemize}
Suppose also that $L$ is semiample and $\mathfrak{a}L_{\mathbb{A}^1}$ and all the $\mathscr{I}_{D_k}L$ are nef. Then,
\[
e_{L_{\mathbb{A}^1}}(\mathfrak{a})=\sum_{k=0}^{r-1}\sum_{j=0}^ne_L(\mathscr{I}_{D_k}^{[j]},\mathscr{I}_{D_{k+1}}^{[n-j]}),
\]
where $\mathscr{I}_{D_r}=\mathcal{O}_X$, $\mathscr{I}_{D_k}^{[j]}$ indicates that $\mathscr{I}_{D_k}$ appears $r_i$ times and $e_{L_{\mathbb{A}^1}}(\mathfrak{a})=e_{L_{\mathbb{A}^1}}(\mathfrak{a}^{[n+1]})$.
\end{thm}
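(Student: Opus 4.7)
The plan is to use asymptotic Hirzebruch--Riemann--Roch together with the explicit graded decomposition supplied by condition $(*)$. Since every $\mathscr{I}_{D_k}$ is invertible under $(*)$, the right-hand side simplifies immediately: each mixed multiplicity is $e_L(\mathscr{I}_{D_k}^{[j]},\mathscr{I}_{D_{k+1}}^{[n-j]})=L^n-(L-D_k)^j(L-D_{k+1})^{n-j}$. On the left-hand side, $L_{\mathbb{P}^1}$ is pulled back from the $n$-dimensional $X$, so $L_{\mathbb{P}^1}^{n+1}=0$ and hence $e_{L_{\mathbb{A}^1}}(\mathfrak{a})=-(L_{\mathbb{P}^1}-E)^{n+1}$ on the blowup $\pi:Y\to X\times\mathbb{P}^1$ along $\mathfrak{a}$. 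My first step is to identify this with the asymptotic expression
\[
e_{L_{\mathbb{A}^1}}(\mathfrak{a})=(n+1)!\lim_{m\to\infty}\frac{1}{m^{n+1}}\chi\left(X_{\mathbb{P}^1},\,L_{\mathbb{P}^1}^m\otimes\mathcal{O}/\overline{\mathfrak{a}^m}\right),
\]
using HRR on $Y$, the projection formula $\pi_*\mathcal{O}_Y(-mE)=\overline{\mathfrak{a}^m}$, and relative Serre vanishing (the semiampleness of $L$ and the nefness of $\mathfrak{a}L_{\mathbb{A}^1}$ and of each $\mathscr{I}_{D_k}L$ ensure the higher direct images contribute only to lower-order terms).

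Next, I exploit the graded form from $(*)$: $\overline{\mathfrak{a}^m}=\sum_{k=0}^{mr-1}t^k\mathscr{I}_{m,k}$ with $\mathscr{I}_{m,k}=\mathcal{O}_X(-(m-i)D_j-iD_{j+1})$ for $k=mj+i$, $0\le i<m$, $0\le j<r$. This yields
\[
\chi\left(X_{\mathbb{P}^1},L_{\mathbb{P}^1}^m\otimes\mathcal{O}/\overline{\mathfrak{a}^m}\right)=\sum_{j=0}^{r-1}\sum_{i=0}^{m-1}\left[\chi(X,L^m)-\chi(X,L^m(-(m-i)D_j-iD_{j+1}))\right].
\]
Asymptotic HRR on $X$ gives $\chi(X,L^m(-(m-i)D_j-iD_{j+1}))=\frac{m^n}{n!}(L-(1-\alpha)D_j-\alpha D_{j+1})^n+O(m^{n-1})$ with $\alpha=i/m$, and since the integrand is polynomial in $\alpha$ the sum over $i$ converges as a Riemann sum (with error $O(1)$ after the appropriate normalization), so
\[
e_{L_{\mathbb{A}^1}}(\mathfrak{a})=(n+1)\sum_{j=0}^{r-1}\int_0^1\bigl[L^n-(L-(1-\alpha)D_j-\alpha D_{j+1})^n\bigr]\,d\alpha.
\]

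The final step is to evaluate this integral via the formal identity $\int_0^1(A+\alpha B)^n\,d\alpha=\frac{(A+B)^{n+1}-A^{n+1}}{(n+1)B}$ combined with $x^{n+1}-y^{n+1}=(x-y)\sum_{k=0}^n x^k y^{n-k}$. Taking $A=L-D_j$ and $A+B=L-D_{j+1}$ in the formal intersection ring, the integrand reduces to $\frac{1}{n+1}\sum_{k=0}^n(L-D_{j+1})^k(L-D_j)^{n-k}$, and after reindexing $k\mapsto n-k$ I obtain $\sum_{j,k}[L^n-(L-D_j)^k(L-D_{j+1})^{n-k}]$, which matches the right-hand side of the theorem. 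The main obstacle I expect is the first step --- justifying the asymptotic formula --- because condition $(*)$ is required essentially to identify $\overline{\mathfrak{a}^m}$ with the explicit graded form $\sum t^k\mathscr{I}_{m,k}$, without which $\pi_*\mathcal{O}_Y(-mE)$ would not decompose by $t$-degree into the computable summands above; one must also verify via relative Serre vanishing that the higher direct images contribute only $O(m^n)$ error. Once this reduction is in place, the remainder is formal manipulation inside the intersection ring.
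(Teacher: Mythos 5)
Your proposal is correct and follows essentially the same route as the paper's proof: both reduce $e_{L_{\mathbb{A}^1}}(\mathfrak{a})$ to the asymptotic Euler characteristic of $L_{\mathbb{P}^1}^m\otimes\mathcal{O}/\mathfrak{a}^m$, use condition $(*)$ to split this $t$-graded by degree into $\chi(X,L^m/\mathscr{I}_{m,k}L^m)$, apply asymptotic Riemann--Roch on $X$, and sum the $i$-index as a Riemann sum — your integral $\int_0^1(1-\alpha)^{n-j}\alpha^j\,d\alpha$ is exactly Mumford's Lemma 4.5, which the paper invokes in discrete form. The obstacle you flag (justifying that higher cohomology contributes only $O(m^n)$, which requires Fujita-type vanishing given the nefness of $\mathfrak{a}L_{\mathbb{A}^1}$ and each $\mathscr{I}_{D_k}L$) is precisely what the paper's Proposition 4.9 supplies; the paper proves it by routing through $H^0$-dimensions before comparing with $\chi$, but the content is the same.
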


In Theorem \ref{mu}, let $\mathcal{X}$ be the blow up of $X\times \mathbb{A}^1$ along $\mathfrak{a}$ with the exceptional divisor and $\mathcal{L}=L_{\mathbb{A}^1}-E$. Then it is easy to see that 
\[
e_{L_{\mathbb{A}^1}}(\mathfrak{a})=-\mathcal{L}^{n+1}.
\]
 Here, $\mathscr{I}L$ is nef (resp. ample) if $\pi :B\to X$ is the blow up along $\mathscr{I}$, $E$ is the exceptional divisor corresponding $\pi^{-1}\mathscr{I}$ and $\pi^*L-E$ is nef (resp. ample) on $B$. Note that the theorem holds even when $\mathfrak{a}L$ is not necessarily globally generated. 

\begin{ex}\label{33}
Suppose that $(X,L)$ is an $n$-dimensional polarized proper variety, and $\mathscr{I}$ is a coherent ideal on $X$. Let $\mathcal{X}$ be the deformation to the normal cone along $\mathscr{I}$ and $E$ be the exceptional divisor. Suppose that $\mathcal{L}=p^*L-cE$ is a semiample line bundle where $p:\mathcal{X}\to X$ is the canonical projection for $c>0$. Then, the fiber of $0\in \mathbb{P}^1$, $$\mathcal{X}_0=\hat{X}+E,$$ where $\pi:\hat{X}\to X$ is the blow up along $\mathscr{I}$. Let $D$ be the exceptional divisor on $\hat{X}$ and then $D=\hat{X}\cap E$ scheme-theoretically. Therefore,
\begin{align*}
\mathcal{L}^{n+1}&=(p^*L-cE)^{n+1} \\
&=-c\sum_{i=0}^nE\cdot (p^*L^i\cdot (p^*L-cE)^{n-i}) \\
&=c\sum_{i=0}^n(\hat{X}-\mathcal{X}_0)\cdot (p^*L^i\cdot (p^*L-cE)^{n-i})\\
&=c\sum_{i=0}^n \pi^*L^i\cdot (\pi^*L-cD)^{n-i}-c(n+1)L^n.
\end{align*} 
 Therefore, if $t$ is a parameter of $\mathbb{A}^1$,
\[
e_{L_{\mathbb{A}^1}}(\mathscr{I}+(t))=\sum_{j=0}^ne_L(\mathscr{I}^{[j]},\mathcal{O}_X^{[n-j]}).
\]
 On the other hand, if $H$ is a line bundle on $X$, it is easy to see that
\begin{align*}
V(L)(\mathcal{J}^H)^\mathrm{NA}(\mathcal{X},\mathcal{L})&=c\Biggl(\pi^*H\cdot \left(\sum_{i=0}^{n-1} (\pi^*L- cD)^i\cdot \pi^*L^{n-i-1}\right) \\
&- \frac{nH\cdot L^{n-1}}{(n+1)L^n}\sum_{i=0}^n (\pi^*L- cD)^i \cdot \pi^*L^{n-i}\Biggr). 
\end{align*}
We call $(\mathcal{J}^H)^\mathrm{NA}$-energy of a semiample deformation to the normal cone a $(\mathcal{J}^H)^\mathrm{NA}$-slope. We also remark that $e_{L_{\mathbb{A}^1}}(\mathscr{I}+(t))=(p^*L)^{n+1}-\mathcal{L}^{n+1}=-\mathcal{L}^{n+1}$ if $c=1$.

Finally, if $n=2$ and $\mathscr{I}$ is invertible, then
\[
V(L)(\mathcal{J}^H)^\mathrm{NA}(\mathcal{X},\mathcal{L})=c^2\left(2\frac{H\cdot L}{L^2}L-H\right)\cdot D-c^3\frac{2H\cdot L}{3L^2}D^2.
\]
We show that this is nonnegative when $2\frac{H\cdot L}{L^2}L-H$ is nef and $H$ is pseudoeffective in Proposition \ref{deft}.
\end{ex}

In the condition (*), the fact that the right hand side is included in the left hand side is trivial. However, the opposite inclusion does not necessarily hold for an arbitrary flag ideal $\mathfrak{a}$ and for large $m$. Therefore, the inequality in Proposition 4.10 of \cite{M} is not necessarily an equality for an arbitrary flag ideal $\mathfrak{a}$ either. However, if (*) holds, we show the inequality is an equality. Theorem \ref{mu} follows from Propositions 4.3, 4.8, 4.9 and 4.10 of loc.cit. However, we need the following modification of Proposition 4.9 of loc.cit.

\begin{prop}\label{4.9}
Let $V^n$ be a proper variety over $k$ and $L$ be a line bundle on $V$.
\begin{itemize}
\item[(1)]  Let $\mathscr{I}_1,\mathscr{I}_2,\cdots ,\mathscr{I}_r$ be coherent ideals in $\mathcal{O}_V$. If $L,\mathscr{I}_1L,\cdots ,\mathscr{I}_rL$ are nef and all the $\mathscr{I}_j$ is invertible, then
\begin{align*}
|\chi (V,L^{\sum_{j=1}^r m_j}/(\prod_{j=1}^r \mathscr{I}_j^{m_j})L^{\otimes\sum_{j=1}^r m_j})&-\mathrm{dim}(\Gamma(V,L^{\sum_{j=1}^r m_j})/\Gamma(V,(\prod_{j=1}^r \mathscr{I}_j^{m_j})L^{\otimes\sum_{j=1}^r m_j}))|\\
&=O((\sum_{j=1}^r m_j)^{n-1}).
\end{align*}
\item[(2)] Let $\mathfrak{a}$ be a flag ideal in $\mathcal{O}_{V\times \mathbb{A}^1}$ and hence $\mathrm{supp}(\mathcal{O}_{V\times \mathbb{A}^1}/\mathfrak{a})$ is proper. If $L$ is semiample and $\mathfrak{a}L_{\mathbb{A}^1}$ is nef, then
\[
\left|\chi \left(V\times \mathbb{A}^1,L^{\otimes m}_{\mathbb{A}^1}/( \mathfrak{a}^{m})L^{\otimes m}_{\mathbb{A}^1}\right)-\mathrm{dim}\left(\Gamma(V\times \mathbb{A}^1,L^{\otimes m}_{\mathbb{A}^1})/\Gamma(V\times \mathbb{A}^1,\mathfrak{a}^{m}L^{\otimes m}_{\mathbb{A}^1})\right)\right|=O(m^{n}).
\]
\end{itemize} 
\end{prop}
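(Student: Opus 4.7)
In both parts, the strategy is to adapt Mumford's argument for [M, Proposition 4.9] from the ample to the nef/semiample setting. Write the short exact sequence
\[
0 \to \mathfrak{b}\,L^{m} \to L^{m} \to M \to 0,
\]
where $\mathfrak{b}=\prod_j \mathscr{I}_j^{m_j}$ with $m=\sum_j m_j$ in part (1), and $\mathfrak{b}=\mathfrak{a}^{m}$ with $L=L_{\mathbb{A}^1}$ in part (2); in the latter case I would first compactify $V\times\mathbb{A}^1$ to $V\times\mathbb{P}^1$, which is harmless because $\mathcal{O}/\mathfrak{a}$ is supported inside $V\times\{0\}$. From the associated long exact cohomology sequence, together with the identity $\chi(V,M)-h^0(V,M)=\sum_{i\ge 1}(-1)^i h^i(V,M)$ and the inequality
\[
\bigl|h^0(V,M)-\dim\bigl(\Gamma(V,L^{m})/\Gamma(V,\mathfrak{b}L^{m})\bigr)\bigr|\le h^1(V,\mathfrak{b}L^{m}),
\]
the problem reduces to bounding $\sum_{i\ge 1}h^i(V,M)$ and $h^i(V,\mathfrak{b}L^{m})$ for $i\ge 1$ by the stated growth rate.

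The first kind of term is benign: $M$ is a coherent sheaf supported on the proper closed subscheme cut out by $\prod_j\mathscr{I}_j$ (respectively by $\mathfrak{a}$), which has dimension at most $n-1$ in part (1) and at most $n$ in part (2), so every $h^i(V,M)$ is automatically $O((\sum_j m_j)^{n-1})$ (respectively $O(m^{n})$) by the standard Hilbert-polynomial bound on the support. For the second kind, take a common log resolution $\pi:V'\to V$ (respectively $\pi:Y\to V\times\mathbb{P}^1$) on which all the ideals in play become invertible. The nefness hypotheses on $\mathscr{I}_j L$ (and on $\mathfrak{a} L_{\mathbb{A}^1}$) translate into the statement that $\pi^*(\mathfrak{b}L^{m})$ is a tensor product $\bigotimes_j N_j^{m_j}$ of nef line bundles on $V'$ of total ``degree'' $\sum_j m_j$. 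The projection formula combined with the Leray spectral sequence for $\pi$ then reduces the estimation of $h^i(V,\mathfrak{b}L^{m})$ to estimating $h^i(V',\bigotimes_j N_j^{m_j})$ plus cohomology of $R^{>0}\pi_*\mathcal{O}_{V'}$ (which is supported on a lower-dimensional subscheme and harmless), and one invokes the asymptotic vanishing $h^i(V',N^{m})=O(m^{\dim V'-1})$ for nef $N$ and $i\ge 1$ (cf.\ Lazarsfeld, \emph{Positivity in Algebraic Geometry} I, \S 1.4).

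The main obstacle is exactly this nef vanishing: where Mumford could directly invoke Serre's theorem to kill all higher cohomology in the ample case, here one must control the \emph{growth rate} of higher cohomology along an $(r-1)$-parameter family $\bigotimes_j N_j^{m_j}$ with $\sum_j m_j=m$. Securing the bound $O(m^{\dim V'-1})$ uniformly in the multi-index $(m_1,\dots,m_r)$—rather than only the weaker $o(m^{\dim V'})$ that the asymptotic cohomology functional $\widehat{h}^i$ immediately gives—can be handled either by a compactness argument on the slice $\{\sum m_j=1\}$ of the nef cone combined with the single-variable nef vanishing, or by directly running Mumford's inductive argument on hyperplane sections of $V'$ with nef line bundles in place of ample ones. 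Once this growth estimate is in place, the bookkeeping is exactly as in [M, Proposition 4.9], and parts (1) and (2) follow in parallel, the only difference being that $\dim V'$ equals $n$ in part (1) and $n+1$ in part (2), producing the bounds $O((\sum m_j)^{n-1})$ and $O(m^{n})$ respectively.
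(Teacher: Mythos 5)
Your proposal follows the same overall strategy as the paper: reduce via the short exact sequence to estimating higher cohomology, use Fujita-type vanishing for nef line bundles, and secure the uniform growth bound $O(m^{\dim -1})$ across the multi-parameter family $\bigotimes_j N_j^{m_j}$ by running Mumford's inductive argument with hyperplane sections. You also correctly isolate the genuine technical crux — that one needs the bound to be uniform in the multi-index $(m_1,\dots,m_r)$, not merely a one-variable statement — and your second suggested remedy (the inductive hyperplane-section argument, powered by Fujita's vanishing theorem) is exactly what the paper does.

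A few places where the proposal is imprecise relative to what must actually be proved. First, the claim that every $h^i(V,M)$ with $M=L^m/\mathfrak{b}L^m$ is ``automatically'' $O((\sum m_j)^{n-1})$ by a Hilbert-polynomial bound on the support is too quick: $M$ is not a fixed coherent sheaf twisted by powers of line bundles — $\mathcal{O}_V/\mathfrak{b}$ itself thickens as the $m_j$ grow — so Snapper's theorem applies to its Euler characteristic but not directly to the individual $h^i$. The honest route, which the long exact sequence gives you, is to bound $h^i(L^m)$ and $h^i(\mathfrak{b}L^m)$ for $i\ge 1$ directly; this is precisely what the paper does, proving more generally $h^i\bigl(V,(\prod_j\mathscr{I}_j^{m_j})(L^{\sum m_j}\otimes\mathscr{F})\bigr)=O((\sum m_j)^{n-i})$ by induction on $n$, choosing the ample divisor $H$ in Fujita's theorem to miss the associated points of $\mathscr{F}$ so that the restriction sequence to $H$ stays exact. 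Second, passing to a log resolution in part (1) is unnecessary — the hypothesis already makes the $\mathscr{I}_j$ invertible, so one works directly on $V$. Third, in part (2) the compactification $V\times\mathbb{A}^1\hookrightarrow V\times\mathbb{P}^1$ hides two real issues that the paper treats carefully: $\Gamma(V\times\mathbb{A}^1,L^m_{\mathbb{A}^1})$ is infinite-dimensional, which the paper handles by mapping $f\mapsto f\,t^{-km}$ into $H^0(V\times\mathbb{P}^1, L^m_{\mathbb{P}^1}(km(V\times\{0\})))$; and on the blow-up $B\to V\times\mathbb{P}^1$ along $\mathfrak{a}$, the class $\pi^*L_{\mathbb{P}^1}-E$ is only $\mathbb{P}^1$-nef, so one must twist by $k(V\times\{0\})$ for $k\gg0$ before Fujita applies. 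These are repairable but need to be addressed.
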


\begin{proof}
(1): It follows from the fact we will prove that 
\[
h^i(V,L^{\otimes\sum m_j})=O((\sum m_j)^{n-1})
\]
 and 
 \[
 h^i(V,(\prod_j \mathscr{I}_j^{m_j})L^{\otimes\sum m_j})=O((\sum m_j)^{n-1})
 \]
  for $i>0$ as \cite[Proposition 4.9]{M}. The former is easier and we will only show the latter in this proof. More generally, we prove $h^i(V,(\prod_j \mathscr{I}_j^{m_j})(L^{\otimes\sum m_j}\otimes \mathscr{F}))=O((\sum m_j)^{n-i})$ for any coherent sheaf $\mathscr{F}$ by the induction on $n$. Let $\mathcal{O}_V(-E_i)=\mathscr{I}_i$. We can prove that there exists $C'>0$ such that
\[
h^i(V,\mathscr{F}\otimes L^{\otimes\sum m_j}(-\sum m_jE_j))<C'(\sum m_j)^{n-i}
\]
by the fact that $L-E_j$s are nef, using Fujita vanishing theorem (cf. Theorem 1.4.35 of \cite{Laz}) and the induction on $n$. In fact, if $i=0$ we can take an ample divisor $H$ that does not pass through any associated point of $\mathscr{F}$ such that 
$$
H^i(\mathscr{F}\otimes L^{\otimes \sum m_j}(H-\sum m_jE_j))=0
$$ for $i>0$ and any $m_j\ge 0$ by Fujita's theorem and
\begin{eqnarray*}
H^0( \mathscr{F}\otimes L^{\otimes \sum m_j}(-\sum m_jE_j)) \hookrightarrow H^0( \mathscr{F}\otimes L^{\otimes \sum m_j}(H-\sum m_jE_j))\\
h^0( \mathscr{F}\otimes L^{\otimes \sum m_j}(H-\sum m_jE_j))=\chi (\mathscr{F}\otimes L^{\otimes \sum m_j}(H-\sum m_jE_j))=O((\sum m_j)^n)
\end{eqnarray*}
by Snapper's theorem (\cite{B} Theorem 1.1). Otherwise, take an ample and integral divisor $H$ does not pass through any associated point of $\mathscr{F}$ and consider the following exact sequence:
\[
0\to \mathscr{F}\otimes L^{\otimes \sum m_j}(-\sum m_jE_j) \to \mathscr{F}\otimes L^{\otimes \sum m_j}(H-\sum m_jE_j) \to \mathscr{F}(H)_{|H}\otimes L^{\otimes \sum m_j}(-\sum m_jE_j) \to 0.
\]
Thanks to Fujita's theorem, we can choose $H$ so that $H^i(\mathscr{F}\otimes L^{\otimes \sum m_j}(H-\sum m_jE_j))=0$ for $i>0$ and any $m_j\ge 0$. By the long exact sequence and the induction hypothesis of $n$, we have
 \[
 h^i( \mathscr{F}\otimes L^{\otimes \sum m_j}(-\sum m_jE_j) )\le h^{i-1}(\mathscr{F}(H)_{|H}\otimes L^{\otimes \sum m_j}(-\sum m_jE_j))=O((\sum m_j)^{(n-1)-(i-1)}).
 \]
 
(2): There exists an exact sequence
\[
0\to H^0(V\times \mathbb{A}^1,\mathfrak{a}^m\cdot L_{\mathbb{A}^1}^{\otimes m})\to H^0(V\times \mathbb{A}^1, L_{\mathbb{A}^1}^{\otimes m}) \to H^0(V\times \mathbb{A}^1, L_{\mathbb{A}^1}^{\otimes m}/\mathfrak{a}^m\cdot L_{\mathbb{A}^1}^{\otimes m}) .
\]
We want to show that 
\[
\mathrm{dim}(\mathrm{Coker}(H^0(V\times \mathbb{A}^1, L_{\mathbb{A}^1}^{\otimes m}) \to H^0(V\times \mathbb{A}^1, L_{\mathbb{A}^1}^{\otimes m}/\mathfrak{a}^m\cdot L_{\mathbb{A}^1}^{\otimes m})))=O(m^{n})
\]
and 
\[
h^i(V\times \mathbb{A}^1, L_{\mathbb{A}^1}^{\otimes m}/\mathfrak{a}^m\cdot L_{\mathbb{A}^1}^{\otimes m})=O(m^n)
\]
for $i>0$. There exists $N'>0$ such that $L_{\mathbb{A}^1}^{\otimes m}$ is globally generated for $m\ge N'$. Then for $k>0$ and $m\ge N'$, since $H^0(X\times\mathbb{A}^1,L_{\mathbb{A}^1}^{\otimes m})$ is generated by 
\begin{align*}
H^0(V,L^m)\subset &H^0(V\times \mathbb{P}^1,L_{\mathbb{P}^1}^{\otimes m}(km(V\times\{0\})))\\
f\mapsto & f\cdot t^{-km},
\end{align*}
 we have only to prove that $$
\mathrm{dim}(\mathrm{Coker}(H^0(V\times \mathbb{P}^1, L_{\mathbb{P}^1}^{\otimes m}(mk(V\times\{0\}))) \to H^0(V\times \mathbb{P}^1, L_{\mathbb{P}^1}^{\otimes m}/\mathfrak{a}^m\cdot L_{\mathbb{P}^1}^{\otimes m})))=O(m^{n})
$$
where $L_{\mathbb{P}^1}=L\otimes \mathcal{O}_{\mathbb{P}^1}$. Note that $L_{\mathbb{P}^1}^{\otimes m}/\mathfrak{a}^m\cdot L_{\mathbb{P}^1}^{\otimes m}=(L_{\mathbb{P}^1}^{\otimes m}/\mathfrak{a}^m\cdot L_{\mathbb{P}^1}^{\otimes m})\otimes \mathcal{O}_{V\times \mathbb{P}^1}(mk(V\times \{0\}))$ since $\mathrm{supp}(\mathcal{O}_{V\times \mathbb{A}^1}/\mathfrak{a})$ is proper. Let $\pi:B\to V\times \mathbb{P}^1$ be the blow up of $V\times \mathbb{P}^1$ along $\mathfrak{a}$ and $E$ be the exceptional divisor. As in the proof of \cite[Proposition 4.8]{M}, there is a large integer $N$ such that
\begin{itemize}
\item[a)] $R^i\pi_*(\mathcal{O}_B(- mE))=0,\, i>0$ 
\item[b)] $\pi_*(\pi^*\mathcal{O}_B(- mE))=\mathfrak{a}^{m}$
\end{itemize}
when $m\ge N$. Then $h^i(V_{\mathbb{P}^1}, \mathfrak{a}^m\cdot L_{\mathbb{P}^1}^{\otimes m})=h^i(B,\pi^*L_{\mathbb{P}^1}^{\otimes m}(- mE))$ by Leray spectral sequence. By the assumption, since $\pi^*L_{ \mathbb{P}^1}-E$ is $\mathbb{P}^1$-nef, there exists sufficiently large $k$ that $\pi^*L_{ \mathbb{P}^1}(k(V\times\{0\})-E)$ is nef. In fact, we can prove that it is nef when we choose $k$ so large that $k\pi^*(V\times\{0\})-E$ is effective. Let $C$ be an integral curve on $B$ and if $C\cdot (\pi^*L-E)< 0$, $C$ is not contained in any fibre over $\mathbb{P}^1$. Since $\pi^*L$ is nef and $C\not \subset k\pi^*(V\times\{0\})-E$, $$C\cdot (\pi^*L_{ \mathbb{P}^1}(k(V\times\{0\}))-E)\ge 0.$$ Hence, we can apply Fujita's theorem again to obtain
\[
h^i(B,\pi^*L_{\mathbb{P}^1}^{\otimes m}(mk(V\times\{0\})-mE))=O(m^{n+1-i})
\] 
and 
\[
h^i(V\times\mathbb{P}^1,L_{\mathbb{P}^1}^{\otimes m}(mk(V\times \{0\}))=O(m^{n+1-i}).
\]
We can easily see that 
\begin{align*}
\mathrm{dim}(\mathrm{Coker}(H^0(V\times \mathbb{P}^1, L_{\mathbb{P}^1}^{\otimes m}(mk(V\times\{0\}))) &\to H^0(V\times \mathbb{P}^1, L_{\mathbb{P}^1}^{\otimes m}/\mathfrak{a}^m\cdot L_{\mathbb{P}^1}^{\otimes m})))\\
&\le h^1(B,\pi^*L_{\mathbb{P}^1}^{\otimes m}(mk(V\times\{0\})-mE))
\end{align*}
and 
\[
h^i(V\times \mathbb{A}^1, L_{\mathbb{A}^1}^{\otimes m}/\mathfrak{a}^m\cdot L_{\mathbb{A}^1}^{\otimes m})=O(m^{n})
\]
for $i>0$. We complete the proof.
\end{proof}

\begin{proof}[Proof of Theorem \ref{mu}]
By the assumption, $\mathfrak{a}$ satisfies the condition (*). For each $m$, we denote $\mathfrak{a}^m=\bigoplus_{k=0}^\infty t^k\mathscr{I}_{m,k}$, where $\mathscr{I}_{m,mk+i}=\mathscr{I}_{D_k}^{m-i}\cdot \mathscr{I}_{D_{k+1}}^{i}$. Since
\[
H^0(X\times \mathbb{A}^1,\mathfrak{a}^mL_{\mathbb{A}^1}^{\otimes m})=\bigoplus _{k=0}^\infty H^0(X,\mathscr{I}_{m,k}L^{\otimes m})\cdot t^k,
\]
it follows that
\[
\mathrm{dim}(H^0(X\times \mathbb{A}^1,L_{\mathbb{A}^1}^{\otimes m})/H^0(X\times \mathbb{A}^1,\mathfrak{a}^mL_{\mathbb{A}^1}^{\otimes m}))=\sum^\infty_{k=0}\mathrm{dim}(H^0(X,L^{\otimes m})/H^0(X,\mathscr{I}_{m,k}L^{\otimes m})).
\]
The rest of the proof follows from Proposition 4.10 of \cite{M} immediately. For the reader's convenience, we prove as follows. Apply Proposition \ref{4.9} and we get the estimates 
\[
\mathrm{dim}[H^0(X\times \mathbb{A}^1,L_{\mathbb{A}^1}^{\otimes m})/H^0(X\times \mathbb{A}^1,\mathfrak{a}^mL^{\otimes m}_{\mathbb{A}^1})].
\]
for $\chi(L_{\mathbb{A}^1}^{\otimes m}/\mathfrak{a}^mL_{\mathbb{A}^1}^{\otimes m})$ and
\[
\mathrm{dim}(H^0(X,L^{\otimes m})/H^0(X,\mathscr{I}_{D_k}^{m-i}\cdot \mathscr{I}_{D_{k+1}}^{i}\cdot L^{\otimes m}))
\]
for $\chi(L^{\otimes m}/\mathscr{I}_{D_k}^{m-i}\cdot \mathscr{I}_{D_{k+1}}^{i}\cdot L^{\otimes m})$. By the weak form of Riemann-Roch Theorem (\cite[Theorem 1.36]{KM}),
\begin{align*}
\chi(L_{\mathbb{A}^1}^{\otimes m}/\mathfrak{a}^{m} L_{\mathbb{A}^1}^{\otimes m})&=\chi(L_{\mathbb{P}^1}^{\otimes m}/\mathfrak{a}^{m} L_{\mathbb{P}^1}^{\otimes m}) \\
&=\frac{1}{(n+1)!}e_{L_{\mathbb{A}^1}}(\mathfrak{a})m^{n+1} +O(m^{n}).
\end{align*}
and
\begin{align*}
\chi(L^{\otimes m}/\mathscr{I}_{D_k}^{m-i}\cdot \mathscr{I}_{D_{k+1}}^{i}\cdot L^{\otimes m})&= \chi(L^{\otimes m})-\chi(L^{\otimes m}(-(m-i)D_k-iD_{k+1})) \\
&=\sum_{j=0}^n\frac{1}{j!(n-j)!}e_L(\mathscr{I}_{D_k}^{[n-j]},\mathscr{I}_{D_{k+1}}^{[j]})(m-i)^{n-j}i^j +O(m^{n-1}).
\end{align*}
 Therefore,
\begin{align*}
&\sum^\infty_{k=0}\mathrm{dim}(H^0(X,L^{\otimes m})/H^0(X,\mathscr{I}_{m,k}L^{\otimes m}))\\
&=\sum^{r-1}_{k=0}\sum^{m-1}_{i=0} \mathrm{dim}(H^0(X,L^{\otimes m})/H^0(X,\mathscr{I}_{D_k}^{m-i}\cdot \mathscr{I}_{D_{k+1}}^{i}\cdot L^{\otimes m}))\\
&=\sum^{r-1}_{k=0}\sum^{m-1}_{i=0}\left[\sum_{j=0}^n\frac{1}{j!(n-j)!}e_L(\mathscr{I}_{D_k}^{[n-j]},\mathscr{I}_{D_{k+1}}^{[j]})(m-i)^{n-j}i^j +R_i\right],
\end{align*}
where $R_i=O(m^{n-1})$. As in the proof of \cite[Proposition 4.8]{M}, $\sum_{i=0}^{m-1} R_i=O(m^n)$. Then \cite[Lemma 4.5]{M} gives:
\[
\frac{1}{(n+1)!}m^{n+1}=\frac{1}{j!(n-j)!}\sum^{m-1}_{i=0}(m-i)^{n-j}i^j +O(m^n)
\]
and we see the theorem holds.
\end{proof}

We have the following the extention of Theorem \ref{mu} to the case when $L$ is a $\mathbb{Q}$-line bundle.

\begin{prop}\label{muc}
Given an $n$-dimensional variety $X$, an ample $\mathbb{Q}$-line bundle $L$ on $X$ and an flag ideal $\mathfrak{a}\subset \mathcal{O}_{X\times \mathbb{A}^1}$. If $\mathfrak{a}$ satisfies (*), then we have
\[
e_{sL_{\mathbb{A}^1}}(\mathfrak{a})=\sum_{k=0}^{r-1}\sum_{j=0}^ne_{sL}(\mathscr{I}_{D_k}^{[j]},\mathscr{I}_{D_{k+1}}^{[n-j]})
\] 
for any $s\in \mathbb{Q}$.
\end{prop}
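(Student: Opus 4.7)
The plan is to reduce Proposition \ref{muc} to Theorem \ref{mu} via a polynomial-identity argument in $s$.

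Fix $N\in\mathbb{Z}_{>0}$ such that $M:=NL$ is an integral line bundle. I would first observe that both sides of the target identity are polynomials in $s$ with rational coefficients. Indeed, each mixed-multiplicity term $e_{sL}(\mathscr{I}_{D_k}^{[j]},\mathscr{I}_{D_{k+1}}^{[n-j]})$ and the term $e_{sL_{\mathbb{A}^1}}(\mathfrak{a})$ are, by Definition \ref{mazemaze}, differences of intersection numbers of the form $(sD)^n - \prod_i(sD-E_i)$ (and analogously in dimension $n+1$) on suitable blowups; expanding in $s$ shows that the leading $s^n$ (resp. $s^{n+1}$) contributions cancel, so both sides are rational polynomials in $s$ of degrees at most $n-1$ (each summand on the right) and $n$ (the left-hand side) respectively.

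Next, I would check that for every sufficiently large positive integer $t$, the integral line bundle $tM=tNL$ satisfies all hypotheses of Theorem \ref{mu}: ampleness of $L$ ensures that $tM$ is ample (hence semiample) and that $tM-D_k$ is ample for every $k=0,\dots,r-1$, as there are only finitely many $D_k$. The only delicate point is the nefness of $\mathfrak{a}\cdot (tM)_{\mathbb{A}^1}$, i.e., of $\pi^*(tM_{\mathbb{A}^1})-E$ on $\pi:B\to X\times\mathbb{A}^1$. For this, I would pass to the compactification $\bar\pi:\bar B\to X\times\mathbb{P}^1$, exploit the $\bar\pi$-ampleness of $-E$ together with ampleness of $tM$ on $X$, and twist by a large multiple of the fibre $X\times\{0\}$ to obtain an ample class on $\bar B$; this mirrors the $k$-twisting argument used in the proof of Proposition~\ref{4.9}(2) and yields the required nefness on $B\subset\bar B$ for $t$ large.

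Granted these hypotheses, Theorem \ref{mu} applied with $L$ replaced by $tM$ gives
\[
e_{(tN)L_{\mathbb{A}^1}}(\mathfrak{a})=\sum_{k=0}^{r-1}\sum_{j=0}^{n}e_{(tN)L}(\mathscr{I}_{D_k}^{[j]},\mathscr{I}_{D_{k+1}}^{[n-j]})
\]
for all $t\gg 0$, which is precisely the Proposition's identity evaluated at $s=tN$. Two rational polynomials in $s$ agreeing on infinitely many values coincide, so the identity holds for every $s\in\mathbb{Q}$. The main obstacle is the nefness check of $\mathfrak{a}\cdot (tM)_{\mathbb{A}^1}$ on the non-proper blowup $B$ — a subtlety stemming from the fact that $L_{\mathbb{A}^1}$ is only relatively ample over $\mathbb{A}^1$ — but this is handled by the compactify-and-twist argument sketched above, after which the rest of the proof is formal.
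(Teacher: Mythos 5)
Your proof is correct and follows essentially the same strategy as the paper's: reduce to integral polarizations where Theorem~\ref{mu} applies, and then use that the identity is a polynomial identity in $s$. The paper evaluates at the specific values $s=m,2m,\dots,(n+1)m$ for a single sufficiently divisible $m$ (using that the difference of the two sides has degree $\le n$, so $n+1$ zeroes suffice), and it invokes \cite[Corollary 5.8]{RT07} to get nefness of all $\mathscr{I}_{D_k}(mL)$ from the ampleness of $\mathfrak{a}(mL_{\mathbb{A}^1})$; you instead argue $tNL-D_k$ is ample directly from the ampleness of $L$ and the finiteness of the $D_k$, and you supply the compactify-and-twist argument for the nefness of $\mathfrak{a}\cdot(tNL_{\mathbb{A}^1})$ rather than asserting ampleness. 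These are cosmetic variations of the same argument and both are sound.
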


\begin{proof}
In fact, if $\mathrm{Bl}_{\mathfrak{a}}(X\times \mathbb{P}^1)$ is the blow up of $X\times \mathbb{P}^1$ along $\mathfrak{a}$ and $E$ is the exceptional divisor,
\[
e_{sL_{\mathbb{A}^1}}(\mathfrak{a})=(sL_{\mathbb{P}^1})^{n+1}-(sL_{\mathbb{P}^1}-E)^{n+1}
\] 
is a polynomial in $s$ whose degree is at most $n$. On the other hand, 
\[
\sum_{k=0}^{r-1}\sum_{j=0}^ne_{sL}(\mathscr{I}_{D_k}^{[j]},\mathscr{I}_{D_{k+1}}^{[n-j]})=\sum_{k=0}^{r-1}\sum_{j=0}^n((sL)^n-(sL-D_k)^j\cdot(sL-D_{k+1})^{n-j})
\]
is a polynomial in $s$ whose degree is at most $n-1$. Since $L$ is an ample $\mathbb{Q}$-line bundle, there exists a sufficiently divisible integer $m$ such that $mL$ is a line bundle and $\mathfrak{a}(mL_{\mathbb{A}^1})$ is ample. Then we have all the $\mathscr{I}_{D_k}(mL)$ is nef by Corollary 5.8 of \cite{RT07}. Then, we can see that $mL,2mL,\cdots ,(n+1)mL$ satisfy the assumption of Theorem \ref{mu}. Therefore, we can apply Theorem \ref{mu} and we have
\[
e_{sL_{\mathbb{A}^1}}(\mathfrak{a})=\sum_{k=0}^{r-1}\sum_{j=0}^ne_{sL}(\mathscr{I}_{D_k}^{[j]},\mathscr{I}_{D_{k+1}}^{[n-j]})
\]
for $s=m,2m,\cdots,(n+1)m$. There exists $n+1$ zeroes of the polynomial in $s$ $$e_{sL_{\mathbb{A}^1}}(\mathfrak{a})-\sum_{k=0}^{r-1}\sum_{j=0}^ne_{sL}(\mathscr{I}_{D_k}^{[j]},\mathscr{I}_{D_{k+1}}^{[n-j]})$$ and hence we have
\[
e_{sL_{\mathbb{A}^1}}(\mathfrak{a})=\sum_{k=0}^{r-1}\sum_{j=0}^ne_{sL}(\mathscr{I}_{D_k}^{[j]},\mathscr{I}_{D_{k+1}}^{[n-j]})
\]
for any $s$.
\end{proof}

\begin{prop}\label{slpro}
Given an $n$-dimensional polarized normal variety $(X,L)$. Suppose that $n\ge 2$, $H$ is an ample line bundle on $X$ and a flag ideal $\mathfrak{a}\subset\mathcal{O}_{X\times\mathbb{A}^1}$ satisfies (*) in Theorem \ref{mu}. Then for general normal and connected divisor $D\in |lH|$ for $l\gg 0$, the restriction of $\mathfrak{a}$ to $D$ satisfies (*). Furthermore, let $\Pi:\mathrm{Bl}_{\mathfrak{a}}(X\times \mathbb{A}^1)\to X\times \mathbb{A}^1$. Then, $\Pi^*(D\times \mathbb{A}^1)$ is a prime divisor for general $D$ and is isomorphic to the blow up of $D\times \mathbb{A}^1$ along $\mathfrak{a}|_{D\times \mathbb{A}^1}$.
\end{prop}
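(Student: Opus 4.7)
The plan is to impose several open transversality conditions on $D\in|lH|$ via Bertini-type theorems and then verify both assertions by a local analysis on $X\times\mathbb{A}^1$. Since $X$ is normal and $H$ is ample, for $l\gg 0$ the system $|lH|$ is very ample, and by Seidenberg's theorem together with the classical Bertini theorem (using $n\ge 2$) a general $D\in|lH|$ is normal and connected, hence irreducible. I further require that $D$ satisfies: (a) $D$ shares no irreducible component with any of the Cartier divisors $D_0,\ldots,D_{r-1}$; (b) $D$ meets each $D_j$ properly; and (c) $D\times\mathbb{A}^1$ contains no generic point of the center $V(\mathfrak{a})\subset D_0\times\{0\}$, which has codimension $\ge 2$ in $X\times\mathbb{A}^1$. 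Each of these is a nonempty Zariski-open condition on $|lH|$ for $l\gg 0$, because $H$ is ample and each locus to avoid is a proper closed subvariety of $X$.

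To check that $\mathfrak{a}|_{D\times\mathbb{A}^1}$ satisfies (*), condition (a) makes $\mathscr{I}_{D_j}|_D$ the invertible ideal sheaf of the Cartier divisor $D_j|_D$ on $D$, and a product $\mathscr{I}_{D_j}^{m-i}\mathscr{I}_{D_{j+1}}^i=\mathcal{O}_X(-(m-i)D_j-iD_{j+1})$ restricts to $\mathcal{O}_D(-(m-i)(D_j|_D)-i(D_{j+1}|_D))=\mathscr{I}_{D_j|_D}^{m-i}\mathscr{I}_{D_{j+1}|_D}^i$. Combined with the general identity $(\mathfrak{a}|_{D\times\mathbb{A}^1})^m=\mathfrak{a}^m|_{D\times\mathbb{A}^1}$, the decomposition $\mathfrak{a}^m=\sum_k t^k\mathscr{I}_{m,k}$ restricts verbatim to the form required by (*).

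For the blow-up statement, let $\eta$ be a generic point of a component of $V(\mathfrak{a})$ and let $f$ be a local defining equation of $D\times\mathbb{A}^1$ near $\eta$. By (c), $\eta\notin D\times\mathbb{A}^1$, so $f$ is a unit at $\eta$; hence $\Pi^*f$ is a unit at every generic point of the exceptional divisor $E$. Consequently $\Pi^*(D\times\mathbb{A}^1)$ contains no component of $E$ and equals the strict transform of $D\times\mathbb{A}^1$. The standard relationship between blow-ups and strict transforms (applied to the Cartier divisor $D\times\mathbb{A}^1\subset X\times\mathbb{A}^1$, no component of which lies in $V(\mathfrak{a})$) identifies this strict transform with $\mathrm{Bl}_{\mathfrak{a}|_{D\times\mathbb{A}^1}}(D\times\mathbb{A}^1)$; it is prime because $D\times\mathbb{A}^1$ is integral and $\Pi$ restricts to a birational morphism onto it. The main obstacle is precisely this absence-of-exceptional-contribution assertion, guaranteed by transversality condition (c) together with the codimension estimate on $V(\mathfrak{a})$.
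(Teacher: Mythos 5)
Your first half (restricting (*) to $D$) is correct and essentially identical to the paper's argument: Bertini (using $n\ge 2$ for connectedness) gives normal, connected $D$ missing the associated points of the $\mathscr{I}_{D_j}$, the general identity $(\iota^{-1}\mathfrak{a})^m=\iota^{-1}(\mathfrak{a}^m)$ (and likewise for the products $\mathscr{I}_{D_j}^{m-i}\mathscr{I}_{D_{j+1}}^{i}$) then transports (*) to $D\times\mathbb{A}^1$ verbatim.

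In the second half there is a genuine gap in the step ``By (c), $\eta\notin D\times\mathbb{A}^1$, so $f$ is a unit at $\eta$; hence $\Pi^*f$ is a unit at every generic point of $E$.'' The word ``hence'' tacitly assumes that every generic point of $E$ maps under $\Pi$ to a generic point of $V(\mathfrak{a})$. That is not automatic: for a blow-up along a coherent ideal, a component of the exceptional divisor can be contracted to a proper closed subset of the support of that ideal (the Rees valuation centers of $\mathfrak{a}$ need not be generic points of $V(\mathfrak{a})$). What you actually need is that $D\times\mathbb{A}^1$ misses the finitely many \emph{images} $\Pi(\eta_i)$ of the generic points $\eta_i$ of $E$, not merely the generic points of $V(\mathfrak{a})$. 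Your condition (c) is weaker and does not, as stated, force $\mathrm{ord}_{E_i}(\Pi^*f)=0$ for every component $E_i$. The repair is easy (the $\Pi(\eta_i)$ form a finite set of points of $D_0\times\{0\}$, of the form $(\xi_i,0)$ with $\xi_i\in D_0$, and general $D\in|lH|$ for $l\gg 0$ misses all the $\xi_i$ since $|lH|$ is base point free), but you should state the correct locus to avoid. The paper instead applies Bertini directly to the pulled-back linear system $\Pi^*|lH_{\mathbb{A}^1}|$ on $\mathrm{Bl}_{\mathfrak{a}}(X\times\mathbb{A}^1)$ to make $\Pi^*(D\times\mathbb{A}^1)$ integral outright, and then uses the closed immersion $\mathrm{Bl}_{\mathfrak{a}|_{D\times\mathbb{A}^1}}(D\times\mathbb{A}^1)\hookrightarrow\mathrm{Bl}_{\mathfrak{a}}(X\times\mathbb{A}^1)$ of \cite[II, Cor.~7.15]{Ha}, which must then be an isomorphism onto the integral divisor $\Pi^*(D\times\mathbb{A}^1)$ by birationality; this sidesteps identifying the precise loci to avoid. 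Your invocation of \cite[II, Cor.~7.15]{Ha} for the last identification is the same as the paper's, and the rest of your argument (that once no exceptional component appears, $\Pi^*(D\times\mathbb{A}^1)$ equals the strict transform and is prime because $\Pi$ is an isomorphism over the generic point of $D\times\mathbb{A}^1$) is sound.
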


\begin{proof}
Let 
\[
\mathfrak{a}=\sum_{k=0}^r \mathscr{I}_kt^k,
\]
where each $\mathscr{I}_k$ is invertible as in Theorem \ref{mu}. For $l\gg 0$, note that $|lH|$ is base point free on $X$ and $H^0(X\times \mathbb{P}^1, lH_{\mathbb{P}^1})=H^0(X,lH)$. Thus, we may assume that $D$ and $D\times \mathbb{A}^1$ are normal and connected, and do not pass through the associated points of all the $\mathscr{I}_k$ and $\mathfrak{a}$ due to Bertini's theorem for base point free linear systems. We remark that the connectedness follws from the assumption of $\mathrm{dim}\,X$. Therefore, $\iota^{-1}\mathscr{I}_k$ is an invertible ideal sheaf if $\iota:D\hookrightarrow X$ is the canonical inclusion of $D$. By the assumption, $\mathfrak{a}$ satisfies (*). In other words, for $m\in \mathbb{Z}_{\ge0}$, $$\mathfrak{a}^m=\sum_{k=0}^\infty t^k\mathscr{I}_{m,k}$$ on $X\times \mathbb{A}^1$, where $\mathscr{I}_{m,mj+i}=\mathscr{I}_{j}^{m-i}\cdot \mathscr{I}_{j+1}^{i}$. Then for each $m\in \mathbb{Z}_{\ge0}$, $$[\mathfrak{a}|_{D\times\mathbb{A}^1}]^m=\sum_{k=0}^\infty t^k(\iota^{-1}\mathscr{I}_{m,k})$$ on $D\times \mathbb{A}^1$. Note that $\iota^{-1}(\mathscr{I}_{m,mj+i})=\iota^{-1}(\mathscr{I}_{j})^{m-i}\cdot \iota^{-1}(\mathscr{I}_{j+1})^{i}$. Hence, $\mathfrak{a}|_{D\times\mathbb{A}^1}$ also satisfies (*). 
 
 To prove the last assertion, we take $D$ so general that $\Pi^*(D\times \mathbb{A}^1)$ is also integral by Bertini's theorem. $\Pi^*(D\times \mathbb{A}^1)$ is birational to $D\times \mathbb{A}^1$ since it is not contained in the image of the exceptional locus of $\Pi$. On the other hand, let $\mathcal{D}$ be the blow up of $D\times\mathbb{A}^1$ along $\mathfrak{a}|_{D\times\mathbb{A}^1}$ and a closed immersion $\mathcal{D}\to\mathrm{Bl}_{\mathfrak{a}}(X\times \mathbb{A}^1)$ \cite[II Corollary 7.15]{Ha} factors through
 \[
 \varphi :\mathcal{D}\to \Pi^*(D\times \mathbb{A}^1).
 \] 
 Since $\Pi^*(D\times \mathbb{A}^1)$ is integral, $\varphi$ is a dominant closed immersion. Therefore, $\varphi$ is an isomorphism.
\end{proof}

\begin{de}
If $H$ and $D$ are as in Proposition \ref{slpro}, 
\[
e_{L_{|H}}(\mathscr{I}_{k|H}^{[j]},\mathscr{I}_{k+1|H}^{[n-1-j]})=l^{-1}e_{L_{|D}}(\mathscr{I}_{k|D}^{[j]},\mathscr{I}_{k+1|D}^{[n-1-j]}).
\]
If $H$ is a $\mathbb{Q}$-Cartier divisor, we define as follows:
\[
e_{L_{|H}}(\mathscr{I}_{k|H}^{[j]},\mathscr{I}_{k+1|H}^{[n-1-j]})=H\cdot (L^{n-1}-(L-D_k)^j\cdot (L-D_{k+1})^{n-1-j})
\]
where $D_k$ corresponds to $\mathscr{I}_k$. We can check that the definition coincides with the one we gave before if $H$ is as in Proposition \ref{slpro}.
\end{de}

Thanks to Proposition \ref{muc} and Proposition \ref{slpro}, we can calculate $(\mathcal{J}^H)^\mathrm{NA}$ of $(\mathrm{Bl}_{\mathfrak{a}}(X\times \mathbb{A}^1),L_{\mathbb{A}^1}-E)$ as follows, where $E$ is the exceptional divisor and $L_{\mathbb{A}^1}-E$ is ample. 

\begin{thm}\label{sl}
Suppose that $(X,L)$ is an $n$-dimensional variety with a $\mathbb{Q}$-line bundle $H$ and $n\ge2$. If $$\mathfrak{a}=\sum_{k=0}^r \mathscr{I}_kt^k$$ is a flag ideal that satisfies the condition (*) and $(\mathrm{Bl}_{\mathfrak{a}}(X\times \mathbb{A}^1),L_{\mathbb{A}^1}-E)$ is an ample test configuration over $(X,L)$, then
\begin{align*}
V(L)(\mathcal{J}^{H})^\mathrm{NA}(\mathrm{Bl}_{\mathfrak{a}}(X\times \mathbb{A}^1),L_{\mathbb{A}^1}-E)=&\frac{nH\cdot L^{n-1}}{(n+1)L^n}\sum _{k=0}^{r-1}\sum_{j=0}^{n}e_{L}(\mathscr{I}_{k}^{[j]},\mathscr{I}_{k+1}^{[n-j]}) \\
-&\sum _{k=0}^{r-1}\sum_{j=0}^{n-1}e_{L_{|H}}(\mathscr{I}_{k|H}^{[j]},\mathscr{I}_{k+1|H}^{[n-1-j]}).
\end{align*}
\end{thm}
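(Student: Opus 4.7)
The approach is to expand the definition
\[
V(L)(\mathcal{J}^H)^{\mathrm{NA}}(\mathcal{X}, \mathcal{L}) = p^{*}H \cdot \overline{\mathcal{L}}^{n} - \frac{n\,H \cdot L^{n-1}}{(n+1)\,L^{n}}\, \overline{\mathcal{L}}^{n+1}
\]
with $\mathcal{L} = L_{\mathbb{A}^{1}} - E$, and to compute the two top intersections $\overline{\mathcal{L}}^{n+1}$ and $p^{*}H \cdot \overline{\mathcal{L}}^{n}$ separately in terms of mixed multiplicities of the pieces $\mathscr{I}_{k}$ of $\mathfrak{a}$. Both sides of the claimed formula are manifestly linear in $H$, and any $\mathbb{Q}$-line bundle on $X$ is a $\mathbb{Q}$-linear combination of very ample line bundles, so I may assume throughout that $lH$ is a very ample line bundle whose complete linear system satisfies the Bertini-type conclusions of Proposition \ref{slpro}.

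For the first intersection, $L_{\mathbb{P}^{1}}$ is a pullback from $X$, so $(L_{\mathbb{P}^{1}})^{n+1} = 0$ on the compactified blow-up $\mathrm{Bl}_{\mathfrak{a}}(X \times \mathbb{P}^{1})$. Expanding $(L_{\mathbb{P}^{1}} - E)^{n+1}$ and recognising the result via Definition \ref{mazemaze} gives
\[
\overline{\mathcal{L}}^{n+1} = -\,e_{L_{\mathbb{A}^{1}}}(\mathfrak{a}),
\]
and Proposition \ref{muc}, which applies because $\mathfrak{a}$ satisfies (*) by hypothesis, converts this into
\[
\overline{\mathcal{L}}^{n+1} = -\sum_{k=0}^{r-1}\sum_{j=0}^{n} e_{L}\!\left(\mathscr{I}_{k}^{[j]},\, \mathscr{I}_{k+1}^{[n-j]}\right).
\]

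For the second intersection I choose a general $D \in |lH|$, so that $p^{*}H \cdot \overline{\mathcal{L}}^{n} = \tfrac{1}{l}\, p^{*}D \cdot \overline{\mathcal{L}}^{n} = \tfrac{1}{l}\,(\overline{\mathcal{L}}|_{p^{*}D})^{n}$. Proposition \ref{slpro} identifies $p^{*}D = \Pi^{*}(D \times \mathbb{P}^{1})$ with the blow-up of $D \times \mathbb{P}^{1}$ along $\mathfrak{a}|_{D \times \mathbb{A}^{1}}$, shows that this restricted flag ideal again satisfies (*), and identifies $\overline{\mathcal{L}}|_{p^{*}D}$ with $(L|_{D})_{\mathbb{P}^{1}} - E|_{p^{*}D}$. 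Since $D$ has dimension $n - 1$, the same pullback-vanishing argument gives $(\overline{\mathcal{L}}|_{p^{*}D})^{n} = -\,e_{(L|_{D})_{\mathbb{A}^{1}}}(\mathfrak{a}|_{D \times \mathbb{A}^{1}})$, and a second application of Proposition \ref{muc}, this time on the $(n-1)$-dimensional variety $D$, combined with the definition $l\cdot e_{L|_{H}}(\cdots) = e_{L|_{D}}(\cdots)$ stated just before the theorem, yields
\[
p^{*}H \cdot \overline{\mathcal{L}}^{n} = -\sum_{k=0}^{r-1}\sum_{j=0}^{n-1} e_{L|_{H}}\!\left(\mathscr{I}_{k|H}^{[j]},\, \mathscr{I}_{k+1|H}^{[n-1-j]}\right).
\]

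Substituting both computations into the expansion of $V(L)(\mathcal{J}^{H})^{\mathrm{NA}}$ produces the asserted identity, with the two minus signs combining correctly with the explicit coefficient of $\overline{\mathcal{L}}^{n+1}$. The conceptually non-trivial ingredients are Propositions \ref{muc} and \ref{slpro}, both already established; the present argument merely organises those inputs. I expect the only delicate point to be the linearity reduction from a general $\mathbb{Q}$-line bundle $H$ to a very ample integral multiple, so that the hyperplane-section apparatus of Proposition \ref{slpro} (in particular the availability of a general $D$ avoiding the associated points of each $\mathscr{I}_{k}$ and of $\mathfrak{a}$) is actually available.
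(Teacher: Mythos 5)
Your proposal is correct and takes essentially the same route as the paper: expand $(\mathcal{J}^H)^{\mathrm{NA}}$ into its two top intersections, identify $\overline{\mathcal{L}}^{n+1}=-e_{L_{\mathbb{A}^1}}(\mathfrak{a})$ and apply Proposition \ref{muc}, reduce to ample $H$ by linearity, and for $p^*H\cdot\overline{\mathcal{L}}^{n}$ restrict to a general $D\in|lH|$ and apply Propositions \ref{slpro} and \ref{muc} on $D$. If anything, your write-up tracks the two minus signs in the $p^*D\cdot\overline{\mathcal{L}}^{n}$ step a bit more carefully than the paper does.
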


\begin{proof}
By the assumption, we can apply Proposition \ref{muc} and we have
 \begin{align*}
 -\mathcal{L}^{n+1}&=e_{L_{\mathbb{A}^1}}(\mathfrak{a})\\
 &=\sum _{k=0}^r\sum_{j=0}^{n}e_{L}(\mathscr{I}_{D_k}^{[j]},\mathscr{I}_{D_{k+1}}^{[n-j]}).
 \end{align*}
 On the other hand, the left hand side and the right hand side of the equation we want to prove is linear in $H$. Therefore, we may assume that $H$ is ample. Since $L_{|D}$ is ample and $\mathfrak{a}_{|D}$ satisfies (*) for sufficiently general $D\in |lH|$ for $l\gg 0$ due to Proposition \ref{slpro}, we can also apply Proposition \ref{muc} to the computation of the mixed multiplicity of $D$:
  \begin{align*}
 -p^*D\cdot \mathcal{L}^{n}&=(\mathcal{L}|_{\mathcal{D}})^{n}\\
 &=e_{L_{\mathbb{A}^1}|_{D\times\mathbb{A}^1}}(\mathfrak{a}|_{D\times\mathbb{A}^1})\\
 &=\sum _{k=0}^r\sum_{j=0}^{n-1}e_{L|_D}(\mathscr{I}_{D_k|_D}^{[j]},\mathscr{I}_{D_{k+1}|_D}^{[n-1-j]}),
 \end{align*}
where $p:\mathcal{X}\to X$ is the canonical projection and $\mathcal{D}$ is the blow up of $D\times\mathbb{A}^1$ along $\mathfrak{a}|_{D\times\mathbb{A}^1}$ also by Proposition \ref{slpro}. The proof is complete.
\end{proof}

\begin{rem}
If $n=1$, we can define $e_{L_{|H}}(\mathscr{I}_{k|H}^{[j]},\mathscr{I}_{k+1|H}^{[n-1-j]})=0$ and it is easy to see that Theorem \ref{sl} also holds when $n=1$.
\end{rem}

Then we can see any $(\mathcal{J}^H)^\mathrm{NA}$-energy can be decomposed to values like $(\mathcal{J}^H)^\mathrm{NA}$-slopes when the flag ideal $\mathfrak{a}$ satisfies (*).

G. Chen \cite{G} proved the follwing uniform version of Lejmi-Sz{\' e}kelyhidi conjecture.

\begin{thm}[Theorem 1.1 of \cite{G}]\label{modLSconj}
Notations as in \cite{G}. Given a K{\" a}hler manifold $M^n$ with K{\" a}hler metrics $\chi $ and $\omega_0$. Let $c_0$ be the positive constant such that
\begin{eqnarray*}
\int_M \chi \wedge \frac{\omega_0^{n-1}}{(n-1)!}=c_0\int_M\frac{\omega_0^n}{n!}.
\end{eqnarray*}
Then the following are equivalent:
\begin{itemize}
\item[(1)] There exists a smooth function $\varphi$ such that
\[
\omega_{\varphi}=\omega_0+\sqrt{-1}\partial\bar{\partial}\varphi>0
\]
 satisfies the J-equation
\[
\mathrm{tr}_{\omega_{\varphi}}\chi= c_0.
\]
Moreover, such $\varphi$ is unique up to a constant;
\item[(2)] There exists a smooth function $\varphi$ such that $\varphi$ is the critical point of the $\mathcal{J}_\chi$ functional. Moreover, such $\varphi$ is unique up to a constant;
\item[(3)] The $\mathcal{J}_\chi$ functional is coercive. In other words, there exist a positive constant $\epsilon$ and
another constant $C$ such that $\mathcal{J}_\chi(\varphi) \ge \epsilon\mathcal{J}_{\omega_0}(\varphi)-C$;
\item[(4)] $(M,[\omega_0],[\chi])$ is uniformly J-stable. In other words, there exists a positive constant $\epsilon$
such that for any K{\" a}hler test configuration $(X,\Omega)$ (cf. \cite[Definition 2.10]{DR}, \cite[Definitions 3.2 and 3.4]{SjP}), the invariant $\mathcal{J}_{[\chi]}(X,\Omega)$ (cf. \cite[Definition 6.3]{DR}) satisfies
$\mathcal{J}_{[\chi]}(X,\Omega)\ge \epsilon \mathcal{J}_{[\omega_0]}(X,\Omega)$;
\item[(5)] $(M,[\omega_0],[\chi])$ is uniformly slope J-stable. In other words, there exists a positive constant $\epsilon$
such that for any subvariety $V$ of $M$, the deformation to normal cone of $V$
$(X,\Omega)$ (cf. \cite[Example 2.11 (ii)]{DR} satisfies $\mathcal{J}_{[\chi]}(X,\Omega)\ge \epsilon \mathcal{J}_{[\omega_0]}(X,\Omega)$;
\item[(6)]There exists a positive constant $\epsilon$ such that
\[
\int_V (c_0-(n-p)\epsilon)\omega^p_0-p\chi\wedge\omega_0^{p-1}\ge 0
\]
for any $p$-dimensional subvariety $V$ with $p=1,2,\cdots,n$.
\end{itemize}
\end{thm}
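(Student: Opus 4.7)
The plan is to prove the equivalences cyclically:
$(1)\Leftrightarrow(2)\Rightarrow(3)\Rightarrow(4)\Rightarrow(5)\Rightarrow(6)\Rightarrow(1)$. The first five implications are of soft or formal character, while $(6)\Rightarrow(1)$ is the deep PDE statement and constitutes the main content.

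For $(1)\Leftrightarrow(2)$, one notes that the Euler--Lagrange equation of $\mathcal{J}_\chi$ on the space of K\"ahler potentials is precisely the J-equation $\mathrm{tr}_{\omega_\varphi}\chi=c_0$, and uniqueness up to a constant on both sides is a consequence of the strict convexity of $\mathcal{J}_\chi$ along $C^{1,\bar{1}}$ geodesics. For $(2)\Rightarrow(3)$, once a critical point $\varphi_0$ exists, the functional $\mathcal{J}_\chi-\epsilon\,\mathcal{J}_{\omega_0}$ is still convex along geodesics for $\epsilon>0$ sufficiently small; combining this convexity with a lower bound at infinity (controlled via the Darvas $d_1$-distance and standard integration along geodesics) yields the coercivity asserted in (3).

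The passage $(3)\Rightarrow(4)$ proceeds through the non-Archimedean slope formalism of Boucksom--Hisamoto--Jonsson and Darvas--Rubinstein: a K\"ahler test configuration $(X,\Omega)$ produces a finite-energy geodesic ray $\{\varphi_t\}$ whose asymptotic slopes satisfy $\lim_{t\to\infty}t^{-1}\mathcal{J}_\chi(\varphi_t)=\mathcal{J}_{[\chi]}(X,\Omega)$ and similarly for $\mathcal{J}_{\omega_0}$; coercivity then translates at once into the uniform inequality. The implication $(4)\Rightarrow(5)$ is immediate since deformations to the normal cone are particular test configurations. For $(5)\Rightarrow(6)$, I would compute the slope $\mathcal{J}_{[\chi]}(X,\Omega)$ along the deformation to the normal cone of a $p$-dimensional subvariety $V$ via an intersection-theoretic calculation in the spirit of Example \ref{33} and \S\ref{Mixed}; up to a positive multiplicative constant this slope equals $\int_V(c_0\omega_0^p-p\chi\wedge\omega_0^{p-1})$, from which the $\epsilon$-positivity in (6) follows after letting the contraction parameter tend to zero.

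The main obstacle, and the technical heart of the theorem, is $(6)\Rightarrow(1)$. The natural strategy is a continuity method: one deforms through an auxiliary family of equations, proves openness by the implicit function theorem, and proves closedness by a priori estimates. The $C^0$ bound can be extracted by pluripotential-theoretic techniques of Ko\l{}odziej type once a $C^2$ bound is available, and higher regularity then follows from standard elliptic bootstrap; the entire difficulty is therefore concentrated in the $C^2$ estimate. This is where the numerical condition (6) must enter, typically through the construction of a subsolution (in the sense of Song--Weinkove), whose existence is known to be essentially equivalent to a cone-type positivity condition akin to (6); a carefully chosen test-function plus maximum-principle argument then produces the Hessian bound. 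I expect that reproving this $C^2$ estimate in full generality, and in particular deriving the subsolution from the purely numerical data in (6), will be the decisive step.
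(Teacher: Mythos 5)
The paper does not prove this theorem at all: it is quoted verbatim as Theorem 1.1 of \cite{G} (Gao Chen) and used as an imported black box, exactly like Fact~\ref{h} in the introduction. There is therefore no in-paper proof to compare your proposal against, and the appropriate response in a write-up would simply be the citation you see in the text.

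Taking your outline on its own merits, there are two places that need attention. First, $(5)\Rightarrow(6)$: the leading term of the slope along the $\delta$-contracted deformation to the normal cone of a $p$-dimensional $V$ is proportional to $\int_V\bigl(c_0\omega_0^p-p\chi\wedge\omega_0^{p-1}\bigr)$ (compare Example~\ref{33} and the computation in Lemma~\ref{unst}), but the uniform constant $\epsilon$ in (6) does \emph{not} come from ``letting the contraction parameter tend to zero''; letting $\delta\to 0$ only gives the non-uniform inequality. The $(n-p)\epsilon$ correction is obtained by replacing $\chi$ by $\chi-\epsilon\omega_0$ and rerunning the slope computation, precisely as in the last line of the proof of Lemma~\ref{unst}. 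Second, and more seriously, $(6)\Rightarrow(1)$ is not a proof but a pointer. You correctly locate the bottleneck in the $C^2$ estimate and the passage from the numerical cone condition to a subsolution, but your description of that passage as ``a carefully chosen test-function plus maximum-principle argument'' does not reflect what actually makes Chen's theorem work: deriving a subsolution from the purely numerical data of (6) is the central novelty of \cite{G}, proved via a mass-concentration and induction-on-dimension scheme (together with a reduction through supercritical deformed Hermitian--Yang--Mills type equations), and is a full paper's worth of analysis. Your step $(2)\Rightarrow(3)$ also needs the Darvas--Rubinstein abstract framework rather than bare convexity plus a critical point, though you do gesture at $d_1$-geodesics. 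As written, the proposal is a reasonable high-level map of the circle of implications, but the deep direction is left unproved.
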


\begin{rem}\label{useful}
Note that uniform ``J-stability'' in (4) in Theorem \ref{modLSconj} is ``analytic'' in the sense of \cite{DR}, \cite{SjP}. However, we remark that for any polarized smooth variety $(X,L)$ with an ample divisor $H$, uniform $\mathrm{J}^H$-stability of $(X,L)$ is equivalent to the following condition:
\begin{itemize}
\item[(6)']There exists $\epsilon>0$ such that
\[
\left((n\frac{H\cdot L^{n-1}}{L^n}-(n-p)\epsilon)L-p\,H\right)\cdot L^{p-1}\ge 0
\]
for any $p$-dimensional subvariety $V$ with $p=1,2,\cdots,n$.
\end{itemize}
 In fact, if $(X,L)$ is uniformly $\mathrm{J}^H$-stable, it is easy to see that $(X,L)$ is uniformly slope $\mathrm{J}^H$-stable. By \cite[Proposition 13]{LS} (see Lemma \ref{unst} below), we have
\[
\int_V (c_0-(n-p)\epsilon)\mathrm{c}_1(L)^p-p\,\mathrm{c}_1(H)\cdot \mathrm{c}_1(L)^{p-1}\ge 0
\]
for all $p$-dimensional (algebraic) subvarieties $V$ with $p=1,2,\cdots,n$, where $c_0=n\frac{H\cdot L^{n-1}}{L^n}$. Therefore, (6)' holds.

To show the converse, suppose that (6)' holds. Take K\"{a}hler forms $\omega_0\in\mathrm{c}_1(L)$ and $\chi\in\mathrm{c}_1(H)$. Then, $(X,[\omega_0],[\chi])$ is uniformly J-stable in the sense of \cite{DR} by Theorem \ref{modLSconj}. In particular, $(X,L)$ is uniformly $\mathrm{J}^H$-stable.

We also remark that if the solution to the J-equation exists, it is unique up to a constant by Proposition 2 of \cite{xC} (see \cite[Remark 1.4]{G}).
\end{rem}

Due to Theorem \ref{modLSconj}, uniform slope $\mathrm{J}^H$-stability implies uniform $\mathrm{J}^H$-stability in the case when $X$ is smooth and $H$ is ample. On the other hand, Ross and Thomas proved Theorems 6.1 and 6.4 of \cite{RT07}. Thanks to the theorems, we can decompose $(\mathcal{J}^H)^\mathrm{NA}(\mathcal{X},\mathcal{L})$ into finitely many $(\mathcal{J}^H)^\mathrm{NA}$-energy of semiample deformations to the normal cone (we say that $(\mathcal{J}^H)^\mathrm{NA}$-slopes) under certain conditions. Unfortunately, the assumptions of Theorem 6.1 and 6.4 do not hold in general and we do not know that we can always decompose $(\mathcal{J}^H)^\mathrm{NA}$-energy into a finite number of $(\mathcal{J}^H)^\mathrm{NA}$-slopes. However, we can decompose $(\mathcal{J}^H)^\mathrm{NA}(\mathcal{X},\mathcal{L})$ into the mixed multiplicities instead of $(\mathcal{J}^H)^\mathrm{NA}$-slopes in general by taking an alternation as we show in \S \ref{ThMain}. 

\section{Newton Polyhedron and Toroidal Embeddings}\label{NewTro}

In this section, we prepare the notion of Newton polyhedron to prove our decomposition formula. Here, recall the definitions and some facts of \cite[Chapter II]{KKMS}.

\begin{de}[Toroidal embeddings]\label{toroidal}
Suppose that $Z$ is a $p$-dimensional normal variety, $U$ is a smooth open subvariety of $Z$ and $U\hookrightarrow Z$ is a {\it toroidal embedding without intersection} in the sense of \cite{KKMS}. That is, for any closed point $z$ of $Z$, there exist an affine toric variety $X_{\sigma}$, its closed point $t$ and an isomorphism $\widehat{\mathcal{O}_{Z,z}}\simeq \widehat{\mathcal{O}_{X_\sigma,t}}$ mapping the ideal corresponding to $Z-U$ onto the ideal corresponding to $X_{\sigma}-T$, where $T$ is the $p$-dimensional torus acting on $X_{\sigma}$, and if $E_i$ is any irreducible component of $Z-U$, then it is a normal Weil divisor on $Z$. By replacing $X_\sigma$ by its open toric subvariety, we may assume that $t$ is closed in $X_{\sigma}$ and then we call this a {\it local model} of $(Z,z)$. Then we define as following:
\begin{itemize}
\item $Y$ is a {\it stratum} of $Z$ if it is a locally closed subset that is an irreducible component of $\bigcap _{i\in I}E_i-\bigcup_{j\not\in I}E_j$. It is well-known that $Z$ is the disjoint union of all of its strata. Now fix a stratum $Y$. 
\item A {\it star} of $Y$ if it is an open subset of $Z$ that is the union of all the stratum $Y'$ whose closure contains $Y$. Let us denote it by $\mathrm{Star}\, Y$.
\item $M^{\mathrm{Star}\, Y}$, which we call a lattice with respect to $\mathrm{Star}\, Y$, is a group of Cartier divisors supported on $\mathrm{Star}\, Y-U$. It is a subgroup of the free abelian group of Weil divisors supported on $\mathrm{Star}\, Y-U$. If there is no confusion, we will denote it by $M^Y$ or $M$. We denote $M^{\mathrm{Star}\, Y}_{\mathbb{R}}=M^{\mathrm{Star}\, Y}\otimes_{\mathbb{Z}}\mathbb{R}$. $M^Y_+$ is a subsemigroup of non-negative Cartier divisors and we define $M^Y_{\mathbb{R},+}$ similarly. We call $M^Y_+$ the {\it positive cone} of $M^Y$. We remark that $M_+^{Y}+ (-M_+^{Y})=M^{Y}$, where $M_+^{Y}+ (-M_+^{Y})$ denotes the Minkowski sum.
\item $N^Y=\mathrm{Hom}(M^Y,\mathbb{Z})$ is a dual lattice and $\sigma^Y=\{ n\in N^Y_{\mathbb{R}}=N^Y\otimes\mathbb{R};\forall m\in M_+,\langle n,m\rangle \ge 0 \}$ is a polyhedral cone corresponding to $\mathrm{Star}\, Y$. If $D$ is a Cartier divisor supported on $\mathrm{Star}\, Y-U$, we denote $\mathrm{ord}_D\in M^Y$. We define that $m\le m'$ if $m'-m\in M_{\mathbb{R},+}$ for $m,m'\in M_{\mathbb{R}}$. It is easy to see that $m\le m'$ iff $m'-m$ is a nonnegative function on $\sigma^Y$. Since $\sigma^Y$ is not contained in any hyperplane, $M^Y_{\mathbb{R},+}\cap (-M^Y_{\mathbb{R},+})=0$ (cf. Corollary 1 p.61 loc.cit).
\item $\mathfrak{b}$ is a {\it toroidal fractional ideal} on $Z$ if it is a coherent sheaf of fractional ideals invariant under an isomorphism $\alpha:\widehat{\mathcal{O}_{Z,z_1}}\simeq\widehat{\mathcal{O}_{Z,z_2}}$ preserving strata (i.e. if $Y\subset \overline{Y^*}$ for some stratum $Y^*$, then $\alpha$ maps the ideal sheaf corresponding to $\overline{Y^*}$ isomorphically onto the one corresponding to $\overline{Y^*}$ cf. p.73 of loc.cit.) where $z_1$ and $z_2$ are closed points of $Z$ in the same stratum $Y$. Here, $\widehat{\mathcal{O}_{Z,z_1}}$ means the completion of the local ring at $z_1$. It is well-known that the restriction of $\mathfrak{b}$ to $\mathrm{Star}\, Y$ is a finite sum, $$\sum_{i=1}^l\mathcal{O}_{\mathrm{Star}\, Y}(-D^Y_i)$$ where $D^Y_i$ is a Cartier divisor supported on $\mathrm{Star}\, Y-U$ (cf. \cite[p.83]{KKMS} Lemma 3). Then we say that $\mathfrak{b}$ is generated by $D^Y_i$ or $\mathrm{ord}_{D^Y_i}\in M^Y$. Here, $\mathrm{ord}_{\mathfrak{b}} $ is {\it the order function} of $\mathfrak{b}$ if its restriction to each $\sigma^Y$ is a convex function $\min_{1\le i\le l}\mathrm{ord}_{D^Y_i}$. Then we remark that $\overline{\mathfrak{b}}$, the integral closure of $\mathfrak{b}$, is also toroidal by Theorem $9^*$ of \cite{KKMS}. In fact, 
\[
\overline{\mathfrak{b}}=\sum_{\mathrm{ord}_{D}\ge \mathrm{ord}_{\mathfrak{b}}\,\mathrm{on}\,\sigma^Y}\mathcal{O}_{\mathrm{Star}\, Y}(-D)
\] 
on a star of each stratum $Y$ (cf. Remark \ref{retro}). 
\item Consider a normal variety $V$, an affine birational morphism $\varphi:V\to \mathrm{Star}\, Y$ and the following commutative diagram:
$$
\xymatrix{
& V \ar[dd]^{\varphi} \\
U \ar@{^{(}->}[ur] \ar@{^{(}->}[dr] & \\
& \mathrm{Star}\, Y  ,
}
$$
where $U\hookrightarrow V$ is an open immersion. Then $\varphi$ is an {\it affine toroidal morphism} if for any isomorphism $\alpha:\widehat{\mathcal{O}_{Z,z_1}}\to\widehat{\mathcal{O}_{Z,z_2}}$ preserving strata for closed points $z_1,z_2$ of $Z$ in $Y$, $\alpha$ lifts to: 
\[\xymatrix{
V\times_Z\mathrm{Spec}\,\widehat{\mathcal{O}_{Z,z_2}} \ar[d] \ar[r]^{\cong} \ar@{}[dr]|\circlearrowleft & V\times_Z\mathrm{Spec}\,\widehat{\mathcal{O}_{Z,z_1}} \ar[d] \\
\mathrm{Spec}\,\widehat{\mathcal{O}_{Z,z_2}}  \ar[r]^{\mathrm{Spec}\,\alpha} & \mathrm{Spec}\,\widehat{\mathcal{O}_{Z,z_1}}. \\
}\]
By Theorem $1^*$ of loc.cit, it is known that there is a 1-1 correspondence between the set of $(V,\varphi)$ and the set of rational polyhedral cones $\tau\subset\sigma^Y$ given by
\[
\tau\longmapsto V_\tau=\mathcal{S}pec_{\mathrm{Star}\,Y}\mathscr{A}_{\tau},
\]
where $\mathscr{A}_{\tau}=$ subsheaf $\sum_{D\in \tau^{\vee}\cap M^Y}\mathcal{O}_{\mathrm{Star}\,Y}(-D)$ of the rational function field $K(Z)$.
Here, $\tau^{\vee}=\{m\in M^Y_{\mathbb{R}};\forall n\in \tau,\,\langle n,m\rangle\ge 0 \}$.
\end{itemize}
\end{de}

The definitions were given in \cite{KKMS} and the facts stated here were proved there.
\begin{rem}\label{retro}
We remark about \cite{KKMS}. Notations as in loc.cit.
\begin{itemize}
\item We point out a small error in the proof of Theorem $1^*$ of \cite[p.81]{KKMS}. To be precise, $M^{\tilde{Y}}\cong M^{Y}$ is not always true but only the surjectivity $M^{Y}\twoheadrightarrow M^{\tilde{Y}}$ holds in general. For example, consider the blow up $\mathrm{Bl}_0(\mathbb{A}^2)$ of an affine plane $\mathbb{A}^2\cong \mathrm{Spec}\,k[x,y]$ at $(0,0)$. Let $F$ be the strict transformation of $(xy=0)$, $\tilde{Y}=\mathrm{Bl}_0(\mathbb{A}^2)\setminus F$ and $Y=\mathbb{A}^2$. When we consider $\tilde{Y}$ and $Y$ as toric varieties, we have $M^{\tilde{Y}}\cong M^Y$ (cf. \cite[Chapter I]{KKMS}). However, $\tilde{Y}$ corresponds to a ray $\mathbb{R}_{\ge0}\cdot (1,1)$ if we identify $M^{Y}=\mathbb{Z}\cdot (x)\oplus\mathbb{Z}\cdot (y)$, and hence $M^{\tilde{Y}}=\mathbb{R}\cdot (1,1)$ when we consider $\mathbb{A}^2\setminus\{xy=0\}\hookrightarrow\tilde{Y}$ as a toroidal embedding.

 The mistake does not affect their discussions in \cite[Chapter II]{KKMS} so. In fact, we can easily see $M^{\tilde{Y}}_+$ coincides with the image of $M^Y\cap\tau^\vee$ and $\sigma^{\tilde{Y}}\cong \tau$ via the canonical inclusion $N^Y_{\mathbb{R}}\hookrightarrow N^{\tilde{Y}}_{\mathbb{R}}$.
\item For the sake of the completeness, we explain the proof of \cite[Chapter II, Theorem $9^*$]{KKMS}. The proof of all the assertion works as in toric varieties except the assertion of I that $\mathscr{F}_f$ is integrally closed, where the notations are as in Theorem $9^*$ of loc.cit. Since the conclusion is locally, we may assume that $U\hookrightarrow Z=\mathrm{Star}\, Y$ is a toroidal embedding without self intersection, $f:N^Y_{\mathbb{R}}\to \mathbb{R}$ is a convex and piecewise-linear function such that $f(N^{Y})\subset \mathbb{Z}$ and $f(\lambda x)=\lambda f(x)$ for $\lambda\in\mathbb{R}$, and 
\[
\mathscr{F}_f=\sum_{\mathrm{ord}_{D}\ge f} \mathcal{O}_{\mathrm{Star}\, Y}(-D)
\]
on $Z$. We can easily see that $\mathscr{F}_f$ is a coherent sheaf of fractional ideals. $\mathscr{F}_f$ is integrally closed if and only if its Rees algebra $\bigoplus_{n\ge 0} \mathscr{F}_f^n$ is normal. To prove the latter, it is easy to see that $\bigoplus_{n\ge 0} \mathscr{F}_f^n\otimes\mathcal{O}_{Z,z}$ is normal for any closed point $z\in Z$. By replacing $Y$, we may assume that $z\in Y$. Since $\mathcal{O}_{Z,z}$ is excellent, $\beta: \bigoplus_{n\ge 0} \mathscr{F}_f^n\otimes\mathcal{O}_{Z,z}\to\bigoplus_{n\ge 0} \mathscr{F}_f^n\otimes\widehat{\mathcal{O}_{Z,z}}$ is a regular homomorphism (cf. Lemma 4 of \cite[p.253]{Ma}). Then, $\mathrm{Spec}\,\beta$ has regular fibre and hence we have $\bigoplus_{n\ge 0} \mathscr{F}_f^n\otimes\mathcal{O}_{Z,z}$ is normal if and only if so is $\bigoplus_{n\ge 0} \mathscr{F}_f^n\otimes\widehat{\mathcal{O}_{Z,z}}$. On the other hand, $\mathscr{F}_f$ analytically corresponds to an integrally closed torus-invariant ideal on a normal toric variety. Therefore, $\bigoplus_{n\ge 0} \mathscr{F}_f^n\otimes\widehat{\mathcal{O}_{Z,z}}$ is normal.
\end{itemize}
\end{rem}
 Here, we prepare the following definition motivated by the notion of Newton polygon used in \cite[Theorem 6.4]{RT07}: 

\begin{de}[Newton polyhedron]\label{NewPoly}
Suppose that $U\hookrightarrow Z$ is a toroidal embedding without self intersection, $V=\mathrm{Star}\, Y$ where $Y$ is a stratum of $Z$ and $\mathfrak{b}$ is a toroidal fractional ideal sheaf on $V$. Then we define in this paper as follows:
\begin{itemize}
\item We call a conical polyhedron $P$ in $M_{V,\mathbb{R}}$ the {\it Newton polyhedron associated with} $\mathfrak{b}$ {\it on} $V$ denoted by $\mathrm{NP}^{V}_{\mathfrak{b}}$ if $\mathfrak{b}=\sum_{i=1}^n\mathcal{O}_V(-D_i)$ and $$P=\mathrm{Conv}\langle \mathrm{ord}_{D_1},\cdots ,\mathrm{ord}_{D_n}\rangle +M^V_{\mathbb{R},+},$$ where $\mathrm{Conv}\langle \mathrm{ord}_{D_1},\cdots ,\mathrm{ord}_{D_n}\rangle$ means the convex hull of $\{ \mathrm{ord}_{D_1},\cdots ,\mathrm{ord}_{D_n}\}$. Here, $P$ is independent of the choice of $D_1,\cdots ,D_n$.
\item If $C$ is a compact polyhedron that has finite vertices and satisfies that
\[
P=C+M^V_{\mathbb{R},+},
\]
we call it a {\it generating convex set} of $P$ and then say that the vertices of $C$ {\it generates} $P$. 
\item $F$ is {\it the bounded part of faces} of $P$ if it is the union of all of the bounded faces of $P$. Since $M^V_{\mathbb{R},+}\cap(-M^V_{\mathbb{R},+})=0$, we have $F\ne \emptyset$.
\end{itemize}
\end{de}

\begin{prop}\label{prevpro}
Notations as in Definition \ref{NewPoly}. Then the followings are equivalent:
\begin{enumerate}[(1)]
\item $\mathfrak{b}$ is integrally closed.
\item For each $D \in M$, 
\[
D \in \mathrm{NP}^V_{\mathfrak{b}}\Leftrightarrow \mathcal{O}_V(-D)\subset \mathfrak{b}.
\]
\end{enumerate}
\end{prop}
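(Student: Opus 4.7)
The plan is to derive both implications from one combinatorial equivalence:
\[
(\star) \qquad \mathrm{ord}_D \ge \mathrm{ord}_{\mathfrak{b}} \text{ on } \sigma^Y \quad\Longleftrightarrow\quad D \in \mathrm{NP}^V_{\mathfrak{b}}, \qquad \forall D \in M^V.
\]
Granting $(\star)$, the proposition is immediate from the description of the integral closure recalled in Remark \ref{retro}: since $\overline{\mathfrak{b}} = \sum_{\mathrm{ord}_D \ge \mathrm{ord}_{\mathfrak{b}}} \mathcal{O}_V(-D)$, we have $\mathcal{O}_V(-D) \subset \overline{\mathfrak{b}}$ iff $\mathrm{ord}_D \ge \mathrm{ord}_{\mathfrak{b}}$ on $\sigma^Y$, which by $(\star)$ is exactly $D \in \mathrm{NP}^V_{\mathfrak{b}}$. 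Hence $\mathfrak{b}$ is integrally closed precisely when the containment $\mathcal{O}_V(-D) \subset \mathfrak{b}$ is characterized by membership in $\mathrm{NP}^V_{\mathfrak{b}}$, which is the content of (2).

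The forward direction of $(\star)$ is the easy one. Writing $D = \sum_i t_i\,\mathrm{ord}_{D_i} + m$ with $t_i \ge 0$, $\sum_i t_i = 1$, and $m \in M^V_{\mathbb{R},+}$, for any $n \in \sigma^Y$ we have $\langle n, m\rangle \ge 0$ by definition of $\sigma^Y$, so
\[
\langle n, D\rangle = \sum_i t_i \langle n, \mathrm{ord}_{D_i}\rangle + \langle n, m\rangle \ge \min_i \langle n, \mathrm{ord}_{D_i}\rangle = \mathrm{ord}_{\mathfrak{b}}(n).
\]

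The reverse direction is the crux, and I would prove it by Hahn--Banach separation. Suppose $D \notin \mathrm{NP}^V_{\mathfrak{b}}$. Since the Newton polyhedron is closed and convex in $M^V_{\mathbb{R}}$, there exist $n \in N^Y_{\mathbb{R}}$ and $c \in \mathbb{R}$ with $\langle n, D\rangle < c \le \langle n, p\rangle$ for every $p \in \mathrm{NP}^V_{\mathfrak{b}}$. The recession containment $\mathrm{NP}^V_{\mathfrak{b}} + M^V_{\mathbb{R},+} \subset \mathrm{NP}^V_{\mathfrak{b}}$ built into Definition \ref{NewPoly} forces $\langle n, m\rangle \ge 0$ for every $m \in M^V_{\mathbb{R},+}$, whence $n \in \sigma^Y$. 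Specializing $p = \mathrm{ord}_{D_i}$ yields $\langle n, D\rangle < \min_i \langle n, \mathrm{ord}_{D_i}\rangle = \mathrm{ord}_{\mathfrak{b}}(n)$, contradicting $\mathrm{ord}_D \ge \mathrm{ord}_{\mathfrak{b}}$ on $\sigma^Y$. The main obstacle is precisely verifying that the separating functional lies in $\sigma^Y$ rather than in a larger half-space of $N^Y_{\mathbb{R}}$; the recession condition handles this cleanly, so no serious difficulty remains.
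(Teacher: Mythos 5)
Your proof is correct and follows the same overall structure as the paper's: both reduce the proposition, via Remark \ref{retro} (that is, Theorem $9^*$ of \cite{KKMS}), to the combinatorial identity $\mathrm{NP}^V_{\mathfrak{b}} = \{h \in M_{\mathbb{R}} : h \ge \mathrm{ord}_{\mathfrak{b}} \text{ on } \sigma^Y\}$, and prove the inclusion $\supset$ of that identity identically. The only divergence is in the inclusion $\subset$ (your ``reverse direction''): the paper picks a finite set of inequalities $\rho_j$ cutting out $\sigma^Y$ and runs a Farkas-type argument, observing that the cone cut out by $\mathrm{ord}_{D_i}-h\ge 0$ and $\rho_j\ge 0$ is degenerate, extracting a nontrivial linear relation $\sum m_i(\mathrm{ord}_{D_i}-h)+\sum n_j\rho_j=0$ and then arguing some $m_i\ne 0$; you instead apply the separating-hyperplane theorem directly to the closed convex polyhedron $\mathrm{NP}^V_{\mathfrak{b}}$ and use its recession cone $M^V_{\mathbb{R},+}$ to land the separating functional inside $\sigma^Y$. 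These are dual formulations of the same polyhedral fact, but your version is shorter and avoids the mildly delicate step in the paper's proof of showing that not all the $m_i$ vanish. Both reach the same conclusion; there is no gap in your argument.
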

\begin{proof}
Let $\mathfrak{b}=\sum \mathcal{O}(-D_i)$. We only have to show the latter condition is equivalent to the following:
\begin{itemize}
\item[(3)] For each $D \in M$, 
\[
\mathrm{ord}_D\ge \mathrm{ord}_{\mathfrak{b}}\quad \mathrm{on}\quad \sigma^{Y} \Longleftrightarrow \mathcal{O}_V(-D)\subset \mathfrak{b}.
\]
\end{itemize}
In fact, (3) is equivalent to (1) by Theorem $9^*$ of \cite[Chapter II]{KKMS}. Therefore, we only have to prove that
\begin{eqnarray}\label{preprop}
\{ h\in M_{\mathbb{R}}; h\ge \mathrm{ord}_{\mathfrak{b}}\} =\mathrm{NP}^V_{\mathfrak{b}}.
\end{eqnarray}
$\supset$: If $h\in \mathrm{NP}^V_{\mathfrak{b}}$, we have by the definition $\mathrm{ord}_{\mathfrak{b}}=\min \mathrm{ord}_{D_i}$ on $\sigma^Y$ and $$h\ge \sum_{a_i\ge0;\sum a_i=1}a_i\mathrm{ord}_{D_i}\ge\min \mathrm{ord}_{D_i}.$$
$\subset$: Suppose that $h\in M_{\mathbb{R}}$ satisfies $h\ge \mathrm{ord}_{\mathfrak{b}}$. Equivalently, $h\ge \min \mathrm{ord}_{D_i}$ on $\sigma^Y$. Take finite elements $\rho_j\in M_+\setminus\{0\}$ defining $\sigma^Y$. Then,
\[
\min_{i,j}\{ \mathrm{ord}_{D_i}-h, \rho_j\}\le0.
\]
on $N_{\mathbb{R}}$. This means that the polyhedral cone in $N_{\mathbb{R}}$ defined by inequalities $\mathrm{ord}_{D_i}-h\ge0, \rho_j\ge0$ for all $i,j$ is contained in a hyperplane. Hence, the cone generated by $\mathrm{ord}_{D_i}-h, \rho_j$ in $M_{\mathbb{R}}$ contains a certain linear subspace and there exist $m_i,n_j\ge0$ but some $m_i$ or $n_j$ is not $0$ such that 
\[
\sum m_i(\mathrm{ord}_{D_i}-h)+\sum n_j \rho_j=0.
\]
If all $m_i=0$, then $\sum n_j \rho_j$ does not vanish on the relative interior of $\sigma^Y$. Then, there exists $m_i\ne 0$ and 
\[
h\in \frac{\sum m_i\mathrm{ord}_{D_i}}{\sum m_i}+M_{\mathbb{R},+}. 
\]
Therefore, $h\in \mathrm{NP}^V_{\mathfrak{b}}$.
\end{proof}

\begin{rem}\label{bdface}
Notations as in Definition \ref{NewPoly}. We can easily show that the bounded part of faces $F$ equals to $F'$, which is the union of faces of $C$ that are also of $P$. We can prove this by the following.
\begin{claim}
 If $x\in P$, then there exists $y\in F'$ and $x-y\in M_{\mathbb{R},+}$. 
\end{claim}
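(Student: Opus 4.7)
The plan is to construct a minimal element $y$ of $P$ below $x$ with respect to the partial order $\le$ induced by $M_{\mathbb{R},+}$, and then to show that any such minimal element lies on a bounded face of $P$ which, by the Minkowski-sum structure, must simultaneously be a face of $C$. To produce $y$, I would consider the slice $S \coloneq (x-M_{\mathbb{R},+})\cap P$, which is closed, convex, and nonempty (it contains $x$). Because $M_{\mathbb{R},+}\cap(-M_{\mathbb{R},+})=0$ by Definition \ref{toroidal}, the cone $M_{\mathbb{R},+}$ is pointed and the recession cone of $S$ equals $M_{\mathbb{R},+}\cap(-M_{\mathbb{R},+})=\{0\}$, so $S$ is compact. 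Picking a linear functional $\ell$ on $M_{\mathbb{R}}$ that is strictly positive on $M_{\mathbb{R},+}\setminus\{0\}$ (which exists by pointedness), let $y\in S$ minimize $\ell$. Then $y\le x$ by construction, and any $z\in P$ with $z<y$ would satisfy $z\in S$ and $\ell(z)<\ell(y)$, contradicting minimality; hence $y$ is a minimal element of $(P,\le)$.

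Next, I would argue that $y$ lies on a bounded face of $P$. Let $F_y$ denote the smallest face of $P$ containing $y$, so that $y$ is in the relative interior of $F_y$. If $F_y$ were unbounded, its recession cone would contain some nonzero vector $v$; since the recession cone of $P=C+M_{\mathbb{R},+}$ equals $M_{\mathbb{R},+}$ (as $C$ is compact), we have $v\in M_{\mathbb{R},+}\setminus\{0\}$, and $y\pm tv\in F_y\subseteq P$ for sufficiently small $t>0$ by relative interiority. Then $y-tv\in P$ and $y-(y-tv)=tv\in M_{\mathbb{R},+}\setminus\{0\}$, contradicting the minimality of $y$. Hence $F_y$ is bounded.

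Finally, I would invoke the Minkowski-sum decomposition of the faces of $P=C+M_{\mathbb{R},+}$: every face of $P$ is uniquely of the form $C_0+M_0$ for some face $C_0$ of $C$ and some face $M_0$ of $M_{\mathbb{R},+}$. This decomposition is obtained by taking a supporting linear functional $\ell'$ for the given face, noting that $\ell'\ge 0$ on $M_{\mathbb{R},+}$ (else $\ell'$ would be unbounded below on $P$), and writing its minimum set on $P$ as $\{c\in C:\ell'(c)=\min_{C}\ell'\}+\{m\in M_{\mathbb{R},+}:\ell'(m)=0\}$. Applied to the bounded face $F_y$, this forces $M_0=\{0\}$, so $F_y=C_0$ is a face of $C$ which is simultaneously a face of $P$. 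Therefore $F_y\subseteq F'$, giving $y\in F'$ with $x-y\in M_{\mathbb{R},+}$ as required. The main technical point is the Minkowski-sum face decomposition, which is a standard but somewhat delicate piece of polyhedral convex analysis; the rest of the argument is routine.
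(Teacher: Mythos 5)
Your proof is correct but takes a genuinely different route from the paper's. You construct the element $y$ directly as a minimizer of a strictly positive linear functional on the compact set $(x - M_{\mathbb{R},+}) \cap P$, then argue that the minimal face $F_y$ of $P$ containing $y$ must be bounded (else moving backward along a recession direction would contradict minimality), and finally invoke the Minkowski-sum face decomposition of $P = C + M_{\mathbb{R},+}$ to conclude that the bounded face $F_y$ is a face of $C$, hence lies in $F'$. The paper instead runs a dimensional descent by contradiction: it reduces to $x \in C$, takes the minimal face $H$ of $P$ through $x$, and if $H \cap C$ fails to be a face of $P$, produces a direction $\xi \in M_{\mathbb{R},+}\setminus\{0\}$ along which one can move from $x$ to a strictly smaller face $H'$ with $\dim(H' \cap C) < \dim(H \cap C)$, iterating until termination. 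Your approach is cleaner and more conceptual, packaging the geometry into two standard facts (pointed cones give compact lower sets; faces of a Minkowski sum decompose as sums of faces), at the cost of needing to justify the Minkowski-sum decomposition; the paper's descent argument is more elementary and self-contained, avoiding that lemma but requiring a careful iteration with a decreasing dimension. Both are valid, and your compactness step is a nice observation: it shows at once that a minimal $y \le x$ exists in $P$ and furnishes the bounded face, which the paper obtains only implicitly through the termination of its recursion.
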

In fact, it is easy to see that $F'\subset F$. Assume that $H$ is a bounded face of $P$ but not contained in $C$. Let $h_1,\cdots,h_k$ be all the vertex of $H$. By the assumption, some $h_j$ is not contained in $C$. Then there exists $y\in F'$ such that $h_j-y\in M_{\mathbb{R},+}$ by the claim. Since $h_j$ is also a vertex of $P$, $2h_j-y\in P$ and $h_j=\frac{1}{2}((2h_j-y)+y)$, we have $h_j=y\in C$. It is a contradiction. Therefore, $F\subset F'$.

\begin{proof}[Proof of Claim]
Let $C=\mathrm{Conv}\langle y_1,\cdots ,y_n\rangle$ be a generating convex set of $P$. We may assume that $P\subset M_{\mathbb{R},+}$ by translation by a sufficiently large $w\in M_{+}$. Assume that the claim does not hold for $x\in P$. As we saw in the last part of the proof of Proposition \ref{prevpro}, there exists $z\in C$ such that $x\ge z$. Therefore we may assume $x\in C$. Let $H$ be the minimal face of $P$ contains $x$. Then $H\cap C$ is a face of $C$. Assume that the face $H\cap C$ of $C$ was not a face of $P$. We see that there exist $\eta\in H\setminus C$ and $w_i\in M_{\mathbb{R},+}$ such that
\[
\eta=\sum_{a_i\ge 0; \, \sum a_i=1} a_i(y_i+w_i).
\]
If $H$ is a face defined by a linear function $f$ and $f(\sum a_iy_i)>0$, then $f(\sum a_iw_i)<0$ and it contradicts to the fact that $f(\sum a_iy_i+R\sum a_iw_i)\ge 0$ for any $R>0$. Therefore, $\sum a_iy_i\in H\cap C$ and $\xi=\eta-\sum a_iy_i\in M_{\mathbb{R},+}\setminus \{0\}$. Since $x$ is contained in the relative interior of $H$ and $M_{\mathbb{R},+}\cap(-M_{\mathbb{R},+})=0$, there exists $s>0$ such that $x-s\xi\in H$ but $x-(s+\epsilon)\xi\not\in P$ for $\epsilon>0$. Then $x-s\xi$ is contained in a proper face $H'$ of $H$ and it follows from the similar argument as above that there exists $\gamma\in H'\cap C$ such that $x-s\xi\ge \gamma$. Since $\mathrm{dim}(H'\cap C)<\mathrm{dim}(H\cap C)$, we can iterate the above arguments by replacing $x$ and $H$ by $\gamma$ and $H'$ to obtain a face $H''$ of $P$ that is also a face of $C$ and $y\in H''$ such that $x\ge y$. It is a contradiction.
\end{proof}
 
 We also remark that vertices of $F$ coincides with vertices of $P$ and hence generate $P$.
\end{rem}

\begin{rem}
In Proposition \ref{prevpro}, if $\mathfrak{b}$ is integrally closed, then the following are equivalent.
\begin{itemize}
\item[(1)] $\mathfrak{b}$ is invertible;
\item[(2)] $\mathrm{NP}_{\mathfrak{b}}^V$ has only one vertex.
\end{itemize}
In fact, $(2)\Rightarrow (1)$ is easy and if $\mathfrak{b}$ is invertible, there exists a Cartier divisor $D$ supported on $V-U$ such that $\mathfrak{b}=\mathcal{O}_V(-D)$
due to Lemma 3 of \cite[p.83]{KKMS}.
\end{rem}

\begin{lem}\label{3rd}
Notations as in Proposition \ref{prevpro}. Suppose that $\mathrm{NP}^V_{\mathfrak{b}}$ has integral vertices $x_1,x_2,\cdots ,x_k$ and $g:V'\to V$ is an affine toroidal morphism corresponding to a polyhedral cone $\tau \subset \sigma^Y$ (cf. Definition \ref{toroidal}). Then $\mathrm{NP}^{V'}_{g^{-1}\mathfrak{b}}$ is generated by $x_1,x_2,\cdots ,x_k$. If $g^{-1}\mathfrak{b}$ is invertible, then one of $y_1,y_2,\cdots ,y_k$, which are the images of $x_1,x_2,\cdots ,x_k$ under the canonical map $M_V\to M_{V'}$, is the unique vertex of $\mathrm{NP}^{V'}_{g^{-1}\mathfrak{b}}$.
\end{lem}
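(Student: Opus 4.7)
The plan is to unwind the definitions and carry out a short convex-geometry argument in three steps. First, I would write $\mathfrak{b} = \sum_{j=1}^n \mathcal{O}_V(-E_j)$; since $M^{V}_{\mathbb{R},+} \cap (-M^{V}_{\mathbb{R},+}) = 0$, any vertex of $\mathrm{NP}^V_\mathfrak{b} = \mathrm{Conv}\langle \mathrm{ord}_{E_j}\rangle + M^V_{\mathbb{R},+}$ must lie among the $\mathrm{ord}_{E_j}$'s, so after relabelling I may assume $x_i = \mathrm{ord}_{E_i}$ for $i = 1,\ldots,k$, and each of $\mathrm{ord}_{E_{k+1}},\ldots,\mathrm{ord}_{E_n}$ admits a decomposition $\mathrm{ord}_{E_j} = \sum_{i=1}^k a_{ij} x_i + w_j$ with $a_{ij}\ge0$, $\sum_i a_{ij}=1$ and $w_j \in M^V_{\mathbb{R},+}$.

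Second, I compute the pullback ideal as $g^{-1}\mathfrak{b} = \sum_j \mathcal{O}_{V'}(-g^*E_j)$, so by the definition of Newton polyhedron,
\[
\mathrm{NP}^{V'}_{g^{-1}\mathfrak{b}} = \mathrm{Conv}\langle \mathrm{ord}_{g^*E_j}\rangle_{j=1}^n + M^{V'}_{\mathbb{R},+},
\]
and $\mathrm{ord}_{g^*E_j} = \phi(\mathrm{ord}_{E_j})$ where $\phi : M^V \to M^{V'}$ is the canonical surjection; in particular $\phi(x_i)=y_i$. I then want to show $\mathrm{NP}^{V'}_{g^{-1}\mathfrak{b}} = \mathrm{Conv}\langle y_1,\ldots,y_k\rangle + M^{V'}_{\mathbb{R},+}$. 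The inclusion $\supseteq$ is immediate. For $\subseteq$, applying $\phi$ to the decomposition of step one gives $\mathrm{ord}_{g^*E_j} = \sum_i a_{ij} y_i + \phi(w_j)$, and I invoke Remark \ref{retro} to conclude $\phi(w_j) \in M^{V'}_{\mathbb{R},+}$: indeed, $M^{V'}_+$ is the image of $M^V\cap\tau^\vee$, and since $\tau\subset\sigma^Y$ we have $M^V_+ = M^V\cap(\sigma^Y)^\vee \subseteq M^V\cap\tau^\vee$, so $\phi(M^V_+) \subseteq M^{V'}_+$.

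Third, for the invertibility claim, if $g^{-1}\mathfrak{b}$ is invertible then Lemma 3 of \cite[p.~83]{KKMS} (recorded in the paper just after Proposition \ref{prevpro}) gives $g^{-1}\mathfrak{b} = \mathcal{O}_{V'}(-D)$ for some Cartier divisor $D$ supported on the toroidal boundary, so the Newton polyhedron reduces to $\{\mathrm{ord}_D\} + M^{V'}_{\mathbb{R},+}$, whose unique vertex is $\mathrm{ord}_D$. Combining with step two, $\mathrm{ord}_D = \sum a_i y_i + w$ with $w\in M^{V'}_{\mathbb{R},+}$, $a_i\ge0$, $\sum a_i=1$; writing $y_i = \mathrm{ord}_D + u_i$ with $u_i\in M^{V'}_{\mathbb{R},+}$ and substituting, one gets $\sum a_i u_i + w = 0$, and since $M^{V'}_{\mathbb{R},+}\cap(-M^{V'}_{\mathbb{R},+})=0$ each nonnegative summand vanishes, so $u_i = 0$ whenever $a_i > 0$, giving some $y_i=\mathrm{ord}_D$.

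The main technical subtlety to keep straight is the behaviour of the positive cone under $\phi$: it is a surjection rather than an isomorphism, and the one-way inclusion $\phi(M^V_+)\subseteq M^{V'}_+$ (which is precisely the content of the clarification in Remark \ref{retro}) is what makes the $\subseteq$ direction of step two go through; the rest is routine convex geometry.
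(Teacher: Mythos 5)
Your proof is correct, and it takes a genuinely different route from the paper's. The paper reduces $\mathfrak{b}$ to the ideal $\mathfrak{c}$ generated by the vertices, invokes the fact that integral closure is preserved under pullback (so $\overline{g^{-1}\mathfrak{b}}=\overline{g^{-1}\mathfrak{c}}$), and for the invertibility clause factors $g$ through the normalized blow-up along $\mathfrak{b}$ and appeals to Lemma 1.8 of \cite{BHJ} to identify $g^{-1}\mathfrak{b}$ with $g^{-1}\mathfrak{c}$. You instead stay entirely within convex geometry: you decompose each non-vertex generator as a convex combination of the $x_i$ plus a vector in $M^V_{\mathbb{R},+}$ (justified by the claim in Remark \ref{bdface} that the vertices of $P$ generate it), push this decomposition forward along $\phi\colon M^V\to M^{V'}$, and observe that $\phi(M^V_{\mathbb{R},+})\subseteq M^{V'}_{\mathbb{R},+}$. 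The latter inclusion is actually even more direct than the biduality argument you give: effective Cartier divisors on $V$ supported on the boundary pull back to effective Cartier divisors on $V'$ supported on the boundary, so $\phi(M^V_+)\subseteq M^{V'}_+$ immediately, and passing to $\mathbb{R}$-cones is automatic. For the invertibility clause your cancellation argument (writing $\mathrm{ord}_D=\sum a_iy_i+w$ and $y_i=\mathrm{ord}_D+u_i$, obtaining $\sum a_iu_i+w=0$ and using $M^{V'}_{\mathbb{R},+}\cap(-M^{V'}_{\mathbb{R},+})=0$) cleanly replaces the paper's appeal to normalized blow-ups. What the paper's approach buys is a formulation aligned with the ideal-theoretic toolkit used elsewhere in the section; what yours buys is self-containment — you only need the convexity facts already established in Remark \ref{bdface} and the monotonicity of the positive cone under $\phi$, with no external references beyond Lemma 3 of \cite{KKMS} for the $\mathcal{O}_{V'}(-D)$ presentation of the invertible pullback.
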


\begin{proof}
 By the equation (\ref{preprop}) in the proof of Proposition \ref{prevpro} and Theorem $9^*$ of \cite{KKMS}, we have $\mathrm{NP}^V_{\mathfrak{b}}=\mathrm{NP}^V_{\overline{\mathfrak{b}}}$ if $\overline{\mathfrak{b}}$ is the integral closure of $\mathfrak{b}$. Let $\mathfrak{c}$ be an ideal on $V$ generated by $x_1,x_2,\cdots ,x_k$. Then $\overline{\mathfrak{b}}=\overline{\mathfrak{c}}$ and hence $\overline{g^{-1}\mathfrak{b}}=\overline{g^{-1}\mathfrak{c}}$. Hence, $\mathrm{NP}^{V'}_{g^{-1}\mathfrak{b}}=\mathrm{NP}^{V'}_{g^{-1}\mathfrak{c}}$. We can see $\mathrm{NP}^{V'}_{g^{-1}\mathfrak{c}}$ is generated by $y_1,y_2,\cdots ,y_k$. For the second assertion, $\mathrm{NP}^{V'}_{g^{-1}\mathfrak{b}}$ has the unique vertex by the previous remark. Since $g^{-1}\mathfrak{b}$ is invertible, $g$ factors through the normalized blow up $W$ along $\mathfrak{b}$, $W$ is the normalized blow up along $\mathfrak{c}$ due to Lemma 1.8 of \cite{BHJ}. Hence, $g^{-1}\mathfrak{b}=g^{-1}\mathfrak{c}$ and is generated by $y_1,y_2,\cdots ,y_k$. Then $g^{-1}\mathfrak{b}$ is generated by one of $y_1,y_2,\cdots ,y_k$ since $g^{-1}\mathfrak{b}$ is invertible and whose bounded part of faces is one point. Thus, one of $y_1,y_2,\cdots ,y_k$ is the unique vertex.
\end{proof}

\begin{ex}\label{yokuwakaru}
We illustrate the proof of Theorems \ref{slope} and \ref{impl} briefly by a few examples. Let $U\hookrightarrow X$ be a toroidal embedding and take a flag ideal $$\mathfrak{a}=\sum_{k=0}^r\mathcal{O}_X(-D_k)t^k,$$ where each $D_k$ is a Cartier divisor supported on $X-U$. It is necessary for (*) in Theorem \ref{mu} to seek out what is a generator of $\mathfrak{a}^n$. Then we can consider the Newton polyhedron of $\mathfrak{a}$. Note that $\mathrm{NP}_{\mathfrak{a}^k}=k\mathrm{NP}_{\mathfrak{a}}$. As we saw, the Newton polyhedron tells us what is a generator of $\mathfrak{a}^n$ for $n\ge 0$ if $\mathfrak{a}^n$ is integrally closed. First, as in \cite[Theorem 6.4]{RT07}, let us consider when each $D_k$ is some multiple of one Cartier divisor $D$. When
\[
\mathfrak{a}=t^4+\mathcal{O}(-3D),
\]
$\overline{\mathfrak{a}}$ contains $\mathcal{O}(-2D)t^{2}$, which is not on the bounded part of faces $F$. Therefore, we can conclude that the integral points of $F$ do not necessarily generate $\mathfrak{a}$. However, if there exists $D'$ such that $D=2D'$, the integral points of $nF$ generate $\mathfrak{a}^n$. In general, we will prove in Proposition \ref{cov} that there exists $\pi:X'\to X$, where $\pi$ is a composition of its 2-cyclic coverings and its Bloch-Gieseker covering \cite{BG} such that there exists $D'$ that satisfies $\pi^*D=2D'$. Here, we also remark that $F$ is one-dimensional then. 

Next, let us consider 
\[
\mathfrak{a}=t^2+\mathcal{O}(-E_1)t+\mathcal{O}(-E_1-E_2).
\]
Then $$\mathfrak{a}^2=t^4+\mathcal{O}(-E_1)t^3+(\mathcal{O}(-E_1)+\mathcal{O}(-E_2))\mathcal{O}(-E_1)t^2+\mathcal{O}(-2E_1-E_2)t+\mathcal{O}(-2E_1-2E_2)$$ and hence $F$ is not one-dimensional. We can easily see that if $\mathfrak{a}$ does not satisfy (*). If $\frac{1}{2}E_1$ and $\frac{1}{2}E_2$ are also Cartier divisors, 
\[
\overline{\mathfrak{a}}=t^2+\left(\mathcal{O}(-\frac{1}{2}E_1)+\mathcal{O}(-\frac{1}{2}E_2)\right)\mathcal{O}(-\frac{1}{2}E_1)t+\mathcal{O}(-E_1-E_2).
\]
Then the intersection of $F$ and the hyperplane defined by $t^1$ is generated by $E_1$ and $\frac{1}{2}(E_1+E_2)$. Assume that $f_1,f_2$ are local generators of $\frac{1}{2}E_1$ and $\frac{1}{2}E_2$ respectively. Take the blow up $\pi:X'\to X$ along $\mathcal{O}(-\frac{1}{2}E_1)+\mathcal{O}(-\frac{1}{2}E_2)$ with the exceptional divisor $\tilde{E}$. Assume that $F_1$ and $F_2$ are the strict transformations of $\frac{1}{2}E_1$ and $\frac{1}{2}E_2$ respectively. Then
\[
(\pi\times\mathrm{id}_{\mathbb{A}^1})^{-1}\overline{\mathfrak{a}}=t^2+\mathcal{O}(-2\tilde{E}-F_1)t+\mathcal{O}(-4\tilde{E}-2F_1-2F_2).
\]
We can easily see that the bounded part of faces $F$ is one-dimensional and $(\pi\times\mathrm{id}_{\mathbb{A}^1})^{-1}\overline{\mathfrak{a}}$ satisfies that (*). However, if $L$ is a polarization of $X$ and $L'=\pi^*L$, $L'$ is big and semiample but not ample and we can not apply Theorem \ref{sl} immediately. On the other hand, $L'$ is an accumulation point of ample $\mathbb{Q}$-line bundles to which we can apply Theorem \ref{sl}, and hence we can calculate $(\mathcal{J}^H)^\mathrm{NA}$-energy by using the mixed multiplicity of Cartier divisors on $X'$. 
\end{ex}

 \section{Construction of the Alternation}\label{ThMain}
In this section we prove Theorems \ref{a} and \ref{b} that we stated in \S \ref{Intro}. We want to show the following theorem motivated by Theorems 6.1 and 6.4 of \cite{RT07}:

\begin{thm}\label{slope}
Given an $n$-dimensional polarized variety $(X,L)$, and an flag ideal $\mathfrak{a}=\sum\mathfrak{a}_it^i\subset \mathcal{O}_{X\times \mathbb{A}^1}$ such that $\mathfrak{a}_0\ne0$. Then there exists an alternation $\pi:X'\to X$ (i.e. $\pi$ is a generically finite and proper morphism) such that $X'$ is smooth and irreducible, $D_0$ is an snc divisor corresponding to $\pi^{-1}\mathfrak{a}_0$ and the integral closure $\overline{(\pi\times\mathrm{id}_{\mathbb{A}^1})^{-1}\mathfrak{a}}$ of the inverse image of $\mathfrak{a}$ to $X'\times\mathbb{A}^1$ satisfies (*) in Theorem \ref{mu}. Moreover, if $(\mathrm{Bl}_{\mathfrak{a}}(X\times \mathbb{A}^1),L_{\mathbb{A}^1}-E)$ is a semiample test configuration, where $E=\Pi^{-1}(\mathfrak{a})$ and $\Pi:\mathrm{Bl}_{\mathfrak{a}}(X\times \mathbb{A}^1)\to X\times \mathbb{A}^1$ is the blow up, $(\pi\times\mathrm{id}_{\mathbb{A}^1})^{-1}\mathfrak{a}\cdot (\pi\times\mathrm{id}_{\mathbb{A}^1})^*L_{\mathbb{A}^1}$ is semiample and $\pi^*L-D_0$ is nef.
\end{thm}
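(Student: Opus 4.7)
The plan is to interpret condition (*) in terms of the Newton polyhedra of Definition \ref{NewPoly}. Namely, if $\mathfrak{b}=\overline{(\pi\times\mathrm{id})^{-1}\mathfrak{a}}$ is realized as a toroidal fractional ideal on a suitable toroidal embedding of $X'\times\mathbb{A}^1$, then (*) is equivalent to requiring that, on the star of every stratum, $\mathrm{NP}_\mathfrak{b}$ has a unique integral vertex $(D_k,k)$ at each height $t=k$ for $k=0,\dots,r$, and that the bounded part of faces is the piecewise-linear path connecting these vertices in order. By Proposition \ref{prevpro}, such a shape recovers $\mathfrak{b}=\sum_k\mathscr{I}_{D_k}t^k$ with $\mathscr{I}_{D_k}$ invertible, and the multiplicative description of $\mathfrak{b}^m$ then reduces to identifying the integral lattice points of the dilated polyhedron $m\cdot\mathrm{NP}_\mathfrak{b}$ on its bounded part of faces with the Minkowski combinations $(m-i)D_j+iD_{j+1}$, together with the fact that products of invertible ideals are integrally closed.

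I would construct $\pi$ in three stages, following the heuristic of Example \ref{yokuwakaru}. \textbf{Stage 1 (principalization).} Apply Hironaka's principalization to $\prod_i\mathfrak{a}_i$ to obtain $\pi_1:X_1\to X$ smooth and irreducible with $\pi_1^{-1}\mathfrak{a}_i=\mathcal{O}_{X_1}(-F_i)$ for effective Cartier divisors $F_i$ of snc total support. This makes $X_1\times\mathbb{A}^1$ a toroidal embedding on which the pullback of $\mathfrak{a}$ is toroidal, with Newton polyhedron generated on each star by $\{(F_0,0),\dots,(F_{r-1},r-1),(0,r)\}$, and $D_0:=F_0$ becomes snc. \textbf{Stage 2 (Bloch--Gieseker).} The vertices of the bounded part of faces of $\mathrm{NP}^Y$ are rational convex combinations of the generators but need not be integral in the lattice. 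Let $N$ be a common denominator; apply iterated Bloch--Gieseker covers \cite{BG} (or iterated $2$-cyclic covers, as in Example \ref{yokuwakaru}) composed with further log-resolutions to obtain $\pi_2:X_2\to X_1$ smooth and irreducible such that each $\frac{1}{N}\pi_2^*F_i$ becomes Cartier; after pulling back, every vertex becomes integral in the refined lattice. \textbf{Stage 3 (toroidal blow-up).} Blow up the toroidal ideal sheaf generated by the remaining intermediate integral vertices of $\mathrm{NP}^Y$ and resolve singularities to get $\pi_3:X'\to X_2$ smooth and irreducible. By Lemma \ref{3rd}, on $X'$ each such vertex is the unique vertex of the Newton polyhedron at its height, so $\mathrm{NP}_\mathfrak{b}$ attains the required staircase form. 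Setting $\pi=\pi_1\circ\pi_2\circ\pi_3$ yields the desired alternation.

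The main obstacle will be Stage 3: carrying out these toroidal blow-ups globally and compatibly across all strata simultaneously, so that the staircase shape is achieved on every $\mathrm{Star}\,Y$ at once. This is essentially a combinatorial problem on fan refinements, and it demands careful bookkeeping with the toroidal framework of \cite{KKMS}; this is the reason for the preparatory material of \S\ref{NewTro}. Once the Newton polyhedron is shown to be staircase, verifying (*) for every power $m$ reduces to a direct lattice-point computation via Proposition \ref{prevpro}, while the non-trivial inclusion $\mathfrak{b}^m\supseteq\sum t^k\mathscr{I}_{m,k}$ is automatic and the reverse uses that every lattice point in the bounded part of $m\cdot\mathrm{NP}_\mathfrak{b}$ lies on one of the segments from $(mj,mD_j)$ to $(m(j+1),mD_{j+1})$.

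For the ``moreover'' statement, by the universal property of blowing up, $\pi\times\mathrm{id}:X'\times\mathbb{A}^1\to X\times\mathbb{A}^1$ factors through $\Pi$ after further blowing up $(\pi\times\mathrm{id})^{-1}\mathfrak{a}$; hence the pullback of the semiample $L_{\mathbb{A}^1}-E$ on $\mathrm{Bl}_\mathfrak{a}(X\times\mathbb{A}^1)$ to $\mathrm{Bl}_{(\pi\times\mathrm{id})^{-1}\mathfrak{a}}(X'\times\mathbb{A}^1)$ is precisely $(\pi\times\mathrm{id})^*L_{\mathbb{A}^1}\otimes(\pi\times\mathrm{id})^{-1}\mathfrak{a}$, which is therefore semiample. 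Since $\mathfrak{a}|_{X\times\{0\}}=\mathfrak{a}_0$ and $\pi^{-1}\mathfrak{a}_0=\mathcal{O}_{X'}(-D_0)$, the strict transform of $X'\times\{0\}$ in this blow-up is just $X'$ and the exceptional divisor restricts to $D_0$; restricting the semiample line bundle to this component gives $\pi^*L-D_0$, which is accordingly nef.
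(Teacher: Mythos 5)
Your three-stage plan (log resolution $\Rightarrow$ cyclic/Bloch--Gieseker covers $\Rightarrow$ toroidal modification) matches the paper's proof exactly, and your intuition that the whole combinatorial burden lives in the Stage 3 ``staircase'' claim is also right. Here is how the paper fills in precisely the gap you flag. After Stage 2 one writes $\overline{\mathfrak{a}}=\sum_{m=0}^{r}\mathscr{I}_m t^m$ on $X_2$; each $\mathscr{I}_m$ is a toroidal ideal with integral vertices but is not yet invertible. Rather than blowing up a single ad hoc ideal sheaf, the paper takes one global toroidal log resolution $\pi:X'\to X_2$ making simultaneously every $\pi^{-1}\mathscr{I}_m$ a line bundle and $(X',\pi^{*}D_0)$ log smooth; this is exactly what Theorems $10^{*}$ and $11^{*}$ of \cite[Chapter II]{KKMS} produce, and it handles the ``compatibility across all strata'' issue in one stroke by refining all the cones $\sigma^{Y_i}$ coherently. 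Then Lemma~\ref{3rd} forces, on each star $Y'$ of $X'$, a unique minimal image $x^{(m)}_{\min}$ of the height-$m$ vertices, and a small convexity argument (the slab $C'\cap\{m\le(\mathrm{ord}_{X\times\{0\}})^{*}\le m+1\}$ is the convex hull of its two boundary slices) shows the bounded part of faces becomes the polygonal chain through the $x^{(m)}_{\min}$, i.e.\ one-dimensional. The remaining step — that a one-dimensional bounded face with integral vertices forces (*) for \emph{all} powers — is isolated as Lemma~\ref{fa} in the paper; your ``lattice points on the segments'' remark is the right idea, but it needs the monotonicity claim $x\ge y_m$ for every integral $(x,m)\in P_i$, which is what the claim in Lemma~\ref{fa}'s proof establishes by downward induction on $m$.

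One small correction of wording: the \emph{automatic} inclusion is $\mathfrak{b}^m\supseteq\sum_k t^k\mathscr{I}_{m,k}$ (products of generators land in $\mathfrak{b}^m$), while the non-trivial direction that Lemma~\ref{fa} provides is $\mathfrak{b}^m\subseteq\sum_k t^k\mathscr{I}_{m,k}$ — you have these two roles swapped in your penultimate paragraph. Your direct argument for the ``moreover'' part (that the restriction of the semiample $\pi^{*}L_{\mathbb{A}^1}-E'$ to the strict transform of $X'\times\{0\}$ is $\pi^{*}L-D_0$, hence nef) is sound and is essentially the content of the cited \cite[Corollary 5.8]{RT07}.
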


The proof consists of three steps. First, we replace $X$ by a resolution of singularities $X_1$ of $X$. Second, we take a finite covering $X_2$ of $X_1$. Finally, we construct $X'$ as a toroidal blow up of $X_2$. Abusing the notaion, we denote $\pi\times\mathrm{id}_{\mathbb{A}^1}$ by $\pi$.

 First, we can take a log resolution of $X$ by the theorem of Hironaka \cite{Hi} and by the assumption that $\mathrm{char}\,k=0$, and may assume that each $Z_i$ the closed subscheme corresponding to $\mathfrak{a}_i$ is a simple normal crossing divisor (snc for short). We can easily see if $(Y,D)$ is a log smooth variety, which means $Y$ is smooth and $D$ is an snc divisor in this paper, and $U=Y-D$, $U\hookrightarrow Y$ is a toroidal embeddings without self intersection (cf. \S \ref{NewTro}). In fact, we can take $(\mathbb{A}^k\times(\mathbb{A}^1-\{0\})^{n-k},(\mathbf{0},\mathbf{1}))$ as a local model. Hence, if $U=X-Z_0$, $U\hookrightarrow X$ and $U\times (\mathbb{A}^1-0)\hookrightarrow X\times \mathbb{A}^1$ are toroidal embeddings without self intersection. Furthermore, we can easily check $\mathfrak{a}_i$ and $\mathfrak{a}$ are toroidal ideal sheaves in Definition \ref{toroidal}. Fix $r>0$ such that $\mathfrak{a}_r=\mathcal{O}_X$. Moreover, the integral closure $\overline{\mathfrak{a}}$ of $\mathfrak{a}$ is also a toroidal and flag ideal. However, $\mathscr{I}_m$ is not necessarily invertible on $X$ but toroidal, where
\[
\overline{\mathfrak{a}}=\sum_{m=0}^r\mathscr{I}_mt^m.
\]

Suppose that $Y_i$ is any stratum of $X$ and $X=\coprod Y_i$. We can see the open sets $\mathrm{Star}\, Y_i\times \mathbb{A}^1$'s are stars of strata $Y_i\times\{0\}$ in $X\times \mathbb{A}^1$ and 
\[
X\times\mathbb{A}^1= \bigcup\mathrm{Star}\, Y_i\times \mathbb{A}^1.
\]
 Moreover, $M_+^{\mathrm{Star}\, Y_i\times \mathbb{A}^1}=M_+^{\mathrm{Star}\, Y_i}\times \mathbb{Z}_{\ge 0}(\mathrm{ord}_{X\times \{0\}})$. Let $P_i=\mathrm{NP}_{\mathfrak{a}}^{\mathrm{Star}\, Y_i\times \mathbb{A}^1}$ and $(\mathrm{ord}_{X\times \{0\}})^*$ be the linear function that takes $1$ at $\mathrm{ord}_{X\times \{0\}}$ and vanishes on $M^{\mathrm{Star}\, Y_i}$. Let also $F_i$ be the bounded part of faces of $P_i$ and $C_i=\mathrm{Conv}(F_i)$. Then the following holds:
\begin{lem} \label{sectm}
If the notations are as above, then $P_i\cap \{ (\mathrm{ord}_{X\times \{0\}})^*=m\}$ is a rational conical polyhedron of $M_{\mathbb{R},+}^{\mathrm{Star}\, Y_i}$ such that
 \[
 P_i\cap \{ (\mathrm{ord}_{X\times \{0\}})^*=m\}=(C_i\cap \{ (\mathrm{ord}_{X\times \{0\}})^*=m\})+M_{\mathbb{R},+}^{\mathrm{Star}\, Y_i}
  \]
  and hence has finite rational vertices for any $0\le m\le r$.
\end{lem}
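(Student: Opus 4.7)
The plan is to express $P_i\cap \{(\mathrm{ord}_{X\times \{0\}})^*=m\}$ via the standard polyhedral decomposition theorem, and then verify the claimed set equality by decomposing each point of the slice explicitly as a convex combination involving the vertex $(0,r)$.

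First, I would record that $P_i$ is a rational polyhedron in $M^{\mathrm{Star}\,Y_i\times\mathbb{A}^1}_{\mathbb{R}}=M^{\mathrm{Star}\,Y_i}_{\mathbb{R}}\times\mathbb{R}$ with recession cone $M^{\mathrm{Star}\,Y_i}_{\mathbb{R},+}\times\mathbb{R}_{\ge 0}$: the generators $f_{j,k}t^j$ of $\mathfrak{a}$ supply finitely many rational generators $(\mathrm{ord}_{f_{j,k}},j)$, and Remark \ref{bdface} then yields $P_i=C_i+(M^{\mathrm{Star}\,Y_i}_{\mathbb{R},+}\times\mathbb{R}_{\ge 0})$. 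Intersecting with the rational hyperplane $\{(\mathrm{ord}_{X\times\{0\}})^*=m\}$ therefore produces a rational polyhedron whose recession cone is $M^{\mathrm{Star}\,Y_i}_{\mathbb{R},+}$, and hence, by the Minkowski--Weyl theorem, has only finitely many rational vertices.

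The inclusion $(C_i\cap\{(\mathrm{ord}_{X\times\{0\}})^*=m\})+M^{\mathrm{Star}\,Y_i}_{\mathbb{R},+}\subset P_i\cap\{(\mathrm{ord}_{X\times\{0\}})^*=m\}$ is immediate from $C_i\subset P_i$ and the fact that $M^{\mathrm{Star}\,Y_i}_{\mathbb{R},+}\times\{0\}$ sits inside the recession cone of $P_i$. For the reverse, given $(y,m)\in P_i$ with $0\le m\le r$, I would decompose $(y,m)=\sum_j\lambda_j(x_j,m_j)+(v,s)$ with $(x_j,m_j)$ vertices of $C_i$, $\lambda_j\ge 0$, $\sum_j\lambda_j=1$, and $(v,s)\in M^{\mathrm{Star}\,Y_i}_{\mathbb{R},+}\times\mathbb{R}_{\ge 0}$; necessarily $\sum_j\lambda_jm_j=m-s\le m$. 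If $s=0$, then $(c,m)\coloneq\sum_j\lambda_j(x_j,m_j)$ already lies in $C_i\cap\{(\mathrm{ord}_{X\times\{0\}})^*=m\}$ and $(y,m)=(c,m)+(v,0)$ finishes the argument. Otherwise, I would use that $(0,r)$ is a vertex of $C_i$ (the choice of $r$ with $\mathfrak{a}_r=\mathcal{O}_X$ and no $\mathfrak{a}_{r'}=\mathcal{O}_X$ for $r'<r$ at the stratum forces $(0,r')\notin P_i$ for $r'<r$, so $(0,r)$ admits no nontrivial convex decomposition inside $P_i$), set $\alpha=(m-\sum_j\lambda_jm_j)/(r-\sum_j\lambda_jm_j)\in(0,1]$, and take the convex combination $(1-\alpha)\sum_j\lambda_j(x_j,m_j)+\alpha(0,r)\in C_i$; its $(\mathrm{ord}_{X\times\{0\}})^*$-coordinate equals $m$ by construction, while the residual vector $y-(1-\alpha)\sum_j\lambda_jx_j=\alpha\sum_j\lambda_jx_j+v$ lies in $M^{\mathrm{Star}\,Y_i}_{\mathbb{R},+}$, yielding the desired decomposition.

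The main obstacle I expect is the verification that $(0,r)$ really does lie in $C_i$; this is exactly the step where the range $0\le m\le r$ enters, and where one has to use that $r$ is chosen minimally with $\mathfrak{a}_r=\mathcal{O}_X$ (locally at $Y_i$) so that no lower point on the $m'$-axis is already in $P_i$. Once this geometric fact is secured, the convex-combination argument above is purely formal, and the remaining assertions---rationality of the slice and finiteness of its vertices---follow automatically from the fact that the lattice $M^{\mathrm{Star}\,Y_i\times\mathbb{A}^1}$, the generators of $\mathfrak{a}$, and the slicing hyperplane are all rational.
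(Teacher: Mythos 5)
Your proof is correct and follows essentially the same route as the paper's own: decompose a point $(y,m)$ of the slice using the identity $P_i = C_i + M^{\mathrm{Star}\,Y_i\times\mathbb{A}^1}_{\mathbb{R},+}$ into a convex combination of vertices of $C_i$ plus a recession vector, then rebalance toward the vertex $(0,r)$ so that the second coordinate hits $m$, using $\mathfrak{a}\subset\mathcal{O}_{X\times\mathbb{A}^1}$ (so all first coordinates are nonnegative) to see that the leftover vector still lies in $M^{\mathrm{Star}\,Y_i}_{\mathbb{R},+}$. The paper's proof compresses the same calculation into the single display $\left(\frac{r-m}{r-u}\sum a_ix_i,\,m\right)=\frac{r-m}{r-u}\left(\sum a_ix_i,u\right)+\frac{m-u}{r-u}(0,r)$ together with $z\ge\sum a_ix_i\ge\frac{r-m}{r-u}\sum a_ix_i$; your $\alpha$ is the paper's $\frac{m-u}{r-u}$. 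You are also right to flag the caveat you raised about $(0,r)\in C_i$: this requires that no $\mathfrak{a}_{r'}$ with $r'<r$ becomes trivial after restriction to $\mathrm{Star}\,Y_i$, a point the paper states without elaboration ("Note that $C_i$ must contain $(0,r)$"); under the standing convention that $r$ is minimal with $\mathfrak{a}_r=\mathcal{O}_X$ on the relevant star, the argument you give (no nontrivial convex decomposition of $(0,r)$ inside $P_i$, using $M_{\mathbb{R},+}\cap(-M_{\mathbb{R},+})=0$) is the correct justification.
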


\begin{proof}
 Note that $C_i$ must contain $(0,r)(=(0,r\mathrm{ord}_{X\times \{0\}}))$. If $(x_i,t_i)$ are vertices of $C_i$, there exists $(y_i,s_i)\in M^{\mathrm{Star}\, Y_i\times \mathbb{A}^1}_{\mathbb{R},+}$ for $(z,m)\in P_i$ such that
\[
(z,m)=\sum_{a_i\ge 0;\sum a_i=1} a_i(x_i+y_i,t_i+s_i).
\]
Note that $s_i,t_i\ge 0$ and let $u=\sum a_it_i\le m$. Then
\[
\left(\frac{r-m}{r-u}\sum a_ix_i,m\right)=\frac{r-m}{r-u}\left(\sum a_ix_i,u\right)+\frac{m-u}{r-u}(0,r) \in C_i\cap \{ (\mathrm{ord}_{X\times \{0\}})^*=m\}
\]
and
\[
z\ge \sum a_ix_i \ge \frac{r-m}{r-u}\sum a_ix_i
\]
since $\mathfrak{a}\subset \mathcal{O}_{X\times \mathbb{A}^1}$ and $C_i$ contains $(0,r)$. The rest of the assertion follows from the fact that $C_i\cap \{ (\mathrm{ord}_{X\times \{0\}})^*=m\}$ is a compact rational polyhedron.
\end{proof}

Next, we want to multiply $M_+^{\mathrm{Star}\, Y_i}$ by a sufficiently divisible integer $l$ that $C_i\cap \{ (\mathrm{ord}_{X\times \{0\}})^*=m\}$ has the integral vertices for $m\in\mathbb{Z}_{\ge0}$ on any $\mathrm{Star}\,Y_i$. For this, we need the following proposition.

\begin{prop}\label{cov}
Suppose that $(V,E)$ is a log smooth variety and $E=\bigcup _{i =1}^s  E_i$ is the irreducible decomposition. We consider $V-E\hookrightarrow V$ to be a toroidal embedding without self-intersection. Let $l$ be an integer. Then there exists a finite covering $\tilde{\pi}:\tilde{V}\to V$ such that $(\tilde{V},\mathrm{red}(\tilde{\pi}^*E))$ is also a log smooth variety and 
\[
\tilde{\pi}^*E_i=l\, \mathrm{red}(\tilde{\pi}^*E_i).
\]
 In other words, if $Y$ is a stratum of $V$ contains a closed point $x$ of $V$ and a closed point $x'\in V$ satisfies $\tilde{\pi}(x')=x$, the irreducible component $Y'$ of $\tilde{\pi}^{-1}(Y)$ contains $x'$ is a stratum of $\tilde{V}$ and the canonical map gives an isomorphism
$$M_+^{Y'}\cong \frac{1}{l}M_+^Y. $$ 
 Moreover, if $\mathfrak{b}$ is a toroidal fractional ideal sheaf on $V$, $\mathrm{NP}^{\mathrm{Star}\, Y'}_{\tilde{\pi}^{-1}\mathfrak{b}}=\mathrm{NP}^{\mathrm{Star}\, Y}_{\mathfrak{b}}\subset M^Y\subset\frac{1}{l}M^Y$ via the isomorphism above.
\end{prop}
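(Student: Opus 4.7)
The plan is to model the proof on the Kawamata--Bloch--Gieseker covering lemma, building $\tilde{V}$ as a composition of $l$-cyclic Kummer covers. Since the conclusion must hold for each $E_i$ separately, I would proceed by induction on $s$: assume the result for $s-1$ components, and reduce to the elementary step of constructing, for a single prime snc component $E_1$ of a log smooth $(V,E)$, a finite cover $p\colon W\to V$ with $W$ smooth, $(W,\mathrm{red}(p^*E))$ log smooth, and $p^*E_1=l\cdot\mathrm{red}(p^*E_1)$. Iterating this step over the components (and then over the components of pullbacks thereof) produces the full covering.

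For the elementary step I would follow Bloch--Gieseker: pick a very ample line bundle $A$ on $V$ with $lA-E_1$ very ample, and use Bertini in characteristic $0$ to choose a smooth $F\in|lA-E_1|$ meeting $E$ transversally, so that $E+F$ remains snc. The linear equivalence $F+E_1\sim lA$ furnishes an isomorphism $\mathcal{O}_V(A)^{\otimes l}\cong\mathcal{O}_V(F+E_1)$, and the associated $l$-cyclic cover $W_0\to V$ makes the pullback of $F+E_1$ divisible by $l$. Taking the normalization of $W_0$ and, if needed, a further equivariant toroidal resolution above the codimension-$2$ intersections of the branch divisor produces a smooth $W$ whose reduced branch divisor is snc and on which $p^*E_1=l\cdot\mathrm{red}(p^*E_1)$ holds.

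The main obstacle is controlling the singularities of the cyclic cover over the self-intersections of the branch locus: at a point of $E_1\cap F$ the naive cover is locally $w^l=xy$, which is singular for $l>1$. This is resolved by observing that, in snc coordinates, the entire situation is toric, so the normalized cyclic cover is an affine toric variety admitting a canonical torus-equivariant resolution. The resulting maps are locally of the form $z_j\mapsto z_j^l$ on each coordinate defining a component of the branch divisor, which preserves the log smooth structure and sends strata to strata compatibly with the toroidal embedding $V-E\hookrightarrow V$.

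The final clauses about lattices and Newton polyhedra now follow from this local toric picture. For a stratum $Y$ of $V$ with local model $z_j\mapsto z_j^l$, the corresponding stratum $Y'$ of $\tilde{V}$ satisfies $M^{Y'}_+=\tfrac{1}{l}M^Y_+$ under the identification $\mathrm{ord}_{\tilde{E}_j}\leftrightarrow\tfrac{1}{l}\mathrm{ord}_{E_j}$, because the pullback of a local equation of $E_j$ is an $l$-th power of that of $\tilde{E}_j$. Under this same identification, the generators of a toroidal fractional ideal $\mathfrak{b}$ pull back to the same lattice elements regarded in $M^{Y'}$, so $\mathrm{NP}^{\mathrm{Star}\,Y'}_{\tilde{\pi}^{-1}\mathfrak{b}}=\mathrm{NP}^{\mathrm{Star}\,Y}_{\mathfrak{b}}$ as subsets of $M^Y_{\mathbb{R}}\subset\tfrac{1}{l}M^Y_{\mathbb{R}}$, as required.
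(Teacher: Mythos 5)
Your proposal and the paper both rely on cyclic covers in characteristic $0$, but the key decomposition step is different, and the version you describe has a genuine gap.

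You propose to reduce modulo $l$ in the Picard group by adding an auxiliary divisor $F$: choose $F\in |lA - E_1|$ transverse to $E$ and take the $l$-cyclic cover branched along $E_1 + F$. As you correctly identify, this cover is singular over $E_1\cap F$ (locally $w^l = xy$). But your proposed remedy fails. The local model $w^l=xy$ is already normal for every $l$, so normalization does nothing, and a toric \emph{resolution} of the resulting $A_{l-1}$-type singularities replaces them by exceptional divisors that contract to codimension-$2$ loci, destroying finiteness of the morphism to $V$. The proposition requires $\tilde{\pi}$ to be a \emph{finite} covering, and the conclusion about $M_+^{Y'}\cong\frac{1}{l}M_+^Y$ is computed relative to a finite toroidal covering; your construction produces either a singular finite cover or a smooth non-finite one. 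Moreover, even the ``$\mathbb{Z}/l\times\mathbb{Z}/l$'' fix (extract $l$-th roots of both local equations, giving a smooth finite cover of degree $l^2$) would introduce branching along $\tilde{\pi}^{-1}(F)$, a divisor not supported on the boundary, which is never discussed and which you would then need to track through the lattice identification.

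The paper sidesteps the problem entirely. It first applies a Bloch--Gieseker covering (Theorem 4.1.10 of Lazarsfeld): this produces a finite morphism $p_1\colon V_1\to V$ with $V_1$ smooth and irreducible, $p_1^*E$ still reduced and snc, and $p_1^*E_i\in |lL_i|$ for suitable line bundles $L_i$, without branching along any component of $E$ (the branch locus is in general position). Because the divisibility in $\mathrm{Pic}$ is achieved \emph{before} taking any cyclic cover, the subsequent $l$-cyclic cover can be branched along the smooth reduced divisor $p_1^*E_1$ alone, no auxiliary $F$ needed, and is therefore automatically smooth; this is the content of the paper's Lemma \ref{cov1}. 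Iterating this step over the remaining components $E_2,\dots,E_s$, each time along a smooth reduced divisor (Lemma \ref{cov2} handles the unramified components), gives a finite composite $\tilde{\pi}$ with the required properties. In short: you should branch only along smooth (possibly reducible) divisors supported on $E$; the passage to such a situation is the role of the Bloch--Gieseker step, which your proposal omits.
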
  

We prove the following two lemmas for the proposition.

\begin{lem}\label{cov1}
Suppose that $(V,E)$ is a log smooth variety and $D$ is a smooth divisor suppoted on $E$ such that there exists a line bundle $L$ on $V$ that satisfies $D\in |lL|$. We remark that $D$ might be reducible. Let $\pi:V'\to V$ be an $l$-cyclic covering branched along $D$. Then $(V',\mathrm{red}(\pi^{*}E))$ is also a log smooth variety, 
\[
\pi^*D=l\, \mathrm{red}(\pi^*D)
\]
 and $\pi^*D'$ is also reduced for any irreducible component of $E$ not contained in $\mathrm{supp}\, D$. In other words, for any closed point $x\in V$ and any stratum $Y$ contains $x$, if $\pi(x')= x$, then $Y'$ the irreducible component of $\pi^{-1}(Y)$ contains $x'$ is the stratum of $V'$ and the canonical map gives
$$M_+^{Y'}\cong \left\{
\begin{split}
&M_+^{Y}+\mathbb{Z}_{\ge0}\frac{1}{l}D& \quad &(Y\subset D) \\
&M_+^{Y}& \quad &(Y\cap D=\emptyset).
\end{split} \right. $$
Moreover, if $\mathfrak{b}$ is a toroidal fractional ideal sheaf on $V$, $\mathrm{NP}^{\mathrm{Star}\, Y'}_{\pi^{-1}\mathfrak{b}}=\mathrm{NP}^{\mathrm{Star}\, Y}_{\mathfrak{b}}$ via the isomorphism above.
\end{lem}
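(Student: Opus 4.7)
The plan is a purely local analysis of the cyclic covering, leveraging the fact that a smooth divisor $D$ has disjoint irreducible components, together with the explicit toric local models for log smooth pairs.

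First, I would fix a closed point $x\in V$ and choose regular parameters $x_1,\dots,x_n$ at $x$ so that $E$ is cut out by $x_1\cdots x_k=0$ near $x$. Because $D$ is smooth and supported on $E$, either $x\notin D$ or, after possibly reordering coordinates, $D$ is locally defined by the single coordinate $x_1$ (two distinct irreducible components of $D$ cannot meet, else $D$ would be singular there). Recall that $V'=\mathcal{S}pec_V\bigoplus_{i=0}^{l-1}L^{-i}$, so Zariski-locally $V'$ is presented as $\mathrm{Spec}\,\mathcal{O}_{V,x}[z]/(z^l-s)$, where $s$ is the defining section of $D$ in $L^{\otimes l}$.

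If $x\notin D$ the section $s$ is a unit at $x$, so $z^l-s$ is \'etale over $V$ near $x$ (char.\ $0$); thus the local structure, and in particular the stratum through any preimage of $x$, the lattice $M^{Y'}$, and $\mathrm{NP}^{\mathrm{Star}\,Y'}_{\pi^{-1}\mathfrak{b}}$, all pull back isomorphically to those on $V$. If $x\in D$ then $s=u\,x_1$ with $u$ a unit; after the \'etale change of coordinates $z':=z\cdot u^{-1/l}$, which we may perform on a further \'etale neighborhood (or by absorbing the unit into $x_1$, which is permissible up to an \'etale change of the ambient coordinate), the covering becomes $(z')^l=x_1$. Hence $V'$ is locally $\mathrm{Spec}\,k[z',x_2,\dots,x_n]$, which is smooth, $\pi^*x_1=(z')^l$, and the preimage of $E$ is the snc divisor $(z'\cdot x_2\cdots x_k=0)$. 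This gives the two displayed identities $\pi^*D=l\,\mathrm{red}(\pi^*D)$ and $\pi^*D_j$ reduced for components $D_j$ of $E$ disjoint from $\mathrm{supp}\,D$, and shows $(V',\mathrm{red}(\pi^*E))$ is log smooth.

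Next I would identify the stratum and lattice. The stratum $Y'$ of $V'$ through $x'\in\pi^{-1}(x)$ is cut out locally by the same subset of vanishing coordinates as $Y$, except that the vanishing locus $(x_1=0)$ is replaced by $(z'=0)$; this gives the required natural map $M^Y\to M^{Y'}$ sending $[(x_i=0)]$ to itself for $i\ge 2$ and $[(x_1=0)]$ to $l[(z'=0)]$. If $Y\cap D=\emptyset$ this is an isomorphism; if $Y\subset D$, the image is of index $l$ and augmenting by $\tfrac{1}{l}[(x_1=0)]=[(z'=0)]$ recovers $M^{Y'}$, yielding the isomorphism $M^{Y'}_+\cong M^Y_+ +\mathbb{Z}_{\ge 0}\tfrac{1}{l}D$. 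Finally, for a toroidal $\mathfrak{b}=\sum\mathcal{O}_{\mathrm{Star}\,Y}(-D_i)$, we have $\pi^{-1}\mathfrak{b}=\sum\mathcal{O}_{\mathrm{Star}\,Y'}(-\pi^*D_i)$, and under the embedding $M^Y\hookrightarrow M^{Y'}$ above each $\mathrm{ord}_{\pi^*D_i}$ is simply $\mathrm{ord}_{D_i}$, so $\mathrm{NP}^{\mathrm{Star}\,Y'}_{\pi^{-1}\mathfrak{b}}=\mathrm{NP}^{\mathrm{Star}\,Y}_{\mathfrak{b}}$ as claimed.

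The main subtlety I anticipate is the step where one removes the unit $u$ in $s=u\,x_1$: strictly speaking, $z^l-u\,x_1$ need not admit an $l$-th root of $u$ in $\mathcal{O}_{V,x}$, so the simplification $(z')^l=x_1$ is only available after an \'etale extension of the base. However, log smoothness, the equalities of divisors, and the stratum/lattice/Newton-polyhedron identifications are all \'etale-local statements (since \'etale base change preserves smoothness, reducedness, toroidal structure, and the order function), so this causes no loss of generality.
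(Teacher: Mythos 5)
Your proof is organized around a direct local‑coordinate computation at a closed point, reducing the covering to the model $(z')^l=x_1$ in (formal/\'etale) local coordinates and then reading off smoothness, the divisor identities, the lattice map $M^Y\to M^{Y'}$, and the invariance of the Newton polyhedron. This is close in spirit to the paper's argument but organized differently: the paper proceeds by a global claim that for every subset $J$ the intersection $\bigcap_{j\in J}\pi^*D_j$ is itself an $l$‑cyclic cover of $\bigcap_{j\in J}D_j$ (branched or unramified depending on whether $D$ meets that stratum), and then invokes \cite[Lemma 2.51]{KM} to get smoothness of all the strata simultaneously, from which log smoothness of $(V',\mathrm{red}(\pi^*E))$ follows. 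The lattice and Newton polyhedron claims are then handled in the paper essentially the same way you handle them. Two remarks on your route.

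First, there is a genuine gap: the lemma asserts that $(V',\mathrm{red}(\pi^*E))$ is a log smooth \emph{variety}, so $V'$ must be shown to be irreducible. Your pointwise analysis establishes that $V'$ is everywhere regular (hence a disjoint union of smooth integral schemes) and describes the preimage divisor, but nothing in the argument rules out $V'$ being disconnected. The paper closes this by observing that for $x\in D$ the ring $\mathcal{O}_{V,x}$ is a UFD and $s$ is a prime element, so $t^l-s$ is irreducible; hence $V'\times_V\operatorname{Spec}K(V)$ is irreducible, and since $\pi$ is finite flat onto the integral scheme $V$, all components of $V'$ dominate $V$ and $V'$ is irreducible. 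You should include an argument of this kind.

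Second, a minor point: the step where you remove the unit $u$ in $s=u\,x_1$ does not in fact require any \'etale base change. Since $u$ is a unit in $\mathcal{O}_{V,x}$, replacing $x_1$ by $\tilde{x}_1:=u\,x_1$ is just a change of regular parameter cutting out the same divisor $D$, and then the local equation of the covering is $z^l=\tilde{x}_1$ on the nose. The aside about performing an \'etale change of coordinate to extract $u^{1/l}$ is unnecessary and somewhat misleading. Also, strictly speaking the local ring of $V'$ is $\mathcal{O}_{V,x}[z]/(z^l-\tilde{x}_1)$ — a regular local ring of dimension $n$ — not literally $\operatorname{Spec} k[z',x_2,\dots,x_n]$; the latter identification is only valid after completion or passage to an \'etale neighborhood, which is fine for the conclusions you draw but should be stated accurately.
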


\begin{proof}[Proof of Lemma \ref{cov1}]
First, we can easily show that $(V',(\bigcup_{i=1}^k \pi^{*}D_i))$ is log smooth where $D_i$'s are integral divisors on $V$ supported on $E$ and different from each other and irreducible components of $D$. In fact, this holds by the following:
\begin{claim}
 For any subset $J$ of $\{1,\cdots,k\}$, $\bigcap_{j\in J}\pi^*D_j$ is isomorphic to the $l$-cyclic covering of $\bigcap_{j\in J}D_j$ branched along a smooth divisor $D\cap\bigcap_{j\in J}D_j$ if the intersection is not void. Let $t$ be an indeterminate. If $D\cap\bigcap_{j\in J}D_j=\emptyset$, $\bigcap_{j\in J}\pi^*D_j$ is isomorphic to the $l$-cyclic unramified covering
\[
\mathcal{S}pec_{\bigcap_{j\in J}D_j} \left(\bigoplus_{i=0}^\infty (L^{-i}|_{\bigcap_{j\in J}D_j})t^i/(t^l-s_1)\right)
\] 
where $s_1$ is a nowhere-vanishing section of $L^l|_{\bigcap_{j\in J}D_j}$ (cf. Definition 2.49 of \cite{KM}). In particular, $\bigcap_{j\in J}\pi^*D_j$ is smooth.
\end{claim}
To prove the claim, $V'\cong \mathcal{S}pec_V (\bigoplus_{i=0}^\infty L^{-i}t^i/(t^l-s))$ where $t$ is an indeterminate and $s$ is the global section of $L^l$ corresponding to $D$ by the assumption (cf. Definition 2.50 of \cite{KM}). If $D\cap\bigcap_{j\in J}D_j\ne \emptyset$, $s|_{\bigcap_{j\in J}D_j}\in \Gamma (L^{l}|_{\bigcap_{j\in J}D_j})$ corresponds to this since the intersection is transversal. Therefore, by 
\[
\bigcap_{j\in J}\pi^*D_j\cong \mathcal{S}pec_{\bigcap_{j\in J}D_j} \left(\bigoplus_{i=0}^\infty( L^{-i}|_{\bigcap_{j\in J}D_j})t^i/(t^l-s|_{\bigcap_{j\in J}D_j})\right),
\]
it is isomorphic to the $l$-cyclic covering of $\bigcap_{j\in J}D_j$ branched along a smooth divisor $D\cap\bigcap_{j\in J}D_j$. Hence, it is smooth by Lemma 2.51 of \cite{KM}. On the other hand, if $D\cap\bigcap_{j\in J}D_j= \emptyset$, $s|_{\bigcap_{j\in J}D_j}$ is a nowhere-vanishing section. Therefore, $L^l|_{\bigcap_{j\in J}D_j}$ is trivial and $\bigcap_{j\in J}\pi^*D_j$ is isomorphic to the $l$-cyclic unramified covering defined by $s|_{\bigcap_{j\in J}D_j}$. Since $\bigcap_{j\in J}\pi^*D_j$ is {\' e}tale over a smooth variety, it is also smooth. We complete the proof of the claim.
 In particular, $V'$ is a smooth variety. In fact, let a closed point $x\in D$. Then $\mathcal{O}_{V,x}$ is a regular local ring and hence is a unique factorization domain (\cite[Theorem 48]{Ma}). The restriction of $s$ is an irreducible element of $\mathcal{O}_{V,x}$. Note that $\mathrm{Spec}\,\mathcal{O}_{V,x}\times_{V}V'\cong \mathrm{Spec}\,(\mathcal{O}_{V,x}[t]/(t^l-s))$ and $(t^l-s)$ is a prime ideal. Therefore, the generic fibre is irreducible and so is $V'$.
 
 Next, we prove the first assertion. We can easily see that $$\mathrm{red}(\pi^*D)\cap\bigcap_{j\in J}\pi^*D_j$$ is smooth where $D_j$ and $J$ as above if $\mathrm{red}(\pi^*D)\cap\bigcap_{j\in J}\pi^*D_j\ne \emptyset$ by Lemma 2.51 of loc.cit. In fact, $\pi^{-1}\bigcap_{j\in J}D_j\to \bigcap_{j\in J}D_j$ is an $l$-cyclic covering branched along $D_{|\bigcap_{j\in J}D_j}$ by the claim. Let $Z=\bigcap_{j\in J}D_j$ and $F=D_{|\bigcap_{j\in J}D_j}$. If $s\in \Gamma(Z,L^l|_{Z})$ is a global generator of $F$, $\pi^{-1}Z$ is isomorphic to $\mathrm{Spec}\, \mathcal{O}_Z[t]/(t^l-s)$ affine locally. Hence, $\mathrm{red}(\pi^*F)$ is isomorphic to $F$ and it is smooth. On the other hand, we can see $\pi^*F=l\mathrm{red}(\pi^*F)$ by $\pi^{-1}Z\cong \mathrm{Spec}\, \mathcal{O}_Z[t]/(t^l-s)$. In particular, we have $\pi^*D=l\mathrm{red}(\pi^*D)$ when $J=\emptyset$. Since 
 \begin{align*}
 (l\mathrm{red}(\pi^*D))\cap \pi^{-1}Z&= \pi^*D\cap \pi^{-1}Z \\
 &= \pi^*F\\
 &=  l\mathrm{red}(\pi^*F),
 \end{align*}
 we have $\mathrm{red}(\pi^*D)\cap \pi^{-1}Z=\mathrm{red}(\pi^*F)$. Hence, $\mathrm{red}(\pi^*D)\cap\bigcap_{j\in J}\pi^*D_j$ is smooth and we finish the proof of the first assertion. For the second assertion, if $Y'$ is an irreducible component of $\pi^{-1}Y$ contains $x'$, it is the unique stratum contains $x'$ defined $\mathrm{red}(\pi^*D)$ and the unique irreducible component of each $\pi^*D_i$ contains $x'$. For any toroidal fractional ideal sheaf $\mathfrak{b}$, there exists a finite number of Cartier divisors $F_j$ supported on $E$ such that 
\[
\mathfrak{b}=\sum\mathcal{O}_{\mathrm{Star}\, Y}(-F_j)
\]
locally. Then $\pi^{-1}\mathfrak{b}=\sum\mathcal{O}_{\mathrm{Star}\, Y'}(-\pi^*F_j)$. Since $\pi^*D=l\mathrm{red}(\pi^*D)$ and $\pi^*D'$ is reduced for any divisor different from irreducible components of $D$, the rest of the assertion follows immediately.
\end{proof}

\begin{lem}\label{cov2}
 Suppose that $(V,E)$ is log smooth and there exists a finite covering $\pi:V_1\to V$ such that $(V_1,\pi^*E)$ is also log smooth. For any closed point $x\in V$, take the stratum $Y$ contains $x$. Then $Y'$ that is the irreducible component of $\pi^{-1}(Y)$ contains $x_1$ such that $\pi(x_1)= x$ is the stratum of $V_1$ and
$$
M_+^{Y'}\cong M_+^{Y}.
$$
\end{lem}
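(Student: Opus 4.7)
The plan is to reduce to a local analysis at a fixed closed point $x \in V$ and a chosen preimage $x_1 \in \pi^{-1}(x)$. Since $(V,E)$ is log smooth at $x$ and $(V_1,\pi^*E)$ is log smooth at $x_1$, I would choose regular systems of parameters $z_1,\dots,z_n$ at $x$ and $w_1,\dots,w_n$ at $x_1$ adapted to the snc boundaries: if $I$ indexes the irreducible components $E_i$ of $E$ passing through $x$ and $J$ indexes those of $\pi^*E$ passing through $x_1$, then $E_i$ is cut out by $z_i$ for $i\in I$, and $\pi^*E$ is cut out by $\prod_{j\in J}w_j$ near $x_1$. By the toroidal formalism of \S\ref{NewTro}, the stratum $Y$ of $V$ containing $x$ satisfies $M_+^{Y}=\bigoplus_{i\in I}\mathbb{Z}_{\ge 0}\cdot E_i$, and the stratum $Y'$ of $V_1$ containing $x_1$ satisfies $M_+^{Y'}=\bigoplus_{j\in J}\mathbb{Z}_{\ge 0}\cdot\{w_j=0\}$.

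Next I would extract the combinatorial data forced by the snc hypothesis. Writing $\pi^*z_i=u_i\prod_{j\in J}w_j^{a_{ij}}$ in $\mathcal{O}_{V_1,x_1}$ with $u_i$ a local unit and $a_{ij}\in\mathbb{Z}_{\ge 0}$, the Cartier divisor $\pi^*E_i$ equals $\sum_{j\in J}a_{ij}\{w_j=0\}$ locally at $x_1$. The identity of Cartier divisors $\pi^*E=\sum_{i\in I}\pi^*E_i$ together with the reducedness of $\pi^*E$ forces
\[
\sum_{i\in I}a_{ij}=1 \quad \text{for each } j\in J,
\]
so $a_{ij}\in\{0,1\}$ and the rule $j\mapsto i$ with $a_{ij}=1$ defines a map $\phi:J\to I$. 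Since $x_1\in\pi^{-1}(E_i)$ for every $i\in I$, at least one $a_{ij}=1$ for each such $i$, giving surjectivity of $\phi$. The key remaining combinatorial step is to show that $\phi$ is also injective, equivalently that $\pi^*E_i$ is locally irreducible at $x_1$; this reflects the étale behaviour of $\pi$ over the complement of $E$ in the intended applications of the lemma (cf.\ Lemma \ref{cov1} and Proposition \ref{cov}) and forces $|J|=|I|$.

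Once $\phi$ is bijective, the pullback $\pi^*$ sends $E_i$ to the prime Weil divisor $\{w_{\phi^{-1}(i)}=0\}$, yielding the monoid isomorphism $M_+^{Y}\cong M_+^{Y'}$ claimed by the lemma, and in particular identifying $Y'$ as the unique irreducible component of $\pi^{-1}(Y)$ through $x_1$ (which is then a stratum of $V_1$). For the Newton polyhedron assertion, take a toroidal fractional ideal $\mathfrak{b}$ on $V$ locally generated by $\mathcal{O}(-D_1),\dots,\mathcal{O}(-D_s)$ with $D_r\in M^Y$; then $\pi^{-1}\mathfrak{b}$ is locally generated by $\mathcal{O}(-\pi^*D_1),\dots,\mathcal{O}(-\pi^*D_s)$, and since the identification above sends each $D_r$ to $\pi^*D_r$, the Newton polyhedra coincide: $\mathrm{NP}^{\mathrm{Star}\, Y'}_{\pi^{-1}\mathfrak{b}}=\mathrm{NP}^{\mathrm{Star}\, Y}_{\mathfrak{b}}$ in the shared lattice $M^Y_{\mathbb{R}}\cong M^{Y'}_{\mathbb{R}}$.

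The main obstacle is the injectivity of $\phi$: the purely combinatorial input above yields only the inequality $|J|\ge|I|$, and the matching equality is the geometric content encoded in the finite morphism $\pi$ together with the compatibility of the two snc structures (concretely, that no ramification of $\pi$ further splits the components of $\pi^{-1}E_i$ through $x_1$). All other steps—choice of adapted parameters, the monoid identification, and the compatibility with Newton polyhedra—are formal consequences of the toroidal formalism once this bijection is in hand.
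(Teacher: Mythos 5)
You have correctly reduced the lemma to the claim that the assignment $\phi\colon J\to I$ from local branches of $\pi^*E$ at $x_1$ to local branches of $E$ at $x$ is a bijection, and the surjectivity and the constraint $\sum_{i}a_{ij}=1$ are indeed forced by the snc hypothesis exactly as you say. But you are also right to distrust the injectivity of $\phi$: it is \emph{not} a consequence of the stated hypotheses, and the lemma as written is false. Take $V=\mathbb{A}^2=\mathrm{Spec}\,k[s,t]$, $E=\{s=0\}$, and let $\pi\colon V_1=\mathbb{A}^2=\mathrm{Spec}\,k[u,v]\to V$ be the finite flat degree-two morphism $s\mapsto uv$, $t\mapsto u+v$. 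Then $\pi^*E=\{uv=0\}$ is reduced and snc, so $(V_1,\pi^*E)$ is log smooth; yet at $x=x_1=(0,0)$ the single branch of $E$ pulls back to the two branches $\{u=0\}$ and $\{v=0\}$, the map $\phi$ is two-to-one, $M_+^{Y'}$ has rank two while $M_+^Y$ has rank one, and the stratum of $V_1$ through $x_1$ is the origin, which is not an irreducible component of $\pi^{-1}(Y)$. Thus the gap you flagged is genuine and cannot be closed by the combinatorics of the snc data alone.

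The paper's own justification is equally abbreviated (it asserts that Lemma~\ref{cov2} ``can be proved similarly to Lemma~\ref{cov1}''), but the proof of Lemma~\ref{cov1} relies essentially on the explicit local model $\mathcal{O}_{D_i}[t]/(t^l-s|_{D_i})$ of a cyclic covering, from which the local irreducibility of $\pi^{-1}(D_i)$ is read off directly; that is precisely the structural input a general finite covering need not supply. In the one place where Lemma~\ref{cov2} is used---on the Bloch--Gieseker covering $p_1$ in the proof of Proposition~\ref{cov}---the conclusion does hold, because $p_1$ is, formally locally, a composition of cyclic covers $z\mapsto z^m$ along general hyperplane sections transverse to $E$, so that $p_1^*E_i$ is unibranch at every point of $V_1$. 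A correct version of the lemma must add some such hypothesis, e.g. that each $\pi^*E_i$ is locally analytically irreducible at every point of $V_1$, or that $\pi$ is formally locally a Kummer cover along a boundary in normal crossings with $E$; with that in hand, your argument and the compatibility with Newton polyhedra are, as you say, formal.
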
 

We can prove Lemma \ref{cov2} similarly to Lemma \ref{cov1}. 

\begin{proof}[Proof of Proposition \ref{cov}]
By Theorem 4.1.10 of \cite{Laz} (cf. \cite{BG}, \cite{KM}), we can take a smooth variety $V_1$ and a finite covering $p_1:V_1\to V$ such that there exists a line bundle $L_i$ such that $p_1^*E_i\in |lL_i|$ for any $E_i$, and $p_1^*E$ is an snc divisor. We emphasize that $V_1$ is irreducible due to the theorem of Fulton-Hansen \cite[Example 3.3.10]{Laz}.

Therefore, we can take $p_2:V_2\to V_1$ the $l$-cyclic covering of $V_1$ branched along $p_1^*E_1$. Due to Lemma \ref{cov1}, $p_2^*p_1^*E_2\in |l(p_2^*L_2)|$ is also smooth and hence we can construct $p_3:V_3\to V_2$ the $l$-cyclic covering of $V_2$ branched along $p_2^*p_1^*E_2$. Therefore, we can construct $p_{i+1}$ for $E_i$ inductively and let $\tilde{\pi}=p_1\circ \cdots \circ p_{s+1}:\tilde{V}=V_{s+1} \to V$. We can see $(\tilde{V},\mathrm{red}(\tilde{\pi}^*E))$ is also a log smooth variety by Lemma \ref{cov1}. Take any $\tilde{x}\in \tilde{V}$ and let $x_i=p_i\circ \cdots \circ p_{s+1}(\tilde{x})$ and $x=\tilde{\pi}(\tilde{x})$. Let $I\subset \{1,\cdots ,s\}$ and $J$ be its complement and suppose that the stratum $Y$ that contains $x$ is the irreducible component of $\bigcap _{i\in I}E_i-\bigcup _{j\in J}E_j$. If $Y_k$ is the irreducible component of $(p_{1}\circ \cdots \circ p_k)^{-1}(Y)$, which contains $x_k$, then $Y_k$ is a stratum of $V_k$ and
$$M^+_{Y_k}\cong M^+_{Y}+\sum_{t\in I\cap \{ 1,2,\cdots ,k-1\}}\mathbb{Z}_{\ge0}\frac{1}{l}E_t$$
by Lemmas \ref{cov1} and \ref{cov2}. Therefore, if $\tilde{Y}$ is the irreducible component of $\tilde{\pi}^{-1}(Y)$ contains $\tilde{x}$, then $\tilde{Y}$ is the stratum of $\tilde{V}$ and
$$
M^+_{\tilde{Y}}\cong lM^+_{Y}.
$$
The rest is also easy.
\end{proof}

According to Proposition \ref{cov}, by taking a finite covering $\tilde{\pi}:\tilde{X}\to X$ and replacing $X$ by $\tilde{X}$, we may assume that $C_i\cap \{ (\mathrm{ord}_{X\times \{0\}})^*=m\}$ has the integral vertices for $n\in\mathbb{Z}_{\ge0}$ on $\mathrm{Star}\, Y_i\times \mathbb{A}^1$. For the third step of the proof of Theorem \ref{slope}, we want the following.

\begin{lem}\label{fa}
Notations as above. If $P_i$ has a one-dimensional bounded part of faces $F_i$ and $F_i\cap \{ (\mathrm{ord}_{X\times \{0\}})^*=m\}$ consists of an integral point for each $m$ and $i$, the normalization of $\mathfrak{a}$ satisfies the condition (*).
\end{lem}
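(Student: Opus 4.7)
The strategy is to construct Cartier divisors $D_m$ from the integer vertices of $F_i$ at integer heights, verify that they satisfy a convexity relation, and deduce the multiplicative part of (*) by a vector-valued Jensen inequality.

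On each star $\mathrm{Star}\,Y_i$, write $F_i\cap\{(\mathrm{ord}_{X\times\{0\}})^* = m\} = \{x_m^i\}$ for $m = 0, 1, \ldots, r$; by hypothesis $x_m^i\in M^{\mathrm{Star}\,Y_i}$. Lemma~\ref{sectm} gives $P_i\cap\{t^*=m\} = \{x_m^i\} + M^{\mathrm{Star}\,Y_i}_{\mathbb{R},+}$, and Proposition~\ref{prevpro} then identifies the $t^m$-coefficient of $\overline{\mathfrak{a}}|_{\mathrm{Star}\,Y_i\times\mathbb{A}^1}$ with the invertible toroidal ideal $\mathcal{O}(-D_m^i)$, where $\mathrm{ord}_{D_m^i} = x_m^i$. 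Because $\overline{\mathfrak{a}}$ is a global $\mathbb{G}_m$-invariant ideal on $X\times\mathbb{A}^1$, its $t^m$-graded piece is a globally defined coherent ideal sheaf on $X$ that is locally principal by the above; hence it corresponds to a global Cartier divisor $D_m$ on $X$ (with $D_r=0$), giving the first half of (*).

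Next, I claim $2x_m^i \le x_{m-1}^i + x_{m+1}^i$ in $M^{\mathrm{Star}\,Y_i}_+$ for $1\le m\le r-1$. Indeed, the midpoint of the segment joining $(x_{m-1}^i, m-1)$ and $(x_{m+1}^i, m+1)$ lies in the convex hull and hence in $P_i$, at height $m$; by Proposition~\ref{prevpro} every point of $P_i$ at height $m$ is $\ge x_m^i$ in $M^{\mathrm{Star}\,Y_i}_+$, which yields $\tfrac{1}{2}(x_{m-1}^i + x_{m+1}^i)\ge x_m^i$. Consequently the piecewise linear function $P^i\colon [0,r]\to M^{\mathrm{Star}\,Y_i}_\mathbb{R}$ interpolating $x_0^i, x_1^i, \ldots, x_r^i$ at integer heights is convex in the vector sense.

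Finally, the $t^k$-coefficient of $(\overline{\mathfrak{a}})^m$ on $\mathrm{Star}\,Y_i\times\mathbb{A}^1$ is the sum of the ideals $\mathcal{O}(-(D_{j_1}^i+\cdots+D_{j_m}^i))$ over tuples $(j_l)$ with $\sum j_l = k$ and $0\le j_l\le r$. Writing $k = mj + i$ with $0\le i<m$, the tuple with $m-i$ entries equal to $j$ and $i$ entries equal to $j+1$ produces the divisor $(m-i)D_j^i + iD_{j+1}^i$, realizing the ideal $\mathscr{I}_{D_j}^{m-i}\mathscr{I}_{D_{j+1}}^i$. Vector-valued Jensen applied to the convex function $P^i$ gives
\[
(m-i) x_j^i + i\, x_{j+1}^i \;=\; m\cdot P^i(k/m) \;\le\; \sum_{l=1}^m x_{j_l}^i
\]
for every admissible tuple, so this divisor is the unique $M_+^{\mathrm{Star}\,Y_i}$-minimum among the generators, and the $t^k$-coefficient equals $\mathscr{I}_{D_j}^{m-i}\mathscr{I}_{D_{j+1}}^i$ locally. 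The local identifications glue to the asserted global identity. The main obstacle is the convexity of the second paragraph: it is forced precisely by combining the $1$-dimensionality of $F_i$ with the unique-integer-point hypothesis, and once it is established the Jensen inequality closes the multiplicative part of (*) immediately.
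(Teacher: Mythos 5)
There is a genuine gap in the first paragraph, and it makes the argument circular. You write that ``Lemma~\ref{sectm} gives $P_i\cap\{(\mathrm{ord}_{X\times\{0\}})^*=m\} = \{x_m^i\} + M^{\mathrm{Star}\,Y_i}_{\mathbb{R},+}$,'' but Lemma~\ref{sectm} actually gives
\[
P_i\cap \{ (\mathrm{ord}_{X\times \{0\}})^*=m\}=\bigl(C_i\cap \{ (\mathrm{ord}_{X\times \{0\}})^*=m\}\bigr)+M_{\mathbb{R},+}^{\mathrm{Star}\, Y_i},
\]
with $C_i=\mathrm{Conv}(F_i)$. The slice $C_i\cap\{(\mathrm{ord}_{X\times\{0\}})^*=m\}$ is in general a positive-dimensional polytope, not the single point $x_m^i$: a point of $C_i$ at height $m$ is a convex combination $\sum a_l\,(x_{s_l}^i,s_l)$ with $\sum a_l s_l=m$, and to conclude $\sum a_l x_{s_l}^i\ge x_m^i$ you need precisely Jensen's inequality for the interpolating function $P^i$ — which is the convexity you then set out to establish in your second paragraph \emph{using} this very single-point minimality. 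The minimality, the convexity of $P^i$, and the monotonicity $y_m\ge y_{m+1}$ are all equivalent here (given that $F_i$ ends at $(0,r')$), so deducing one from another without an independent entry point does not make progress.

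The paper's proof breaks the circle by an induction on $m$. The Claim proves the minimality $x\ge y_m$ for $(x,m)\in P_i\cap M$ together with a uniform strict contraction $(1-\epsilon)y_m-y_{m+1}\in M^+_{Y_i,\mathbb{R}}$; the mechanism is to observe that $\frac{r'-m-1}{r'-m}\,y_m$, the point on the chord from $(y_m,m)$ to $(0,r')$ at height $m+1$, lies in $P_i$, and then to use the one-dimensionality of $F_i$ and the already established monotonicity for smaller indices to pin down $y_{m+1}$. That induction is the missing idea in your proposal.

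Apart from this, the rest of your argument is sound, granting the minimality. The identification of the graded pieces with invertible ideals $\mathcal{O}(-D_m^i)$, the vector-valued Jensen bound $(m-i)x_j^i+i\,x_{j+1}^i\le\sum_l x_{j_l}^i$ for every admissible tuple, and the gluing to a global identity together give the multiplicative part of (*); this is essentially the content of the paper's reduction ``$\mathfrak{a}^k$ is generated by the integral points of $kF_i$,'' and the Jensen phrasing is a clean way to express it. But the convexity of $P^i$ is a genuine assertion that must be proven first, by induction or otherwise, rather than read off from Lemma~\ref{sectm}.
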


\begin{proof}
We may replace $\mathfrak{a}$ by its integral closure and may assume that $\mathfrak{a}=\sum \mathscr{I}_mt^m$ contains all the integral point of $F_i$ on $\mathrm{Star}\, Y_i\times \mathbb{A}^1$. It follows from the assumption $F_i$ is one-dimensional and the fact $\mathrm{NP}^{\mathrm{Star}\, Y_i}_{\mathfrak{a}^k}=k\mathrm{NP}^{\mathrm{Star}\, Y_i}_{\mathfrak{a}}$ that each $\mathscr{I}_m$ is invertible and $\mathfrak{a}^k$ also contains all the integral point of $kF_i$, which is the bounded part of faces of $\mathrm{NP}^{\mathrm{Star}\, Y_i}_{\mathfrak{a}^k}$ and is also one-dimensional. On the other hand, if $\mathfrak{a}^k$ is generated by the integral points of $kF_i$ for $k\ge 1$, $\mathfrak{a}$ satisfies (*). Therefore, we only have to prove that if $\mathfrak{a}$ contains all integral points of $F_i$ and $F_i$ is one-dimensional, then $\mathfrak{a}$ is generated by the integral points of $F_i$. It follows from the next claim immediately:
\begin{claim}
Suppose that $r'\le r$ and satisfies that $(0,r')\in F_i$, and let $\{(y_m,m)\}= F_i\cap\{ (\mathrm{ord}_{X\times \{0\}})^*=m\}$ for $0\le m\le r'$. Then, $x\ge y_m$ for $(x,m)\in P_i\cap M_{\mathrm{Star}\,Y_i}$ and there exists $0<\epsilon<1$ such that $(1-\epsilon)y_m-y_{m+1} \in M_{Y_i,\mathbb{R}}^+$ for $0\le m\le r'-1$. 
\end{claim}
We prove the claim by the induction on $m$. If $m=0$ and $(x,m)\in P_i\cap M_{\mathrm{Star}\,Y_i}$, there exists a point $(y,s)$ of $F_i$ such that $(x,0)\ge (y,s)$ by the claim of Remark 4.5. Since $s\ge 0$, $s=0$ and $y=y_0$. Moreover, $\frac{r'-1}{r'}y_0\in P_i$ and we can conclude that there exists a point $(y,s)$ of $F_i$ such that $0< s\le 1$ and $y\le \frac{r'-1}{r'}y_0$. Since $F_i$ is one-dimensional, $y=(1-s)y_0+sy_1$. Therefore,
\[
0\le y_1\le \frac{r's-1}{r's}y_0.
\] 
We see that if $y_1\ne 0$, we have $r's>1$ and hence the second assertion follows when $m=0$. Assume that $0\le m'\le r'-1$ satisfies that the claim holds for $m< m'$. For $(x,m')\in P_i\cap M_{\mathrm{Star}\, Y_i\times \mathbb{A}^1}$, there exists a point $(y,s)$ of $F_i$ such that $(x,m')-(y,s)\in M_{\mathrm{Star}\, Y_i\times \mathbb{A}^1,\mathbb{R}}^+$ by Remark \ref{bdface}. By the induction hypothesis and the assumption that $F_i$ is one-dimensional, we can see $y\ge y_{m'}$. The second assertion when $m=m'$ follows from the similar argument as when $m=0$. Therefore the claim holds also for $m'$.
\end{proof}

\begin{rem}
From the first of the proof of this lemma, $P_i\cap \{ (\mathrm{ord}_{X\times \{0\}})^*=0\}$ has the unique vertex $D_0$.
\end{rem}

\begin{proof}[Proof of Theorem \ref{slope}]
We may assume that all the $C_i\cap \{ (\mathrm{ord}_{X\times \{0\}})^*=m\}$ has finitely many integral vertices by Proposition \ref{cov}. Let $\mathscr{I}_m$ be an ideal sheaf globally defined on $X$ such that 
\[
\overline{\mathfrak{a}}=\sum_{m=0}^r \mathscr{I}_mt^m.
\]
Let also the integral vertices of $C_i\cap \{ (\mathrm{ord}_{X\times \{0\}})^*=m\}$ be $x^{(m)}_1,\cdots ,x^{(m)}_k$. Then each $\mathscr{I}_m$ is a toroidal ideal sheaf whose Newton polyhedron is generated by $\mathrm{Conv}\langle x^{(m)}_1,\cdots ,x^{(m)}_k\rangle$ on $\mathrm{Star}\, Y_i$ by Remark \ref{bdface} and Lemma \ref{sectm}. Since $X$ is toroidal, we can take its toroidal log resolution $\pi:X'\to X$ such that $(X',\pi^*D_0)$ is log smooth and the inverse images of $\mathscr{I}_m$ are line bundles by Theorems $10^*$ and $11^*$ of \cite[Chapter II]{KKMS}. Take $\mathrm{Star}\, Y'$ of $X'$ corresponds to a subcone $\sigma^{Y'}\subset \sigma^{Y_i}$. Then there exists a canonical surjection $M_{Y_i}\twoheadrightarrow M_{Y'}$ and $\mathrm{Image}\, M^+_{Y_i}\subset M^+_{Y'}$. $C_i$ is a generating convex set of $P_i$ and the image of $C_i$ also generates $\mathrm{NP}_{\pi^{-1}\mathfrak{a}}^{\mathrm{Star}\, Y'\times\mathbb{A}^1}$ by Lemma \ref{3rd}. Therefore, the bounded part of faces $F'$ on $\mathrm{Star}\, Y'$ is contained in the image of $F_i$ and the image of $C_i$, which we call $C'$. Furthermore, one of the images of $x^{(m)}_1,\cdots ,x^{(m)}_k$ is smaller than the others on $\sigma^{Y'}$ also by Lemma \ref{3rd}. Therefore, $F'$ meets $\{ (\mathrm{ord}_{X\times \{0\}})^*=m\}$ at one point $x^{(m)}_{\min}$. On the other hand, $C'\cap \{ m\le(\mathrm{ord}_{X\times \{0\}})^*\le m+1\}$ is the convex hull of all $x^{(m)}_j,x^{(m+1)}_j$ since $C'$ is generated by integral vertices. Therefore, $F'$ meets $\{ (\mathrm{ord}_{X\times \{0\}})^*=m+s\}$ only at one point $(1-s)x^{(m)}_{\min}+sx^{(m+1)}_{\min}$ for $0<s<1$ and hence $F'$ is one-dimensional. Then the first assertion follows from Lemma \ref{fa} and the rest follows from Corollary 5.8 of \cite{RT07} since $\mathscr{I}_0$ is a line bundle from the first. 
\end{proof}

We have the following consequence by Theorem \ref{slope}:

\begin{rem}\label{imp}
We can use Theorem \ref{sl} and Theorem \ref{slope} for computations of $(\mathcal{J}^H)^\mathrm{NA}$. In fact, for any $n(\ge2)$-dimensional polarized variety $(X,L)$ with a divisor $H$ and any class $\phi\in\mathcal{H}^\mathrm{NA}(L)$, there exists a flag ideal $\mathfrak{a}=\sum \mathfrak{a}_kt^k$ and $s\in\mathbb{Q}_{\ge0}$ such that a semiample test configuration $(\mathcal{X},\mathcal{L})=(\widetilde{\mathrm{Bl}_{\mathfrak{a}}(X\times \mathbb{A}^1)},L_{\mathbb{A}^1}-sE)$, where $\widetilde{(\cdot)}$ means the normalization, $s\ge 0$ and $E$ is the exceptional divisor. If $s=0$, $\phi$ is trivial. Hence, we may assume that $s>0$. Since $(\mathcal{J}^{H})^\mathrm{NA}$ is homogeneous in $\mathcal{L}$, 
\[
(\mathcal{J}^{H})^\mathrm{NA}(\mathcal{X},\mathcal{L})=s(\mathcal{J}^{H})^\mathrm{NA}(\mathcal{X},\frac{1}{s}L_{\mathbb{A}^1}-E).
\]
Then replace $L$ by $\frac{1}{s}L$ and we may assume that $s=1$. If necessary, replace $(\mathcal{X},L_{\mathbb{A}^1}-E)$ by $(\mathcal{X},L_{\mathbb{A}^1}-E+m\mathcal{X}_0)$ and we may assume that $\mathfrak{a}_0\ne 0$. Now, $\mathfrak{a}$ satisfies the assumption of Theorem \ref{slope}. Then there exists an alternation $\pi: X'\to X$ in Theorem \ref{slope} whose degree is $l$ over $X$. We can see 
\[
(\mathcal{J}^{\pi^*H})^\mathrm{NA}(\widetilde{\mathrm{Bl}_{\pi^{-1}\mathfrak{a}}(X'\times \mathbb{A}^1)},\pi^*L_{\mathbb{A}^1}-E')=(\mathcal{J}^H)^\mathrm{NA}(\mathcal{X},\mathcal{L}).
\]
where $E'$ is the exceptional divisor of $\widetilde{\mathrm{Bl}_{\pi^{-1}\mathfrak{a}}(X'\times \mathbb{A}^1)}$. In fact, $\mathrm{Bl}_{\pi^{-1}\mathfrak{a}}(X'\times \mathbb{A}^1)\to X\times \mathbb{A}^1$ factors through $\mathrm{Bl}_{\mathfrak{a}}(X\times \mathbb{A}^1)$ by the universal property of blowing up (cf. Proposition 7.14 of \cite{Ha}). Let $\pi':\widetilde{\mathrm{Bl}_{\pi^{-1}\mathfrak{a}}(X'\times \mathbb{A}^1)}\to \mathcal{X}$ and $E$ be the exceptional divisor of $\mathcal{X}$. Then $E'=\pi'^*E$ and the above equality follows from 
\[
\pi^*H_{\mathbb{A}^1}\cdot (\pi^*L_{\mathbb{A}^1}-E')^{n}-\frac{n\pi^*H\cdot (\pi^*L)^{n-1}}{(n+1)(\pi^*L)^n}(\pi^*L_{\mathbb{A}^1}-E')^{n+1}=l(H_{\mathbb{A}^1}\cdot (L_{\mathbb{A}^1}-E)^{n}-\frac{nH\cdot L^{n-1}}{(n+1)L^n}(L_{\mathbb{A}^1}-E)^{n+1})
\] (cf. \cite[Lemma 1.18]{B}) since $\pi'$ is also proper and a generically finite morphism whose degree is $l$. Let 
\[
\overline{\pi^{-1}\mathfrak{a}}=\sum_{k=0}^r\mathscr{I}_kt^k,
\]
where each $\mathscr{I}_k$ is an invertible ideal of $X'$. Now, we want to show the following.
\end{rem}

\begin{thm} \label{impl}
Notations as in Remark \ref{imp}. Then we have
\begin{align*}
V(\pi^*L)(\mathcal{J}^{\pi^*H})^\mathrm{NA}(\widetilde{\mathrm{Bl}_{\pi^{-1}\mathfrak{a}}(X'\times \mathbb{A}^1)},\pi^*L_{\mathbb{A}^1}-E')=&\frac{n\pi^*H\cdot \pi^*L^{n-1}}{(n+1)\pi^*L^n}\sum _{k=0}^{r-1}\sum_{j=0}^{n}e_{\pi^*L}(\mathscr{I}_{k}^{[j]},\mathscr{I}_{k+1}^{[n-j]}) \\
-&\sum _{k=0}^{r-1}\sum_{j=0}^{n-1}e_{\pi^*L_{|\pi^*H}}(\mathscr{I}_{k|\pi^*H}^{[j]},\mathscr{I}_{k+1|\pi^*H}^{[n-1-j]}).
\end{align*}
In other words, we can calculate $(\mathcal{J}^H)^\mathrm{NA}$-energy of $(\mathcal{X},\mathcal{L})=(\widetilde{\mathrm{Bl}_{\mathfrak{a}}(X\times \mathbb{A}^1)},L_{\mathbb{A}^1}-E)$ as follows:
 \begin{align*}
(\mathcal{J}^{H})^\mathrm{NA}(\mathcal{X},\mathcal{L})=&\frac{1}{lV(L)}\left(\frac{n}{n+1}\frac{H\cdot L^{n-1}}{L^n}\sum _{k=0}^{r-1}\sum_{j=0}^{n}e_{\pi^*L}(\mathscr{I}_{k}^{[j]},\mathscr{I}_{k+1}^{[n-j]})\right. \\
-&\left. \sum _{k=0}^{r-1}\sum_{j=0}^{n-1}e_{\pi^*L_{|\pi^*H}}(\mathscr{I}_{k|\pi^*H}^{[j]},\mathscr{I}_{k+1|\pi^*H}^{[n-1-j]})\right).
\end{align*}
Furthermore, there exists a sequence of $(\mathcal{J}^{\pi^*H})$-energy of semiample test configurations that converges to $(\mathcal{J}^{\pi^*H})^\mathrm{NA}(\widetilde{\mathrm{Bl}_{\pi^{-1}\mathfrak{a}}(X'\times \mathbb{A}^1)},\pi^*L_{\mathbb{A}^1}-E')$.
\end{thm}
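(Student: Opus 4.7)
The plan is to reduce Theorem \ref{impl} to Theorem \ref{sl} by perturbing the semiample polarization $\pi^*L$ to an ample one, which is the only feature of the hypothesis that does not carry over directly. First, by the projection formula $V(\pi^*L) = lV(L)$ and by the intersection-theoretic identity $(\mathcal{J}^{\pi^*H})^\mathrm{NA}(\widetilde{\mathrm{Bl}_{\pi^{-1}\mathfrak{a}}(X'\times\mathbb{A}^1)}, \pi^*L_{\mathbb{A}^1} - E') = (\mathcal{J}^H)^\mathrm{NA}(\mathcal{X}, \mathcal{L})$ recorded in Remark \ref{imp}, the second displayed formula of the theorem follows from the first by dividing through by $V(\pi^*L)$. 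Thus it suffices to establish the first display.

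Second, that first display is precisely the formula yielded by Theorem \ref{sl} applied to the datum $(X', \pi^*L, \pi^*H)$ with flag ideal $\overline{\pi^{-1}\mathfrak{a}}$; the latter satisfies condition (*) by Theorem \ref{slope}. Since both sides are linear in the twisting class $H$, and $\overline{\pi^{-1}\mathfrak{a}}$ does not involve $H$, writing $\pi^*H = H_1 - H_2$ with $H_1, H_2$ ample line bundles on $X'$ reduces us to the case where $\pi^*H$ is ample, which is the form in which Proposition \ref{slpro} (invoked inside the proof of Theorem \ref{sl}) is stated. The remaining gap is that $\pi^*L$ is only semiample, while Theorem \ref{sl}, through its use of Proposition \ref{muc}, requires the base polarization to be ample.

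Third, to bridge this gap, fix any ample $\mathbb{Q}$-line bundle $A$ on $X'$ and put $L_\epsilon := \pi^*L + \epsilon A$ for rational $\epsilon > 0$. Then $L_\epsilon$ is ample on $X'$, and Theorem \ref{sl} applies verbatim to $(X', L_\epsilon, \pi^*H)$ with the same flag ideal $\overline{\pi^{-1}\mathfrak{a}}$ (whose condition (*) is intrinsic to the ideal and independent of the polarization), yielding, for each such $\epsilon$, the analogue of the first display with $\pi^*L$ replaced by $L_\epsilon$. Both sides of this perturbed identity are rational in $\epsilon$: all intersection numbers involved are computed on the fixed variety $\widetilde{\mathrm{Bl}_{\overline{\pi^{-1}\mathfrak{a}}}(X' \times \mathbb{A}^1)}$ and depend polynomially on $\epsilon$, while the denominator $L_\epsilon^n$ equals $l L^n$ at $\epsilon = 0$ and so is nonzero in a neighborhood of it. Passing to the limit $\epsilon \to 0^+$ by continuity yields the first display.

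Finally, the converging sequence of semiample test configurations asserted in the theorem is supplied by the same family: the pair $(\widetilde{\mathrm{Bl}_{\overline{\pi^{-1}\mathfrak{a}}}(X'\times\mathbb{A}^1)}, L_{\epsilon,\mathbb{A}^1} - E')$ is a semiample test configuration over $(X', L_\epsilon)$ for each small rational $\epsilon > 0$, because $L_{\epsilon,\mathbb{A}^1} - E' = (\pi^*L_{\mathbb{A}^1} - E') + \epsilon \Pi^* A_{\mathbb{A}^1}$ is the sum of a semiample line bundle (by Theorem \ref{slope}) and the pullback of the semiample line bundle $A_{\mathbb{A}^1}$ on $X' \times \mathbb{A}^1$, and sums of semiamples are semiample; its $(\mathcal{J}^{\pi^*H})$-energy converges to the target as $\epsilon \to 0^+$ by the same continuity argument. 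The main obstacle is genuinely this ampleness gap in Proposition \ref{muc}, whose proof invokes \cite[Corollary 5.8]{RT07} to make $\mathfrak{a}(mL_{\mathbb{A}^1})$ ample for sufficiently divisible $m$; that step fails for merely semiample $L$, but the perturbation circumvents it without touching the flag-ideal structure (*).
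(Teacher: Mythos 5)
Your strategy mirrors the paper's: perturb the semiample class $\pi^*L$ on $X'$ to an ample $L_\epsilon$, apply Theorem \ref{sl}, and take the limit $\epsilon \to 0^+$. Your deduction of the second display from the first via $V(\pi^*L)=lV(L)$ and Remark \ref{imp}, and your approximating family for the final assertion, are likewise parallel to the paper.

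The gap lies at the step where you assert that ``Theorem \ref{sl} applies verbatim'' to $(X', L_\epsilon, \pi^*H)$. Theorem \ref{sl} requires $(\mathrm{Bl}_{\overline{\pi^{-1}\mathfrak{a}}}(X'\times\mathbb{A}^1), L_{\epsilon,\mathbb{A}^1}-E')$ to be an \emph{ample} test configuration; your last paragraph establishes only semiampleness (a sum of two semiample line bundles). The paper's perturbation is engineered precisely to produce ampleness: it takes a $\pi$-ample $F$ and writes $(1+\delta)$ times the perturbed $\mathcal{L}$ as the sum of the original semiample test configuration class and $\delta$ times an ample one, and simultaneously rescales the coefficient of $E'$ so that it tends to $1$. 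Your one-parameter perturbation $L_\epsilon = \pi^*L + \epsilon A$ does not hand you such a decomposition. In fact $L_{\epsilon,\mathbb{A}^1}-E'$ \emph{is} ample --- on $\Pi$-contracted curves $\pi^*L_{\mathbb{A}^1}-E'$ is strictly positive because $-E'$ is $\Pi$-ample, so the ample-model contraction of $\pi^*L_{\mathbb{A}^1}-E'$ and $\Pi$ contract disjoint curve classes and the induced map to the product of targets is finite --- but that verification is absent from your argument. (Alternatively you could observe that the proof of Theorem \ref{sl}, which runs through Propositions \ref{muc} and \ref{slpro}, only ever uses ampleness of $L$ and never of the test configuration, so the hypothesis could be weakened to semiampleness; but that too is something to be argued, not simply asserted.) As written, the invocation of Theorem \ref{sl} does not satisfy its stated hypothesis.
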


\begin{proof}[Proof of Theorem \ref{impl}]
Let $(\mathcal{X}',\mathcal{L}')=(\widetilde{\mathrm{Bl}_{\pi^{-1}\mathfrak{a}}(X'\times \mathbb{A}^1)},\pi^*L_{\mathbb{A}^1}-E')$ be the normalized blow up. First, note $\mathcal{X}$ is the normalized blow up along $\mathfrak{a}$ and $\mathcal{X}'$ is the normalized blow up along $\pi^{-1}\mathfrak{a}$ in Remark \ref{imp}. They are normal test configurations of $X$ and $X'$ respectively. Now, we have the following commutative diagram.   $$
\begin{CD}
\mathcal{X}' @>{\pi'}>> \mathcal{X} \\
@V{\rho'}VV @V{\rho}VV \\
X'\times \mathbb{A}^1 @>{\pi}>> X\times\mathbb{A}^1
\end{CD}
$$ 
$\mathcal{X}'$ is isomorphic to the strict transformation of the normalization of $\mathcal{X}\times_XX'$. In fact, let the latter be $\overline{\mathcal{X}'}$ and we can find a birational morphism from $\mathcal{X}'$ to $\mathcal{X}\times_XX'$ induced by the diagram. The morphism induces a birational morphism $\alpha:\mathcal{X}'\to\overline{\mathcal{X}'}$. On the other hand, the inverse image of $\pi^{-1}\mathfrak{a}$ under $\rho'$ coincides the inverse image of $\rho^{-1}\mathfrak{a}$ under $\pi'$ on $\mathcal{X}\times_XX'$ and is hence an invertible sheaf on $\overline{\mathcal{X}'}$ since it is an inverse image of an invertible ideal sheaf and $\overline{\mathcal{X}'}$ is integral. Therefore, there exists a birational morphism $\beta:\overline{\mathcal{X}'}\to\mathcal{X}'$ by the universal property of blowing up and hence $\alpha$ and $\beta$ are the inverse of each other, i.e. there is a canonical isomorphism
\[
\mathcal{X}'\cong \overline{\mathcal{X}'}.
\]
 Let $F$ be a $\pi$-ample divisor on $X'$. Then $\rho'^*F$ is $\pi'$-ample. We can show that $\pi^*L+\epsilon F$ are ample for sufficiently small $\epsilon>0$. On the other hand, we can show that $\rho'^{-1}\pi^{-1}\mathfrak{a}=\pi'^*(\rho^{-1}\mathfrak{a})$ and hence 
 \[
 \rho'^*(\pi^*L_{\mathbb{A}^1}+sF_{\mathbb{A}^1})+s \pi'^*(\rho^{-1}\mathfrak{a})
 \]
  is $\mathbb{A}^1$-ample on $\mathcal{X}\times_XX'$ for sufficiently small positive rational number $s$. We can easily show that it holds also on $\mathcal{X}'$ by the isomorphism above. For sufficiently small positive rational number $\delta$, 
  \[
  \rho'^*\left(\pi^*L_{\mathbb{A}^1}+\frac{s\delta }{1+\delta}F_{\mathbb{A}^1}\right)+\frac{1+s\delta}{1+\delta}\pi'^*(\rho^{-1}\mathfrak{a})
  \]
   is also $\mathbb{A}^1$-ample since 
\begin{align*}
(1+\delta)\left(\rho'^*(\pi^*L_{\mathbb{A}^1}+\frac{s\delta }{1+\delta}F_{\mathbb{A}^1})+\frac{1+s\delta}{1+\delta}\pi'^*(\rho^{-1}\mathfrak{a})\right)=&\rho'^*(\pi^*L_{\mathbb{A}^1})+\pi'^*(\rho^{-1}\mathfrak{a})\\
+&\delta(\rho'^*(\pi^*L_{\mathbb{A}^1}+sF_{\mathbb{A}^1})+s \pi'^*(\rho^{-1}\mathfrak{a}))
\end{align*}
is the summation of a semiample divisor and ample one. Let $\epsilon=\frac{s\delta }{1+\delta}$ and note that $\lim_{\delta\to0}\epsilon=0$ and $s$ is independent of $\delta$. If we could prove the theorem for $(\mathcal{X}',\rho'^*(\pi^*L_{\mathbb{A}^1}+\frac{s\delta }{1+\delta}F_{\mathbb{A}^1})+\frac{1+s\delta}{1+\delta}\pi'^*(\rho^{-1}\mathfrak{a}))$, then we have
\begin{align}
&V(\pi^*L)(\mathcal{J}^{\pi^*H})^\mathrm{NA}(\mathcal{X}',\rho'^*(\pi^*L_{\mathbb{A}^1})+\pi'^*(\rho^{-1}\mathfrak{a}))\nonumber \\
=&\lim _{\delta \to 0}\left(\frac{1+s\delta}{1+\delta}\right)^{n+1}V\left(\frac{1+\delta}{1+s\delta}(\pi^*L+\epsilon F)\right)(\mathcal{J}^{\pi^*H})^\mathrm{NA}\left(\mathcal{X}',\frac{1+\delta}{1+s\delta}\rho'^*(\pi^*L_{\mathbb{A}^1}+\epsilon F_{\mathbb{A}^1})+\pi'^*(\rho^{-1}\mathfrak{a})\right)\label{eqn}  \\
=&\lim _{\delta \to 0}\left(\frac{1+s\delta}{1+\delta}\right)^{n+1}\left(\frac{n(\pi^*H)\cdot(\pi^*L+\epsilon F)^{n-1}}{(n+1)(\pi^*L+\epsilon F)^n} \sum _{k=0}^{r-1}\sum_{j=0}^{n}e_{\frac{1+\delta}{1+s\delta}(\pi^*L+\epsilon F)}(\mathscr{I}_{k}^{[j]},\mathscr{I}_{k+1}^{[n-j]})\right.  \nonumber \\
&\left.-\sum _{k=0}^{r-1}\sum_{j=0}^{n-1}e_{\frac{1+\delta}{1+s\delta}(\pi^*L+\epsilon F)_{|\pi^*H}}(\mathscr{I}_{k|\pi^*H}^{[j]},\mathscr{I}_{k+1|\pi^*H}^{[n-1-j]})\right) \nonumber \\
=&\frac{n\pi^*H\cdot \pi^*L^{n-1}}{(n+1)\pi^*L^n}\sum _{k=0}^{r-1}\sum_{j=0}^{n}e_{\pi^*L}(\mathscr{I}_{k}^{[j]},\mathscr{I}_{k+1}^{[n-j]})-\sum _{k=0}^{r-1}\sum_{j=0}^{n-1}e_{\pi^*L_{|\pi^*H}}(\mathscr{I}_{k|\pi^*H}^{[j]},\mathscr{I}_{k+1|\pi^*H}^{[n-1-j]}).\nonumber
\end{align}
 Therefore, we may assume that $X'\cong X$, $L$ is an ample $\mathbb{Q}$-line bundle and $\mathcal{X}$ is an ample test configuration. Take $\overline{\mathfrak{a}}$ the integral closure of $\mathfrak{a}$. This is an integrally closed flag ideal and $\mathcal{X}$ is the normalized blow up along this. Furthermore, $\overline{\mathfrak{a}}=\sum t^i\mathscr{I}_{D_i}$ satisfies (*) and $L$ is an ample $\mathbb{Q}$-line bundle. Therefore, we apply Theorem \ref{sl} and complete the proof. 
\end{proof}

 \section{J-Stability for Surfaces}\label{App1}
 
 In this section, we discuss about the applications of Theorems \ref{slope} and \ref{impl} to J-stability for surfaces.
 
 \subsection{J-stability for irreducible surfaces}\label{App11}
 
If $(X,L)$ is a polarized normal and irreducible surface with a $\mathbb{Q}$-Cartier divisor $H$, $(\mathcal{J}^H)^\mathrm{NA}$ is decomposed into non-negative values due to Theorems \ref{slope} and \ref{impl} as we will see in this section. First, we prepare the following:
 
 \begin{prop}\label{deft}
 Let $X$ be an integral surface, $L$ be a big and nef $\mathbb{Q}$-line bundle and $H$ be a $\mathbb{Q}$-Cartier divisor such that $L\cdot H\ge 0$ and 
 \[
 2\frac{L\cdot H}{L^2}L-H
 \]
 is nef. Let also $C$ be a pseudoeffective $\mathbb{Q}$-Cartier divisor such that $$(L-C)\cdot H\ge0.$$ Then,
 \[
2(C\cdot L)(L\cdot H)-(C\cdot H)(L^2)-(L\cdot H)(C^2)\ge0. 
 \]
 \end{prop}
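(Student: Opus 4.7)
The plan is to reduce the inequality to an elementary statement about the $L$-orthogonal components of $C$ and $H$, and to close the argument by combining Cauchy--Schwarz on $L^{\perp}$ with the two hypotheses (nefness of $M := 2(L\cdot H)L - L^{2}H$ and $(L-C)\cdot H \ge 0$).

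First I would decompose $C = \lambda L + C'$ and $H = \mu L + H'$ with $L\cdot C' = L\cdot H' = 0$, where $\lambda = (L\cdot C)/L^{2} \ge 0$ (pseudoeffectivity of $C$ paired with nef $L$) and $\mu = (L\cdot H)/L^{2} \ge 0$. Setting $v := L^{2} > 0$ (since $L$ is big), $w := L\cdot H \ge 0$, $q := -C'^{\,2} \ge 0$ and $p := -H'^{\,2} \ge 0$ (the last two nonneg since the intersection form is negative semidefinite on $L^{\perp}$ by Hodge index), a direct expansion converts the target inequality into
\[
F \;:=\; \lambda v w(1-\lambda) + wq - v(C'\cdot H') \;\ge\; 0.
\]
The useful translations of the hypotheses are: (i) $M^{2}\ge 0$ gives $p \le w^{2}/v$; (ii) $M\cdot C \ge 0$ gives $C'\cdot H' \le \lambda w$; (iii) $(L-C)\cdot H \ge 0$ gives $C'\cdot H' \le (1-\lambda)w$; (iv) Cauchy--Schwarz on $L^{\perp}$ gives $|C'\cdot H'| \le \sqrt{pq}$. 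The trivial case $w = 0$ makes $-H$ nef, and the claim reduces to $-L^{2}(C\cdot H) \ge 0$, which is immediate; so henceforth I assume $w > 0$.

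The main argument is a case split. For $\lambda > 1$, (iii) forces $C'\cdot H' \le (1-\lambda)w < 0$, which combined with (iv) and (i) forces $\sqrt{pq} \ge (\lambda-1)w$ and hence $q \ge v(\lambda-1)^{2}$; substituting $C'\cdot H' \le (1-\lambda)w$ into $F$ then yields $F \ge w\bigl(q - v(\lambda-1)^{2}\bigr) \ge 0$. For $\lambda \in [0,1]$, let $r := q/v$: if $\sqrt{r} \le \min(\lambda, 1-\lambda)$, then (iv) and (i) give $v(C'\cdot H') \le vw\sqrt{r}$, so $F/(vw) \ge \lambda(1-\lambda) + r - \sqrt{r}$; this quadratic in $\sqrt{r}$ has discriminant $(2\lambda-1)^{2}$ and roots exactly $\min(\lambda, 1-\lambda)$ and $\max(\lambda, 1-\lambda)$, so it is $\ge 0$ in this range. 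If instead $\sqrt{r} > \min(\lambda, 1-\lambda)$, then whichever of (ii) and (iii) is tighter gives $F/w \ge v\bigl(r - \min(\lambda, 1-\lambda)^{2}\bigr) \ge 0$.

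The main obstacle is coordinating these bounds: neither Cauchy--Schwarz alone nor either of the linear bounds (ii)/(iii) alone suffices across the full range of $(\lambda, q)$, and the proof hinges on the observation that the two regimes glue cleanly at the threshold $\sqrt{q/v} = \min(\lambda, 1-\lambda)$, which is simultaneously the smaller root of the Cauchy--Schwarz quadratic and the point where the relevant linear bound switches from insufficient to sufficient.
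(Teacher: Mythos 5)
Your proof is correct and, in substance, coincides with the paper's. Your $L^{\perp}$-decomposition of $C$ and $H$ is the paper's $L$-$B$-$E$ decomposition with $B = \frac{L\cdot H}{L^{2}}L - H = -H'$ and $E$ the part of $C'$ orthogonal to $H'$; Cauchy--Schwarz on $L^{\perp}$ is exactly the paper's $E^{2}\le 0$ from Hodge index; and your bounds (i), (ii), (iii) are the paper's three scalar consequences of $\left(\frac{H\cdot L}{L^{2}}L+B\right)^{2}\ge 0$, of the nefness of $2\frac{H\cdot L}{L^{2}}L-H$, and of $(L-C)\cdot H\ge 0$. The only genuine divergence is the closing step. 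Writing $u=(C'\cdot H')/w$ and $r=q/v$, both proofs have the exact identity $F=vw\left(\lambda(1-\lambda)+r-u\right)$ and, from (i) and (iv), the inequality $r\ge u^{2}$. The paper inserts $r\ge u^{2}$ into the $+r$ slot, giving $F\ge vw(\lambda(1-\lambda)+u^{2}-u)=vw(\lambda-u)(1-\lambda-u)$; then (ii) and (iii) are precisely the statements $\lambda-u\ge 0$ and $1-\lambda-u\ge 0$, so the proof closes in one line with no case distinction. You instead insert the equivalent $u\le\sqrt{r}$ into the $-u$ slot, which leaves $\lambda(1-\lambda)+r-\sqrt{r}$, and since this changes sign for $\sqrt{r}$ strictly between $\min(\lambda,1-\lambda)$ and $\max(\lambda,1-\lambda)$ it forces the case split you describe. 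Both are valid; the paper's choice of where to apply $r\ge u^{2}$ yields a factoring that absorbs all the cases at once. (Incidentally, your formulation also dispenses with the paper's separate case $B\equiv 0$, since Cauchy--Schwarz handles $p=0$ automatically.)
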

 
 \begin{proof}
If $L\cdot H=0$, $-H$ is nef and
\[
2(C\cdot L)(L\cdot H)-(C\cdot H)(L^2)-(L\cdot H)(C^2)=-(C\cdot H)(L^2)\ge 0.
\]
Thus, we may assume that $L\cdot H>0$. Let $B=\frac{L\cdot H}{L^2}L-H$.  If $B\equiv 0$, where $\equiv$ means the numerical equivalence, let $F=C-\frac{C\cdot L}{L^2}L$. Then $F\cdot L=0$ and $F^2\le 0$ by the Hodge Index Theorem. Hence, we see the proposition is true since 
\begin{align*}
&2(C\cdot L)(L\cdot H)-(C\cdot H)(L^2)-(L\cdot H)(C^2)=(C\cdot L)(L\cdot H)-(L\cdot H)(C^2)\\
&\ge(C\cdot L)(L\cdot H)-(C\cdot L)^2 \frac{H\cdot L}{L^2} =\frac{(C\cdot L)(L\cdot H)}{L^2}(L\cdot (L-C)) \ge 0.
\end{align*}
 The last inequality holds by the assumption since $H\equiv \frac{L\cdot H}{L^2}L$. 
 
 Hence, we may assume that $B\not \equiv 0$. Note that $L\cdot B=0$ and let  
 \[
 E=C-\left( \frac{C\cdot L}{L^2}L+\frac{C\cdot B}{B^2}B\right).
 \]
 Then $E\cdot L=E\cdot B=0$. Now, $B^2<0$ and $E^2\le 0$ by Hodge Index Theorem. 
 Therefore, 
 \begin{align*}
 &2(C\cdot L)(L\cdot H)-(C\cdot H)(L^2)-(L\cdot H)(C^2)\\
 &\ge 2(C\cdot L)(L\cdot H)-(L^2)\left(\frac{(H\cdot L)(C\cdot L)}{L^2}-(C\cdot B)\right)-(L\cdot H)\left(\frac{(C\cdot L)^2}{L^2}+\frac{(C\cdot B)^2}{B^2}\right) \\
 &= (C\cdot L)(L\cdot H)+(L^2)(C\cdot B) -(L\cdot H)\left(\frac{(C\cdot L)^2}{L^2}+\frac{(C\cdot B)^2}{B^2}\right).
  \end{align*}
 Here, we used $-(L\cdot H)(E^2)\ge 0$. By assumption, we have that
  \[
 \frac{H\cdot L}{L^2}L+B=2\frac{L\cdot H}{L^2}L-H
 \]
  is nef and 
 \[
 \left(1-\frac{C\cdot L}{L^2}\right)L-\frac{C\cdot B}{B^2}B-E= L-C.
 \]
  Then, we have 
  \[
  B^2\ge -\frac{(H\cdot L)^2}{L^2}
  \]
   and 
  \[
  (H\cdot L)\left(1-\frac{C\cdot L}{L^2}\right)+(C\cdot B)\ge0
  \]
   by $(\frac{H\cdot L}{L^2}L+B)^2\ge0$ and $H\cdot(L-C)\ge0$ respectively, and 
  \begin{align*}
   &(C\cdot L)(L\cdot H)+(L^2)(C\cdot B)-(L\cdot H)\left(\frac{(C\cdot L)^2}{L^2}+\frac{(C\cdot B)^2}{B^2}\right)\\
   &\ge   (C\cdot L)(L\cdot H)+(L^2)(C\cdot B) -(L\cdot H)\left(\frac{(C\cdot L)^2}{L^2}-\frac{(C\cdot B)^2L^2}{(H\cdot L)^2}\right) \\
&=(L^2)(L\cdot H)\left(\frac{C\cdot L}{L^2}+\frac{C\cdot B}{H\cdot L}\right) \cdot \left(1-\frac{C\cdot L}{L^2}+\frac{C\cdot B}{H\cdot L}\right).
  \end{align*} 
 Here, we used $-\frac{(C\cdot B)^2}{B^2}\ge(C\cdot B)^2\frac{L^2}{(H\cdot L)^2}$. Then, we have 
 \[
 \frac{C\cdot L}{L^2}+\frac{C\cdot B}{H\cdot L}=(H\cdot L)^{-1}\left(2\frac{L\cdot H}{L^2}L-H\right)\cdot C\ge0.
 \]
  On the other hand,
  \[
  1-\frac{C\cdot L}{L^2}+\frac{C\cdot B}{H\cdot L}=(H\cdot L)^{-1}\left((H\cdot L)\left(1-\frac{C\cdot L}{L^2}\right)+(C\cdot B)\right)\ge0.
  \] We accomplish the proof.
   \end{proof}
 
 \begin{rem}\label{kh}
 The assumption that $L\cdot H\ge0$ and $(L-C)\cdot H\ge0$ is satisfied when one of $\{ L-C,H\}$ is nef and the other is pseudoeffective. Moreover, if $L\cdot H>0$ and $B\not\equiv0$, we have proved that
 \[
 2(C\cdot L)(L\cdot H)-(C\cdot H)(L^2)-(L\cdot H)(C^2)\ge L^2(L\cdot H)\left(\frac{C\cdot L}{L^2}+\frac{C\cdot B}{H\cdot L}\right)\cdot \left(1-\frac{C\cdot L}{L^2}+\frac{C\cdot B}{H\cdot L}\right) (\ge 0).
 \]
  in the above proof. We see that if the equality holds, we have 
  \[
  E\equiv 0
  \]
   and 
   \[
  \left(B^2 +\frac{(H\cdot L)^2}{L^2}\right)(C\cdot B)^2=0.
   \]
   We make use of these facts in the proof of Proposition \ref{p} below.
 \end{rem}
 
\begin{thm}\label{d2}
 Let $(X,L)$ be a polarized normal surface and $H$ be a pseudoeffective $\mathbb{Q}$-Cartier divisor on $X$ such that
 \[
 2\frac{L\cdot H}{L^2}L-H
 \]
 is nef. Then $(X,L)$ is $\mathrm{J}^H$-semistable.
\end{thm}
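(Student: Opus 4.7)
The plan is to use the decomposition formula of Theorem \ref{impl} to reduce the non-negativity of $(\mathcal{J}^H)^{\mathrm{NA}}(\mathcal{X},\mathcal{L})$ to showing that certain pair-summands are non-negative, each controlled by two complementary applications of Proposition \ref{deft}.

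First I would reduce the problem: by Remark \ref{imp}, it suffices to consider $(\mathcal{X},\mathcal{L}) = (\widetilde{\mathrm{Bl}_{\mathfrak{a}}(X\times\mathbb{A}^1)}, L_{\mathbb{A}^1}-E)$ for a flag ideal $\mathfrak{a}$ with $\mathfrak{a}_0 \neq 0$. Picking an alternation $\pi:X'\to X$ of degree $l$ as in Theorem \ref{slope}, write $\overline{(\pi\times\mathrm{id})^{-1}\mathfrak{a}} = \sum_{k=0}^{r} \mathscr{I}_{D_k} t^k$ with effective Cartier divisors $D_0 \ge D_1 \ge \cdots \ge D_{r-1} \ge D_r = 0$ on the smooth surface $X'$. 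Put $L' = \pi^*L$, $H' = \pi^*H$, and $a \coloneq (H\cdot L)/L^2 = (H'\cdot L')/(L')^2$. On $X'$, the pullback $L'$ is big and nef, $H'$ is pseudoeffective, $2aL'-H' = \pi^*(2aL-H)$ is nef, and each $L'-D_k$ is nef by Corollary 5.8 of \cite{RT07}. A direct expansion of the mixed multiplicities for $n=2$ turns Theorem \ref{impl} into
\begin{equation*}
V(L)\cdot l \cdot (\mathcal{J}^H)^{\mathrm{NA}}(\mathcal{X},\mathcal{L}) = \sum_{k=0}^{r-1} S_k, \quad S_k \coloneq (2aL' - H')\cdot(D_k+D_{k+1}) - \tfrac{2a}{3}\bigl(D_k^2 + D_k D_{k+1} + D_{k+1}^2\bigr).
\end{equation*}

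The heart of the argument is to show that $S_k \ge 0$ for every $k$, using Proposition \ref{deft} in two complementary ways. Applying it first to $C=D_k$ and $C=D_{k+1}$ separately (each effective, with $L'-D_k$, $L'-D_{k+1}$ nef and $H'$ pseudoeffective), one gets $(2aL'-H')\cdot D_k \ge a D_k^2$ and similarly for $D_{k+1}$; summing and substituting yields
\begin{equation*}
S_k \ge a(D_k^2 + D_{k+1}^2) - \tfrac{2a}{3}(D_k^2 + D_k D_{k+1} + D_{k+1}^2) = \tfrac{a}{3}(D_k - D_{k+1})^2.
\end{equation*}
Second, applying Proposition \ref{deft} to $C=(D_k+D_{k+1})/2$ (effective, with $L'-C = \tfrac{1}{2}((L'-D_k)+(L'-D_{k+1}))$ nef) gives $(2aL'-H')\cdot(D_k+D_{k+1}) \ge \tfrac{a}{2}(D_k+D_{k+1})^2$, hence
\begin{equation*}
S_k \ge \tfrac{a}{2}(D_k+D_{k+1})^2 - \tfrac{2a}{3}(D_k^2 + D_k D_{k+1} + D_{k+1}^2) = -\tfrac{a}{6}(D_k - D_{k+1})^2.
\end{equation*}

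Combining the two bounds finishes the proof: if $(D_k-D_{k+1})^2 \ge 0$ the first inequality gives $S_k\ge 0$, and if $(D_k-D_{k+1})^2 \le 0$ the second does, so in all cases $S_k\ge 0$, whence $(\mathcal{J}^H)^{\mathrm{NA}}(\mathcal{X},\mathcal{L}) \ge 0$. The main conceptual obstacle is precisely this last combinatorial step: a single application of Proposition \ref{deft} to the pair $(D_k,D_{k+1})$ inevitably leaves a residue proportional to $\pm(D_k - D_{k+1})^2$ of indefinite sign, since the effective divisor $D_k - D_{k+1}$ may well have negative self-intersection on the smooth surface $X'$; the two complementary inequalities are arranged so that their error terms have opposite signs, allowing one of them always to dominate.
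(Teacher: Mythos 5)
Your proof is correct and takes essentially the same approach as the paper: reduce via Remark \ref{imp} and Theorem \ref{impl} to the summands $S_k$, then control each $S_k$ by three applications of Proposition \ref{deft} with $C=D_k$, $C=D_{k+1}$, and $C=(D_k+D_{k+1})/2$. The only presentational difference is the final bookkeeping: the paper writes $S_k$ \emph{exactly} as the convex combination $\tfrac{1}{3}\mathrm{Prop}(D_k)+\tfrac{1}{3}\mathrm{Prop}(D_{k+1})+\tfrac{4}{3}\mathrm{Prop}(\tfrac{D_k+D_{k+1}}{2})$ of the three nonnegative quantities, which makes $S_k\ge 0$ immediate without any case analysis---indeed, taking $\tfrac{1}{3}$ of your first bound plus $\tfrac{2}{3}$ of your second gives $S_k\ge 0$ directly, so the dichotomy on the sign of $(D_k-D_{k+1})^2$ is avoidable; you should also note that nefness of \emph{all} the $L'-D_k$ (not merely $L'-D_0$ supplied by Theorem \ref{slope}) requires the $\epsilon F$-perturbation and limiting argument from the proof of Theorem \ref{impl} before one can invoke Corollary 5.8 of \cite{RT07}, as the paper makes explicit.
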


\begin{proof}
First, note that $(\mathcal{J}^H)^\mathrm{NA}$ is linear in $H$ and $H+a L$ is a big divisor such that
\[
2\frac{L\cdot (H+a L)}{L^2}L-(H+a L)=2\frac{L\cdot H}{L^2}L-H+a L
\] 
is ample for $a >0$. Therefore, we may assume that $H$ is big and $2\frac{L\cdot H}{L^2}L-H$ is ample. Furthermore, replace $H$ by $bH$ for some $b>0$ and we may also assume that $L\cdot H=L^2$. We must show that $(\mathcal{J}^H)^\mathrm{NA}(\phi)\ge0$ for any $\phi\in\mathcal{H}^\mathrm{NA}(L)$. By replacing $L$ by some multiple, we may assume as in Remark \ref{imp} that a flag ideal $\mathfrak{a}=\sum\mathfrak{a}_kt^k$ that satisfies $\mathfrak{a}_0\ne 0$ and induces a semiample test configuration $(\mathcal{X},\mathcal{L})=(\mathrm{Bl}_{\mathfrak{a}}(X\times \mathbb{A}^1),L_{\mathbb{A}^1}-E)$, where $E$ is the exceptional divisor, that is a representative of $\phi$. By Theorem \ref{impl}, there exists an alternation $\pi:X'\to X$ and
\[
\overline{\pi^{-1}\mathfrak{a}}=\mathscr{I}_{D_0}+\mathscr{I}_{D_1}t+\cdots +\mathscr{I}_{D_{r-1}}t^{r-1}+t^r,
\]
where each $D_i$ is an snc divisor, satisfies that the condition (*) and $(\pi^{-1}\mathfrak{a})\pi^*L_1$ is semiample. Next, we take $F,\epsilon,s$ and $\delta$ as in the proof of Theorem \ref{impl}. We can check that $\pi^*H+\epsilon F$ is big and $\pi^*L+\epsilon F$ and
\[
 2\frac{(\pi^*L+\epsilon F)\cdot (\pi^*H+\epsilon F)}{(\pi^*L+\epsilon F)^2}(\pi^*L+\epsilon F)-(\pi^*H+\epsilon F)=\pi^*(2L-H)+O(\epsilon)\pi^*L+(\epsilon +O(\epsilon^2))F
\]
are ample for sufficiently small $\epsilon$. Then if we could prove the theorem in the case where $\pi$ is isomorphic and $\mathfrak{a}$ induces an ample deformation to the normal cone, we have by the equation (\ref{eqn}) in the proof of Theorem \ref{impl}, 
\begin{align*}
(\mathcal{J}^{H})^\mathrm{NA}(\mathcal{X},\rho^*L_{\mathbb{A}^1}+\rho^{-1}(\mathfrak{a}))&=\lim _{\delta \to 0}(\mathcal{J}^{\pi^*H+\epsilon F})^\mathrm{NA}\left(\mathcal{X}',\rho'^*(\pi^*L_{\mathbb{A}^1}+\frac{s\delta }{1+\delta}F_{\mathbb{A}^1})+\frac{1+s\delta}{1+\delta}\pi'^*(\rho^{-1}\mathfrak{a})\right)\\
&\ge 0,
\end{align*}
where $\epsilon=\frac{s\delta }{1+\delta}$. Thus, we may also assume that $\pi$ is an isomorphism and $\mathfrak{a}$ induces an ample deformation to the normal cone. Then all the $L-D_i$ is nef by Corollary 5.8 of \cite{RT07}. Let us denote $D_r=0$. Now, it is easy to see by Theorem \ref{sl} that $V(L)(\mathcal{J}^H)^\mathrm{NA}(\mathcal{X},\mathcal{L})$ is as follows:
\begin{align*}
V(L)(\mathcal{J}^H)^\mathrm{NA}(\mathcal{X},\mathcal{L})&=p^*H\cdot \mathcal{L}^2-\frac{2H\cdot L}{3L^2}\mathcal{L}^3 \\
&=(3L^2)^{-1}\Biggl(\sum_{i=0}^{r-1}\biggl(  6((D_i+D_{i+1})\cdot L)(L\cdot H)\\
&-3((D_i+D_{i+1})\cdot H)(L^2)-2(L\cdot H)(D_i^2+D_i\cdot D_{i+1}+D_{i+1}^2)\biggr)\Biggr).
\end{align*}
We can prove that $$
6((D_i+D_{i+1})\cdot L)(L\cdot H)-3((D_i+D_{i+1})\cdot H)(L^2)-2(L\cdot H)(D_i^2+D_i\cdot D_{i+1}+D_{i+1}^2)\ge 0 $$
as follows. Note that $L-\frac{D_i+D_{i+1}}{2}=\frac{1}{2}((L-D_i)+(L-D_{i+1}))$ is nef. Then by Proposition \ref{deft}, 
\begin{align*}
 &4((D_i+D_{i+1})\cdot L)(L\cdot H)-2((D_i+D_{i+1})\cdot H)(L^2)-(L\cdot H)(D_i+D_{i+1})^2\\
 &=4\left(2\left(\frac{D_i+D_{i+1}}{2}\cdot L\right)(L\cdot H)-\left(\frac{D_i+D_{i+1}}{2}\cdot H\right)(L^2)-(L\cdot H)\left(\frac{D_i+D_{i+1}}{2}\right)^2\right)\\
 &\ge0.
\end{align*}
Therefore, we have that
\begin{align*}
 &6((D_i+D_{i+1})\cdot L)(L\cdot H)-3((D_i+D_{i+1})\cdot H)(L^2)
 -2(L\cdot H)(D_i^2+D_i\cdot D_{i+1}+D_{i+1}^2) \\
 &=4((D_i+D_{i+1})\cdot L)(L\cdot H)-2((D_i+D_{i+1})\cdot H)(L^2)-(L\cdot H)(D_i+D_{i+1})^2\\
 &+2(D_i\cdot L)(L\cdot H)-(D_i\cdot H)(L^2)-(L\cdot H)(D_i^2)\\
 &+2(D_{i+1}\cdot L)(L\cdot H)-(D_{i+1}\cdot H)(L^2)-(L\cdot H)(D_{i+1}^2)\\
 &\ge0.
\end{align*}
by Proposition \ref{deft}. Hence, 
\begin{align*}
(\mathcal{J}^H)^\mathrm{NA}(\mathcal{X},\mathcal{L})&=(3V(L)^2)^{-1}\Biggl(\sum_{i=0}^{r-1}\biggr(  6((D_i+D_{i+1})\cdot L)(L\cdot H)\\
&-3((D_i+D_{i+1})\cdot H)(L^2)-2(L\cdot H)(D_i^2+D_i\cdot D_{i+1}+D_{i+1}^2)\biggl)\Biggr).
\end{align*}
 is nonnegative.
\end{proof}

\begin{cor}\label{jstable}
For any polarized integral surface $(X,L)$ with a big (resp. pseudoeffective) $\mathbb{Q}$-Cartier divisor $H$, the following are equivalent. 
\begin{itemize}
\item[(1)] $(X,L)$ is uniformly $\mathrm{J}^H$-stable (resp. $\mathrm{J}^H$-semistable). In other words, there exists $\epsilon>0$ such that for any semiample test configuration $(\mathcal{X},\mathcal{L})$
\[
(\mathcal{J}^H)^\mathrm{NA}(\mathcal{X},\mathcal{L})\ge \epsilon J^\mathrm{NA}(\mathcal{X},\mathcal{L})\quad (\mathrm{resp}.\, \ge0).
\] 
\item[(2)] $(X,L)$ is uniformly slope $\mathrm{J}^H$-stable (resp. slope $\mathrm{J}^H$-semistable). In other words, there exists $\epsilon>0$ such that for any semiample deformation to the normal cone $(\mathcal{X},\mathcal{L})$ along any integral curve
\[
(\mathcal{J}^H)^\mathrm{NA}(\mathcal{X},\mathcal{L})\ge \epsilon J^\mathrm{NA}(\mathcal{X},\mathcal{L})\quad (\mathrm{resp}.\, \ge0).
\] 
\item[(3)] There exists $\epsilon>0$ such that for any integral curve $C$, 
\[
\left(2\frac{H\cdot L}{L^2}L-H\right)\cdot C\ge \epsilon L\cdot C\quad (\mathrm{resp}.\, \ge0).
\]
\end{itemize}
\end{cor}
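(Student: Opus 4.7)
The plan is to establish the cycle $(1)\Rightarrow(2)\Rightarrow(3)\Rightarrow(1)$ simultaneously in the uniform (strict) and semistable forms. The first implication is immediate from the definitions: a semiample deformation to the normal cone along an integral curve is a particular semiample test configuration, so the uniform (resp. semistable) inequality on the larger class trivially restricts to the smaller one.

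For $(2)\Rightarrow(3)$, fix an integral curve $C\subset X$ and consider the one-parameter family of deformations to the normal cone with polarization $\mathcal{L}_c=p^*L-cE$ on $\mathcal{X}_c$, which is semiample (in fact ample) for all sufficiently small $c>0$. Example \ref{33} supplies the explicit formula
\[
V(L)(\mathcal{J}^H)^\mathrm{NA}(\mathcal{X}_c,\mathcal{L}_c)=c^2\Bigl(2\tfrac{H\cdot L}{L^2}L-H\Bigr)\cdot C-\tfrac{2c^3(H\cdot L)}{3L^2}C^2.
\]
Specializing $H=L$ and invoking the identity $(\mathcal{J}^L)^\mathrm{NA}=I^\mathrm{NA}-J^\mathrm{NA}$ gives $V(L)(I^\mathrm{NA}-J^\mathrm{NA})=c^2L\cdot C-\tfrac{2c^3}{3}C^2$, and then the norm equivalence $J^\mathrm{NA}\ge\tfrac{1}{n}(I^\mathrm{NA}-J^\mathrm{NA})$ for $n=2$ yields the quadratic lower bound $V(L)J^\mathrm{NA}(\mathcal{X}_c,\mathcal{L}_c)\ge\tfrac{c^2}{2}L\cdot C+O(c^3)$. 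Hence the uniform slope inequality $(\mathcal{J}^H)^\mathrm{NA}\ge\epsilon J^\mathrm{NA}$, after division by $c^2$ and letting $c\to 0^+$, delivers $(2\tfrac{H\cdot L}{L^2}L-H)\cdot C\ge\tfrac{\epsilon}{2}L\cdot C$; the semistable version ($\epsilon=0$) is identical.

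For $(3)\Rightarrow(1)$, the semistable case is direct: condition (3) with $\epsilon=0$ asserts that $2\tfrac{H\cdot L}{L^2}L-H$ has non-negative intersection with every integral curve and is therefore nef by Kleiman's criterion, so Theorem \ref{d2} produces $\mathrm{J}^H$-semistability. For the uniform case (where $H$ is big), perturb by setting $H':=H-\delta L$ with $0<\delta<\epsilon$. A decomposition $H=A+E$ with $A$ ample and $E$ effective shows $H'=(A-\delta L)+E$ is still big, hence pseudoeffective, for $\delta$ small; moreover
\[
\Bigl(2\tfrac{H'\cdot L}{L^2}L-H'\Bigr)\cdot C=\Bigl(2\tfrac{H\cdot L}{L^2}L-H\Bigr)\cdot C-\delta L\cdot C\ge(\epsilon-\delta)L\cdot C\ge 0
\]
for every integral $C$, so $2\tfrac{H'\cdot L}{L^2}L-H'$ is nef. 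Theorem \ref{d2} yields $(\mathcal{J}^{H'})^\mathrm{NA}\ge 0$ on all of $\mathcal{H}^\mathrm{NA}(L)$; then linearity of $(\mathcal{J}^{\bullet})^\mathrm{NA}$ in its upper index combined with $(\mathcal{J}^L)^\mathrm{NA}=I^\mathrm{NA}-J^\mathrm{NA}\ge\tfrac{1}{2}J^\mathrm{NA}$ gives
\[
(\mathcal{J}^H)^\mathrm{NA}=(\mathcal{J}^{H'})^\mathrm{NA}+\delta\,(I^\mathrm{NA}-J^\mathrm{NA})\ge\tfrac{\delta}{2}\,J^\mathrm{NA},
\]
which is uniform $\mathrm{J}^H$-stability with constant $\delta/2$.

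The main obstacle is the $(2)\Rightarrow(3)$ step: one must carefully extract the $c^2$-leading coefficient of the slope J-energy along $C$ and match it against a comparable quadratic lower bound for $J^\mathrm{NA}$, which is obtained only via the $(\mathcal{J}^L)^\mathrm{NA}=I^\mathrm{NA}-J^\mathrm{NA}$ identity combined with the norm equivalence for $I^\mathrm{NA}$--$J^\mathrm{NA}$. Once the leading asymptotic behavior is secured, everything else collapses to Kleiman's criterion and one direct invocation of Theorem \ref{d2}.
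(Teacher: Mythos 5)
Your proof is correct and follows essentially the same architecture as the paper's: $(1)\Rightarrow(2)$ is trivial; $(2)\Rightarrow(3)$ via leading-order asymptotics of the slope energy as $c\to 0^+$ (the paper bundles this into Lemma~\ref{unst}, whose proof is the very $c^{n-p+1}$ expansion you carry out inline, and handles the uniform version by the same $H\mapsto H-\epsilon L$ shift); and $(3)\Rightarrow(1)$ by Kleiman plus Theorem~\ref{d2} together with the perturbation $H\mapsto H-\delta L$, the linearity $(\mathcal{J}^{H})^{\mathrm{NA}}=(\mathcal{J}^{H-\delta L})^{\mathrm{NA}}+\delta(I^{\mathrm{NA}}-J^{\mathrm{NA}})$, and the norm equivalence. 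One small precision worth flagging in your $(2)\Rightarrow(3)$ step: the closed-form formula you quote from Example~\ref{33},
\[
V(L)(\mathcal{J}^{H})^{\mathrm{NA}}(\mathcal{X}_c,\mathcal{L}_c)=c^2\Bigl(2\tfrac{H\cdot L}{L^2}L-H\Bigr)\cdot C-\tfrac{2c^3(H\cdot L)}{3L^2}C^2,
\]
is derived there under the hypothesis that $\mathscr{I}_C$ is invertible, i.e.\ $C$ is Cartier. On an integral but singular surface an integral curve need not be Cartier; one should instead work on the blow-up $\pi:\hat{X}\to X$ along $\mathscr{I}_C$ with exceptional divisor $D$, obtaining a leading term $c^2\,e_CX\cdot\bigl(2\tfrac{H\cdot L}{L^2}L-H\bigr)\cdot C$ by the projection formula $\pi_*D=e_CX\,[C]$, as in the proof of Lemma~\ref{unst}. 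Since the same multiplicity $e_CX>0$ appears in the leading term of $V(L)(I^{\mathrm{NA}}-J^{\mathrm{NA}})$ (your $H=L$ specialization), it cancels and your conclusion $(2\tfrac{H\cdot L}{L^2}L-H)\cdot C\ge\tfrac{\epsilon}{2}\,L\cdot C$ stands unchanged; but the intermediate identity should be stated with $D$ rather than $C$ to be literally correct.
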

\begin{proof}
$(1)\Rightarrow (2)$ is trivial. We have proved that $(3)\Rightarrow (1)$ when $2\frac{H\cdot L}{L^2}L-H$ is nef. If $(2\frac{H\cdot L}{L^2}L-H)\cdot C\ge \epsilon L\cdot C$, then
\[
\left(2\frac{(H-\epsilon L)\cdot L}{L^2}L-(H-\epsilon L)\right)\cdot C\ge 0.
\]
Therefore, take $\epsilon$ so small that $H-\epsilon L$ still be big and
\begin{align*}
(\mathcal{J}^H)^\mathrm{NA}&=(\mathcal{J}^{H-\epsilon L})^\mathrm{NA} +\epsilon (I^\mathrm{NA}-J^\mathrm{NA})\\
&\ge \epsilon (I^\mathrm{NA}-J^\mathrm{NA}).
\end{align*}
$(2)\Rightarrow (3)$ follows from the next generalized lemma of Lejmi-Sz\'{e}kelyhidi \cite{LS}. We reprove it for possibly singular varieties.
\end{proof}

\begin{lem}[cf. Proposition 13 \cite{LS}]\label{unst}
For any polarized $n$-dimensional variety $(X,L)$ with a $\mathbb{Q}$-Cartier divisor $H$, if there exists a $p$-dimensional subvariety $V$ such that $$\left(n\frac{H\cdot L^{n-1}}{L^n}L-pH\right)\cdot L^{p-1}\cdot V<0,$$ 
then $(X,L)$ is slope $\mathrm{J}^H$-unstable. Furthermore, if 
\[
\left((n\frac{H\cdot L^{n-1}}{L^n}-(n-p)\epsilon)L-pH\right)\cdot L^{p-1}\cdot V<0
\]
for any $\epsilon>0$, then $(X,L)$ is not uniformly slope $\mathrm{J}^H$-stable. 
\end{lem}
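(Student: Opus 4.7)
The plan is to construct an explicit destabilizing (resp.\ non-uniformly destabilizing) semiample test configuration by the deformation of $X$ to the normal cone of $V$, and then to read off the claimed inequality from the leading coefficient of an asymptotic expansion in the blow-up parameter $c$. Concretely, let $\mathcal{X}=\mathrm{Bl}_{V\times\{0\}}(X\times\mathbb{A}^1)$ with exceptional divisor $E$, and set $\mathcal{L}_c=p^*L-cE$ with $p:\mathcal{X}\to X$ the canonical projection. A standard argument (ampleness of $L-cV'$ on $\mathrm{Bl}_V X$ for $c$ below the Seshadri constant of $V$, together with relative ampleness of $\mathcal{O}_E(1)$) yields a positive rational $c_0$ such that $(\mathcal{X},\mathcal{L}_c)$ is a semiample test configuration for every $c\in(0,c_0]\cap\mathbb{Q}$; compactify to $(\overline{\mathcal{X}},\overline{\mathcal{L}_c})$ in the usual way by adding $X\times\{\infty\}$.

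The core of the proof is the following intersection computation on $\overline{\mathcal{X}}$: using the projection formula and the Segre class formula for the exceptional divisor of a blow-up, one has $(p^*L)^{k}\cdot E^{n+1-k}=0$ for $k>p$, and at the top nonvanishing order $(p^*L)^{p}\cdot E^{n+1-p}=(-1)^{n-p}L^p\cdot V$, together with the analogous statement $p^*H\cdot(p^*L)^{p-1}\cdot E^{n-p+1}=(-1)^{n-p}H\cdot L^{p-1}\cdot V$. (Note that $p^*L\cdot(p^*L)^n=0$ because $(p^*L)^{n+1}$ is a pullback from $X$.) Plugging these into the binomial expansions of $\mathcal{L}_c^{n+1}$ and $p^*H\cdot\mathcal{L}_c^{n}$ gives
\begin{align*}
V(L)(\mathcal{J}^H)^\mathrm{NA}(\mathcal{X},\mathcal{L}_c)&=\frac{n!}{p!(n-p+1)!}\,c^{n-p+1}\Bigl(n\tfrac{H\cdot L^{n-1}}{L^n}L-pH\Bigr)\cdot L^{p-1}\cdot V+O(c^{n-p+2}),\\
V(L)J^\mathrm{NA}(\mathcal{X},\mathcal{L}_c)&=\frac{n!}{p!(n-p+1)!}\,c^{n-p+1}L^p\cdot V+O(c^{n-p+2}).
\end{align*}

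From here both assertions follow. If $\bigl(n\tfrac{H\cdot L^{n-1}}{L^n}L-pH\bigr)\cdot L^{p-1}\cdot V<0$, the leading coefficient above is strictly negative, so for all sufficiently small $c>0$ one has $(\mathcal{J}^H)^\mathrm{NA}(\mathcal{X},\mathcal{L}_c)<0$, proving slope $\mathrm{J}^H$-unstability. For the second claim, the leading coefficient of $(\mathcal{J}^H)^\mathrm{NA}-\delta J^\mathrm{NA}$ at order $c^{n-p+1}$ is a positive multiple of $\bigl(n\tfrac{H\cdot L^{n-1}}{L^n}L-pH\bigr)\cdot L^{p-1}\cdot V-\delta L^p\cdot V$; after rescaling $\delta=(n-p)\epsilon$ this matches the hypothesis, which by assumption is negative for every $\epsilon>0$. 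Hence for every $\delta>0$ we may choose $c>0$ small enough that $(\mathcal{J}^H)^\mathrm{NA}(\mathcal{X},\mathcal{L}_c)<\delta J^\mathrm{NA}(\mathcal{X},\mathcal{L}_c)$, ruling out uniform slope $\mathrm{J}^H$-stability.

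The main technical obstacle is carrying out these intersection computations cleanly when $X$ and $V$ are only assumed to be varieties (not smooth). Fortunately, we only need the topmost Segre number $s_0(V\times\{0\},X\times\mathbb{P}^1)=[V]$, which is insensitive to singularities, so all higher-order Segre contributions are absorbed into the $O(c^{n-p+2})$ remainder and do not interfere with the sign of the leading term; verifying that $(\mathcal{X},\mathcal{L}_c)$ genuinely lies in $\mathcal{H}^\mathrm{NA}(L)$ for $c$ in a nonempty range (so that $J^\mathrm{NA}\geq 0$ is meaningful to compare against) is then the only remaining input, and it is handled by the usual semiampleness argument for deformation to the normal cone quoted above.
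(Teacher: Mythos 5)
Your proof takes essentially the same route as the paper: the deformation of $X$ to the normal cone of $V$, a binomial expansion of $\mathcal{L}_c^{n+1}$ and $p^*H\cdot\mathcal{L}_c^n$ in the parameter $c$, and a Segre-class/projection-formula identification of the leading coefficient. The paper carries out the computation on the strict transform $\hat X$ of $X\times\{0\}$ (the blow-up of $X$ along $V$, with exceptional $D$) rather than directly on $E$, but these are equivalent bookkeeping choices; and for the uniform part, the paper replaces $H$ by $H-\epsilon L$ and reruns the first argument, which is the same trick as your direct comparison with $\delta J^\mathrm{NA}$ since $(\mathcal{J}^{\epsilon L})^\mathrm{NA}=\epsilon(I^\mathrm{NA}-J^\mathrm{NA})$.

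One small but real imprecision: your claim that $s_0(V\times\{0\},X\times\mathbb{P}^1)=[V]$ is only correct when the Samuel multiplicity $e_V X$ equals $1$; in general $s_0(V,X)=e_V X\cdot[V]$ (Fulton \S 4.3), and that is exactly what the paper writes as $\pi_*\bigl((-1)^{n-p-1}D^{n-p}\bigr)=e_V X\,[V]$. Since $e_V X>0$, the leading coefficient is still a positive multiple of $\bigl(n\tfrac{H\cdot L^{n-1}}{L^n}L-pH\bigr)\cdot L^{p-1}\cdot V$ (resp.\ of $L^p\cdot V$ for $J^\mathrm{NA}$), so your sign argument and your rescaling $\delta=(n-p)\epsilon$ go through unchanged --- but the explicit constants you display should carry the extra factor $e_V X$, and the parenthetical "insensitive to singularities" should be replaced by "a positive multiple of $[V]$," which is the property actually used.
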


\begin{proof}
To prove the first assertion of this lemma, let $(\mathcal{X},L_{\mathbb{A}^1}-E)$ be a deformation to the normal cone of $V$ and $\hat{X}$ be the strict transformation of $X\times \{0\}$. $\hat{X}$ is isomorphic to the blow up of $X$ along $V$ and let $\pi:\hat{X}\to X$ be the canonical projection and $D$ be the exceptional divisor. Now, we can compute $(\mathcal{J}^H)^\mathrm{NA}(\mathcal{X},L_{\mathbb{A}^1}-\delta E)$. Note that $\pi_*(D^k)$ is a zero cycle when $k<n-p$. Then for $\delta>0$,
\begin{align*}
&V(L)(\mathcal{J}^H)^\mathrm{NA}(\mathcal{X},L_{\mathbb{A}^1}-\delta E)\\
&= \delta \pi^*H\cdot \left(\sum_{i=0}^{n-1} (\pi^*L-\delta D)^i\cdot \pi^*L^{n-i-1}\right) -\delta \frac{nH\cdot L^{n-1}}{(n+1)L^n}\sum_{i=0}^{n} (\pi^*L-\delta D)^i \cdot \pi^*L^{n-i} \\
&=\delta^{n-p+1} (-1)^{n-p-1} \frac{n!}{(n-p+1)!p!} D^{n-p}\cdot \left(n\frac{H\cdot L^{n-1}}{L^n}\pi^*L^p-p\pi^*H\cdot \pi^*L^{p-1}\right)+O(\delta^{n-p+2}). 
\end{align*}
It is well-known that $\pi_*((-1)^{n-p-1}D^{n-p})=e_VX[V]$ where $e_VX$ is the multiplicity of $X$ along $V$ (cf. \cite[\S 4.3]{F}) and $e_VX>0$. Therefore, we apply the projection formula to obtain
\begin{align*}
(-1)^{n-p-1}D^{n-p}\cdot \left(n\frac{H\cdot L^{n-1}}{L^n}\pi^*L^p-p\pi^*H\cdot \pi^*L^{p-1}\right)&=e_VX\left(n\frac{H\cdot L^{n-1}}{L^n}L^p-pH\cdot L^{p-1}\right)\cdot V\\
&<0
\end{align*}
by the assumption. Hence, for sufficiently small $\delta$, $(\mathcal{J}^H)^\mathrm{NA}(\mathcal{X},L-\delta E)<0$. The last assertion follows from the above argument and replacing $H$ by $H-\epsilon L$ for sufficiently small $\epsilon$.
\end{proof}

 \subsection{Remarks on reducible J-stable surfaces}\label{Demi}

\subsubsection{Stability of reducible schemes}

 J-stability for reducible schemes behave in a more complicated way as the following generalization of \cite[Theorem 7.1]{LW} shows:
 
 \begin{thm}\label{demichan}
Let $(X,\Delta;L)$ be an $n$-dimensional polarized reducible deminormal pair with a $\mathbb{Q}$-line bundle $H$ such that $X=\bigcup _{i=1}^l X_i$ be the irreducible decomposition. Let also $L_i=L|_{X_i}$ and $H_i=H|_{X_i}$. Suppose that 
\[
\frac{H\cdot L^{n-1}}{L^n}\ne\frac{H_i\cdot L_i^{n-1}}{L_i^n}
\]
for some $1\le i\le l$. Then $(X,L)$ is $\mathrm{J}^H$-unstable.

Furthermore, let $\nu:\coprod_{i=1}^l X_i^\nu\to X$ be the normalization, $\overline{L_i}=\nu^*L|_{X_i^\nu}$, $\overline{H_i}=\nu^*H|_{X^\nu_i}$ and $K_{(X_i^\nu,\Delta^\nu_i)}=\nu^*(K_{(X,\Delta)})|_{X^\nu_i}$. Suppose that 
\[
\frac{K_{(X,\Delta)}\cdot L^{n-1}}{L^n}\ne\frac{K_{(X_i^\nu,\Delta^\nu_i)}\cdot \overline{L_i}^{n-1}}{\overline{L_i}^n}
\]
for some $1\le i\le l$. Then $(X,\Delta;L)$ is K-unstable.
\end{thm}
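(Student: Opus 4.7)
The plan is to exhibit an explicit non-trivial semiample test configuration $(\mathcal{X},\mathcal{L})$ of $(X,L)$ with $(\mathcal{J}^H)^\mathrm{NA}(\mathcal{X},\mathcal{L})<0$, thereby witnessing $\mathrm{J}^H$-instability. Picking the offending component $X_i$, I would form the flag ideal $\mathfrak{a}\coloneq\mathscr{I}_{X_i}\cdot\mathcal{O}_{X\times\mathbb{A}^1}+(t)$, take the blow-up $\mathcal{X}=\mathrm{Bl}_{\mathfrak{a}}(X\times\mathbb{A}^1)$ with exceptional divisor $E$, and polarize by $\mathcal{L}=L_{\mathbb{A}^1}-cE$ for a small parameter $c\in\mathbb{Q}_{>0}$; a standard nefness analysis for flag-ideal blow-ups will ensure $\mathcal{L}$ is semiample once $c$ is sufficiently small.

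Next I would compute $(\mathcal{J}^H)^\mathrm{NA}(\mathcal{X},\mathcal{L})$ as a polynomial in $c$ using the decomposition formula of Theorem~\ref{impl}, applied to an alternation $\pi\colon X'\to X$ resolving $\mathfrak{a}$ as produced by Theorem~\ref{a}. Because $\mathfrak{a}$ is supported on $X_i\times\{0\}$ and its intersections with the other components $X_j$, the leading coefficient in $c$ will localize to intersection numbers on $X_i$ (and on $X_i\cap X_j$), and a direct manipulation of the mixed-multiplicity expression should yield
\[
(\mathcal{J}^H)^\mathrm{NA}(\mathcal{X},\mathcal{L})=C\,c^{\alpha}\!\left(\frac{H_i\cdot L_i^{n-1}}{L_i^n}-\frac{H\cdot L^{n-1}}{L^n}\right)+o(c^{\alpha})
\]
for some explicit positive constant $C$ (proportional to $L_i^n$) and some exponent $\alpha$. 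Since the identity
\[
\sum_{j=1}^l\left(\frac{H_j\cdot L_j^{n-1}}{L_j^n}-\frac{H\cdot L^{n-1}}{L^n}\right)L_j^n=0
\]
forces the bracketed imbalance to take both signs across the components, I can choose $X_i$ so that the leading coefficient is negative; then $(\mathcal{J}^H)^\mathrm{NA}(\mathcal{X},\mathcal{L})<0$ for sufficiently small $c$, establishing $\mathrm{J}^H$-instability.

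For the K-unstable part, I would specialize $H=K_{(X,\Delta)}$ and compute intersections by pulling back to the normalization via the identity $\nu^*K_{(X,\Delta)}|_{X_i^\nu}=K_{(X_i^\nu,\Delta_i^\nu)}$, which converts the K-hypothesis into the J-hypothesis for $H=K_{(X,\Delta)}$. The previous step then gives $(\mathcal{J}^{K_{(X,\Delta)}})^\mathrm{NA}(\mathcal{X},\mathcal{L})<0$. Since $M_\Delta^\mathrm{NA}=H_\Delta^\mathrm{NA}+(\mathcal{J}^{K_{(X,\Delta)}})^\mathrm{NA}$, K-instability will follow once I verify that the entropy $H_\Delta^\mathrm{NA}(\mathcal{X},\mathcal{L})$ is strictly dominated by the $\mathcal{J}$-term as $c\to 0$, which is controlled by bounding the log discrepancies $A_{(X,\Delta)}(v_F)$ of the finitely many divisorial valuations coming from irreducible components $F$ of $\mathcal{X}_0$ together with the decay estimate of $F\cdot\mathcal{L}^n$ in $c$.

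The main obstacle will be the explicit leading-order calculation exhibiting the factor $\frac{H_i\cdot L_i^{n-1}}{L_i^n}-\frac{H\cdot L^{n-1}}{L^n}$: this requires careful bookkeeping through the alternation--decomposition machinery of Theorems~\ref{a} and~\ref{b}, especially to isolate the contribution localized at $X_i$ and to control the cross terms coming from the (possibly singular) intersections $X_i\cap X_j$ via the toroidal/mixed-multiplicity formalism of Section~\ref{Mixed}. Secondary obstacles are the semi-ampleness verification for $\mathcal{L}$ near nodes of $X$ (where $\mathscr{I}_{X_i}$ fails to be invertible) and, for K-instability, the precise entropy estimate sketched above.
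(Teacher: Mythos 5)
Your test configuration---the deformation to the normal cone of $X_i\times\{0\}$, i.e.\ the blow-up along $\mathfrak{a}=\mathscr{I}_{X_i}\cdot\mathcal{O}_{X\times\mathbb{A}^1}+(t)$ with polarization $L_{\mathbb{A}^1}-cE$ for small $c>0$---is a valid and, if anything, cleaner choice than the paper's, which instead blows up along $Z\times\{0\}$ for $Z=X_1\cap\bigcup_{j\ge2}X_j$ and then twists the polarization by $\eta F$, $F$ being the strict transform of $X_1\times\{0\}$. The mechanism that makes your $\mathcal{L}$ semiample is standard (and easy: $-E$ is $\Pi$-ample and $L_{\mathbb{A}^1}$ is ample on $X\times\mathbb{A}^1$), so this is not the real obstacle. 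The real insight you need, and which your sketch leaves implicit, is that $\mathscr{I}_{X_i}\cdot\mathcal{O}_{X_i}=0$, so on the strict transform of $X_i\times\mathbb{A}^1$ the flag ideal restricts to $(t)$: there the blow-up is \emph{trivial} and $E$ restricts to the \emph{entire} central fibre $\mathcal{X}_{i,0}$. Thus $\mathcal{L}|_{\mathcal{X}_i}=(L_i)_{\mathbb{A}^1}-c\,\mathcal{X}_{i,0}$ is a pure translation, and the linear-in-$c$ term of $(\mathcal{J}^H)^\mathrm{NA}(\mathcal{X},\mathcal{L})$ is exactly the failure of translation-invariance when the global slope $nH\cdot L^{n-1}/((n+1)L^n)$ disagrees with the component's. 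The other components contribute only $O(c^2)$. Concretely the leading term is $-\frac{cn}{V(L)}\bigl(H_i\cdot L_i^{n-1}-\tfrac{H\cdot L^{n-1}}{L^n}L_i^n\bigr)$, so (note the minus sign) you must pick a component with \emph{positive} imbalance.

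The genuine gap is in your proposed computational route. Theorems~\ref{slope} and~\ref{impl} require $X$ irreducible and $\mathfrak{a}_0\ne 0$; after normalizing, the restricted flag ideal on the key component $X_i^\nu$ is exactly $(t)$, so $\mathfrak{a}_0=0$ there and the alternation/mixed-multiplicity formula does not apply. Worse, Theorem~\ref{impl} packages the $(\mathcal{J}^H)^\mathrm{NA}$-energy using the \emph{local} slope $H_i\cdot L_i^{n-1}/L_i^n$ of each irreducible piece, whereas the imbalance you need is precisely the difference between local and global slopes---the component-wise formula is blind to it, so a naive component-by-component application gives $0+O(c^2)$ and misses the linear term entirely. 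The paper avoids this by computing $(\mathcal{J}^H)^\mathrm{NA}$ directly from the defining intersection numbers with the global constant and applying \cite[Lemma~9.11, Proposition~9.12]{BHJ} to each normalized component, which is substantially lighter machinery and is what you should use. Finally, for the K-unstable claim two further points are essential: first, reduce to the slc case (otherwise $(X,\Delta;L)$ is K-unstable by \cite{OX}, \cite{God} anyway); second, the codimension-one Rees valuations of the intersection loci $W_j\subset X_j^\nu$ are the conductor divisors, which have log discrepancy exactly $0$ for the slc pair $(X_j^\nu,\overline{\Delta}_j+D_{X_j^\nu})$---``bounding'' those $A$'s is not enough, it is this \emph{vanishing} combined with Proposition~9.12 of \cite{BHJ} that forces $H_\Delta^\mathrm{NA}=O(c^2)$.
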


\begin{rem}
We point out a small error in \cite{Fu}. Notations as in loc.cit. In the proof of \cite[Proposition 6.1]{Fu}, it is asserted that $\psi_L^{\cdot n-1}\cdot (\psi_N-\mu_N(L)\psi_L)\equiv 0$. To be precise, $\psi_L^{\cdot n-1}\cdot (\psi_N-\mu_N(L)\psi_L)\not\equiv 0$ if there exists an irreducible component $(X_i,L_i)$ of $(X,L)$ such that $\mu_{N|_{X_i}}(L_i)\ne \mu_N(L)$. Theorem 1.1 of loc.cit does not hold for reducible deminormal schemes.
\end{rem}

See the definition of deminormal schemes at \S \ref{Notat}. To prove Theorem \ref{demichan}, recall \cite[\S 9.3]{BHJ} briefly.
 
 \begin{de}[Definition 9.10 of \cite{BHJ}]
 Let $(X,L)$ be a polarized normal and irreducible variety, $\phi\in\mathcal{H}^\mathrm{NA}(L)$ be a positive metric and $(\mathcal{X},\mathcal{L})$ be a normal, semiample test configuration representing $\phi$. Suppose that $\mathcal{X}$ dominates $X_{\mathbb{A}^1}$. For each irreducible component $E$ of $\mathcal{X}_0$, let $Z_E\subset X$ be the closure of the center of $v_E$ on $X$, and let $r_E\coloneq \mathrm{codim}_XZ_E$. Then the canonical morphism $\mathcal{X}\to X\times\mathbb{A}^1$ maps $E$ onto $Z_E\times\{0\}$. Let $F_E$ be the general fibre of the induced morphism $E\to Z_E$. Then, set the local degree $\mathrm{deg}_E(\phi)$ as 
 \[
 \mathrm{deg}_E(\phi)\coloneq(F_E\cdot \mathcal{L}^{r_E}).
 \]
 It is independent of the choice of representatives of $\phi$.
 \end{de}
 
 Since $\mathcal{L}$ is semiample on $E\subset \mathcal{X}_0$, $\mathrm{deg}_E(\phi)\ge 0$. Furthermore, $\mathrm{deg}_E(\phi)>0$ iff $E$ is not contracted on the ample model of $(\mathcal{X},\mathcal{L})$. Here, we have the following:
 
 \begin{prop}[Lemma 9.11, Proposition 9.12 of \cite{BHJ}]
 Notations as above. Given $0\le j\le n$, $\mathbb{Q}$-line bundles $M_1,\cdots ,M_{n-j}$ on $X$ and a Cartier divisor $D$ on $\mathcal{X}$ such that $D=\mathcal{L}-L_{\mathbb{A}^1}$. Then, we have, for $0<\epsilon \ll 1$ rational:
 \begin{eqnarray*}
 &\left(E\cdot (L_{\mathbb{A}^1}-\epsilon D)^j\cdot (M_{1,\mathbb{A}^1}\cdot \cdots \cdot M_{n-j,\mathbb{A}^1})\right)\\
&= \left\{
\begin{split}
&\epsilon^{r_E}\left[\mathrm{deg}_E(\phi)\binom{j}{r_E}(Z_E\cdot L^{j-r_E}\cdot M_1\cdot\cdots\cdot M_{n-j})\right] +O(\epsilon^{r_E+1}) &\quad &\mathrm{for}\, j\ge r_E \\
&0 &\quad & \mathrm{for}\, j< r_E.
\end{split} \right. 
 \end{eqnarray*}
 \end{prop}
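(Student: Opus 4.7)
The plan is to expand $(L_{\mathbb{A}^1}-\epsilon D)^{j}$ by the binomial theorem, restrict the intersection to the divisor $E$, and push forward along the surjection $q\colon E\twoheadrightarrow Z_{E}$ induced by the first projection $p_{1}\colon\mathcal{X}\to X$. Concretely, writing $\iota\colon E\hookrightarrow\mathcal{X}$ for the closed immersion, I have
\[
E\cdot(L_{\mathbb{A}^1}-\epsilon D)^{j}\cdot\!\!\prod_{i=1}^{n-j}\!M_{i,\mathbb{A}^1}
=\sum_{k=0}^{j}\binom{j}{k}(-\epsilon)^{k}\bigl((\iota^{*}D)^{k}\cdot q^{*}\alpha_{j-k}\bigr)_{E},
\]
where $\alpha_{j-k}=(L|_{Z_E})^{j-k}\cdot\prod_{i}(M_{i}|_{Z_E})$, because $L_{\mathbb{A}^1}$ and the $M_{i,\mathbb{A}^1}$ are pulled back from $X$ and $p_{1}\circ\iota$ factors through $Z_{E}$.

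By the projection formula for $q$, each bracketed term equals the intersection on $Z_{E}$ of the pushforward $q_{*}(\iota^{*}D)^{k}\in A_{n-k}(Z_{E})$ with $\alpha_{j-k}$. Since $\dim Z_{E}=n-r_{E}$, the pushforward class vanishes automatically for $k<r_{E}$; in particular every summand is zero when $j<r_{E}$, proving the second clause. For $k=r_{E}$ (which requires $j\ge r_{E}$), $q_{*}(\iota^{*}D)^{r_{E}}$ is a top-dimensional class on the irreducible $Z_{E}$ and hence equals $c\cdot[Z_{E}]$ for a unique rational $c$.

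To identify $c$, I would restrict $(\iota^{*}D)^{r_{E}}$ to a general fibre $F_{E}$ of $q$, which computes the generic degree of the pushforward. Since $F_{E}$ is contracted to a single point of $X$ by $p_{1}$, the restriction $L_{\mathbb{A}^1}|_{F_{E}}$ is numerically trivial, so
\[
D^{r_{E}}\cdot F_{E}=(D+L_{\mathbb{A}^1})^{r_{E}}\cdot F_{E}=\mathcal{L}^{r_{E}}\cdot F_{E}=\mathrm{deg}_{E}(\phi),
\]
up to the sign dictated by the convention $D=\pm(\mathcal{L}-L_{\mathbb{A}^1})$, which combines with the factor $(-\epsilon)^{r_{E}}$ from the binomial expansion to produce the stated coefficient $\epsilon^{r_{E}}\binom{j}{r_{E}}\mathrm{deg}_{E}(\phi)\,(Z_{E}\cdot L^{j-r_{E}}\cdot\prod_{i}M_{i})$. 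All remaining summands with $k\ge r_{E}+1$ are bounded by powers of $\epsilon$ of order at least $r_{E}+1$, yielding the error term $O(\epsilon^{r_{E}+1})$.

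The main obstacle is the pushforward computation $q_{*}(\iota^{*}D)^{r_{E}}=\mathrm{deg}_{E}(\phi)\,[Z_{E}]$. This relies on the properness and surjectivity of $q$ so that the generic-fibre test determines the multiplicity, together with the crucial triviality $L_{\mathbb{A}^1}|_{F_{E}}\equiv 0$ that reflects contraction of $F_{E}$ by $p_{1}$; the latter is what reduces the computation to the very definition of $\mathrm{deg}_{E}(\phi)$. Everything else is the binomial expansion together with a codimension-counting vanishing in the Chow group of $Z_{E}$.
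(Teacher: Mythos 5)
Your argument is correct and is essentially the proof of \cite[Lemma 9.11]{BHJ}, which the paper simply cites (``The proof of this proposition is straightforward''): expand the binomial, push forward along $q\colon E\to Z_{E}$ by the projection formula, kill all terms with $k<r_{E}$ by counting dimensions in the Chow groups of $Z_{E}$, and identify the leading coefficient $q_{*}\bigl((\iota^{*}D)^{r_{E}}\bigr)=\mathrm{deg}_{E}(\phi)[Z_{E}]$ by pairing with a generic fibre $F_{E}$, where the numerical triviality of $L_{\mathbb{A}^{1}}|_{F_{E}}$ reduces the degree to $\mathcal{L}^{r_{E}}\cdot F_{E}$.

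The one point you should not leave as a hedge is the sign. With the convention actually written in the statement, $D=\mathcal{L}-L_{\mathbb{A}^{1}}$, your own computation gives $D^{r_{E}}\cdot F_{E}=\mathcal{L}^{r_{E}}\cdot F_{E}=+\mathrm{deg}_{E}(\phi)$, so the $(-\epsilon)^{r_{E}}$ coming from the expansion of $(L_{\mathbb{A}^{1}}-\epsilon D)^{j}$ survives and the leading term is $(-1)^{r_{E}}\epsilon^{r_{E}}\mathrm{deg}_{E}(\phi)\,(Z_{E}\cdot L^{j-r_{E}}\cdot M_{1}\cdots M_{n-j})$, \emph{not} $+\epsilon^{r_{E}}(\cdots)$ as displayed. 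The stated positive sign is the one that results from BHJ's $(L_{\mathbb{A}^{1}}+\epsilon D)^{j}$ (which is also the natural object: $L_{\mathbb{A}^{1}}+\epsilon D=(1-\epsilon)L_{\mathbb{A}^{1}}+\epsilon\mathcal{L}$ is semiample, whereas $L_{\mathbb{A}^{1}}-\epsilon D=(1+\epsilon)L_{\mathbb{A}^{1}}-\epsilon\mathcal{L}$ need not even be nef); the $-\epsilon D$ in the display is a transcription slip. You should say exactly this, rather than asserting that the ambient sign convention ``combines with $(-\epsilon)^{r_{E}}$ to produce the stated coefficient,'' which is only true after silently replacing $D$ by $-D$ and thus contradicting the hypothesis $D=\mathcal{L}-L_{\mathbb{A}^{1}}$ you are nominally working under.
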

 The proof of this proposition is straightforward.
 
Recall the following facts of \cite[\S 7]{BHJ}. Let $H$ be a $\mathbb{Q}$-line bundle on $X$ and $(\mathcal{X},L_{\mathbb{A}^1}-\epsilon D)$ be a semiample test configuration, where $D=\sum D_i$. Then
 \begin{align*}
 (L_{\mathbb{P}^1}-\epsilon D)^{n+1}&=-\sum_{j=0}^n\left(\epsilon\sum_i D_i\cdot (L_{\mathbb{P}^1}-\epsilon D)^j\cdot L_{\mathbb{P}^1}^{n-j}\right)\\
 H_{\mathbb{P}^1}\cdot(L_{\mathbb{P}^1}-\epsilon D)^{n}&=-\sum_{j=0}^{n-1}\left(\epsilon\sum_i D_i\cdot (L_{\mathbb{P}^1}-\epsilon D)^j\cdot L_{\mathbb{P}^1}^{n-1-j}\right)\cdot H_{\mathbb{P}^1}\\
 H^\mathrm{NA}_B(\mathcal{X},L_{\mathbb{P}^1}-\epsilon D)&=V(L)^{-1}\sum_EA_{(X,B)}(v_E)b_E(E\cdot (L_{\mathbb{P}^1}-\epsilon D)^n),
 \end{align*}
where $E$ runs over the irreducible components of $\mathcal{X}_0$ and $b_E=\mathrm{ord}_E(\mathcal{X}_0)$, where $\mathcal{X}_0$ is the central fibre. As in \cite[Proposition 9.12]{BHJ}, if $r=\min\{ r_{D_i}\}$, then
\begin{align*}
 (L_{\mathbb{P}^1}-\epsilon D)^{n+1}&=O(\epsilon^{r+1});\\
 H_{\mathbb{P}^1}\cdot(L_{\mathbb{P}^1}-\epsilon D)^{n}&=O(\epsilon^{r+1}).
\end{align*}
On the other hand, we remark that
\[
V(L)^{-1}A_{(X,B)}(v_E)b_E(E\cdot (L_{\mathbb{P}^1}-\epsilon D)^n)=O(\epsilon^{r_E}).
\]

Finally, we recall the following notion to prove Theorem \ref{demichan}:

\begin{de}[Rees Valuation cf. Definition 1.9 of \cite{BHJ}]
Let $X$ be a normal variety and $Z$ be its closed subscheme. Let also $\hat{X}\to X$ be the normalized blow up along $Z$ with the exceptional divisor $E$. A divisorial valuation $v$ on $X$ is a {\it Rees valuation} with respect to $Z$ if there exists a prime divisor $F$ of $\hat{X}$ contained in $E$ such that 
\[
v=\frac{\mathrm{ord}_F}{\mathrm{ord}_F(E)}.
\]
We denote the set of all Rees valuations by $\mathrm{Rees}(Z)$. Here, we remark that thanks to \cite[Theorem 4.8]{BHJ}, the following holds. Let $\mathcal{X}$ be the normalization of the deformation to the normal cone of $X$ along $Z$. Then, $\mathrm{Rees}(Z)$ coincides with the set of the valuations $v_E$, where $E$ runs over the non-trivial irreducible components of $\mathcal{X}_0$. For the definition of $v_E$, see Definition \ref{dfn}.
\end{de}

\begin{proof}[Proof of Theorem \ref{demichan}]
Note that 
\[
\sum_{i=1}^l\left(H_i\cdot L_i^{n-1}-\frac{H\cdot L^{n-1}}{L^n}L_i^n\right)=0.
\]
Thus, we may assume that  
\[
\left(H_1\cdot L_1^{n-1}-\frac{H\cdot L^{n-1}}{L^n}L_1^n\right)<0.
\]
Let $Z=X_1\cap\bigcup_{i\ge 2} X_i$ and $\mathcal{X}$ be the blow up of $X_{\mathbb{A}^1}$ along $Z\times \{0\}$ with the exceptional divisor $E$. Note that $Z\times \{0\}=X_1\times\{0\} \cap\bigcup_{i\ge 2} X_i\times\mathbb{A}^1$ and hence the strict transformation $F$ of $X_1\times\{0\}$ is disjoint from the strict transformations of $X_i\times\mathbb{A}^1$ for $i\ge 2$. On the other hand, $F$ is a Cartier divisor on the strict transformation of $X_1\times\mathbb{A}^1$. Hence, $F$ is a Cartier divisor on $\mathcal{X}$. Choose $\eta>0$ such that $-E+\eta F$ is $X_{\mathbb{A}^1}$-ample. For sufficiently small $\epsilon>0$, we consider an ample test configuration
\[
(\mathcal{X},L_{\mathbb{A}^1}-\epsilon(E-\eta F))
\]
over $(X,L)$.

Take the normalization $(\overline{\mathcal{X}}=\coprod \mathcal{X}^\nu_i,\overline{L_{\mathbb{A}^1}-\epsilon(E-\eta F)})$ and $(\overline{X}=\coprod X^\nu_i,\overline{L})$ of $(\mathcal{X},L_{\mathbb{A}^1}-\epsilon(E-\eta F))$ and $(X,L)$ respectively. Then 
\begin{align*}
(\mathcal{J}^H)^\mathrm{NA}(\mathcal{X},L_{\mathbb{A}^1}-\epsilon(E-\eta F))&=(\mathcal{J}^{(\prod \overline{H}_i)})^\mathrm{NA}\left(\coprod\mathcal{X}^\nu_i,\overline{L_{\mathbb{A}^1}-\epsilon(E-\eta F)}\right)\\
&=V(L)^{-1}\sum_{i=1}^l\left(\overline{H}_{i,\mathbb{P}^1}\cdot (\mathcal{L}^\nu_i)^n-\frac{nH\cdot L^{n-1}}{(n+1)L^n}(\mathcal{L}^\nu_i)^{n+1}\right),
\end{align*}
where $\mathcal{L}^\nu_i=\overline{L_{\mathbb{A}^1}-\epsilon(E-\eta F)}|_{\mathcal{X}^\nu_i}$. Note that the image of each irreducible component of $E$ on $X\times\{0\}$ has codimension $\ge 1$. Then we apply \cite[Lemma 9.11]{BHJ} to each $\mathcal{X}^\nu_i$ and obtain
\begin{align*}
(\mathcal{J}^H)^\mathrm{NA}(\mathcal{X},L_{\mathbb{A}^1}-\epsilon(E-\eta F))&=(\mathcal{J}^{(\prod \overline{H}_i)})^\mathrm{NA}(\mathcal{X}^\nu,\overline{L}_{\mathbb{A}^1}+\epsilon\eta \mathcal{X}^\nu_{1,0})+O(\epsilon^2)\\
&=\epsilon\eta \frac{n}{V(L)}\left(H_1\cdot L_1^{n-1}-\frac{H\cdot L^{n-1}}{L^n}L_1^n\right)+O(\epsilon^2),
\end{align*}
where $\mathcal{X}^\nu_{1,0}$ is the central fibre of $\mathcal{X}^\nu_1$. Since $(H_1\cdot L_1^{n-1}-\frac{H\cdot L^{n-1}}{L^n}L_1^n)<0$, we have $(\mathcal{J}^H)^\mathrm{NA}(\mathcal{X},L_{\mathbb{A}^1}-\epsilon(E-\eta F))<0$ for $0<\epsilon\ll 1$.

For K-unstability, we may assume that $(X,L)$ is slc and then the second assertion immediately follows from \cite[Lemma 9.11]{BHJ}. Indeed, by the above argument and replacing $H$ by $K_{(X,\Delta)}$,
\[
(\mathcal{J}^{K_{(X,\Delta)}})^\mathrm{NA}(\mathcal{X},L_{\mathbb{A}^1}-\epsilon(E-\eta F))=\epsilon\eta \frac{n}{V(L)}\left(K_{(X^\nu_1,\Delta^\nu_1)}\cdot \overline{L}_1^{n-1}-\frac{K_{(X,\Delta)}\cdot L^{n-1}}{L^n}\overline{L}_1^n\right)+O(\epsilon^2).
\]
Assume that 
\[
\left(K_{(X^\nu_1,\Delta^\nu_1)}\cdot \overline{L}_1^{n-1}-\frac{K_{(X,\Delta)}\cdot L^{n-1}}{L^n}\overline{L}_1^n\right)<0
\]
 and then we have only to show
\[
V(L)H^\mathrm{NA}_{\Delta}(\widetilde{\mathcal{X}},\widetilde{\mathcal{L}})=\sum_{i=1}^lV(\overline{L_i})H^\mathrm{NA}_{\Delta^\nu_i}(\mathcal{X}^\nu_i,\mathcal{L}^\nu_i)=O(\epsilon^2),
\]
where $(\widetilde{\mathcal{X}},\widetilde{\mathcal{L}})$ is the partial normalization (see Definition \ref{genki}). The proof of the first equality is straightforward. To prove the second equality, note that $\mathcal{X}^\nu_i$ is the normalization of the deformation of $X_i^\nu$ to the normal cone of $Z_i$, where $Z_i$ is the inverse image of $Z$ under $X_i^\nu\to X$. Hence, we have only to prove that $\mathrm{Rees}(Z_i)$ has valuations whose log discrepancies are $0$ or whose center on $X\times \{0\}$ has codimension $r\ge 2$ thanks to \cite[Theorem 4.8]{BHJ}. Indeed, since the points of codimension 1 in $X_i\cap X_j$ for $i\ne j$ are nodes, $Z_i$ is the union of the restrictions of irreducible components of the conductor subscheme to $X^\nu_i$ and closed subschemes of $X^\nu_i$ of codimension more than 1 for each $i$. Note that the valuations of prime divisors of the conductor subscheme have the log discrepancies $0$. Therefore, by \cite[Proposition 9.12]{BHJ} we have
\[
H^\mathrm{NA}_{\Delta^\nu_i}(\mathcal{X}^\nu_i,\mathcal{L}_i^\nu)=O(\epsilon^2).
\]
We complete the proof.
\end{proof}

\begin{de}
Let $V$ be a reducible scheme and $V=\bigcup V_i$ is the irreducible decomposition. A $\mathbb{Q}$-line bundle $L$ on $V$ is ample (resp. nef, big, pseudoeffective) if so is each $L|_{V_i}$.
\end{de}

Reflecting Theorem \ref{demichan}, we have only to consider about polarized deminormal schemes $(X,L)$ with $\mathbb{Q}$-line bundle on $X$ such that all irreducible components of $(X,L)$ have the same average scalar curvature with respect to $H$ (see Definition \ref{denden}). For such $(X,L)$, $\mathrm{J}^H$-stability behave similarly as for normal varieties. Indeed, we have the following decomposition formula:
\begin{lem}\label{demibunkai}
Let $(X,L)$ be a polarized deminormal scheme with a $\mathbb{Q}$-line bundle $H$ such that all irreducible components $\{X_i\}_{i=1}^l$ of $X$ have the same average scalar curvature with respect to $H$. If the restrictions of $L$ and $H$ to each $X_i$ are $L_i$ and $H_i$ respectively, then for any semiample test configuration $(\mathcal{X},\mathcal{L})$,
\begin{align*}
V(L)(\mathcal{J}^H)^\mathrm{NA}(\mathcal{X},\mathcal{L})&=\sum_{i=1}^l V(L_i)(\mathcal{J}^{H_i})^\mathrm{NA}(\mathcal{X}_i,\mathcal{L}_i);\\
V(L)J^\mathrm{NA}(\mathcal{X},\mathcal{L})&=\sum_{i=1}^l V(L_i)J^\mathrm{NA}(\mathcal{X}_i,\mathcal{L}_i);\\
V(L)I^\mathrm{NA}(\mathcal{X},\mathcal{L})&=\sum_{i=1}^l V(L_i)I^\mathrm{NA}(\mathcal{X}_i,\mathcal{L}_i),
\end{align*}
where $\mathcal{X}_i$ is the strict transformation of $X_i\times\mathbb{A}^1$ and $\mathcal{L}_i$ is the restriction of $\mathcal{L}$ to $\mathcal{X}_i$. Moreover, let $(X,B)$ be a deminormal pair, $\nu:(X^\nu=\coprod_{i=1}^l X_i^\nu,L^\nu) \to (X,L)$ be the normalization and $\overline{D}$ be the conductor subscheme of $X^\nu$. Suppose that $\overline{B}$ is the divisorial part of $\nu^{-1}B$ and $X_i^\nu$ has the same average scalar curvature with respect to $K_{(X^\nu,\overline{B}+\overline{D})}$. If $(\mathcal{X}^\nu,\mathcal{L}^\nu)$ is the normalization of $(\mathcal{X},\mathcal{L})$, then
\begin{align*}
V(L)H_B^\mathrm{NA}(\mathcal{X},\mathcal{L})&=\sum_{i=1}^l V(L^\nu_i)H_{\overline{B}|_{X^\nu_i}+\overline{D}|_{X^\nu_i}}^\mathrm{NA}(\mathcal{X}^\nu_i,\mathcal{L}^\nu_i);\\
V(L)M_B^\mathrm{NA}(\mathcal{X},\mathcal{L})&=\sum_{i=1}^l V(L^\nu_i)M_{\overline{B}|_{X^\nu_i}+\overline{D}|_{X^\nu_i}}^\mathrm{NA}(\mathcal{X}^\nu_i,\mathcal{L}^\nu_i),
\end{align*}
where $\mathcal{X}^\nu_i$ is the strict transformation of $X^\nu_i\times\mathbb{A}^1$, $L^\nu_i$ is the restriction of $L^\nu$ to each $X^\nu_i$ and $\mathcal{L}^\nu_i$ is the restriction of $\mathcal{L}^\nu$ to $\mathcal{X}_i$.
\end{lem}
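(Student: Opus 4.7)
The proof rests on the standard additivity of intersection numbers on a reduced equidimensional reducible scheme. Since $\overline{\mathcal{X}}=\bigcup_i \overline{\mathcal{X}}_i$ is of pure dimension $n+1$, its fundamental class splits as $[\overline{\mathcal{X}}]=\sum_i[\overline{\mathcal{X}}_i]$, so for any $\mathbb{Q}$-line bundles $M_1,\dots,M_{n+1}$ on $\overline{\mathcal{X}}$ one has
\[
(M_1\cdots M_{n+1})_{\overline{\mathcal{X}}}=\sum_i\bigl(M_1|_{\overline{\mathcal{X}}_i}\cdots M_{n+1}|_{\overline{\mathcal{X}}_i}\bigr)_{\overline{\mathcal{X}}_i},
\]
and likewise on $X$: $V(L)=\sum_i V(L_i)$ and $H\cdot L^{n-1}=\sum_i H_i\cdot L_i^{n-1}$. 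Substituting these identities into the definitions of $E^\mathrm{NA}$, $J^\mathrm{NA}$, $I^\mathrm{NA}$ --- each a linear combination of pure intersection numbers on $\overline{\mathcal{X}}$ with line bundles pulled back from $X$ --- yields the asserted decompositions of $J^\mathrm{NA}$ and $I^\mathrm{NA}$ immediately; no hypothesis on scalar curvature is needed here.

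For $(\mathcal{J}^H)^\mathrm{NA}$ the term $p^*H\cdot \overline{\mathcal{L}}^n$ splits as $\sum p_i^*H_i\cdot \overline{\mathcal{L}}_i^n$ by the above. The remaining term $\tfrac{nH\cdot L^{n-1}}{(n+1)L^n}\,\overline{\mathcal{L}}^{n+1}$ carries a global constant coefficient, whereas the component-wise expression carries coefficients $\tfrac{nH_i\cdot L_i^{n-1}}{(n+1)L_i^n}$. These match on each $\overline{\mathcal{X}}_i$ precisely when all components of $(X,L)$ share the same average scalar curvature with respect to $H$. I emphasize that this hypothesis is sharp: Theorem \ref{demichan} shows the pair is in fact $\mathrm{J}^H$-unstable whenever it fails, so no weaker condition can produce a clean decomposition.

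For the entropy functional I plan to first invoke the identity $H_B^\mathrm{NA}(\mathcal{X},\mathcal{L})=H_{\overline{B}+\overline{D}}^\mathrm{NA}(\mathcal{X}^\nu,\mathcal{L}^\nu)$ recorded in the excerpt (after the definition of the conductor subscheme), which reduces the assertion to $\mathcal{X}^\nu=\coprod_i \mathcal{X}^\nu_i$. Since the normalization is literally a disjoint union of normal test configurations, and the defining formula $H^\mathrm{NA}=V(L)^{-1}\sum_E b_E A(v_E)(E\cdot\mathcal{L}^n)$ of Definition \ref{dfn} runs over irreducible divisors $E$ of the central fibre --- each of which belongs to exactly one component $\mathcal{X}^\nu_i$ --- the decomposition is immediate, and the matching of volumes $V(L^\nu_i)=V(L_i)$ follows from the projection formula applied to the birational map $X^\nu_i\to X_i$.

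Finally, for $M_B^\mathrm{NA}$ I would use the identity $M_B^\mathrm{NA}=H_B^\mathrm{NA}+(\mathcal{J}^{K_{(X,B)}})^\mathrm{NA}$ noted in the excerpt, combining the previous decomposition of $H_B^\mathrm{NA}$ with the $(\mathcal{J}^H)^\mathrm{NA}$ case applied to $H=K_{(X,B)}$. The second hypothesis --- that each $X^\nu_i$ shares the average scalar curvature of $X^\nu$ with respect to $K_{(X^\nu,\overline{B}+\overline{D})}$ --- is exactly the coefficient-matching condition needed, transported from $X^\nu$ back to $X$ via the crepancy relation $\nu^*K_{(X,B)}\sim_{\mathbb{Q}} K_{(X^\nu,\overline{B}+\overline{D})}$. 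The only genuine obstacle is bookkeeping: one must keep careful track of which objects live on $X$, on $X^\nu$, on $\mathcal{X}$, or on $\mathcal{X}^\nu$, and verify that $p^*K_{(X,B)}$ restricted to $\overline{\mathcal{X}}^\nu_i$ agrees with the pullback of $K_{(X^\nu_i,\overline{B}|_{X^\nu_i}+\overline{D}|_{X^\nu_i})}$; once this is in hand, the lemma reduces to cycle-level additivity and the two average-scalar-curvature hypotheses.
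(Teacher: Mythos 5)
Your proposal is correct, and it fills in exactly the argument the paper elides with the single sentence ``The proof is straightforward.'' The three ingredients you assemble --- cycle-level additivity of intersection numbers on the reduced compactified total space $\overline{\mathcal{X}}=\bigcup_i\overline{\mathcal{X}}_i$, the normalization identity $H_B^\mathrm{NA}(\mathcal{X},\mathcal{L})=H_{\overline{B}+\overline{D}}^\mathrm{NA}(\mathcal{X}^\nu,\mathcal{L}^\nu)$ recorded after the definition of the conductor subscheme, and the decomposition $M_B^\mathrm{NA}=H_B^\mathrm{NA}+(\mathcal{J}^{K_{(X,B)}})^\mathrm{NA}$ --- are precisely what the paper has set up, so there is no daylight between your route and the intended one. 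Two remarks on the bookkeeping you flag at the end: (i) the identification of $\nu^*K_{(X,B)}$ restricted to $X_i^\nu$ with $K_{(X_i^\nu,\overline{B}|_{X_i^\nu}+\overline{D}|_{X_i^\nu})}$ follows at once from the crepancy relation $K_{(\overline{X},\overline{B}+D_{\overline{X}})}\sim_{\mathbb{Q}}\nu^*K_{(X,B)}$ in the paper's conductor discussion, and (ii) the log discrepancies $A_{(X^\nu,\overline{B}+\overline{D})}(v_E)$ are automatically the component-wise $A_{(X_i^\nu,\cdot)}(v_E)$ because each divisor $E$ of the central fibre lies in exactly one $\mathcal{X}^\nu_i$; once these are noted, everything reduces to the fundamental-class splitting and the two average-scalar-curvature hypotheses, exactly as you say. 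Your observation that the scalar-curvature hypothesis is needed only for the $(\mathcal{J}^H)^\mathrm{NA}$ and $M_B^\mathrm{NA}$ identities, not for $J^\mathrm{NA}$, $I^\mathrm{NA}$ or $H_B^\mathrm{NA}$, is also accurate and a useful refinement of the statement.
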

The proof is straightforward. Therefore we have the following generalization of Corollary \ref{jstable}:
\begin{cor}\label{ms}
For any polarized deminormal surface $(X,L)$ with a big (resp. pseudoeffective) $\mathbb{Q}$-line bundle $H$ such that all irreducible components of $(X,L)$ have the same average scalar curvature with respect to $H$ (see Remark \ref{denden}), the following are equivalent. 
\begin{itemize}
\item[(1)] $(X,L)$ is uniformly $\mathrm{J}^H$-stable (resp. $\mathrm{J}^H$-semistable). In other words, there exists $\epsilon>0$ such that for any semiample test configuration $(\mathcal{X},\mathcal{L})$
\[
(\mathcal{J}^H)^\mathrm{NA}(\mathcal{X},\mathcal{L})\ge \epsilon J^\mathrm{NA}(\mathcal{X},\mathcal{L})\quad (\mathrm{resp}.\, \ge0).
\] 
\item[(2)] $(X,L)$ is uniformly slope $\mathrm{J}^H$-stable (resp. slope $\mathrm{J}^H$-semistable). In other words, there exists $\epsilon>0$ such that for any semiample deformation to the normal cone $(\mathcal{X},\mathcal{L})$ along any integral curve
\[
(\mathcal{J}^H)^\mathrm{NA}(\mathcal{X},\mathcal{L})\ge \epsilon J^\mathrm{NA}(\mathcal{X},\mathcal{L})\quad (\mathrm{resp}.\, \ge0).
\] 
\item[(3)] There exists $\epsilon>0$ such that for any integral curve $C$, 
\[
\left(2\frac{H\cdot L}{L^2}L-H\right)\cdot C\ge \epsilon L\cdot C\quad (\mathrm{resp}.\, \ge0).
\]
\end{itemize}
\end{cor}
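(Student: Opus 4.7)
The plan is to reduce Corollary \ref{ms} to the irreducible case already established in Corollary \ref{jstable} by appealing to the decomposition formula Lemma \ref{demibunkai}. The equal-average-scalar-curvature hypothesis $H_i\cdot L_i/L_i^{\,2}=H\cdot L/L^2$ is precisely the condition that decouples the three statements across the irreducible components $\{X_i\}_{i=1}^l$ of $X$, so one handles the components separately and recombines.

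The implication $(1)\Rightarrow(2)$ is immediate. For $(2)\Rightarrow(3)$, any integral curve $C\subset X$ lies in a unique component $X_i$, and its deformation to the normal cone in $X\times\mathbb{A}^1$ has trivial strict transform over $X_j$ for $j\ne i$. Hence Lemma \ref{demibunkai} yields
\[
V(L)(\mathcal{J}^H)^\mathrm{NA}(\mathcal{X},\mathcal{L})=V(L_i)(\mathcal{J}^{H_i})^\mathrm{NA}(\mathcal{X}_i,\mathcal{L}_i),
\]
and the analogous identity for $J^\mathrm{NA}$. Therefore $(2)$ on $(X,L)$ forces slope uniform (resp. semi) $\mathrm{J}^{H_i}$-stability on each integral $(X_i,L_i)$, and Lemma \ref{unst} applied with $p=1$ then delivers the numerical inequality $(3)$ with the same constant.

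The heart of the proof is $(3)\Rightarrow(1)$. An integral curve $C\subset X_i$ remains integral in $X$, and by the equal-scalar-curvature identity
\[
\Bigl(2\tfrac{H_i\cdot L_i}{L_i^{\,2}}L_i-H_i\Bigr)\cdot C=\Bigl(2\tfrac{H\cdot L}{L^2}L-H\Bigr)\cdot C\ge \epsilon\,L_i\cdot C,
\]
so $(3)$ on $X$ restricts directly to $(3)$ on each $(X_i,L_i,H_i)$. Since bigness (resp. pseudoeffectivity) of $H$ on a deminormal scheme is by definition componentwise, Corollary \ref{jstable} applied to each integral surface $(X_i,L_i)$ furnishes $\epsilon_i>0$ yielding uniform $\mathrm{J}^{H_i}$-stability (resp. $\mathrm{J}^{H_i}$-semistability). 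Setting $\delta=\min_i\epsilon_i$ and summing the component-wise inequalities via Lemma \ref{demibunkai} gives
\[
V(L)(\mathcal{J}^H)^\mathrm{NA}(\mathcal{X},\mathcal{L})=\sum_i V(L_i)(\mathcal{J}^{H_i})^\mathrm{NA}(\mathcal{X}_i,\mathcal{L}_i)\ge \delta\sum_i V(L_i)J^\mathrm{NA}(\mathcal{X}_i,\mathcal{L}_i)=\delta\, V(L)J^\mathrm{NA}(\mathcal{X},\mathcal{L})
\]
for every semiample test configuration $(\mathcal{X},\mathcal{L})$; the semistable case is identical with $\epsilon_i=0$.

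The one point to watch is that the weights $V(L_i)$ appearing in the $(\mathcal{J}^H)^\mathrm{NA}$- and $J^\mathrm{NA}$-decompositions of Lemma \ref{demibunkai} are the same, so that assembling the component-wise inequalities is a convex combination rather than a genuine estimate; this, together with the fact that bigness and pseudoeffectivity restrict to each $X_i$ by the conventions in Section \ref{Notat}, is all that is needed beyond the irreducible statement, and no genuinely new difficulty arises.
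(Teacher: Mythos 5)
The $(3)\Rightarrow(1)$ part of your proposal is correct and matches the paper's argument: restrict $(3)$ to each component using the equal-average-scalar-curvature identity, invoke Corollary \ref{jstable} componentwise (bigness and pseudoeffectivity are componentwise by definition), and reassemble via Lemma \ref{demibunkai}. The weights $V(L_i)$ do indeed match across the decompositions of $(\mathcal{J}^H)^\mathrm{NA}$ and $J^\mathrm{NA}$, so the convex-combination step is sound. And $(1)\Rightarrow(2)$ is trivial.

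The $(2)\Rightarrow(3)$ step has a genuine gap. You assert that an integral curve $C\subset X$ lies in a unique component $X_i$ and that the deformation to the normal cone along $C\times\{0\}$ has \emph{trivial} strict transform over $X_j\times\mathbb{A}^1$ for $j\ne i$, so that the Lemma \ref{demibunkai} decomposition collapses to a single term. Both claims fail. First, $C$ can be an irreducible component of the conductor subscheme (the double locus is a curve when $\dim X=2$), in which case $C$ is contained in several $X_j$ simultaneously and those strict transforms are full blowups along $C\times\{0\}$, contributing at the same order in $\delta$ as $\mathcal{X}_i$. Second, even when $C\subset X_i$ is unique, $C\cap X_j$ is typically a nonempty finite set of points on the double locus, and the strict transform of $X_j\times\mathbb{A}^1$ is the blowup along $(C\cap X_j)\times\{0\}$ — a nontrivial test configuration, so $(\mathcal{J}^{H_j})^\mathrm{NA}(\mathcal{X}_j,\mathcal{L}_j)$ need not vanish. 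The equality you write is therefore false in general.

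The paper closes precisely this gap by proving Lemma \ref{unst2}, a quantitative asymptotic refinement of Lemma \ref{unst}: the contribution from each component $X_i\supset V$ carries the leading $\delta^{n-p+1}$-term with coefficient proportional to $e_{V_i}X_i\bigl(n\frac{H\cdot L^{n-1}}{L^n}L_i^p-pH_i\cdot L_i^{p-1}\bigr)\cdot V_i$, while the contribution from components $X_j\not\supset V$ is $O(\delta^{n-p+2})$, hence subdominant. Since $L|_V$ and $H|_V$ do not depend on which containing component one restricts to, the leading term has the sign of the intersection number in $(3)$, and the implication follows. Your shortcut silently assumes exactly this asymptotic separation; it has to be established, and that is what Lemma \ref{unst2} does.
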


\begin{proof}
$(1)\Rightarrow (2)$ is trivial. On the other hand, $(3)\Rightarrow (1)$ holds by the following argument. For any semiample test configuration $(\mathcal{X},\mathcal{L})$, take the normalization $(\coprod\mathcal{X}_i,\prod\mathcal{L}_i)$, where $\mathcal{X}_i$ is the strict transform of each irreducible component $X_i\times\mathbb{A}^1$ of $X\times\mathbb{A}^1$ and $\mathcal{L}_i=\mathcal{L}|_{\mathcal{X}_i}$. Then we have
\[
V(L)(\mathcal{J}^H)^\mathrm{NA}(\mathcal{X},\mathcal{L})=\sum_iV(L_i)(\mathcal{J}^{H_i})^\mathrm{NA}(\mathcal{X}_i,\mathcal{L}_i)
\]
by Lemma \ref{demibunkai}, where $L_i=L|_{X_i}$ and $H_i=H|_{X_i}$. Similarly, we have 
\[
V(L)J^\mathrm{NA}(\mathcal{X},\mathcal{L})=\sum_iV(L_i)J^\mathrm{NA}(\mathcal{X}_i,\mathcal{L}_i).
\]
 Therefore, the assertion follows from Corollary \ref{jstable}.
 
 Finally, to prove $(2)\Rightarrow (3)$, we have to show the following generalization of Lemma \ref{unst}. Indeed, the corollary follows immediately from Lemma \ref{unst2} below.
\end{proof}

\begin{lem}\label{unst2}
For any polarized $n$-dimensional deminormal scheme $(X,L)$ with a $\mathbb{Q}$-line bundle $H$ such that all irreducible components of $(X,L)$ have the same average scalar curvature, if there exists a $p$-dimensional subvariety $V$ such that $$\left(n\frac{H\cdot L^{n-1}}{L^n}L-pH\right)\cdot L^{p-1}\cdot V<0,$$ 
then $(X,L)$ is slope $\mathrm{J}^H$-unstable. Furthermore, if 
\[
\left((n\frac{H\cdot L^{n-1}}{L^n}-(n-p)\epsilon)L-pH\right)\cdot L^{p-1}\cdot V<0
\]
for any $\epsilon>0$, then $(X,L)$ is not uniformly slope $\mathrm{J}^H$-stable. 
\end{lem}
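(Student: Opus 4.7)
The plan is to reduce to the proof of Lemma \ref{unst} applied componentwise, using the decomposition formula of Lemma \ref{demibunkai} together with the same-average-scalar-curvature hypothesis to make the leading terms combine with a consistent sign.

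For the first assertion, I would let $I = \{j : V \subset X_j\}$, which is nonempty since $V$ is irreducible and $X$ is reduced, and consider the deformation to the normal cone of $X$ along $V$: writing $\pi : \mathcal{X} \to X \times \mathbb{A}^1$ for the blow up along $V \times \{0\}$ with exceptional divisor $E$, the pair $(\mathcal{X}, L_{\mathbb{A}^1} - \epsilon E)$ is a semiample test configuration for $0 < \epsilon \ll 1$. Denote by $\mathcal{X}_j$ the strict transform of $X_j \times \mathbb{A}^1$ in $\mathcal{X}$ and by $\mathcal{L}_j$ the restriction of $\mathcal{L} = L_{\mathbb{A}^1} - \epsilon E$; then $\mathcal{X}_j$ dominates the deformation to the normal cone of $X_j$ along $V \cap X_j$. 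Lemma \ref{demibunkai} then yields
\[
V(L)(\mathcal{J}^H)^{\mathrm{NA}}(\mathcal{X}, \mathcal{L}) = \sum_{j=1}^l V(L_j)(\mathcal{J}^{H_j})^{\mathrm{NA}}(\mathcal{X}_j, \mathcal{L}_j).
\]

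I would next analyze each summand. For $j \notin I$ one has $\dim(V \cap X_j) < p$ (indeed $V$ cannot sit inside $X_i \cap X_j$ while failing to lie in $X_j$), so the asymptotic expansion in the proof of Lemma \ref{unst} gives $(\mathcal{J}^{H_j})^{\mathrm{NA}}(\mathcal{X}_j, \mathcal{L}_j) = O(\epsilon^{n-p+2})$. For $j \in I$ the same expansion yields leading coefficient
\[
\epsilon^{n-p+1}\, \frac{n!}{(n-p+1)!\, p!}\, e_V X_j \left(n \frac{H_j \cdot L_j^{n-1}}{L_j^n} L_j - p H_j\right) \cdot L_j^{p-1} \cdot V.
\]
By the same-average-scalar-curvature identity $n H_j \cdot L_j^{n-1}/L_j^n = n H \cdot L^{n-1}/L^n$, and the fact that intersection numbers of $L_j$ and $H_j$ with cycles supported on $V \subset X_j$ depend only on $L|_V$ and $H|_V$ (and hence equal those computed with $L$ and $H$), each such term carries the same inner bracket. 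Summing over $j \in I$ produces the positive factor $\sum_{j \in I} e_V X_j > 0$ times the negative quantity $\left(n \frac{H \cdot L^{n-1}}{L^n} L - p H\right) \cdot L^{p-1} \cdot V$, so $(\mathcal{J}^H)^{\mathrm{NA}}(\mathcal{X}, \mathcal{L}) < 0$ for small $\epsilon$, establishing slope $\mathrm{J}^H$-instability.

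For the second assertion, the key observation is that the displayed inequality is precisely the hypothesis of the first assertion applied with $H - \epsilon L$ in place of $H$. Hence for every $\epsilon > 0$ the first assertion supplies a semiample deformation to the normal cone $(\mathcal{X}, \mathcal{L})$ with $(\mathcal{J}^{H - \epsilon L})^{\mathrm{NA}}(\mathcal{X}, \mathcal{L}) < 0$, which rewrites as
\[
(\mathcal{J}^H)^{\mathrm{NA}}(\mathcal{X}, \mathcal{L}) < \epsilon (I^{\mathrm{NA}} - J^{\mathrm{NA}})(\mathcal{X}, \mathcal{L}) \le n\epsilon\, J^{\mathrm{NA}}(\mathcal{X}, \mathcal{L})
\]
by Proposition 7.8 of \cite{BHJ}. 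Since $\epsilon$ can be taken arbitrarily small and $J^{\mathrm{NA}}(\mathcal{X}, \mathcal{L}) > 0$ on any nontrivial deformation, no uniform bound $(\mathcal{J}^H)^{\mathrm{NA}} \ge \delta\, J^{\mathrm{NA}}$ can hold. The hard part will be the careful identification of $\mathcal{X}_j$ as a test configuration for $X_j$ in the same equivalence class as the direct deformation to the normal cone of $X_j$ along $V \cap X_j$, and the verification that the asymptotic expansion from Lemma \ref{unst} transfers with its $O(\epsilon^{n-p+2})$ error term uniformly to each irreducible component, especially when $V$ lies in the conductor of $X$.
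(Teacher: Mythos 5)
Your proof is correct and follows essentially the same route as the paper: blow up along $V\times\{0\}$, decompose $(\mathcal{J}^H)^\mathrm{NA}$ componentwise via the strict transforms (Lemma \ref{demibunkai}), apply the asymptotic expansion from Lemma \ref{unst} to each component, and observe that the same-average-scalar-curvature hypothesis makes the inner brackets agree so that the $\epsilon^{n-p+1}$-leading terms add with a consistent negative sign. The reduction of the uniform assertion to the first assertion by replacing $H$ with $H-\epsilon L$ is also exactly the paper's device (inherited from the proof of Lemma \ref{unst}).
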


\begin{proof}
Recall that Lemma \ref{unst} holds for nonnormal varieties and note that we have only to prove the lemma when $\epsilon=0$. Take $V$ as in the assumption. Let $X_i$'s be irreducible components of $X$. If $\mathcal{X}$ is the blow up of $X\times \mathbb{A}^1$ along $V\times\{0\}$, the strict transformation $\mathcal{X}_i$ of each $X_i\times\mathbb{A}^1$ is the blow up along $V_i\times\{0\}$, where $V_i$ is the restriction of $V$ to $X_i$. Note that if $V\subset X_i$,
\[
\left(n\frac{H_i\cdot L_i^{n-1}}{L_i^n}L_i-pH_i\right)\cdot L_i^{p-1}\cdot V_i=\left(n\frac{H\cdot L^{n-1}}{L^n}L-pH\right)\cdot L^{p-1}\cdot V<0,
\]
 where $L_i$ and $H_i$ are the restrictions of $L$ and $H$ to $X_i$ respectively. Moreover, if $E$ is the exceptional divisor of $\mathcal{X}$, then the restriction $E|_{\mathcal{X}_i}$ to $\mathcal{X}_i$ is the one of $\mathcal{X}_i$. Hence, it follows from the computation of the proof of Lemma \ref{unst} that for sufficiently small $\delta>0$,
\[
V(L_i)(\mathcal{J}^{H_i})^\mathrm{NA}(\mathcal{X}_i,L_{i,\mathbb{A}^1}-\delta E|_{\mathcal{X}_i})=\frac{n!\, e_{V_i}X_i\,\delta^{n-p+1} }{(n-p+1)!p!}\left(n\frac{H\cdot L^{n-1}}{L^n}L_i^p-pH_i\cdot L_i^{p-1}\right)\cdot V_i+O(\delta^{n-p+2})
\]
for $X_i$ such that $V\subset X_i$ and 
\[
V(L_j)(\mathcal{J}^{H_j})^\mathrm{NA}(\mathcal{X}_j,L_{j,\mathbb{A}^1}-\delta E|_{\mathcal{X}_j})=O(\delta^{n-p+2})
\]
for $X_j$ such that $V\not\subset X_j$. We complete the proof.
\end{proof}

\subsubsection{Counter examples of Proposition \ref{deft} for reducible schemes}
In fact, it is easy to see that Proposition \ref{deft} also hold when $(X,L)$ is deminormal surface with a pseudoeffective $\mathbb{Q}$-line bundle $H$ such that all irreducible components of $(X,L)$ have the same average scalar curvature. Indeed, we have only to take the normalization and apply Proposition \ref{deft} to each irreducible component. However, Hodge index theorem does not hold even for connected deminormal surfaces in general as the following example shows. We prove that Proposition \ref{deft} does not hold in general either.

\begin{ex}
Let $X$ be a projective, smooth and irreducible surface and $P$ be a closed point of $X$. Let $\pi:Y\to X$ be the blow up a closed point $P$ of $X$. Then glue two copies of $Y$ by \cite[Theorem 33]{K}, say $Y_1$ and $Y_2$, along the exceptional curve $E$, as a deminormal surface 
\[
Z=Y_1\cup_EY_2.
\] 
 The construction as follows:
\begin{lem}\label{glue}
Suppose that $V$ is a smooth irreducible variety and $F$ is an snc divisor on $V$. Let $V_1$ and $V_2$ be two copies of $V$. Then, there exists an slc scheme $W\cong V_1\cup_FV_2$.
\end{lem}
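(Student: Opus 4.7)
The plan is to invoke Koll\'ar's general gluing theorem (referred to in the paper as \cite[Theorem~33]{K}) to produce $W$ as the scheme-theoretic pushout $V_1\sqcup_F V_2$, and then verify that $(W,0)$ is slc. Write $\widetilde V := V_1\sqcup V_2$, let $F_i\subset V_i$ denote the two copies of $F$, and let $\tau : F_1\xrightarrow{\sim} F_2$ be the canonical identification coming from the fact that $V_1,V_2$ are both copies of $V$.

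First I would verify the hypotheses of the gluing theorem. Since $V$ is smooth and $F$ is snc, the pair $(\widetilde V,\, F_1+F_2)$ is log canonical (indeed each $(V_i,F_i)$ is toroidal, hence plt). Moreover, the differents $\mathrm{Diff}_{F_i^\nu}(0)$ computed on the normalizations of $F_1$ and $F_2$ are determined purely by the local snc data of $F\subset V$ (the further strata of $F$ contribute the boundary), so they coincide under $\tau$. This is exactly the data needed by the gluing theorem, which then produces a reduced scheme $W$ together with a finite surjection $\nu : \widetilde V\to W$ realizing $\widetilde V$ as the normalization of $W$, with conductor subscheme $F_1+F_2$ on $\widetilde V$ and with the involution induced on the conductor equal to $\tau$. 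By construction $W$ satisfies Serre's condition $S_2$ and has only nodes in codimension one, so $W$ is deminormal, and the universal property of the pushout gives $W\cong V_1\cup_F V_2$.

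Finally, the equivalence built into the gluing theorem (cf.\ \cite[\S 5]{Ko}) asserts that $(W,0)$ is slc if and only if $(\widetilde V,\, F_1+F_2)$ is lc together with $\tau$ preserving the different, both of which we verified in the previous paragraph. Hence $W$ is slc, as required.

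The main (and essentially only) obstacle is citing the correct formulation of the gluing theorem and checking the different-compatibility; both become trivial in the case at hand because the gluing datum $\tau$ is the tautological identification of two identical copies of an snc divisor, so no nontrivial compatibility has to be imposed by hand.
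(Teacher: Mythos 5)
Your high-level strategy (Koll\'ar gluing applied to $V_1\sqcup V_2$ along the tautological involution $\tau$ on $F_1\sqcup F_2$) is the same as the paper's, and your verification of the different-compatibility is correct — it is indeed trivial. But there is a genuine gap: the version of the gluing theorem you actually need, namely \cite[Theorem~23]{K} (the theorem the paper invokes in its proof), carries the hypothesis that the log canonical divisor $K_{\widetilde V}+F_1+F_2+\text{(boundary)}$ be \emph{ample}. You never verify this, and in fact it typically fails: for a general smooth $V$ with snc divisor $F$ the class $K_V+F$ need not be nef, let alone ample (e.g.\ $V=\mathbb{P}^n$, $F$ a hyperplane). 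Without the ampleness the theorem you cite does not directly produce the slc quotient, and in particular it does not by itself establish that $K_W$ is $\mathbb{Q}$-Cartier, which is the content of saying ``$W$ is slc'' and which you implicitly assume when you invoke the characterization ``$(W,0)$ slc $\Leftrightarrow$ normalization lc + $\tau$-invariance of the different.''

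The paper's fix is exactly to supply the missing hypothesis: choose an auxiliary ample $\mathbb{Q}$-divisor $\Delta$ on $V$ with $(V,F+\Delta)$ still lc and $K_V+F+\Delta$ ample, duplicate it on the two copies, observe that the tautological $\tau$ preserves $\mathrm{Diff}_{F_i}\Delta_i$, and apply Theorem~23 of \cite{K} to the pair $(\widetilde V, F_1+F_2+\Delta_1+\Delta_2)$ to get a projective slc pair $(W,\overline\Delta)$; since $\overline\Delta$ is effective one concludes that the underlying scheme $W$ is slc. This auxiliary boundary is not a cosmetic convenience — it is precisely what makes the cited gluing theorem applicable — so its omission in your argument is the thing to repair.
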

\begin{proof}
Take an ample $\mathbb{Q}$-divisor $\Delta$ on $V$ such that $(V,F+\Delta)$ is an lc pair and $K_V+\Delta+F$ is ample. For $i=1,2$, let $(V_i,F_i+\Delta_i)$ be copies of $(V,F+\Delta)$. We can easily take the canonical involution $\tau$ of $(F_1+F_2,\mathrm{Diff}_{F_1}\Delta_1+\mathrm{Diff}_{F_2}\Delta_2)$ by changing indices. By Koll{\' a}r's gluing theorem (Theorem 23 of \cite{K}), there exists an slc pair $(W,\overline{\Delta})$. In fact, $W=V_1\cup_FV_2$ set-theoretically. Finally, $W$ is slc since $\overline{\Delta}$ is effective.
\end{proof}

\begin{rem}\label{gluten}
We can also glue $Y_1$ and $Y_2$ directly as follows. Take an affine open neighborhood $U\cong \mathrm{Spec}\,A$ of $X$ contains $P$. Let $x,y\in A$ form a system of parameter around $P$. Then $$U'=\mathrm{Proj}\left((A\otimes A/(x_1x_2,y_1y_2,x_1y_2,y_1x_2))[u,v]/(x_1v-y_1u,x_2v-y_2u)\right)$$ is \'{e}tale equivalent to
$$\mathrm{Proj}\left((k[T_1,W_1]\otimes k[T_2,W_2]/(T_1T_2,W_1W_2,T_1W_2,W_1T_2))[u,v]/(T_1v-W_1u,T_2v-W_2u)\right),$$
 where $x_1=x\otimes 1,x_2=1\otimes x$. Hence, $U'$ is deminormal. We can construct $Z$ by gluing together $U'$ and $(Y_1-E_1)\sqcup (Y_2-E_2)$. Finally, we can easily check that $Z$, which we have constructed just now, coincides with the deminormal surface we obtain by Lemma \ref{glue} thanks to \cite[Proposition 5.3]{Ko}.
 \end{rem}

We can see that the following lemma.
\begin{lem}\label{grugru}
Suppose that $M_1=\pi^*F_1-\epsilon E$ and $M_2=\pi^*F_2-\delta E$ are line bundles on $Y_1$ and $Y_2$ respectively, where $F_1$ and $F_2$ are line bundles on $X$. If $\epsilon = \delta$, there exists $M$ a line bundle on $Z$ whose restriction to $Y_1$ and $Y_2$ are $M_1$ and $M_2$ respectively. 
\end{lem}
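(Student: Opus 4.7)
The plan is to exploit that $Z = Y_1 \cup_E Y_2$ is realized as a pushout of $Y_1$ and $Y_2$ along the common curve $E$ (either via Koll\'ar's gluing as in Lemma \ref{glue} or via the explicit affine-local construction in Remark \ref{gluten}). By the standard descent of line bundles along such a nodal gluing (equivalently, by the fact that transition functions on the affine cover described in Remark \ref{gluten} need only match along $E$), to produce a line bundle $M$ on $Z$ restricting to $M_1$ on $Y_1$ and $M_2$ on $Y_2$ it suffices to exhibit an isomorphism $\varphi\colon M_1|_E \xrightarrow{\sim} M_2|_E$ compatible with the gluing identification $\tau\colon E_1 \xrightarrow{\sim} E_2$.

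To check this, I would first compute $M_i|_E$ explicitly. Since $\pi\colon Y \to X$ contracts $E$ to the point $P$, one has $\pi^* F_i|_E \cong \mathcal{O}_E$ for $i = 1,2$. Combined with $E|_E = \mathcal{O}_E(E\cdot E) = \mathcal{O}_{\mathbb{P}^1}(-1)$, this gives
\[
M_1|_E \cong \mathcal{O}_{\mathbb{P}^1}(\epsilon), \qquad M_2|_E \cong \mathcal{O}_{\mathbb{P}^1}(\delta).
\]
Under the hypothesis $\epsilon = \delta$, these are abstractly isomorphic line bundles on $\mathbb{P}^1$.

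It remains to see that an isomorphism $\varphi$ compatible with the gluing $\tau$ exists. This is automatic because $\tau$ is an isomorphism of $\mathbb{P}^1$'s, and every automorphism of $\mathbb{P}^1$ acts trivially on $\mathrm{Pic}(\mathbb{P}^1) = \mathbb{Z}$; hence $\tau^* M_2|_E \cong \mathcal{O}_{\mathbb{P}^1}(\delta) \cong M_1|_E$. Choosing any such $\varphi$, the triple $(M_1, M_2, \varphi)$ descends to a line bundle $M$ on $Z$ with the desired restriction property. No part of this argument is a serious obstacle; the only care needed is in invoking the descent of line bundles for the pushout, for which one may refer to the concrete transition-function description of Remark \ref{gluten}.
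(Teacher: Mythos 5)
Your proof is correct, but it takes a genuinely different route from the paper's. The paper builds $M$ explicitly in two steps: for the case $\epsilon=\delta=0$ it reduces to $F_i$ ample, chooses sections $s_i$ of $F_i$ whose zero loci miss $P$, pulls back the resulting divisors to $Y_i$ (now disjoint from $E_i$), and glues these effective Cartier divisors on $Z$ directly; for the summand $-\epsilon E$ it exhibits a line bundle on $Z$ restricting to $\mathcal{O}(-E_i)$ on each $Y_i$ by extending $\mathcal{O}_{\mathrm{Proj}}(1)$ from the explicit local model of Remark~\ref{gluten}. Your argument instead invokes the Milnor--Ferrand descent principle for line bundles along the conductor square $(Z, E) \leftarrow (Y_1 \sqcup Y_2, E_1 \sqcup E_2)$: a pair $(M_1,M_2)$ descends to $Z$ precisely when $M_1|_{E_1}$ and $\tau^*(M_2|_{E_2})$ are abstractly isomorphic on $E\cong\mathbb{P}^1$, and you verify this via the degree computation $M_i|_{E}\cong\mathcal{O}_{\mathbb{P}^1}(\epsilon)$, using that $\pi^*F_i|_E$ is trivial, $E|_E\cong\mathcal{O}_{\mathbb{P}^1}(-1)$, and any automorphism of $\mathbb{P}^1$ acts trivially on $\mathrm{Pic}(\mathbb{P}^1)$. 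Your approach is cleaner and makes visible exactly where $\epsilon=\delta$ is used; the paper's is more elementary and self-contained in that it does not appeal to the abstract descent theorem. The one point worth spelling out if you use descent is that $Z=Y_1\cup_E Y_2$ is indeed the scheme-theoretic pushout along $E$ (equivalently, that the conductor square is co-Cartesian and the finite map $E_1\sqcup E_2\to E$ is the one induced by normalization), which is precisely what Remark~\ref{gluten} and Lemma~\ref{glue} establish; granting that, the descent of the pair $(M_1,M_2)$ together with any isomorphism $\varphi$ yields $M$, and since $\mathcal{O}^*(Y_i)=k^*$ surjects onto $\mathcal{O}^*(E)=k^*$, the resulting $M$ is even independent of the choice of $\varphi$ up to isomorphism.
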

\begin{proof}
First, we prove the lemma in the case where $\epsilon =\delta =0$. We may assume that $F_1$ and $F_2$ are ample and there are general global sections $s_1$ and $s_2$ whose zero loci do not pass through $P$. If $D_i=(s_i)_0$, $\pi^*D_i$ is disjoint from $E_i$ and we can get $D$ by gluing $\pi^*D_1$ and $\pi^*D_2$. Hence, $D$ is a Cartier divisor on $Z$ and we obtain $F=\mathcal{O}_Z(D)$ such that $F_{|Z_i}=F_i $ for $i=1,2$. Next, we consider an affine neighborfood of $P$ of $X$, $\mathrm{Spec}(A)$. Let $x,y\in A$ form a system of parameter around $P$. As we saw in Remark \ref{gluten}, $Z$ is $$\mathrm{Proj}\left((A\otimes A/(x_1x_2,y_1y_2,x_1y_2,y_1x_2))[u,v]/(x_1v-y_1u,x_2v-y_2u)\right)$$ locally around $E$ where $x_1=x\otimes 1,x_2=1\otimes x$. On $(\mathrm{Spec}A\cup_P\mathrm{Spec}A)\setminus P$, $Z$ is isomorphic to this. $\mathcal{O}_\mathrm{Proj}(1)$ can be extended globally such that it is trivial on $Z\setminus E$ and the restriction to $Y_i$ is $\mathcal{O}(-E_i)$.
\end{proof}
  Then, Hodge index theorem does not hold on $Z$, which we have constructed. Take an ample line bundle $L$ on $X$, let two line bundles $L_1,L_2$ are two copies of $\pi^*L$ on $Y_1,Y_2$. We construct line bundles $M_1$ and $M_2$ on $Z$ by gluing $L_1$ and $\mathcal{O}_{Y_2}$ together, and $\mathcal{O}_{Y_1}$ and $L_2$ together respectively. Then $M_1^2,M_2^2>0$ but $M_1\cdot M_2=0$.
  
  Here by using $Z$ above, we construct a counter-example for Theorem \ref{d2} when $X$ is reducible and all irreducible components of $X$ do not have the same average scalar curvature. Due to Lemma \ref{grugru}, we can construct two line bundles $\tilde{L}$ and $\tilde{H}$ by gluing
  \[
  \delta L_1-\epsilon E_1, L_2-\epsilon E_2
  \]
  and 
  \[
  \delta L_1-\frac{1}{3}\epsilon E_1, \left(\frac{1}{2}+\eta\right)L_2-\frac{1}{3}\epsilon E_2
  \]
  respectively for $0<\epsilon\ll\delta\ll\eta\ll 1$. Then
  \[
  \frac{\tilde{L}\cdot \tilde{H}}{\tilde{L}^2}=\frac{1}{2}+\eta+O(\epsilon,\delta).
  \]
  Therefore,
  \begin{align*}
  \nu^*\left(2\frac{\tilde{L}\cdot \tilde{H}}{\tilde{L}^2}\tilde{L}-\tilde{H}\right)&=(1+2\eta+O(\epsilon,\delta))( \delta L_1-\epsilon E_1\times L_2-\epsilon E_2)\\
  &-\left(\delta L_1-\frac{1}{3}\epsilon E_1\times \left(\frac{1}{2}+\eta\right)L_2-\frac{1}{3}\epsilon E_2\right)\\
  &=\Biggl(\delta(2\eta+O(\epsilon,\delta)) L_1-\epsilon\left(\frac{2}{3}+ O(\epsilon,\delta,\eta)\right)E_1\\
  &\times \left(\frac{1}{2}+\eta+O(\epsilon,\delta)\right)L_2-\epsilon\left(\frac{2}{3}+ O(\epsilon,\delta,\eta)\right) E_2\Biggr)
  \end{align*}
  is ample for $0<\epsilon\ll\eta\delta\ll\delta\ll\eta\ll 1$, where $\nu$ is the normalization. Hence, $2\frac{\tilde{L}\cdot \tilde{H}}{\tilde{L}^2}\tilde{L}-\tilde{H}$, $\tilde{L}$ and $\tilde{H}$ are also ample. We may assume that $\eta\delta L-\epsilon E$ is ample and $\tilde{L}-C$ is ample, where
  \[
  \nu^*C=(\delta(1-\eta)L_1\times\mathcal{O}_{Y_2}).
  \]
  Then
  \[
  \left(2\frac{\tilde{L}\cdot \tilde{H}}{\tilde{L}^2}\tilde{L}-\tilde{H}\right)\cdot C-\frac{\tilde{L}\cdot \tilde{H}}{\tilde{L}^2}C^2=\frac{1}{2}\delta^2(-1+O(\epsilon,\delta,\eta))L^2<0.
  \]
 \end{ex}
   
\section{Stability versus Uniform Stability}\label{App2}

In this section, we construct polarized surfaces $(X,L)$ with ample divisors $H$ such that $(X,L)$ are $\mathrm{J}^H$-stable but not uniformly $\mathrm{J}^H$-stable. 

G. Chen proved the uniform version of Lejmi-Sz{\'e}kelyhidi conjecture in \cite{G}. The original conjecture (cf. \cite{LS}, \cite{CS}) was also solved recently as follows:
\begin{thm}[Theorem 1.2 of \cite{DP}, Corollary 1.2 of \cite{S}]
Notations as in Theorem \ref{modLSconj}. (1) is equivalent to the following:
\begin{itemize}
\item[(7)]
\[
\int_V c_0\omega^p_0-p\chi\wedge\omega_0^{p-1}> 0
\]
for all $p$-dimensional subvarieties $V$ with $p=1,2,\cdots,n-1$.
\end{itemize}
\end{thm}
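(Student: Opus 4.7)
The plan is to deduce this from Theorem \ref{modLSconj}, where (1) is already shown equivalent to the uniform J-positivity condition (6). Thus it suffices to prove (6) $\iff$ (7).

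The easy direction (6) $\Rightarrow$ (7) is immediate: given $\epsilon > 0$ as in (6), for any proper subvariety $V \subset M$ of dimension $p \in \{1, \ldots, n-1\}$, since $\omega_0$ is K\"ahler one has $\int_V \omega_0^p > 0$, whence
\[
\int_V c_0\, \omega_0^p - p\, \chi \wedge \omega_0^{p-1} \;\ge\; (n-p)\epsilon \int_V \omega_0^p \;>\; 0.
\]
The substantive content is the reverse implication (7) $\Rightarrow$ (6), which requires upgrading strict positivity on each individual subvariety to a uniform positive lower bound
\[
\delta_0 \coloneq \inf_{V,\, p} \frac{\int_V c_0\, \omega_0^p - p\, \chi \wedge \omega_0^{p-1}}{(n-p) \int_V \omega_0^p} \;>\; 0,
\]
where the infimum is over proper subvarieties $V$ of dimension $p < n$. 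My approach would be by contradiction: assume $\delta_0 = 0$, pass to a sequence $\{V_k\}$ (of a single fixed dimension $p$ after refining) along which the ratio tends to $0$, and consider the normalized positive currents of integration $T_k \coloneq [V_k]/\int_{V_k} \omega_0^p$, each of unit mass with respect to $\omega_0^p$. By Bishop-type compactness for positive closed currents of bounded mass, extract a weak limit $T$, a nontrivial closed positive current of bidimension $(p, p)$ satisfying
\[
\int_M T \wedge (c_0\, \omega_0^p - p\, \chi \wedge \omega_0^{p-1}) \;\le\; 0.
\]

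The main obstacle is then to derive a contradiction with (7) from the existence of such a limiting $T$, which in general need not itself be the current of integration along a subvariety. This is the heart of the arguments of \cite{DP} and \cite{S}: one invokes Siu's decomposition theorem to split $T$ into a countable sum of currents of integration along irreducible analytic subvarieties plus a diffuse residual part. Hypothesis (7) forces strict positivity on each algebraic summand, while the residual diffuse part is controlled by pluripotential-theoretic estimates on the pairing with $(c_0 \omega_0 - p\chi/(n-p)\cdot\omega_0^{p-1}/\omega_0^{p-1})$-type positive forms (or, in the approach of \cite{S}, by running the J-flow and using the variational method on the space of finite-energy K\"ahler metrics, where (7) is used to rule out destabilizing geodesic rays). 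Combining the strict positivity of algebraic pieces with the non-negativity of the diffuse piece forces
\[
\int_M T \wedge (c_0\, \omega_0^p - p\, \chi \wedge \omega_0^{p-1}) \;>\; 0,
\]
contradicting the limit inequality above and completing the proof.
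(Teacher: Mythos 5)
The paper does not prove this theorem. It is stated with the attribution ``[Theorem 1.2 of \cite{DP}, Corollary 1.2 of \cite{S}]'' and the immediately following sentence reads: ``It is proved in \cite[Corollary 1.3]{DP} when $M$ is projective and in \cite[Corollary 1.2]{S} for general K\"ahler manifolds.'' The only thing the paper actually proves along these lines is Proposition~\ref{mat}, which is the surface case $n=2$ and rests on Nakai--Moishezon applied to a $(1,1)$-class; that argument does not generalize. So there is no proof in the paper to compare against, and your attempt should be judged on its own.

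Your reduction to $(6)\Leftrightarrow(7)$ and the direction $(6)\Rightarrow(7)$ are fine. The reverse direction, however, has a genuine gap exactly where you flag ``the main obstacle.'' Two concrete problems: first, Siu's decomposition theorem is a statement about closed positive currents of bidegree $(1,1)$, not of bidimension $(p,p)$ for general $p<n$, so it cannot be invoked on your limit current $T$ as written; one needs a different structure theory (or must work with $(1,1)$-classes by induction on dimension, which is what \cite{DP} and \cite{S} actually do). Second, even granting a decomposition of $T$ into an algebraic part plus a diffuse remainder, there is no soft reason the diffuse part should pair nonnegatively with $c_0\omega_0^p - p\,\chi\wedge\omega_0^{p-1}$: the $(1,1)$-form $c_0\omega_0 - \tfrac{p}{n-p}\chi$ (or $c_0\omega_0 - p\chi$) is in general not nonnegative, so positivity of this $(p,p)$-form is a global numerical condition rather than a pointwise one, and controlling its pairing with a singular positive current is precisely the analytic content of \cite{DP} (an inductive cone argument on projective subvarieties) and \cite{S} (a mass-concentration argument in the style of the Demailly--P\u{a}un Nakai--Moishezon theorem). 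Your sketch correctly identifies where the difficulty lives but does not supply the idea that closes it.
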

It is proved in \cite[Corollary 1.3]{DP} when $M$ is projective and in \cite[Corollary 1.2]{S} for general K\"{a}hler manifolds. We can easily prove the original conjecture in projective smooth surfaces holds as follows. 
\begin{prop}\label{mat}
In Corollary \ref{jstable} (3), there exists $\epsilon>0$ such that for any integral curve $C$, 
\[
\left(2\frac{H\cdot L}{L^2}L-H\right)\cdot C\ge \epsilon L\cdot C
\]
iff
\[
\left(2\frac{H\cdot L}{L^2}L-H\right)\cdot C>0
\]
for any integral curve $C$.
\end{prop}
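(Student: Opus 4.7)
The forward implication (a) $\Rightarrow$ (b) is immediate: ampleness of $L$ gives $L\cdot C>0$ for every integral curve $C$, so $D\cdot C \geq \epsilon L\cdot C>0$, where I write $D := 2\tfrac{H\cdot L}{L^{2}}L - H$.

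For the reverse (b) $\Rightarrow$ (a), my plan is to apply the Nakai--Moishezon criterion on the surface $X$. The key preliminary observation is a direct bilinear expansion:
\[
D^{2} \;=\; 4\,\tfrac{(H\cdot L)^{2}}{(L^{2})^{2}}\,L^{2} \;-\; 4\,\tfrac{H\cdot L}{L^{2}}(H\cdot L) \;+\; H^{2} \;=\; H^{2}.
\]
In the setting of the Lejmi--Sz\'ekelyhidi conjecture being recovered here, $H$ is ample, so $D^{2} = H^{2} > 0$. Combined with the hypothesis that $D\cdot C>0$ for every integral curve $C$, Nakai--Moishezon on the surface then forces $D$ itself to be ample.

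Once $D$ is ample, the openness of the ample cone in $N^{1}(X)_{\mathbb{R}}$ together with ampleness of $L$ implies that $D - \epsilon L$ remains ample for all sufficiently small rational $\epsilon > 0$. In particular, $D-\epsilon L$ is nef, so intersecting with an arbitrary integral curve $C$ yields the uniform inequality $D\cdot C \geq \epsilon L\cdot C$, which is exactly (a).

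The only nontrivial step is the identity $D^{2} = H^{2}$ together with the positivity $H^{2}>0$; the former is the elementary calculation above, and the latter is immediate from ampleness of $H$. I anticipate no further obstacle in the ample setting. If one wished to extend the equivalence to the merely big case of Corollary \ref{jstable}, note that (b) already forces $D$ to be nef and hence $D^{2}\geq 0$; the remaining work would be to rule out the degenerate possibility $D^{2}=0$, which corresponds to a Mumford-type strictly nef divisor on $X$ and does not occur once $H$ is ample.
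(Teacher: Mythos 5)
Your proof is correct and matches the paper's argument exactly: both compute $\left(2\tfrac{H\cdot L}{L^{2}}L-H\right)^{2}=H^{2}>0$, invoke Nakai--Moishezon to conclude ampleness, and then use openness of the ample cone to produce the uniform $\epsilon$. Nothing to add.
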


In fact, $(2\frac{H\cdot L}{L^2}L-H)^2=H^2>0$ and if $C\cdot (2\frac{H\cdot L}{L^2}L-H)>0$ for any integral curve, $2\frac{H\cdot L}{L^2}L-H$ is ample by the Nakai-Moishezon criterion. Therefore, there exists $\epsilon>0$ such that $(2\frac{H\cdot L}{L^2}-\epsilon)L-H$ is ample. Hence we have Proposition \ref{mat} by Corollary \ref{jstable}. For K\"{a}hler manifolds, the solvability of J-equation is equivalent to the class $2\frac{H\cdot L}{L^2}\mathrm{c}_1(L)-\mathrm{c}_1(H)$ is K{\" a}hler by \cite{SW}.

 However, J-stability and uniform J-stability is not equivalent as following. 

\begin{thm}\label{JJJ}
There are smooth polarized surfaces $(X,L)$ with ample divisors $H$ such that $(X,L)$ are $\mathrm{J}^H$-stable but not uniformly stable. In particular, if K{\" a}hler metrics $\omega_0$ and $\chi$ satisfy that $\omega_0\in \mathrm{c}_1(L)$ and $\chi\in\mathrm{c_1}(H)$, there is no smooth function $\varphi$ that satisfies the J-equation:
\[
\mathrm{tr}_{\omega_{\varphi}}\chi=\frac{H\cdot L}{L^2},
\]
where
\[
\omega_{\varphi}=\omega_0+\sqrt{-1}\partial \bar{\partial}\varphi
\]
is a positive $(1,1)$-form.
\end{thm}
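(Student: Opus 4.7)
The plan is to produce a concrete example and then verify the two statements separately, using Corollary \ref{jstable} together with Theorem \ref{impl} as the main engines; the J-equation conclusion then falls out of Fact \ref{h}.

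For the example I would take $X = \mathrm{Bl}_p(\mathbb{P}^2)$ with $\mathrm{Pic}(X) = \mathbb{Z} H_1 \oplus \mathbb{Z} E$, and set $L = 2H_1 - E$, $H = 5H_1 - 4E$, both ample. A direct computation gives $L^{\cdot 2} = 3$, $L\cdot H = 6$, and
\[
M \;:=\; 2\tfrac{L\cdot H}{L^{\cdot 2}} L - H \;=\; 4(2H_1 - E) - (5H_1 - 4E) \;=\; 3H_1,
\]
which is nef (pullback of a very ample class) yet satisfies $M \cdot E = 0$, so $M$ is not ample. By Lemma \ref{unst} applied to the curve $C = E$, or equivalently by the implication $(1)\Rightarrow(3)$ of Corollary \ref{jstable}, this already rules out uniform $\mathrm{J}^H$-stability of $(X,L)$.

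The serious part is showing that $(X,L)$ is nonetheless $\mathrm{J}^H$-stable. For any nontrivial class $\phi \in \mathcal{H}^\mathrm{NA}(L)$, Theorem \ref{slope} supplies an alternation $\pi : X' \to X$ of degree $l$ and an integrally closed flag ideal $\overline{\pi^{-1}\mathfrak{a}} = \sum \mathscr{I}_{D_k}t^k$ satisfying (*), with effective snc divisors $D_0 \geq D_1 \geq \cdots \geq D_{r-1} \geq D_r = 0$ on $X'$ and $\pi^* L - D_k$ nef. Applying Theorem \ref{impl} and grouping terms as in the proof of Theorem \ref{d2} expresses $V(L)(\mathcal{J}^H)^\mathrm{NA}(\phi)$ as $\tfrac{1}{3l}\sum_{i=0}^{r-1} S_i$, where
\[
S_i \;=\; 4\,\mathrm{HT}\!\left(\tfrac{D_i+D_{i+1}}{2}\right) + \mathrm{HT}(D_i) + \mathrm{HT}(D_{i+1}),
\]
with $\mathrm{HT}(D) := 2(D\cdot \pi^*L)(\pi^*L\cdot \pi^*H) - (D\cdot \pi^*H)(\pi^*L)^{\cdot 2} - (\pi^*L\cdot \pi^*H) D^{\cdot 2}$. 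Since $\pi^* M$ is nef and $\pi^* H$ is big on $X'$, Proposition \ref{deft} yields $\mathrm{HT}(D) \geq 0$ for all the relevant $D$, so $(\mathcal{J}^H)^\mathrm{NA}(\phi) \geq 0$ summand by summand.

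To upgrade this to strict positivity I would use the rigidity observation recorded in Remark \ref{kh}: because $(\pi^* M)^{\cdot 2} = l \cdot H^{\cdot 2} > 0$, tracing through the Hodge decomposition $D = \tfrac{D\cdot \pi^*L}{(\pi^*L)^{\cdot 2}}\pi^*L + \tfrac{D\cdot \pi^*B}{(\pi^*B)^{\cdot 2}}\pi^*B + E$ shows that $\mathrm{HT}(D) = 0$ forces $E \equiv 0$ and $D\cdot \pi^*B = 0$, hence $D \equiv \lambda \pi^*L$ with $\lambda \in \{0,1\}$. Forcing every $S_i$ to vanish therefore constrains each $D_i, D_{i+1}$ and their midpoint to lie in $\{0,\pi^*L\}$ numerically, which propagates (by the midpoint clause) the numerical class of $D_{r} = 0$ backward through all indices; effectivity then gives $D_i = 0$, so $\phi$ is trivial. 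The principal obstacle I anticipate is precisely this rigidity step — every non-strict inequality in Proposition \ref{deft} must be revisited to confirm that failure of $D \equiv \lambda\pi^*L$ yields a strictly positive deficit, as opposed to merely non-negative. Granting this, the final sentence of Theorem \ref{JJJ} follows from Remark \ref{useful} and Fact \ref{h}: failure of uniform $\mathrm{J}^H$-stability violates condition $(6)'$, hence equivalently condition $(1)$ of Fact \ref{h}, so no smooth $\varphi$ solving $\operatorname{tr}_{\omega_\varphi}\chi = H\cdot L/L^{\cdot 2}$ can exist.
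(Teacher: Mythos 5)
Your proposal is correct and uses the same machinery as the paper's Proposition~\ref{p}: the decomposition formula from Theorems~\ref{slope} and \ref{impl}, non-negativity of each summand from Proposition~\ref{deft}, and Hodge-index rigidity from Remark~\ref{kh} for strictness. There are two cosmetic differences. You take $\mathrm{Bl}_p(\mathbb{P}^2)$ where the paper uses a Hirzebruch surface $\mathbb{P}_{\mathbb{P}^1}(\mathcal{O}\oplus\mathcal{O}(e))$; both examples satisfy the needed constraints (ample $L,H$, nef but non-ample $M := 2\frac{L\cdot H}{L^{\cdot 2}}L-H$, and $M^{\cdot 2}=H^{\cdot 2}>0$). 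For strict positivity, you assume every summand vanishes and propagate $D_i\equiv\lambda_i\pi^*L$ with $\lambda_i\in\{0,1\}$ backward from $D_r=0$ through the midpoint terms, whereas the paper normalizes $r$ so that $D_{r-1}\neq 0$ and shows directly that the single terminal summand $6(D_{r-1}\cdot L')(L'\cdot H')-3(D_{r-1}\cdot H')(L')^{\cdot 2}-2(L'\cdot H')D_{r-1}^{\cdot 2}$ is strictly positive, which is a little shorter since it dispenses with the backward propagation. The ``obstacle'' you flag is in fact already closed: $M^{\cdot 2}=H^{\cdot 2}>0$ is exactly $B^{\cdot 2}>-(H'\cdot L')^2/(L')^{\cdot 2}$, so by Remark~\ref{kh} vanishing of the quadratic form in Proposition~\ref{deft} forces $E\equiv 0$ and $D\cdot B=0$, hence $D\equiv tL'$, and a direct substitution gives the form the value $t(1-t)(L')^{\cdot 2}(L'\cdot H')$, which vanishes only for $t\in\{0,1\}$, with $0\le t\le 1$ coming from effectivity of $D$ and nefness of $\pi^*L-D$. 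So your argument closes without any gap.
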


\begin{proof}
The second assertion follows from the first assertion and Theorem 1.1 of \cite{G}. For the first assertion, we can construct an example such that $L,H$ is ample but there is an integral curve $C_0$ on $X=\mathbb{P}_{\mathbb{P}^1}(\mathcal{O}\oplus \mathcal{O}(e))$ for $e>0$, such that $(2\frac{L\cdot H}{L^2}L-H)\cdot C_0=0$ as follows:

  $C_0$ be the section of $\mathbb{P}^1$ satisfies that $C_0^2=-e$ and $f$ denote the numerical class of fibre. By Corollary 2.18 in \cite{Ha}, $aC_0+bf$ is ample $\Leftrightarrow b>ae>0$. Therefore, $aC_0+aef$ is nef for $ a>0$. Let $L=mC_0+nf$ and \begin{align*}
H&=\frac{2na}{2nm-m^2e}(mC_0+nf)-(aC_0+aef)\\
&=\frac{m^2ea}{2nm-m^2e}C_0+\frac{(2n(n-me)+m^2e^2)a}{2nm-m^2e}f 
\end{align*}
for $0<me<n$ and $a>0$. Here, we see that $H$ is ample. Then we can show that $2\frac{L\cdot H}{L^2}L-H=aC_0+aef$ by an easy computation. Note that $(aC_0+aef)\cdot C_0=0$ and $(X,L)$ is not uniformly $\mathrm{J}^H$-stable by Lemma \ref{unst}. Next, we prove the following:
\begin{prop}\label{p}
For any polarized deminormal surface $(X,L)$ with an ample $\mathbb{Q}$-line bundle $H$ such that all irreducible components of $(X,L)$ have the same average scalar curvature with respect to $H$ and
\[
2\frac{L\cdot H}{L^2}L-H
\]
is nef, $(X,L)$ is $\mathrm{J}^H$-stable.
\end{prop}

\begin{proof}[Proof of Proposition \ref{p}]
We can easily reduce to the case when $X$ is normal and irreducible by taking the normalization. Thus we may assume that $X$ is normal and irreducible. Suppose that $\mathfrak{a}$ is a flag ideal induces a nontrivial semiample test configuration. As in the proof of Theorem \ref{d2}, there exists an alternation $\pi:X'\to X$ due to Theorem \ref{slope} and
\[
\overline{\pi^{-1}\mathfrak{a}}=\mathscr{I}_{D_0}+\mathscr{I}_{D_1}t+\cdots +\mathscr{I}_{D_{r-1}}t^{r-1}+t^r,
\]
where each $D_i$ is a Cariter divisor, satisfies that the condition (*) and $(\pi^{-1}\mathfrak{a})\pi^*L_1$ is semiample and $\pi^*L-D_0$ is nef. Let $L'=\pi^*L$ and $H'=\pi^*H$. Now, we can see the following by Theorem \ref{impl},
\begin{align*}
p^*H'\cdot \mathcal{L}'^2-\frac{2H'\cdot L'}{3L'^2}\mathcal{L}'^3&=(3L'^2)^{-1}\Biggl(\sum_{i=0}^{r-2}\biggl(  6((D_i+D_{i+1})\cdot L')(L'\cdot H')\\
&-3((D_i+D_{i+1})\cdot H')(L'^2)-2(L'\cdot H')(D_i^2+D_i\cdot D_{i+1}+D_{i+1}^2)\biggr) \\
&+6(D_{r-1}\cdot L')(L'\cdot H')-3(D_{r-1}\cdot H')(L'^2)-2(L'\cdot H')(D_{r-1}^2))\Biggr).
\end{align*}
Note that $D_0\ge D_1\ge\cdots \ge D_{r-1}\ge 0$, where $D\ge D'$ if $D-D'$ is effective, and we may assume that $D_{r-1}\ne 0$. Therefore, we can prove that $$
6((D_i+D_{i+1})\cdot L')(L'\cdot H')-3((D_i+D_{i+1})\cdot H')(L'^2)-2(L'\cdot H')(D_i^2+D_i\cdot D_{i+1}+D_{i+1}^2)\ge 0 $$
by applying Proposition \ref{deft} as in the proof of Theorem \ref{d2} since $L'$, $H'$ and $2\frac{L'\cdot H'}{L'^2}L'-H'$ are nef and big and all the $L'-D_i$ and $L'-\frac{D_i+D_{i+1}}{2}$ are pseudoeffective (cf. Remark \ref{kh}). Hence, we have only to prove 
\begin{eqnarray}\label{value}
6(D_{r-1}\cdot L')(L'\cdot H')-3(D_{r-1}\cdot H')(L'^2)-2(L'\cdot H')(D_{r-1}^2)>0.
\end{eqnarray}
 As in the proof of Proposition \ref{deft}, let $B=\frac{L'\cdot H'}{L'^2}L'-H'\not \equiv 0$ and $E=D_{r-1}- \frac{D_{r-1}\cdot L'}{L'^2}L'+\frac{D_{r-1}\cdot B}{B^2}B$. Since $H'$ is nef and big, we have $H'^2>0$ and $B^2> -\frac{(H'\cdot L')^2}{L'^2}$. Hence, it is necessary for the value (\ref{value}) to be zero that $E^2=0$ and $D_{r-1}\cdot B=0$. Equivalently, there exists $t\in\mathbb{Q}$ such that
\[
D_{r-1}\equiv tL'. 
\]
However, if $D_{r-1}\equiv tL'$, then $0< t\le 1$ since $L'-D_{r-1}$ is pseudoeffective (if $t=0$, $\mathfrak{a}$ induces an almost trivial test configuration) and we see 
\[
6(D_{r-1}\cdot L')(L'\cdot H')-3(D_{r-1}\cdot H')(L'^2)-2(L'\cdot H')(D_{r-1}^2)\ge t(L'^2)(H'\cdot L')>0.
\]
We complete the proof of the proposition.
\end{proof}
 Therefore, we have constructed $\mathrm{J}^H$-stable smooth surfaces but they are not uniformly $\mathrm{J}^H$-stable. We finish the proof of Theorem \ref{JJJ}.
\end{proof}

We apply Theorem \ref{JJJ} to construct the following log pairs. We remark that they are not log Fano (cf. \cite[Theorem 1.5]{LXZ}).

\begin{cor}\label{K}
There exist polarized normal pairs $(X,\Delta;L)$ such that they are K-stable but not uniformly K-stable.
\end{cor}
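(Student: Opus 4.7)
The plan is to promote the example of Theorem \ref{JJJ} to a K-stability counter-example by choosing a log boundary $\Delta$ with $K_{(X,\Delta)} \equiv H$ numerically and $\Delta$ log canonical with $C_0$ as an lc place; the Mabuchi decomposition $M_\Delta^{\mathrm{NA}} = H_\Delta^{\mathrm{NA}} + (\mathcal{J}^{K_{(X,\Delta)}})^{\mathrm{NA}}$ then turns strict $\mathrm{J}^H$-stability into strict K-stability while letting the failure of uniform $\mathrm{J}^H$-stability propagate to the failure of uniform K-stability. Concretely, I will take $X$ to be a generic elliptic K3 surface with a section, so that its Picard group contains classes $C_0$ (section) and $F$ (fiber) with $C_0^2 = -2$, $F^2 = 0$, $C_0 \cdot F = 1$, and the Mori cone of curves is spanned by $C_0$ and $F$. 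Setting $L = C_0 + 3F$ and $H = C_0 + 5F$, both of which are ample by Nakai-Moishezon, a direct computation yields $L^2 = 4$, $L \cdot H = 6$ and $2\tfrac{L\cdot H}{L^2} L - H = 2C_0 + 4F$; this is nef with $(2C_0 + 4F) \cdot C_0 = 0$, so by Proposition \ref{p}, $(X,L)$ is $\mathrm{J}^H$-stable, while by Lemma \ref{unst} it is not uniformly so. Next I take $\Delta = C_0 + F_1 + \cdots + F_5$ where $F_1, \ldots, F_5$ are five distinct smooth fibers meeting $C_0$ transversally in distinct points; then $(X, \Delta)$ is snc, hence log canonical, with $v_{C_0}$ and the $v_{F_i}$ as lc places. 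Since $K_X = 0$ on a K3, $K_{(X,\Delta)} = \Delta$ as $\mathbb{Q}$-Cartier divisors and $\Delta \equiv C_0 + 5F = H$ numerically.

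With this set-up in hand, the Mabuchi decomposition reads $M_\Delta^{\mathrm{NA}} = H_\Delta^{\mathrm{NA}} + (\mathcal{J}^{K_{(X,\Delta)}})^{\mathrm{NA}} = H_\Delta^{\mathrm{NA}} + (\mathcal{J}^H)^{\mathrm{NA}}$, where the second equality uses that $(\mathcal{J}^\bullet)^{\mathrm{NA}}$ depends only on the numerical class of its argument. For any nontrivial semiample test configuration, $H_\Delta^{\mathrm{NA}} \ge 0$ by log canonicity and $(\mathcal{J}^H)^{\mathrm{NA}} > 0$ by Proposition \ref{p}, so $M_\Delta^{\mathrm{NA}} > 0$, giving K-stability. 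For the failure of uniform K-stability I test against the semiample deformation to the normal cone of $C_0$ with parameter $s \to 0^+$: by the formula of Example \ref{33}, the coefficient of $s^2$ in $(\mathcal{J}^H)^{\mathrm{NA}}$ equals $(2\tfrac{L\cdot H}{L^2} L - H) \cdot C_0 = 0$, so $(\mathcal{J}^H)^{\mathrm{NA}} = O(s^3)$ while $J^{\mathrm{NA}} = O(s^2)$; meanwhile $\Delta$ has coefficient $1$ along $C_0$, so the Rees valuation $v_E$ of the exceptional divisor satisfies $A_{(X,\Delta)}(v_E) = 0$ and the trivial component of the central fibre also has zero log discrepancy, forcing $H_\Delta^{\mathrm{NA}} = 0$ on this test configuration. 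Consequently $M_\Delta^{\mathrm{NA}}/J^{\mathrm{NA}} = O(s) \to 0$, which rules out uniform K-stability.

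The main technical point is the choice of $\Delta$ under two conflicting constraints: it must carry coefficient $1$ along the degenerating curve $C_0$ (so the entropy term vanishes on the obstructing test configuration), yet simultaneously represent the ample class $H$ numerically as $K_{(X,\Delta)}$ (so that strict $\mathrm{J}^H$-stability from Proposition \ref{p} transfers to strict positivity of the Mabuchi functional). On the Hirzebruch surfaces used in Theorem \ref{JJJ} this is impossible because $-K_X$ already carries coefficient $2$ along $C_0$ there; switching to a K3 surface is exactly what is needed to exploit $K_X = 0$ and let $\Delta$ alone realize $H$ using only coefficients in $\{0, 1\}$.
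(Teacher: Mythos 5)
Your proposal is correct, and the underlying strategy is exactly the paper's: use the J-stable-but-not-uniformly-J-stable phenomenon of Theorem \ref{JJJ}, choose a boundary $\Delta$ so that $(X,\Delta)$ is log canonical with the obstructing curve $C_0$ as an lc center and $K_{(X,\Delta)} \equiv H$, use $M^\mathrm{NA}_\Delta = H^\mathrm{NA}_\Delta + (\mathcal{J}^{K_{(X,\Delta)}})^\mathrm{NA}$ together with $H^\mathrm{NA}_\Delta \ge 0$ and Proposition \ref{p} to get K-stability, and then show $M^\mathrm{NA}_\Delta/J^\mathrm{NA} \to 0$ along the deformation to the normal cone of $C_0$. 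What you change is the concrete surface: an elliptic K3 with a section and $\Delta = C_0 + F_1 + \cdots + F_5$, rather than the paper's Hirzebruch surface $\mathbb{P}_{\mathbb{P}^1}(\mathcal{O}\oplus\mathcal{O}(e))$ with boundary $\Delta'+C_0$ where $\Delta'$ is a general ample $\mathbb{Q}$-divisor. Your version has the appeal that $\Delta$ is an honest reduced snc divisor with integer coefficients, which makes the log-canonicity and the vanishing of the log discrepancy of $\mathrm{ord}_{C_0}$ transparent without invoking Bertini.

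However, your closing remark --- that the promotion is \emph{impossible} on the Hirzebruch surfaces of Theorem \ref{JJJ} because $-K_X$ carries coefficient $2$ along $C_0$ --- is not correct, and in fact contradicts what the paper does. The paper does not try to realize $H - K_X$ as a boundary with coefficients in $\{0,1\}$ along $C_0$ and $f$; it takes the boundary to be $\Delta' + C_0$ with $C_0$ contributing coefficient $1$ and $\Delta'$ a \emph{general} member of (a small multiple of) the ample $\mathbb{Q}$-class $H - K_X - C_0 = H + C_0 + (e+2)f$, which is ample on $\mathbb{P}_{\mathbb{P}^1}(\mathcal{O}\oplus\mathcal{O}(e))$. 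Taking $\Delta' = \tfrac{1}{N}\Delta''$ for a general $\Delta'' \in |N(H + C_0 + (e+2)f)|$ and $N$ large makes all boundary coefficients along components of $\Delta'$ arbitrarily small, and by Bertini $\Delta''$ meets $C_0$ transversally and avoids any prescribed finite set, so $(X,\Delta'+C_0)$ is log smooth hence lc with lc center $C_0$. Thus there is no obstruction on the Hirzebruch side; both constructions work. This misconception does not affect the validity of your own argument --- it only mislabels the K3 detour as necessary when it is merely a clean alternative.
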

\begin{proof}
In the proof of Theorem \ref{JJJ}, fix a polarization $L=mC_0+nf$. By Bertini's theorem, we can take a general ample $\mathbb{Q}$-divisor $\Delta$ and $K_{(X,\Delta+C_0)}\equiv H$ for sufficiently large $a$ so that $(X,\Delta+C_0)$ is lc and its lc center contains $C_0$. Since $(X,\Delta+C_0;L)$ is $(\mathcal{J}^{K_{(X,\Delta+C_0)}})^\mathrm{NA}$-stable as we saw and has only lc singularities, we have $H^\mathrm{NA}_{\Delta+C_0}\ge 0$. Therefore, $(X,\Delta+C_0;L)$ is K-stable. However, if $\mathcal{X}$ is the deformation to the normal cone along $C_0$, then $H^\mathrm{NA}_{\Delta+C_0}(\mathcal{X},L_{\mathbb{A}^1}-\delta E)=0$ for $\delta>0$, where $E$ is the exceptional divisor (cf. \cite[Theorem 4.8, Corollary 7.18]{BHJ}). For any $\epsilon>0$, there exists $\delta>0$ such that 
\[
(\mathcal{J}^{K_{(X,\Delta+C_0)}})^\mathrm{NA}(\mathcal{X},L_{\mathbb{A}^1}-\delta E)< \epsilon J^\mathrm{NA}(\mathcal{X},L_{\mathbb{A}^1}-\delta E)
\]
 due to Lemma \ref{unst} and hence $(X,\Delta+C_0;L)$ is not uniformly K-stable. Therefore we have the corollary.
\end{proof}

Hence, we propose the following conjecture:

\begin{conj}
There exists a polarized normal variety $(X,L)$ such that it is K-stable but not uniformly K-stable.
\end{conj}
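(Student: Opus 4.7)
The plan is to adapt the construction of Corollary \ref{K} so that the boundary divisor $\Delta + C_0$ is ``absorbed'' into the singularities of a normal variety $X$. The key features to preserve are: (i) ampleness of $L$ and of $K_X$; (ii) the numerical identity $\bigl(2\tfrac{K_X\cdot L}{L^{\cdot 2}}L - K_X\bigr)\cdot C_0' = 0$ for some integral curve $C_0'$; (iii) the fact that $C_0'$ lies in the locus where the log discrepancy vanishes, so the entropy term vanishes on the deformation to the normal cone along $C_0'$. The candidate construction is a cyclic (or iterated cyclic) cover $\pi: X \to Y$ of the Hirzebruch surface $Y = \mathbb{P}_{\mathbb{P}^1}(\mathcal{O}\oplus\mathcal{O}(e))$ from the proof of Theorem \ref{JJJ}, branched along a sufficiently divisible auxiliary divisor chosen so that $\pi^*(K_Y + \Delta + C_0) \sim_{\mathbb{Q}} K_X$ and so that the preimage of the lc center $C_0$ remains (strictly) log canonical on $X$. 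Alternatively, one may look directly among normal surfaces with simple elliptic or cusp singularities (which are strictly lc but not klt), engineering the analogue of the Hirzebruch configuration.

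Assuming such an $X$ is in hand, the verification would proceed in two steps. First, to establish K-stability: since $X$ is lc one has $H^\mathrm{NA}_X \ge 0$ on $\mathcal{H}^\mathrm{NA}(L)$ (cf.\ the discussion after Definition \ref{genki}), so $M^\mathrm{NA}_X \ge (\mathcal{J}^{K_X})^\mathrm{NA}$; then, pulling back a given test configuration through the alternation produced by Theorem \ref{slope} and arguing as in Proposition \ref{p} (using that $K_X$ plays the role of the ample divisor $H$ and that $2\tfrac{K_X\cdot L}{L^{\cdot 2}}L - K_X$ is nef), one shows $(\mathcal{J}^{K_X})^\mathrm{NA} > 0$ on every non-almost-trivial metric. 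The strict positivity uses the $D_{r-1}$-term analysis in the proof of Proposition \ref{p}, which relies only on nefness of $K_X$ and bigness of $L$ together with the Hodge index calculation of Proposition \ref{deft}. Hence $M^\mathrm{NA}_X > 0$, i.e.\ $X$ is K-stable.

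Second, to rule out uniform K-stability: take the deformation $(\mathcal{X},L_{\mathbb{A}^1} - \delta E)$ to the normal cone along $C_0'$. Because $C_0'$ is an lc center of $X$, the divisorial valuations $v_E$ associated to the components of $\mathcal{X}_0$ contracted to $C_0'$ satisfy $A_X(v_E)=0$, so the entropy formula in Definition \ref{dfn} gives $H^\mathrm{NA}_X(\mathcal{X}, L_{\mathbb{A}^1}-\delta E) = 0$ (cf.\ the analogous computation at the end of the proof of Corollary \ref{K}). On the other hand, $\bigl(2\tfrac{K_X\cdot L}{L^{\cdot 2}}L - K_X\bigr)\cdot C_0' = 0$ by construction, so by Lemma \ref{unst} applied with $H = K_X$, for every $\epsilon>0$ one can choose $\delta>0$ such that $(\mathcal{J}^{K_X})^\mathrm{NA}(\mathcal{X}, L_{\mathbb{A}^1}-\delta E) < \epsilon\, J^\mathrm{NA}(\mathcal{X}, L_{\mathbb{A}^1}-\delta E)$. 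Combining, $M^\mathrm{NA}_X < \epsilon J^\mathrm{NA}$ for any $\epsilon>0$, contradicting uniform K-stability.

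The main obstacle is the construction step itself. By Theorem \ref{g}, no klt minimal-model polarization can give a counterexample, and the Fano case is covered by \cite{LXZ}, so the example must be strictly lc with $K_X$ ample (or mixed). Most naive cyclic covers of $(Y,\Delta+C_0)$ either smooth out $C_0$ into a klt singularity (killing the obstruction by Theorem \ref{g}) or fail to preserve the numerical identity $\bigl(2\tfrac{K_X\cdot L}{L^{\cdot 2}}L - K_X\bigr)\cdot C_0' = 0$ because the pullback ratio $\tfrac{K_X\cdot L}{L^{\cdot 2}}$ is distorted by the ramification formula. The delicate point is to arrange the ramification so that both ampleness and strict lc-ness along the center are maintained simultaneously; this may well require moving to higher dimension or using a cone construction over $(Y,\Delta+C_0)$ in the sense of \cite{Ko}, where the apex automatically provides a strictly lc center with controllable numerical invariants.
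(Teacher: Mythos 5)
The statement you are attempting to prove is in fact presented in the paper as an open conjecture, not as a theorem: after proving Corollary \ref{K} (the log-pair case $(X,\Delta;L)$) and Corollary \ref{DJ} (the deminormal surface case), the author poses the normal-variety version as a conjecture. There is therefore no paper proof to compare against, and no amount of comparison will change the fact that a proposed proof of this statement would be new mathematics.

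Your outline is the natural adaptation of the paper's own log-pair argument: replace the lc center created by the boundary $\Delta+C_0$ with an lc center arising from a strictly lc singularity of a genuinely normal $X$ with $K_X$ ample; use lc-ness and the strict J-positivity of Proposition \ref{p} to force $M^\mathrm{NA}>0$; then use vanishing of the entropy on the deformation to the normal cone along that lc center, combined with Lemma \ref{unst}, to defeat uniform K-stability. This strategy is internally consistent with the paper's machinery. But the proposal never produces the promised $X$. All the substantive verification is conditional (``Assuming such an $X$ is in hand\dots''), and the discussion of the construction consists only of negative constraints and a frank admission that the candidate constructions you name (cyclic covers of the Hirzebruch configuration, cone constructions) are liable to either smooth the lc center into a klt one or distort the numerical identity $\bigl(2\tfrac{K_X\cdot L}{L^{\cdot 2}}L - K_X\bigr)\cdot C_0'=0$. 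So what you have is a correct reduction of the conjecture to an (unresolved) construction problem, not a proof.

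Two technical points deserve mention if you pursue the construction. First, your observation that $X$ cannot be klt is correct, but the clean reason is the alpha-invariant bound $H^\mathrm{NA}_X \ge \alpha(X;L)\,I^\mathrm{NA}$ from \cite[Theorem~9.14]{BHJ}: if $X$ were klt then $\alpha(X;L)>0$, and combined with $(\mathcal{J}^{K_X})^\mathrm{NA}\ge 0$ this would already give uniform K-stability, independently of Theorem~\ref{g}. Second, the entropy-vanishing step needs more than ``$C_0'$ is an lc center'': the entropy of the deformation to the normal cone along $C_0'$ is a sum over all Rees valuations of $C_0'$, and on a singular normal surface there may be several, not all of which need have zero log discrepancy. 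In the smooth-pair model of Corollary~\ref{K} this is automatic because $C_0$ is a smooth Cartier divisor (a single Rees valuation, with $A_{(X,\Delta+C_0)}(\mathrm{ord}_{C_0})=0$); in your setting it is a nontrivial hypothesis on the singularities of $X$ along $C_0'$ that must be built into the construction.
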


On the other hand, if $X$ is deminormal, we have the following result:

\begin{cor}\label{DJ}
There exist polarized connected and deminormal surfaces $(Z,M)$ such that $K_Z$ are ample and $(Z,M)$ are $\mathrm{J}^{K_Z}$-stable and K-stable but not uniformly either.
\end{cor}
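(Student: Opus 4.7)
The plan is to glue two copies of the Hirzebruch-surface example of Theorem \ref{JJJ} along a reduced snc divisor containing the negative section, arranging that the canonical divisor of the resulting connected deminormal surface itself plays the role of the ample divisor $H$. Keep the notation of the proof of Theorem \ref{JJJ}: $X=\mathbb{P}_{\mathbb{P}^1}(\mathcal{O}\oplus\mathcal{O}(e))$ with negative section $C_0$, fibre class $f$, $L=mC_0+nf$, and $H=\alpha C_0+\beta f$ chosen so that $2\tfrac{L\cdot H}{L^2}L-H=a(C_0+ef)$, whence $H$ is ample and $(2\tfrac{L\cdot H}{L^2}L-H)\cdot C_0=0$. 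I would fix integer parameters (for instance $e=m=1,n=2,a=3$, giving $L=C_0+2f$ and $H=C_0+5f$) for which $H-K_X$ (here $3C_0+8f$) is an ample class representable by a reduced snc divisor $F=C_0+\sigma_1+\cdots+\sigma_r$ where $C_0$ appears as a component and $\sigma_1,\ldots,\sigma_r$ are general smooth sections/fibres meeting every earlier component transversally; such representatives exist by Bertini on the appropriate very ample subclasses.

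Apply Lemma \ref{glue} to the log smooth pair $(X,F)$ to glue two copies $Y_1,Y_2$ of $X$ along $F$. The lemma produces an slc pair $(Z,\overline{\Delta})$ with $\overline{\Delta}$ ample; since removing an effective divisor preserves the slc condition, $(Z,0)$ is itself slc. By the gluing adjunction $K_Z|_{Y_i}=K_{Y_i}+F_i=H$, so $K_Z$ is an ample $\mathbb{Q}$-Cartier divisor. Lemma \ref{grugru} (with $\epsilon=0$) lets one glue two copies of $L$ to an ample $\mathbb{Q}$-line bundle $M$ on $Z$ with $M|_{Y_i}=L$. Since $(H\cdot L)/L^2$ is identical on the two components, the hypothesis of Lemma \ref{demibunkai} is satisfied. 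For stability, Lemma \ref{demibunkai} writes $V(M)(\mathcal{J}^{K_Z})^\mathrm{NA}(\mathcal{Z},\mathcal{M})=\sum_iV(L)(\mathcal{J}^H)^\mathrm{NA}(\mathcal{Y}_i,\mathcal{L}_i)$; each summand is nonnegative by Proposition \ref{p}, and at least one is strictly positive whenever $(\mathcal{Z},\mathcal{M})$ is nontrivial, giving $\mathrm{J}^{K_Z}$-stability. Slc-ness of $(Z,0)$ yields $H^\mathrm{NA}_0\ge 0$, and hence $M^\mathrm{NA}=H^\mathrm{NA}_0+(\mathcal{J}^{K_Z})^\mathrm{NA}>0$ on nontrivial classes, giving K-stability.

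Failure of uniform stability I would test against the $\mathbb{G}_m$-equivariant modification $(\mathcal{Z},\mathcal{M}_\delta)$ obtained by deforming to the normal cone of $C_0$ on each component and regluing along the induced isomorphism of the strict transforms of $F_i\times\mathbb{A}^1$; by construction its restriction to each $\mathcal{Y}_i$ is the deformation to the normal cone of $C_0\subset Y_i$. The computation at the start of the proof of Lemma \ref{unst} gives $V(L)(\mathcal{J}^H)^\mathrm{NA}(\mathcal{Y}_i,\mathcal{L}_{i,\delta})=\delta^2(2\tfrac{L\cdot H}{L^2}L-H)\cdot C_0+O(\delta^3)=O(\delta^3)$ on each component, so $(\mathcal{J}^{K_Z})^\mathrm{NA}(\mathcal{Z},\mathcal{M}_\delta)=O(\delta^3)$. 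Because $C_0$ is a prime component of $F_i$, Definition \ref{dfn} forces $A_{(Y_i,F_i)}(\mathrm{ord}_{C_0})=1-1=0$, so the only nontrivial contribution to $H^\mathrm{NA}_{F_i}(\mathcal{Y}_i,\mathcal{L}_{i,\delta})$ vanishes and $H^\mathrm{NA}_0(\mathcal{Z},\mathcal{M}_\delta)=0$ via Lemma \ref{demibunkai}. With $J^\mathrm{NA}(\mathcal{Z},\mathcal{M}_\delta)=\Theta(\delta^2)$, both ratios $(\mathcal{J}^{K_Z})^\mathrm{NA}/J^\mathrm{NA}$ and $M^\mathrm{NA}/J^\mathrm{NA}$ tend to $0$ as $\delta\downarrow 0$, so $(Z,M)$ is neither uniformly $\mathrm{J}^{K_Z}$-stable nor uniformly K-stable.

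The technically subtle step, and the one driving the shape of $F$, is ensuring that $C_0$ appears as a prime component of $F$: this is what forces the conductor log discrepancy of $\mathrm{ord}_{C_0}$ to vanish and thereby eliminates the entropy contribution that would otherwise dominate $J^\mathrm{NA}$ on the test configuration and rescue uniform K-stability. The remaining bookkeeping—verifying that $H-K_X$ is ample enough to admit such a reduced snc representative, and that the gluing of the deformations on the two components along the common divisor $C_0$ indeed yields a genuine semiample test configuration of $(Z,M)$—is routine given the explicit Hirzebruch numerics and Lemma \ref{grugru}.
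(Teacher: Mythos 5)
Your construction is essentially the paper's, with the same Hirzebruch surface input from Theorem \ref{JJJ}, the same Koll\'{a}r gluing along a divisor containing $C_0$ chosen so that the log canonical of each piece becomes $K_Z$ and equals $H$, the same use of Lemma \ref{demibunkai} plus Proposition \ref{p} to get $\mathrm{J}^{K_Z}$-stability, the same observation $A_{(X,F)}(\mathrm{ord}_{C_0})=0$ to kill the entropy term, and the same deformation along $C_0$ to defeat uniformity. The explicit numerics you supply ($e=m=1$, $n=2$, $a=3$, $L=C_0+2f$, $H=C_0+5f$, $H-K_X=3C_0+8f$) check out, and breaking $H-K_X-C_0$ into two very ample summands $\sigma_1+\sigma_2$ is a harmless variant of the paper taking a single general integral ample $\Delta$.

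Where you are thin is precisely the step you dismiss as ``routine'': building the destabilizing test configuration of $(Z,M)$. You propose to blow up $Y_i\times\mathbb{A}^1$ along $(C_0)_i\times\{0\}$ separately and then ``reglue along the induced isomorphism of the strict transforms of $F_i\times\mathbb{A}^1$,'' but this does not obviously produce a flat, $\mathbb{G}_m$-equivariant family over $\mathbb{A}^1$ with $(Z,M)$ as generic fibre. The paper avoids this by defining the test configuration intrinsically: it takes $D_0\subset Z$ to be the irreducible component of the conductor that is the image of $(C_0)_1\sqcup(C_0)_2$, forms the deformation to the normal cone of $Z$ along $D_0\times\{0\}$, and then \emph{proves} the scheme-theoretic identity $\nu^{-1}D_0=(C_0)_1\sqcup(C_0)_2$ (the inclusion $\subset$ uses that $\mathscr{I}_{(C_0)_1\sqcup(C_0)_2}$ is $S_2$). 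That identity is exactly what guarantees that normalizing the deformation of $Z$ recovers the two blow-ups you want to glue. Your argument implicitly needs this but does not address it; it is the only genuinely delicate point in the proof, and you should carry it out rather than wave at it. Once it is in place, the rest of your writeup --- the $O(\delta^3)$ estimate for $(\mathcal{J}^{K_Z})^\mathrm{NA}$ from Lemma \ref{unst}, the vanishing of $H^\mathrm{NA}_0$ from $A_{(X,F)}(\mathrm{ord}_{C_0})=0$ together with Lemma \ref{demibunkai}, and $J^\mathrm{NA}=\Theta(\delta^2)$ --- is exactly the paper's computation.
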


\begin{proof}
Take $(X,\Delta+C_0,L)$ as in the proof of Corollary \ref{K} such that $\Delta$ is an integral and ample $\mathbb{Z}$-divisor. By Lemma \ref{glue}, we obtain an slc scheme $Z\cong X\cup_{\Delta+C_0}X$ and let $M$ be the line bundle on $Z$ obtained by gluing $L_1$ and $L_2$ together i.e. $\nu^*M=L_1\times L_2$, where $\nu:X_1\sqcup X_2\to Z$ is the normalization and $(X_i,L_i)$ is a copy of $(X,L)$ for $i=1,2$. We have $\nu^*K_Z=K_{(X_1,\Delta+C_0)}\times K_{(X_2,\Delta+C_0)}$ and hence $K_Z$ is ample. Note also that all irreducible components of $(Z,M)$ have the same average scalar curvature with respect to $K_Z$. Recall from \cite[Remark 3.19]{BHJ} that $(Z,M)$ is K-stable (resp. uniformly K-stable) iff for any semiample test configuration $(\mathcal{Z},\mathcal{M})$ over $(Z,M)$, $M^\mathrm{NA}_{(\Delta+C_0)_1+(\Delta+C_0)_2}(\overline{\mathcal{Z}},\overline{\mathcal{M}})>0$ (resp. $M^\mathrm{NA}_{(\Delta+C_0)_1+(\Delta+C_0)_2}(\overline{\mathcal{Z}},\overline{\mathcal{M}})\ge\epsilon J^\mathrm{NA}(\overline{\mathcal{Z}},\overline{\mathcal{M}})$ for some $\epsilon>0$), where $(\overline{\mathcal{Z}},\overline{\mathcal{M}})$ is the normalization of $(\mathcal{Z},\mathcal{M})$. Since $(X,\Delta+C_0,L)$ is $\mathrm{J}^{K_{(X,\Delta+C_0)}}$-stable and K-stable, so is $(Z,M)$. On the other hand, let $\mathcal{Z}$ be the deformation to the normal cone of $Z$ along a closed subscheme $D_0$ contained in the node and $E$ be the exceptional divisor. Here, $D_0$ is the irreducible component of the conductor subscheme in $Z$, which is the image of $(C_0)_1\sqcup (C_0)_2$. It is easy to see that the inverse image $\nu^{-1}D_0\supset (C_0)_1\sqcup (C_0)_2$. To prove the converse, let $\mathscr{J}$ be the coherent ideal of $\mathcal{O}_Z$ corresponding to $D_0$ and $\mathscr{I}$ be the one of $\mathcal{O}_{X_1\sqcup X_2}$ corresponding to $(C_0)_1\sqcup (C_0)_2$. Note that $(C_0)_1\sqcup (C_0)_2$ is a part of the conductor and it is easy to see that $\nu^{-1}\mathscr{J}= \mathscr{I}$ holds for $\coprod_{k=1}^2(X_k\setminus(\Delta\cap C_0)_k)$. Since $\mathscr{I}$ is $S_2$, we have
\[
\mathscr{I}\subset \nu^{-1}\mathscr{J}.
\]
Therefore, we have that $\nu^{-1}D_0=(C_0)_1\sqcup (C_0)_2$ scheme-theoretically. Then $\overline{\mathcal{Z}}$ is the normalization of the deformation to the normal cone of $X_1\sqcup X_2$ along $(C_0)_1\sqcup (C_0)_2$ and the pullback $\overline{E}$ of $E$ is the exceptional divisor. Since for any $\epsilon>0$, there exists sufficiently small $\delta>0$ such that 
\[
M^\mathrm{NA}_{(\Delta+C_0)_1+(\Delta+C_0)_2}(\overline{\mathcal{Z}},L_1\times L_2-\delta\overline{E})<\epsilon J^\mathrm{NA}(\overline{\mathcal{Z}},L_1\times L_2-\delta\overline{E})
\]
and
\[
(\mathcal{J}^{K_{(X_1,\Delta+C_0)}\times K_{(X_2,\Delta+C_0)}})^\mathrm{NA}(\overline{\mathcal{Z}},L_1\times L_2-\delta\overline{E})<\epsilon J^\mathrm{NA}(\overline{\mathcal{Z}},L_1\times L_2-\delta\overline{E})
\]
due to Proposition \ref{p} and Corollary \ref{K}, $(Z,M)$ is neither uniformly $\mathrm{J}^{K_Z}$-stable nor uniformly K-stable.
\end{proof}

\begin{rem}
There is a more general way to construct such examples as in Theorem \ref{JJJ} by the application of the following consequences of Sj\"{o}str\"{o}m Dyrefelt \cite{Sj}. We extend them to singular projective surfaces:
\begin{thm}[cf. Corollary 7, Corollary 34 of \cite{Sj}]\label{sjo}
Let $X$ be a normal and irreducible surface, $H$ be a fixed ample $\mathbb{R}$-divisor on $X$, $\mathrm{Amp}_X$ be its ample cone and $\mathrm{Big}_X$ be the cone of $NS(X)\otimes\mathbb{R}$ that consists of big $\mathbb{R}$-divisors, where $NS(X)$ is N\'{e}ron-Severi group. Let the subsets $\mathrm{uJs}^H_X$ and $\mathrm{Js}^H_X$ of $\mathrm{Amp}_X$ be 
\[
\mathrm{uJs}^H_X=\{L\in \mathrm{Amp}_X;2\frac{L\cdot H}{L^2}L-H \, \mathrm{is}\, \mathrm{ample}\}
\]
and 
\[
\mathrm{Js}^H_X=\{L\in \mathrm{Amp}_X;2\frac{L\cdot H}{L^2}L-H \, \mathrm{is}\, \mathrm{nef}\}
\] 
respectively. Then $\mathrm{uJs}^H_X$ is open connected, and star convex, i.e. the line segment from $L$ to $H$ is contained in $\mathrm{uJs}^H_X$ for any $L\in \mathrm{uJs}^H_X$. Moreover, $\mathrm{Js}^H_X$ is the closure of $\mathrm{uJs}^H_X$ in $\mathrm{Amp}_X$.
\end{thm}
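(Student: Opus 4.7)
The plan is to reduce all three assertions to a single explicit quadratic computation combined with the Nakai--Moishezon criterion for $\mathbb{R}$-divisors on normal projective surfaces (which holds in the normal irreducible setting by continuity from the rational case, together with Mumford's intersection pairing).

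The starting observation is that for any $L \in \mathrm{Amp}_X$, writing $a = 2\frac{L\cdot H}{L^{2}}$, one has
\[
(aL - H)^{2} \,=\, a^{2} L^{2} - 2a\,(L\cdot H) + H^{2} \,=\, H^{2} \,>\, 0,
\]
because $aL^{2} = 2(L\cdot H)$. Thus the self-intersection half of Nakai--Moishezon is automatically satisfied for every $L \in \mathrm{Amp}_X$, and only the sign of $(aL - H)\cdot C$ against each irreducible curve $C \subset X$ is at issue.

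For star-convexity (which then yields connectedness), fix $L \in \mathrm{uJs}^{H}_{X}$ and set $L_t = tL + (1-t)H$ for $t \in [0,1]$; this stays in $\mathrm{Amp}_X$. Writing $u = 1-t$ and $a = L\cdot H$, $b = L^{2}$, $c = H^{2}$, $p = L\cdot C$, $q = H\cdot C$, a direct expansion gives
\[
2\,(L_t\cdot H)(L_t\cdot C) - L_t^{2}\,(H\cdot C) \,=\, t^{2}(2ap - bq) + 2tu\,cp + u^{2} cq.
\]
All three coefficients are strictly positive: $2ap - bq > 0$ by the hypothesis $L \in \mathrm{uJs}^H_X$, while $cp = H^{2}(L\cdot C) > 0$ and $cq = H^{2}(H\cdot C) > 0$ by ampleness of $H$ and $L$. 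Since $t, u \ge 0$ with $t + u = 1$, the sum is strictly positive on all of $[0,1]$; combined with the self-intersection observation, Nakai--Moishezon yields $L_t \in \mathrm{uJs}^{H}_{X}$ for every $t \in [0,1]$. The openness of $\mathrm{uJs}^H_X$ is then immediate from the continuity of $L \mapsto 2\frac{L\cdot H}{L^{2}} L - H$ on $\mathrm{Amp}_X$ together with the openness of the ample cone.

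For the closure identity, the inclusion $\overline{\mathrm{uJs}^{H}_{X}} \cap \mathrm{Amp}_X \subseteq \mathrm{Js}^{H}_{X}$ follows by passing to the limit and using closedness of the nef cone. For the reverse, given $L \in \mathrm{Js}^{H}_{X}$, set $L_\epsilon = (1-\epsilon)L + \epsilon H$ for $\epsilon \in (0,1]$ and apply the same expansion with $2ap - bq \ge 0$ rather than strict: the middle and final terms $2(1-\epsilon)\epsilon\,cp + \epsilon^{2} cq$ contribute the required strict positivity as soon as $\epsilon > 0$, so $L_\epsilon \in \mathrm{uJs}^{H}_{X}$, and clearly $L_\epsilon \to L$. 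The only genuine technical point to verify is the validity of Nakai--Moishezon for $\mathbb{R}$-divisors on normal projective surfaces, which is the main (though routine) obstacle; one reduces by approximation to the $\mathbb{Q}$-case and uses continuity of Mumford's intersection pairing together with the fact that ampleness in $NS(X)_{\mathbb{R}}$ is the interior of the nef cone.
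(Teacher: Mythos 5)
Your proof is correct, and the quadratic expansion in the segment parameter is essentially the same computation the paper performs, but you route it through the Nakai--Moishezon criterion where the paper argues directly at the level of divisor classes. You observe that $(2\frac{L\cdot H}{L^2}L-H)^2 = H^2 > 0$ holds automatically, so only the curve-intersection half of Nakai--Moishezon needs checking, and you verify that half by expanding $2(L_t\cdot H)(L_t\cdot C)-L_t^2(H\cdot C)$. The paper does the analogous expansion without pairing against a curve:
\[
2\frac{(L+tH)\cdot H}{(L+tH)^2}(L+tH)-H=\frac{1}{(L+tH)^2}\bigl(2(L\cdot H)L-(L^2)H+2t(H^2)L+t^2(H^2)H\bigr),
\]
which for $t>0$ is visibly a positive multiple of a nef divisor plus ample divisors, hence ample, with no appeal to Nakai--Moishezon at all. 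That is the advantage of the paper's phrasing: it entirely sidesteps the $\mathbb{R}$-divisor Nakai--Moishezon criterion on a normal surface, which is the one genuine technical debt in your write-up --- and that criterion is not in fact established by ``approximation to the $\mathbb{Q}$-case'' as you suggest, since a sequence of curves $C_n$ could have $D\cdot C_n\to 0$ even when $D\cdot C>0$ for each $C$; the Campana--Peternell argument is less routine than that. Your identity $(2\frac{L\cdot H}{L^2}L-H)^2=H^2$ is a pleasant extra observation that the paper simply does not need.
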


\begin{rem}
Abusing the notations, we denote also a numerical class of a $\mathbb{Q}$-line bundle $L$ by $L$. By Corollary \ref{jstable}, Proposition \ref{p} and the fact $H^2>0$, we have if $H$ is a $\mathbb{Q}$-line bundle, $\mathrm{uJs}^H_X\cap (NS(X)\otimes\mathbb{Q})$ is the set of polarizations of $X$ such that $(X,L)$ is uniformly $\mathrm{J}^H$-stable and $\mathrm{Js}^H_X\cap (NS(X)\otimes\mathbb{Q})$ is the set of $L$ such that $(X,L)$ is $\mathrm{J}^H$-stable.
\end{rem}

\begin{proof}[Proof of Theorem \ref{sjo}]
By the definition, it follows that $\mathrm{uJs}^H_X$ is open and $\mathrm{Js}^H_X$ is closed. Due to Proposition \ref{p}, we only have to show the following:
\begin{claim}
If $L\in \mathrm{Js}^H_X$, then $L+tH\in \mathrm{uJs}^H_X$ for $t>0$.
\end{claim}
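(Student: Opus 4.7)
The plan is to apply the Nakai-Moishezon criterion to $D_t := 2\beta(t) L_t - H$, where $L_t := L + tH$ and $\beta(t) := \frac{L_t \cdot H}{L_t^2}$. Writing $\alpha := \beta(0) = \frac{L \cdot H}{L^2}$, the hypothesis $L \in \mathrm{Js}^H_X$ is precisely the statement that $2\alpha L - H$ is nef. Expanding $D_t^2$ and using the tautology $\beta(t) L_t^2 = L_t \cdot H$ gives
\begin{equation*}
D_t^2 = 4\beta(t)^2 L_t^2 - 4\beta(t)(L_t \cdot H) + H^2 = H^2 > 0,
\end{equation*}
independently of $t$. Hence the claim reduces to verifying $D_t \cdot C > 0$ for every integral curve $C \subset X$ and every $t > 0$.

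To carry out the curve estimate, I write $H \cdot C = 2\alpha L \cdot C - \epsilon(C)$ with $\epsilon(C) := (2\alpha L - H) \cdot C \geq 0$, and combine this with the two elementary identities
\begin{equation*}
\beta(t)(1 + 2\alpha t) - \alpha = \frac{tH^2(1 + \alpha t)}{L_t^2}, \qquad 2t\beta(t) - 1 = \frac{t^2 H^2 - L^2}{L_t^2},
\end{equation*}
both obtained by clearing denominators. A short manipulation then yields
\begin{equation*}
D_t \cdot C \;=\; \frac{2(L \cdot C)\, tH^2(1 + \alpha t) \;-\; (t^2 H^2 - L^2)\, \epsilon(C)}{L_t^2}.
\end{equation*}
If $t^2 H^2 \leq L^2$ the negative contribution is absent, and positivity follows from $L \cdot C > 0$ (as $L$ is ample), $H^2 > 0$ (as $H$ is ample), and $t > 0$. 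If $t^2 H^2 > L^2$, I use the coarse bound $\epsilon(C) \leq 2\alpha L \cdot C$ (valid since $H \cdot C \geq 0$), after which the cancellation
\begin{equation*}
tH^2(1 + \alpha t) - \alpha(t^2 H^2 - L^2) = tH^2 + L \cdot H
\end{equation*}
collapses the numerator to $2(L \cdot C)(tH^2 + L \cdot H) > 0$.

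Combining the two steps, Nakai-Moishezon yields that $D_t$ is ample, which is the claim. The hard part will be recognizing the right regrouping in the second case: the algebraic cancellation above is not forced by any obvious geometric principle and must be found by direct manipulation. The remainder of Theorem \ref{sjo} then follows formally, since the map $L' \mapsto 2\frac{L' \cdot H}{L'^2}L' - H$ is scale-invariant (so $\mathrm{uJs}^H_X$ is a cone), the identity $(1-s)L + sH = (1-s)\bigl(L + \tfrac{s}{1-s}H\bigr)$ for $s \in (0,1)$ reduces star-convexity to the claim, and the density $\mathrm{Js}^H_X \subset \overline{\mathrm{uJs}^H_X}$ is obtained by letting $t \to 0^+$.
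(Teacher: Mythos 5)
Your proof is correct, and it reaches the same conclusion, but it takes a genuinely different and substantially heavier route than the paper's. The paper's proof is a one-line algebraic identity: clearing the denominator $L_t^2$ in $2\frac{L_t\cdot H}{L_t^2}L_t-H$ gives
\[
L_t^2\left(2\frac{L_t\cdot H}{L_t^2}L_t-H\right)=\bigl(2(L\cdot H)L-(L^2)H\bigr)+tH^2(2L+tH),
\]
and the right-hand side is a nef class (the hypothesis $L\in\mathrm{Js}^H_X$, scaled by $L^2>0$) plus an ample class (since $t>0$, $H^2>0$, and $2L+tH$ is ample). Nef plus ample is ample, so $D_t$ is ample and no curve-by-curve check or Nakai--Moishezon is needed. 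Your approach instead verifies the Nakai--Moishezon (Campana--Peternell, since these are $\mathbb{R}$-divisors) criterion directly, proving $D_t^2=H^2>0$ and then doing a two-case estimate on $D_t\cdot C$, which requires recovering the clever identities and the cancellation by hand. What your route loses in brevity it gains in the pleasant byproduct $D_t^2=H^2$; but the two-case split and the coarse bound $\epsilon(C)\le 2\alpha L\cdot C$ are artifacts of your chosen decomposition of $H\cdot C$ --- had you instead dotted the displayed identity above with $C$, both cases and the cancellation would have disappeared at once, since $(2(L\cdot H)L-L^2H)\cdot C\ge 0$ by nefness and $tH^2(2L+tH)\cdot C>0$ by ampleness. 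The logic of your proof is sound, but the paper's argument is cleaner and also avoids having to invoke the $\mathbb{R}$-divisor version of Nakai--Moishezon.
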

Indeed, $2(L\cdot H)L-(L^2)H$ is nef by the assumption and
\begin{align}\label{mm}
2\frac{(L+tH)\cdot H}{(L+tH)^2}(L+tH)-H&=2\frac{L\cdot H+tH^2}{L^2+2tL\cdot H+t^2H^2}(L+tH)-H\\
&=\frac{1}{L^2+2tL\cdot H+t^2H^2}(2(L\cdot H)L-(L^2)H+2t(H^2)L+t^2(H^2)H)\nonumber
\end{align}
is ample.
\end{proof}

If $H$ is $\mathbb{Q}$-ample, it follows from Theorem \ref{sjo} that all rational points $L$ contained in the boundary $\partial(\mathrm{uJs}^H_X)$ in $\mathrm{Amp}_X$ satisfy that $(X,L)$ is $\mathrm{J}^H$-stable but not uniformly $\mathrm{J}^H$-stable. Next, we extend the following to the case when $X$ has singularities:

\begin{thm}[cf. Theorem 6 \cite{Sj}]\label{dyre}
Notations as in Theorem \ref{sjo}. If $\mathrm{Big}_X=\mathrm{Amp}_X$, 
\[
\mathrm{uJs}^H_X=\mathrm{Amp}_X.
\]
 Otherwise, let $M\in\partial\mathrm{Amp}_X$ be $M^2 =H^2$ and $L_t$ be any ample divisors on $X$ such that $L_t := (1-t)M+tH$ for $t \in (0, 1)$. Then 
 \[
 \mathrm{uJs}^H_X=\{\lambda L_t : \lambda > 0, t \in (\frac{1}{2}, 1]\} \subset \mathrm{Amp}_X.
 \]
\end{thm}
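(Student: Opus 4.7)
The proof adapts Sj\"ostr\"om Dyrefelt's argument (Theorem 6 of \cite{Sj}) to the normal singular setting. By Theorem \ref{sjo} we already know $\mathrm{uJs}^H_X$ is open and star convex about $H$; it remains to identify it explicitly. For $L\in\mathrm{Amp}_X$, set $N(L):=2(L\cdot H)L-(L^2)H$, so that $L\in\mathrm{uJs}^H_X$ precisely when $N(L)$ is ample. A direct computation gives $N(L)^{\cdot 2}=(L^2)^2 H^2>0$, and the Hodge index theorem on the normal surface $X$ yields $N(L)\cdot H=2(L\cdot H)^2-L^2 H^2\ge L^2 H^2>0$. Hence $N(L)$ lies in the component of the positive cone that contains $H$, and is in particular big.

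\textbf{Case $\mathrm{Big}_X=\mathrm{Amp}_X$.} Bigness is equivalent to ampleness, so $N(L)$ is ample for every $L\in\mathrm{Amp}_X$, giving $\mathrm{uJs}^H_X=\mathrm{Amp}_X$.

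\textbf{Case $\mathrm{Amp}_X\subsetneq\mathrm{Big}_X$.} The plan is to parameterize any $L\in\mathrm{Amp}_X$ in the form $\lambda L_t$ with $L_t=(1-t)M+tH$, where $M\in\partial\mathrm{Amp}_X$ satisfies $M^2=H^2$, $\lambda>0$, and $t\in(0,1]$: one prolongs the line through $H$ and $L$ until it meets $\partial\mathrm{Amp}_X$ at a class $M_0$, then rescales so that $M=cM_0$ has $M^2=H^2$ and adjusts $t,\lambda$ accordingly. Writing $a=M\cdot H$ and $b=H^2=M^2$, the key computation gives
\[
N(L_t)=2(1-t)\bigl[(1-t)a+tb\bigr]M+(2t-1)\,b\,H.
\]
For $t\in(1/2,1]$ both coefficients are nonnegative with at least one strictly positive; since $M$ is nef and $H$ is ample, the class $N(L_t)$ meets every integral curve positively, so Nakai--Moishezon (together with $N(L_t)^{\cdot 2}>0$) gives ampleness. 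For $t\le 1/2$, the nefness but not ampleness of $M$, combined with $M^2=b>0$, forces by Nakai--Moishezon the existence of an integral curve $C$ with $M\cdot C=0$, and then $N(L_t)\cdot C=(2t-1)b(H\cdot C)\le 0$, so $N(L_t)$ fails to be ample. This identifies $\mathrm{uJs}^H_X=\{\lambda L_t:\lambda>0,\ t\in(1/2,1]\}$.

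\textbf{Main obstacle.} The principal technical point is to justify Step~3's parameterization: namely, that for every $L\in\mathrm{Amp}_X$ the ray through $H$ and $L$ meets $\partial\mathrm{Amp}_X$ at a class with positive self-intersection, so the normalization $M^2=H^2$ is achievable. The hypothesis $\mathrm{Big}_X\supsetneq\mathrm{Amp}_X$ is used precisely here, to guarantee that $\partial\mathrm{Amp}_X$ carries classes of positive square (boundary classes arising from big but non-ample divisors); a generic choice of two-plane through $H$ and $L$ then avoids the possibly exceptional locus $\{M_0^2=0\}$. Both the Hodge index theorem and the Nakai--Moishezon criterion remain valid on normal projective surfaces, so aside from this verification no essentially new difficulty appears relative to the smooth case treated in \cite{Sj}.
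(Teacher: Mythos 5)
Your computations are correct: $N(L)^{\cdot 2}=(L^2)^2H^2$, the Hodge index bound on $N(L)\cdot H$, and the identity
\[
N(L_t)=2(1-t)\bigl[(1-t)a+tb\bigr]M+(2t-1)\,b\,H
\]
all check out, and the Nakai--Moishezon/Campana--Peternell arguments you invoke do carry over to normal projective surfaces. Your route through the slice is, however, heavier than the paper's: the paper does not recompute $N(L_t)$ for every $t$. It observes that $L_{1/2}=\tfrac12(H+M)$ gives $L_{1/2}^2=L_{1/2}\cdot H$, so that $2\tfrac{L_{1/2}\cdot H}{L_{1/2}^2}L_{1/2}-H=M$, which is nef but not ample; hence $L_{1/2}\in\mathrm{Js}^H_X\setminus\mathrm{uJs}^H_X$. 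The star-convexity of $\mathrm{uJs}^H_X$ from Theorem \ref{sjo} (specifically the claim $L\in\mathrm{Js}^H_X\Rightarrow L+sH\in\mathrm{uJs}^H_X$ for $s>0$, together with the obvious forward implication of star-convexity) then immediately pins down the cutoff at $t=1/2$. This is a one-line computation plus a structural lemma, as opposed to your full parameterized computation; both give the same conclusion along the slice through $M$ and $H$. For the first case the paper decomposes $L=N+tH$ with $N$ nef non-ample (hence $N^2=0$ under $\mathrm{Big}_X=\mathrm{Amp}_X$) and reads ampleness off the formula; your bigness argument ($N(L)^2>0$, $N(L)\cdot H>0$, so $N(L)$ is big, hence ample) is a clean alternative.

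The ``Main obstacle'' section, though, is not a repair but a misdiagnosis. The two-plane through $H$ and $L$ is \emph{uniquely determined}; there is no ``generic choice'' available, so you cannot avoid $M_0^2=0$ by perturbation. In fact, the full set equality as written does not hold in general. On $\mathbb{F}_1$ with $H=C_0+2f$, the only boundary class with $M^2=H^2$ is $M=\sqrt{3}(C_0+f)$, and one computes $\mathrm{uJs}^H_X=\{aC_0+bf : b/a>\tfrac{1+\sqrt3}{2}\}$, which strictly contains $\{\lambda L_t : t\in(\tfrac12,1]\}=\{aC_0+bf : b/a\in(\tfrac{1+\sqrt3}{2},2]\}$; for instance $C_0+3f$ is uniformly $\mathrm{J}^H$-stable but not of the form $\lambda L_t$. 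What the paper's proof (and what the subsequent applications, Theorems \ref{JJJ} and \ref{K}) actually establish and use is the \emph{slice} characterization: along the cone spanned by a fixed $M\in\partial\mathrm{Amp}_X$ with $M^2=H^2$ and by $H$, the boundary of $\mathrm{uJs}^H_X$ occurs exactly at $t=1/2$. The rays of $\mathrm{Amp}_X$ whose closure meets the boundary only on the light cone (as for $C_0+3f$) lie entirely in $\mathrm{uJs}^H_X$ and are simply not captured by the $L_t$ normalization. So rather than trying to show the parameterization is exhaustive, you should read the statement as a description of $\mathrm{uJs}^H_X$ on the $(M,H)$-slice, which is precisely what your direct computation of $N(L_t)$ verifies.
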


\begin{proof}
For the first assertion, it is easy to see that for any ample $\mathbb{R}$-divisor $L$, there are $t>0$ and a nef but not ample $\mathbb{R}$-divisor $N$ such that 
\[
L=tH+N.
\]
Note that $N$ is not big by the assumption. Then $N^2=0$. Therefore, as in the equation (\ref{mm}) in the proof of Theorem \ref{dyre},
\[
2\frac{(N+tH)\cdot H}{(N+tH)^2}(N+tH)-H=\frac{1}{2tN\cdot H+t^2H^2}(2(N\cdot H)N+2t(H^2)N+t^2(H^2)H)
\]
is ample.

For the second assertion, we only have to show that $L_t\in \mathrm{Js}^H_X\setminus \mathrm{uJs}^H_X$ when $t=\frac{1}{2}$ by Theorem \ref{sjo}. If $t=\frac{1}{2}$, $L=\frac{1}{2}(H+M)$ satisfies that 
\[
2\frac{L\cdot H}{L^2}L-H=M
\]
is nef but not ample. We complete the proof.
\end{proof}
Thus, if there exists $\mathbb{Q}$-divisors $H,M$ as in Theorem \ref{dyre}, we can construct examples as Theorem \ref{JJJ}. 

Finally, we remark that we can not extend Theorems \ref{sjo} and \ref{dyre} to the case when $X$ is deminormal in general. In fact, let $X$ be a deminormal surface and $X_i$'s are irreducible components of $X$. In fact, in Theorem \ref{dyre}, if $$\frac{M|_{X_i}^2}{H|_{X_i}^2}=\frac{M|_{X_j}^2}{H|_{X_j}^2}$$ and $$\frac{M|_{X_i}\cdot H|_{X_i}}{H|_{X_i}^2}=\frac{M|_{X_j}\cdot H|_{X_j}}{H|_{X_j}^2}$$ for $i,j$, then the theorem holds for $X$. To be precise, we have Proposition \ref{below} below. Otherwise, the theorem does not hold since there exists $t>0$ such that all irreducible components of $(X,H+tM)$ do not have the same average scalar curvature with respect to $H$.
\end{rem}

\begin{prop}\label{below}
Let $X$ be a projective, deminormal and reducible surface and $L,H$ be $\mathbb{Q}$-ample line bundles. Suppose that $L,H$ satisfy that 
\begin{align*}
\frac{L|_{X_i}^2}{H|_{X_i}^2}&=\frac{L|_{X_j}^2}{H|_{X_j}^2};\\
\frac{L|_{X_i}\cdot H|_{X_i}}{H|_{X_i}^2}&=\frac{L|_{X_j}\cdot H|_{X_j}}{H|_{X_j}^2}
\end{align*} 
for $i\ne j$, where $X=\coprod _{i=1}^lX_i$ is the irreducible decomposition. If $(X,L)$ is $\mathrm{J}^H$-stable, then $(X,L+tH)$ is uniformly $\mathrm{J}^H$-stable for $t>0$.
\end{prop}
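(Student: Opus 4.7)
The plan is to verify that the hypotheses of Corollary \ref{ms} are satisfied both for $(X,L)$ and for $(X,L+tH)$, to extract from $\mathrm{J}^H$-stability of $(X,L)$ that a certain class is nef, and then to establish the required uniform estimate via an explicit algebraic identity worked out on each normalized irreducible component, mimicking the proof of Theorem \ref{sjo}.

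First I would observe that the two ratio conditions together imply $c_i \coloneq (L|_{X_i}\cdot H|_{X_i})/L|_{X_i}^2$ is independent of $i$, so all irreducible components of $(X,L)$ have the same average scalar curvature with respect to $H$ in the sense of Definition \ref{denden}; a short manipulation (dividing the numerator $L|_{X_i}\cdot H|_{X_i} + tH|_{X_i}^2$ and denominator $L|_{X_i}^2 + 2tL|_{X_i}\cdot H|_{X_i} + t^2H|_{X_i}^2$ by $H|_{X_i}^2$) shows this property is preserved by $L\mapsto L+tH$. Hence both Corollary \ref{ms} and Proposition \ref{p} apply to $(X,L)$ and to $(X,L+tH)$. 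Since $(X,L)$ is $\mathrm{J}^H$-stable, in particular $\mathrm{J}^H$-semistable, Corollary \ref{ms} gives that $2cL-H$ is nef on $X$, so its pullback $2c\bar{L}_i - \bar{H}_i$ is nef on each normalization $X_i^\nu$, where $\bar{L}_i, \bar{H}_i$ are the pullbacks of $L|_{X_i}, H|_{X_i}$.

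Next, direct expansion yields the numerical identity
\[
((L+tH)^2)\left(2\frac{(L+tH)\cdot H}{(L+tH)^2}(L+tH) - H\right) = L^2(2cL - H) + tH^2(2L + tH),
\]
which, restricted to each $X_i^\nu$, reads $(\bar{L}_i+t\bar{H}_i)^2(2c'(\bar{L}_i+t\bar{H}_i)-\bar{H}_i) = \bar{L}_i^2(2c\bar{L}_i-\bar{H}_i) + t\bar{H}_i^2(2\bar{L}_i+t\bar{H}_i)$, where $c' = (L+tH)\cdot H/(L+tH)^2$ (independent of the component by the preservation verified above). For any integral curve $C\subset X_i$ with strict transform $\tilde{C}\subset X_i^\nu$, the first summand pairs nonnegatively with $\tilde{C}$ by the nefness of $2c\bar{L}_i-\bar{H}_i$, and since $2\bar{L}_i+t\bar{H}_i = (\bar{L}_i+t\bar{H}_i)+\bar{L}_i$ with $\bar{L}_i$ ample, I obtain
\[
\left(2c'(L+tH)-H\right)\cdot C \;\ge\; \frac{t\,\bar{H}_i^{\,2}}{(\bar{L}_i+t\bar{H}_i)^2}\,(L+tH)\cdot C.
\]
Taking $\epsilon = \min_{1\le i\le l} t\bar{H}_i^{\,2}/(\bar{L}_i+t\bar{H}_i)^2$, which is strictly positive because each $\bar{H}_i$ is ample (hence $\bar{H}_i^2>0$) and there are only finitely many components, Corollary \ref{ms} (3)$\Rightarrow$(1) yields uniform $\mathrm{J}^H$-stability of $(X,L+tH)$.

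The proof is fundamentally a componentwise reduction to the smooth identity used in Theorem \ref{sjo}, so there is no deep obstacle; the only careful points are (i) confirming that the same-average-scalar-curvature property survives the substitution $L\leadsto L+tH$ so that $c$ and $c'$ are well-defined global constants, and (ii) ensuring the resulting lower bound is bounded uniformly away from zero across components, which is immediate from the finiteness of the irreducible decomposition together with ampleness of each $\bar{H}_i$.
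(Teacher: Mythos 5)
Your proof is correct and follows the same route as the paper: verify that the same-average-scalar-curvature hypothesis (Definition \ref{denden}) is preserved under $L\mapsto L+tH$, apply Corollary \ref{ms} to translate J-stability into a nefness/ampleness statement, and then use the algebraic identity $(L+tH)^2\bigl(2c'(L+tH)-H\bigr)=L^2(2cL-H)+tH^2\bigl((L+tH)+L\bigr)$ componentwise to upgrade nef to ample. The paper's proof is terser, simply citing Theorem \ref{sjo} after checking the preservation of (**), leaving the componentwise reduction and the explicit $\epsilon$ implicit, whereas you spell both out; the extra detail is useful but not a new idea.
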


\begin{proof}
Due to Theorem \ref{sjo}, we have only to prove that all irreducible components of $(X,L+tH)$ have the same average scalar curvature with respect to $H$. Indeed, 
\begin{align*}
\frac{(L+tH|_{X_i})\cdot H|_{X_i}}{(L+tH|_{X_i})^2}&=\frac{L|_{X_i}\cdot H|_{X_i}+t(H|_{X_i})^2}{t^2(H|_{X_i})^2+2t(L|_{X_i}\cdot H|_{X_i})+(L|_{X_i})^2}\\
&=\frac{t+\frac{L|_{X_i}\cdot H|_{X_i}}{H|_{X_i}^2}}{t^2+2t\left(\frac{L|_{X_i}\cdot H|_{X_i}}{H|_{X_i}^2}\right)+\frac{L|_{X_i}^2}{H|_{X_i}^2}}\\
&=\frac{(L+tH|_{X_j})\cdot H|_{X_j}}{(L+tH|_{X_j})^2}.
\end{align*}
\end{proof}

\section{J-stability for Higher Dimensional Varieties}\label{Fib}

In this section, we prove two criteria of J-stability for higher dimensional polarized deminormal schemes, Theorems \ref{fu} and \ref{SW}. We also explain applications of them and prove K-stability of log minimal models. Let $(X,L)$ be a deminormal polarized scheme and $H$ be a $\mathbb{Q}$-line bundle on $X$. Define that $(X,L)$ and $H$ satisfy the condition (**) if all irreducible components of $(X,L)$ have the same average scalar curvature with respect to $H$.

\subsection{Song-Weinkove criterion and its applications to stability threshold}

  First, recall the following criterion, which is proved by Song-Weinkove \cite{SW}. 
   
   \begin{thm}[Theorem 1.1 \cite{SW}]\label{SW2}
   Let $X$ be a K\"{a}hler manifold and $\chi$, $\omega$ be K\"{a}hler $(1,1)$-forms on $X$. Then the $(n-1,n-1)$-form 
   \[
   \left(n\frac{\int_X\chi \wedge \omega^{n-1}}{\int_X\omega^n}\omega-(n-1)\chi\right)\wedge \chi^{n-2} 
   \]
    is positive iff $\mathcal{J}^{\chi}$-energy is proper, where an $(n-1,n-1)$-form $\beta$ is positive if $\beta\wedge\alpha\wedge\bar{\alpha}>0$ for any point of $X$ and for any nonzero $(1,0)$-form $\alpha$.   
   \end{thm}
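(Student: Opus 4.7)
The statement is explicitly recalled from Song--Weinkove \cite{SW}, so in the paper the ``proof'' should reduce to a precise citation of their Theorem 1.1. Nevertheless, to justify how one would establish it intrinsically, the plan is to follow the scheme of Song--Weinkove, which couples the positivity of the given $(n-1,n-1)$-form with the solvability of the J-equation, and then with properness of $\mathcal{J}^\chi$.

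The first step is to identify the pointwise positivity condition on
\[
\Theta = \left(n\,\tfrac{\int_X\chi\wedge\omega^{n-1}}{\int_X\omega^n}\,\omega - (n-1)\chi\right)\wedge\chi^{n-2}
\]
with a cone condition on the eigenvalues of $\chi$ relative to $\omega$. Concretely, after simultaneously diagonalizing $\chi$ and $\omega$ at a point, positivity of $\Theta\wedge\alpha\wedge\bar\alpha$ for all nonzero $(1,0)$-forms $\alpha$ translates into a quantitative inequality among the eigenvalues $\lambda_i$ of $\chi$ with respect to $\omega$. This cone condition is precisely the structure needed for the maximum-principle and second-order estimates in the continuity method for the J-equation.

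The second step is to run the continuity method: one deforms $\omega$ within its K\"ahler class to a metric $\omega_\varphi = \omega + \sqrt{-1}\,\partial\bar\partial\varphi$ satisfying $\mathrm{tr}_{\omega_\varphi}\chi = c_0$, using a parameter path whose endpoint is the J-equation. Openness is standard via the implicit function theorem. The hard work is closedness, i.e.\ a priori $C^0$, $C^2$, and higher-order estimates for $\varphi$; the cone condition from Step 1 feeds into the maximum principle for the linearized operator and yields a uniform upper bound on $\mathrm{tr}_{\omega}\omega_\varphi$. Once a critical point of $\mathcal{J}^\chi$ exists, convexity of $\mathcal{J}^\chi$ along geodesics in the space of K\"ahler potentials gives properness. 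Conversely, if $\Theta$ fails to be positive at some point, one produces a one-parameter family of K\"ahler potentials (a ``bubble'') along which the J-energy remains bounded above while $\|\varphi\|$ tends to infinity, contradicting properness.

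The main obstacle is the PDE step: deriving the $C^2$-estimate uniformly in terms only of the cone condition, with no auxiliary assumption on curvature. This is the technical heart of Song--Weinkove's work, and there is no significant shortcut; any independent proof would essentially reproduce their analysis. In the present paper, the pragmatic route is therefore to state the result as a direct appeal to \cite[Theorem 1.1]{SW}, which will suffice for its later use in establishing the decomposition-style criteria (Theorem \ref{SW}) and Theorem \ref{g}.
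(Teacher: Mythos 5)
You correctly observe that the paper does not prove this statement but cites it directly as Theorem~1.1 of Song--Weinkove \cite{SW}, and your sketch of their continuity-method argument is accurate in outline. Since the paper's treatment is simply the citation, your proposal matches the paper's approach.
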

   
   From an algebro-geometric perspective, the next theorem follows from Theorems \ref{SW2} and \ref{modLSconj}:
   
    \begin{thm}\label{SW1}
   Let $(X,L)$ be a smooth polarized variety over $\mathbb{C}$ and $H$ be an ample $\mathbb{Q}$-line bundle on $X$. Suppose that $$n\frac{H \cdot L^{n-1}}{L^n}L-(n-1)H $$ is ample. Then there exists $\delta>0$ such that 
   \[
   (\mathcal{J}^{H})^{\mathrm{NA}}\ge\delta J^{\mathrm{NA}}
   \]
    on $\mathcal{H}^{\mathrm{NA}}(L)$.  
   \end{thm}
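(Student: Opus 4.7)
The plan is to bootstrap the ampleness hypothesis on the class $n\frac{H\cdot L^{n-1}}{L^n}L-(n-1)H$ into the pointwise positivity condition appearing in Song--Weinkove's Theorem \ref{SW2}, then to invoke the analytic equivalences of Fact \ref{h} together with the algebro-geometric bridge of Remark \ref{useful} to conclude uniform $\mathrm{J}^H$-stability in the form $(\mathcal{J}^H)^{\mathrm{NA}}\ge\delta J^{\mathrm{NA}}$.

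First I would exhibit K\"ahler representatives $\omega_0\in\mathrm{c}_1(L)$ and $\chi\in\mathrm{c}_1(H)$ such that the $(1,1)$-form
\[
n\frac{H\cdot L^{n-1}}{L^n}\omega_0-(n-1)\chi
\]
is a K\"ahler form on $X$. This can be arranged by first choosing any K\"ahler $\chi\in\mathrm{c}_1(H)$, then selecting a K\"ahler $\alpha$ in the ample class $n\frac{H\cdot L^{n-1}}{L^n}L-(n-1)H$, and setting $\omega_0\coloneq \bigl(n\frac{H\cdot L^{n-1}}{L^n}\bigr)^{-1}\bigl((n-1)\chi+\alpha\bigr)$. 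The form $\omega_0$ is K\"ahler (a positive combination of K\"ahler forms, since $H$ and $L$ are both ample so $n\frac{H\cdot L^{n-1}}{L^n}>0$), lies in $\mathrm{c}_1(L)$ by construction, and satisfies $n\frac{H\cdot L^{n-1}}{L^n}\omega_0-(n-1)\chi=\alpha>0$ by design.

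Next, since $\chi$ is K\"ahler, the $(n-1,n-1)$-form
\[
\left(n\frac{H\cdot L^{n-1}}{L^n}\omega_0-(n-1)\chi\right)\wedge\chi^{n-2}=\alpha\wedge\chi^{n-2}
\]
is positive in the sense of Theorem \ref{SW2}, being the wedge of a K\"ahler $(1,1)$-form with a positive $(n-2,n-2)$-form. Applying Theorem \ref{SW2} yields the coerciveness of $\mathcal{J}_\chi$ with respect to $\omega_0$, which is precisely condition (3) of Fact \ref{h}. The implication (3) $\Rightarrow$ (4) of Fact \ref{h} then produces uniform J-stability of $(X,[\omega_0],[\chi])$ in the analytic sense of Dervan--Ross. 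Finally, by Remark \ref{useful}, analytic uniform J-stability is equivalent to the algebro-geometric uniform $\mathrm{J}^H$-stability, yielding the required $\delta>0$ with $(\mathcal{J}^H)^{\mathrm{NA}}\ge\delta J^{\mathrm{NA}}$ on $\mathcal{H}^{\mathrm{NA}}(L)$.

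The essential content of the argument is the translation of the numerical ampleness hypothesis into pointwise positivity of a differential form; once this is accomplished, the analytic input is packaged in Theorem \ref{SW2} and Fact \ref{h}, while Remark \ref{useful} handles the transition between the analytic and non-archimedean pictures. No substantial obstacle is anticipated beyond verifying the compatibility of the chosen K\"ahler representatives, which follows immediately from convexity of the K\"ahler cone. The subtlest point to track is that the constant $\delta$ produced by Remark \ref{useful} is strictly positive; this holds because the analytic uniform J-stability gives an explicit $\epsilon>0$ in the intersection-theoretic condition (6)', which transfers directly to the non-archimedean inequality via the cited equivalences.
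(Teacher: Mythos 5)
Your proof is correct and fills in precisely the details the paper leaves implicit when it asserts that Theorem \ref{SW1} ``follows from Theorems \ref{SW2} and \ref{modLSconj}'': you supply the construction of the Kähler representative $\omega_0$ making $n\frac{H\cdot L^{n-1}}{L^n}\omega_0-(n-1)\chi$ pointwise positive, then route through Song--Weinkove's criterion, the equivalences of Theorem \ref{modLSconj}/Fact \ref{h}, and Remark \ref{useful} to reach the non-archimedean inequality. This is the same approach the paper intends; the only implicit hypothesis worth noting is $n\ge 2$ (needed for $\chi^{n-2}$ to be meaningful in Theorem \ref{SW2}), the case $n=1$ being vacuous.
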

  We call this theorem Song-Weinkove criterion for J-stability. This is a weak form not only of Theorem \ref{SW2} but also of Theorem \ref{modLSconj}. 
  
  We generalize the following variant of Song-Weinkove criterion, which is partially known by \cite{HK} and \cite{Fu}, with a purely algebro-geometric proof:

  \begin{thm}[cf. Theorem 3 \cite{HK}, Theorem 6.5 \cite {Fu}]\label{fu}
   Let $(X,L)$ be an $n$-dimensional polarized deminormal scheme with a $\mathbb{Q}$-line bundle $H$ on $X$ such that $(X,L)$ and $H$ satisfy (**). Assume that $L^{n-1}\cdot H>0$ (resp. $\ge0$) and $$n^2\frac{H\cdot L^{n-1}}{L^n}L-(n^2-1)H$$ is ample (resp. nef). Then there exists $\epsilon>0$ (resp. $\ge 0$) such that $$(\mathcal{J}^{H})^\mathrm{NA}\ge \epsilon (I^\mathrm{NA}-J^\mathrm{NA}).$$
   \end{thm}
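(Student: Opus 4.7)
The plan is to reduce Theorem \ref{fu} to Theorem \ref{SW} (the extension of the slope criterion $(4)\Leftrightarrow(6)$ of Fact \ref{h} to deminormal schemes over $\mathbb{C}$), and verify the slope-type positivity algebraically from the hypothesis. Write $c_0 := \frac{H \cdot L^{n-1}}{L^n}$, so that the assumption reads: the class $A := n^2 c_0 L - (n^2-1)H$ is ample (resp.\ nef), and $c_0 > 0$ (resp.\ $\ge 0$).

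First, for each integer $p$ with $1 \le p \le n-1$ and each $p$-dimensional subvariety $V \subset X$, I would pair $A$ with the nef cycle $L^{p-1}\cdot V$ (nef because $L$ is ample and $V$ is effective) to obtain
\[
H \cdot L^{p-1} \cdot V \;\le\; \tfrac{n^2 c_0}{n^2-1}\, L^p \cdot V
\]
(strict in the ample case). Substituting into the Lejmi--Sz\'ekelyhidi slope quantity gives
\[
(n c_0 L - p H)\cdot L^{p-1}\cdot V \;\ge\; \tfrac{n c_0 (n^2 - 1 - p n)}{n^2 - 1}\, L^p\cdot V \;\ge\; \tfrac{n c_0}{n+1}\, L^p\cdot V,
\]
since $p \le n-1$ forces $n^2 - 1 - pn \ge n-1 \ge 0$. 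The case $p = n$ is automatic by definition of $c_0$. Condition (**) ensures that the constant $c_0$ and the positivity of $A$ restrict consistently to each irreducible component $X_i$, so the slope positivity holds on every component with the same constant; this is exactly what Theorem \ref{demichan} and Lemma \ref{demibunkai} require for a uniform conclusion at the deminormal level.

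Next, I would translate the slope positivity into the non-Archimedean inequality via Theorem \ref{SW}. In the ample case, the strict slack $\tfrac{n c_0}{n+1} L^p V$ from Step 1 allows one to subtract a correction $(n-p)\epsilon\, L^p\cdot V$ for sufficiently small $\epsilon > 0$, thereby verifying condition (6)/(6)' of Remark \ref{useful}; Theorem \ref{SW} then yields $(\mathcal{J}^H)^{\mathrm{NA}} \ge \epsilon(I^{\mathrm{NA}} - J^{\mathrm{NA}})$. In the nef case one simply takes $\epsilon = 0$ and obtains $(\mathcal{J}^H)^{\mathrm{NA}} \ge 0$.

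The main obstacle is not the ampleness-to-slope computation itself (which is essentially a one-line manipulation) but the optimality and the invocation of Theorem \ref{SW}. The coefficient $n^2 - 1$ is tight: the inequality $n^2 - 1 - pn \ge 0$ is worst at $p = n-1$, where the slack is exactly $n-1$, so any weakening of the constant would fail in the top codimension. The deminormal generalization of the analytic criterion $(4)\Leftrightarrow(6)$ must also already be available; if one wished a purely algebro-geometric argument, the alternative would be to run the computation directly on the decomposition formula Theorem \ref{impl}, bounding each mixed-multiplicity summand $\frac{n H L^{n-1}}{(n+1)L^n}\sum_j e_{\pi^*L}(\mathscr{I}_{D_k}^{[j]}, \mathscr{I}_{D_{k+1}}^{[n-j]}) - \sum_j e_{\pi^*L|\pi^*H}(\mathscr{I}_{D_k|\pi^*H}^{[j]}, \mathscr{I}_{D_{k+1}|\pi^*H}^{[n-1-j]})$ from below using the nefness of $\pi^*L - D_k$ together with Khovanskii--Teissier-type inequalities, which generalizes the surface argument of Proposition \ref{deft}.
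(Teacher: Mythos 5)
Your slope computation is correct: pairing the nef class $A = n^2 c_0 L - (n^2-1)H$ with $L^{p-1}\cdot V$ does give $(n c_0 L - pH)\cdot L^{p-1}\cdot V \ge \tfrac{n c_0}{n+1}L^p\cdot V$ for $1\le p\le n-1$, and the bound $n^2-1-pn\ge n-1$ is tight at $p=n-1$. However, the reduction to Theorem \ref{SW} has a genuine gap, and the route is fundamentally different from the paper's.

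The gap: Theorem \ref{SW} (and, underlying it, G.~Chen's analytic result Theorem \ref{modLSconj}) requires $H$ to be ample (resp.\ nef). Theorem \ref{fu} makes \emph{no positivity assumption on $H$ itself}; only $A$ is required ample/nef and $H\cdot L^{n-1}>0$. The paper flags exactly this in the remark immediately following the proof: ``We do not assume that $H$ is nef or pseudoeffective in Theorem \ref{fu}. Therefore, Theorem \ref{fu} is not weaker than Theorem \ref{SW} below over $\mathbb{C}$.'' As written, your invocation of Theorem \ref{SW} is therefore out of scope. (A translation $H\mapsto H+aL$ for $a\gg0$ does repair it, since $A$ shifts to $A+aL$ and the slope slack also shifts by $a(n-p)L^p\cdot V$, so the correction to $(\mathcal{J}^{H})^\mathrm{NA}$ cancels cleanly; but you do not perform this step, and without it the argument fails for non-nef $H$.) There is also a mild logical inversion: the paper places Theorem \ref{fu} \emph{before} Theorem \ref{SW}, and the proof of Theorem \ref{SW} explicitly reuses ``the same argument of the proof of Theorem \ref{fu}'' for the uniform case, so deriving Theorem \ref{fu} from Theorem \ref{SW} runs against the intended dependency.

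For comparison, the paper's actual proof is a short, direct non-Archimedean computation that avoids the slope criterion entirely. Setting $A=\tfrac{n^2}{n^2-1}c_0 L - H$ (the $\delta=0$ case), one has the decomposition $\tfrac{n^2}{n^2-1}c_0(I^\mathrm{NA}-J^\mathrm{NA}) = (\mathcal{J}^{A})^\mathrm{NA}+(\mathcal{J}^{H})^\mathrm{NA}$; then $(\mathcal{J}^{A})^\mathrm{NA}$ is bounded above by writing $A_{\mathbb{P}^1}\cdot\mathcal{L}^n$ as a telescoping sum $\sum_j A_{\mathbb{P}^1}\cdot(\mathcal{L}-L_{\mathbb{P}^1})^2\cdot(\cdots)$ and applying the Hodge-index-type inequality of \cite[Lemma 1]{LX} together with $I^\mathrm{NA}\ge\tfrac{n+1}{n}J^\mathrm{NA}$. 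This uses only nefness of $A$ (never of $H$), requires no analytic input, and works over any algebraically closed field of characteristic $0$, whereas your route (via Theorem \ref{modLSconj}) is confined to $\mathbb{C}$.
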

   
   \begin{proof}
We may assume that $X$ is normal and irreducible by taking the normalization due to Lemma \ref{demibunkai}. If $n=1$, $L$ is numerically equivalent to a positive (resp. nonnegative) multiple of $H$ by the assumption and hence the theorem immediately follows. Thus we may assume that $n\ge 2$. Let 
\[
A=\left(\frac{n^2}{n^2-1}\frac{H\cdot L^{n-1}}{L^n}-\delta\right)L-H
\]
 for $\delta\ge0$. The assertion when $L^{n-1}\cdot H>0$ and $\frac{n^2H\cdot L^{n-1}}{L^n}L-(n^2-1)H$ is ample follows from when $H\cdot L^{n-1}\ge0$ and $\frac{n^2H\cdot L^{n-1}}{L^n}L-(n^2-1)H$ is nef. In fact, since $\frac{n^2H\cdot L^{n-1}}{L^n}L-(n^2-1)H$ is ample and $H\cdot L^{n-1}>0$, $A$ is nef and $(H-(n^2-1)\delta L)\cdot L^{n-1}\ge0$ for sufficiently small $\delta>0$. Replace $H$ by $H-(n^2-1)\delta L$ for such $\delta$. Then, if the theorem when $H\cdot L^{n-1}\ge0$ and $\frac{n^2H\cdot L^{n-1}}{L^n}L-(n^2-1)H$ is nef would hold, $$(\mathcal{J}^{H})^\mathrm{NA}-(n^2-1)\delta(I^\mathrm{NA}-J^\mathrm{NA})=(\mathcal{J}^{H-(n^2-1)\delta L})^\mathrm{NA}\ge0.$$ 
 Therefore, we can take $\epsilon=(n^2-1)\delta$.
 
 Let $\delta=0$. Then we have
   \[
  \frac{A\cdot L^{n-1}}{L^n}=\frac{1}{n^2-1}\frac{H\cdot L^{n-1}}{L^n}\ge0
   \]
   and
   \[
   \frac{n^2}{n^2-1}\frac{H\cdot L^{n-1}}{L^n}(I^\mathrm{NA}-J^\mathrm{NA})=(\mathcal{J}^{A})^\mathrm{NA}+(\mathcal{J}^{H})^\mathrm{NA}.
   \]
   For any non-Archimedean metric $\phi$ and any representative $(\mathcal{X},\mathcal{L})$ of $\phi$ that dominates $X_{\mathbb{A}^1}$, we may assume that the support of $\mathcal{L}-L_{\mathbb{A}^1}$ does not contain the strict transformation of $X\times\{0\}$ by translation. Then we have $A_{\mathbb{P}^1}\cdot(\mathcal{L}-L_{\mathbb{P}^1})\cdot L_{\mathbb{P}^1}^{n-1}=0$. On the other hand,
   \[
   V(L)(\mathcal{J}^{A})^\mathrm{NA}=A_{\mathbb{P}^1}\cdot\mathcal{L}^n+nV(L)\frac{A\cdot L^{n-1}}{L^n}J^\mathrm{NA}.
   \]
    Since 
    \begin{align*}
   A_{\mathbb{P}^1}\cdot\mathcal{L}^n&=A_{\mathbb{P}^1}\cdot(\mathcal{L}^n-L_{\mathbb{P}^1}^n)\\
   &=\sum_{j=0}^{n-1} A_{\mathbb{P}^1}\cdot(\mathcal{L}-L_{\mathbb{P}^1})\cdot (\mathcal{L}^j\cdot L_{\mathbb{P}^1}^{n-1-j})\\
   &=\sum_{j=0}^{n-1} A_{\mathbb{P}^1}\cdot(\mathcal{L}-L_{\mathbb{P}^1})\cdot ((\mathcal{L}^j-L_{\mathbb{P}^1}^j)\cdot L_{\mathbb{P}^1}^{n-1-j})\\
   &=\sum_{j=0}^{n-1} A_{\mathbb{P}^1}\cdot(\mathcal{L}-L_{\mathbb{P}^1})^2\cdot \left(\sum_{i=0}^{j-1}\mathcal{L}^i\cdot L_{\mathbb{P}^1}^{n-2-i}\right)\\
   &\le0
   \end{align*} 
   by \cite[Lemma 1]{LX} and $I^\mathrm{NA}\ge\frac{n+1}{n}J^\mathrm{NA}$, we have
   \begin{align*}
   (\mathcal{J}^{H})^\mathrm{NA}&\ge\frac{n^2}{n^2-1}\frac{H\cdot L^{n-1}}{L^n}(I^\mathrm{NA}-J^\mathrm{NA})-n\frac{A\cdot L^{n-1}}{L^n}J^\mathrm{NA}\\
   &\ge\frac{n}{n^2-1}\frac{H\cdot L^{n-1}}{L^n}J^\mathrm{NA}-\frac{n}{n^2-1}\frac{H\cdot L^{n-1}}{L^n}J^\mathrm{NA}\\
   &=0 .
   \end{align*}
   \end{proof}
  
  \begin{rem}
  We do not assume that $H$ is nef or pseudoeffective in Theorem \ref{fu}. Therefore, Theorem \ref{fu} is not weaker than Theorem \ref{SW} below over $\mathbb{C}$.
  \end{rem}
  
   We apply Theorem \ref{fu} to extend the following for deminormal schemes:
   
     \begin{cor}[cf. Theorem 3.15 \cite{DR2}, Lemma 6.9 \cite{Li}]\label{Lil}
    For any polarized deminormal scheme $(X,L)$ and any $\mathbb{Q}$-line bundle $H$ on $X$ such that $(X,L)$ and $H$ satisfy (**), there exists a constant $\delta>0$ such that
   \[
   -\delta J^{\mathrm{NA}}\le (\mathcal{J}^{H})^\mathrm{NA}\le \delta J^{\mathrm{NA}}.
   \]
   \end{cor}
   
   \begin{rem}
   C. Li \cite{Li} proved Corollary \ref{Lil} for smooth projective manifolds and $M$ is their canonical divisor $K_X$ over $\mathbb{C}$ by applying Theorem \ref{SW1}. On the other hand, Dervan and Ross show Corollary \ref{Lil} for normal varieties in the proof of \cite[Theorem 3.15]{DR2}.
   \end{rem}
   
   \begin{proof}[Proof of Corollary \ref{Lil}]
  We may assume that $X$ is normal and irreducible. Because $J^{\mathrm{NA}}$ and $I^{\mathrm{NA}}-J^{\mathrm{NA}}$ is comparable, in the above inequality we can use the latter to compare. Let $M=H+\delta L$. Then $\frac{M\cdot L^{n-1}}{L^n}=\frac{H\cdot L^{n-1}}{L^n}+\delta$. For sufficiently large $\delta$, $M$ is ample and
   \begin{align*}
   n\frac{M\cdot L^{n-1}}{L^n}L-\frac{n^2-1}{n}M=&n\left(\frac{H\cdot L^{n-1}}{L^n}+\delta\right)L-\frac{n^2-1}{n}(H+\delta L) \\
   =&\left(n^{-1}\delta + n\frac{H\cdot L^{n-1}}{L^n}\right)L-\frac{n^2-1}{n}H
   \end{align*}
  is ample. In this case, we get $(\mathcal{J}^{H})^\mathrm{NA}+ \delta (I^{\mathrm{NA}}-J^{\mathrm{NA}})=(\mathcal{J}^{M})^\mathrm{NA}\ge 0$ by Proposition \ref{fu}.
  
  On the other hand, let $M=-H+\delta L$. For sufficiently large $\delta$, we can prove the latter part of the corollary similarly.
   \end{proof}

   Thanks to Corollary \ref{Lil}, we can introduce the notion of the stability threshold of Sj\"{o}str\"{o}m Dyrefelt \cite{Sj}.
   \begin{de}
   Let $(X,B;L)$ be a polarized deminormal pair. Then the {\it non-Archimedean log K-stability threshold} is
   \[
    \Delta_B(X,L)=\sup\{\delta\in \mathbb{R}; M^\mathrm{NA}_B\ge \delta (I^\mathrm{NA}-J^\mathrm{NA}) \}.
   \]
   On the other hand, let $H$ be a $\mathbb{Q}$-line bundle such that irreducible components of $(X,L)$ have the same average scalar curvature with respect to $H$. Then the {\it non-Archimedean $J^H$-stability threshold} is
   \[
   \Delta_H^{pp}(X,L)=\sup\{\delta\in \mathbb{R}; (\mathcal{J}^H)^\mathrm{NA}\ge \delta (I^\mathrm{NA}-J^\mathrm{NA}) \}.
   \]
   \end{de}
   By Corollary \ref{Lil}, we can easily show that both of $\Delta_B(X,L)$ and $\Delta_H^{pp}(X,L)$ are well-defined for slc pairs whose irreducible components have the same average scalar curvature. We can extend the basic properties of $\Delta_H^{pp}$ of \cite{Sj} for possibly singular varieties as follows:
   \begin{thm}[cf. Theorem 17 of \cite{Sj}]\label{thsjo}
   Suppose that $X$ is a deminormal projective surface. For any two $\mathbb{Q}$-line bundles $L,H$, where $L$ is ample and $H$ is pseudoeffective such that $(X,L)$ and $H$ satisfy (**) but $(X,L)$ is not uniformly $\mathrm{J}^H$-stable, then the stability threshold satisfies 
   \[
    \Delta_H^{pp}(X,L)=2\frac{H\cdot L}{L^2}-\inf\{\delta>0: \delta L-H \, \mathrm{is}\, \mathrm{ample}\}.
   \]
   \end{thm}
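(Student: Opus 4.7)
The plan is to reduce the theorem to the surface $\mathrm{J}^H$-semistability criterion of Corollary \ref{ms} via a translation trick. The basic identity is that by linearity of $(\mathcal{J}^{\cdot})^{\mathrm{NA}}$ in its upper index together with $(\mathcal{J}^{\lambda L})^{\mathrm{NA}} = \lambda(I^{\mathrm{NA}}-J^{\mathrm{NA}})$ (recorded in \S\ref{Notat}), we have, for every $\delta\in\mathbb{Q}$,
\[
(\mathcal{J}^H)^{\mathrm{NA}} - \delta(I^{\mathrm{NA}}-J^{\mathrm{NA}}) = (\mathcal{J}^{H-\delta L})^{\mathrm{NA}}
\]
on $\mathcal{H}^{\mathrm{NA}}(L)$. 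Consequently,
\[
\Delta_H^{pp}(X,L) = \sup\bigl\{\delta\in\mathbb{R} : (X,L) \text{ is } \mathrm{J}^{H-\delta L}\text{-semistable}\bigr\},
\]
so the problem becomes one of tracking when each $\mathrm{J}^{H-\delta L}$-semistability holds. A direct computation gives
\[
2\frac{(H-\delta L)\cdot L}{L^{2}}L - (H-\delta L) = \left(2\frac{H\cdot L}{L^{2}} - \delta\right)L - H.
\]

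Writing $\delta_{0} := \inf\{\eta > 0 : \eta L - H \text{ is ample}\}$, I would first observe that because $L$ is ample, $\{\eta : \eta L - H \text{ is nef}\} = [\delta_{0},\infty)$: the inclusion $\supseteq$ follows by taking limits of the ample divisors $(\delta_{0}+\varepsilon)L - H$, and $\subseteq$ is because any $\eta < \delta_{0}$ with $\eta L - H$ nef would make $(\eta+\varepsilon)L - H$ ample for small $\varepsilon > 0$, contradicting the infimum. The second step is to apply Corollary \ref{ms} to the pseudoeffective $\mathbb{Q}$-line bundle $H - \delta L$ (for $\delta \le 0$ it is the sum of the pseudoeffective $H$ with the ample $|\delta|L$, hence pseudoeffective): this yields $\mathrm{J}^{H-\delta L}$-semistability $\iff \left(2\frac{H\cdot L}{L^{2}} - \delta\right)L - H$ is nef $\iff 2\frac{H\cdot L}{L^{2}} - \delta \ge \delta_{0}$, i.e.\ $\delta \le 2\frac{H\cdot L}{L^{2}} - \delta_{0}$.

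The final step is to handle the range $\delta > 0$ (where $H - \delta L$ need no longer be pseudoeffective, so Corollary \ref{ms} is not directly applicable). Here the hypothesis that $(X,L)$ is not uniformly $\mathrm{J}^H$-stable enters: by the uniform direction of Corollary \ref{ms} the divisor $2(H\cdot L)L/L^{2} - H$ is not ample, equivalently $\delta_{0} \ge 2(H\cdot L)/L^{2}$, which forces $2(H\cdot L)/L^{2} - \delta_{0} \le 0$. Combined with the previous paragraph, this shows the supremum is attained in the range $\delta\le 0$ where the Corollary applies, yielding $\Delta_{H}^{pp}(X,L) = 2(H\cdot L)/L^{2} - \delta_{0}$.

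The main (and really only) obstacle is therefore to justify that the regime $\delta > 0$ does not contribute to the supremum; the theorem's nonstability hypothesis is exactly engineered to make this automatic. Finally, the equality $\inf\{\eta > 0 : \eta L - H \text{ is ample}\} = \inf\{\eta > 0 : \eta L - H \text{ is nef}\}$, used implicitly above, is an immediate consequence of the ampleness of $L$ (adding a small ample multiple to a nef divisor produces an ample one), so the formula as stated in the theorem is identical to the one we derived.
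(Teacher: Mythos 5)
Your reformulation---replacing the paper's $t$-parametrized functions $R(t),L(t)$ with a direct sweep over $\delta$ via the translation identity $(\mathcal{J}^{H})^{\mathrm{NA}} - \delta(I^{\mathrm{NA}}-J^{\mathrm{NA}}) = (\mathcal{J}^{H-\delta L})^{\mathrm{NA}}$---is a cleaner and more direct route to the same endpoint. The paper instead sets $H_t = (1-t)H + tL$, uses Lemmas \ref{14} and \ref{15} to establish that $R(t)=\Delta^{pp}_{H_t}(X,L)$ and the explicit right-hand side $L(t)$ are both affine in $t$ with $R(1)=L(1)=1$, and then appeals to a sign-coincidence claim to extract a second intersection point $t_0<1$. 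Your version dispenses with the affine-linearity bookkeeping and lands on the same core ingredient, namely Corollary \ref{ms} applied to the pseudoeffective divisors $H-\delta L$ for $\delta\le 0$; this part of your argument is correct.

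However, your final step contains a genuine gap. To show that the supremum sits in the range $\delta\le 0$, you invoke ``the uniform direction of Corollary \ref{ms}'' to deduce that $2\frac{H\cdot L}{L^2}L - H$ is not ample. That direction of Corollary \ref{ms} is stated (and proved) only for \emph{big} $H$; the hypothesis of the present theorem only gives pseudoeffective $H$, and bigness is really used in the corollary's proof (to keep $H-\epsilon L$ pseudoeffective). The missing link is the numerical identity
\[
\left(2\frac{H\cdot L}{L^{2}}L - H\right)^{2} = H^{2},
\]
which, together with condition (**), shows that if $2\frac{H\cdot L}{L^2}L - H$ were ample then each $(H|_{X_i})^2>0$; since $H$ is pseudoeffective this forces $H$ to be big, and \emph{only then} does Corollary \ref{ms}'s uniform direction yield uniform $\mathrm{J}^H$-stability, contradicting the hypothesis. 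Incidentally, the paper's own step ``the existence of $t_0<1$ follows immediately from the claim'' quietly passes over the analogous boundary case $R(0)=0$, $L(0)>0$, which must be excluded by the same argument; so this point deserves to be spelled out in either presentation. Once this observation is added, your proof is complete and correct.
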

  We mimic the proof of \cite{Sj}. To prove the theorem, we prepare the followings.
   \begin{lem}[cf. Lemma 14 \cite{Sj}]\label{14}
  $X$ as in Theorem \ref{thsjo}. Suppose that $H$ is a $\mathbb{Q}$-line bundle on $X$ and $L$ is an ample $\mathbb{Q}$-line bundle. Suppose that $(X,L)$ and $H$ satisfy (**). Let $a,b\in \mathbb{Q}$ with $a\ge 0$. Then
   \[
   \Delta_{aH+bL}^{pp}(X,L)=a\Delta_{H}^{pp}(X,L)+b.
   \]
   \end{lem}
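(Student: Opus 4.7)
The plan is to reduce the identity to two structural facts recalled in Section \ref{Notat}: the linearity of the non-Archimedean functional $(\mathcal{J}^{\bullet})^{\mathrm{NA}}$ in its superscript, and the identity
\[
(\mathcal{J}^{\lambda L})^{\mathrm{NA}}(\mathcal{X},\mathcal{L}) = \lambda\bigl(I^{\mathrm{NA}}(\mathcal{X},\mathcal{L}) - J^{\mathrm{NA}}(\mathcal{X},\mathcal{L})\bigr)
\]
valid for all $\lambda \in \mathbb{Q}$. Before invoking these, I would first verify that $\Delta_{aH+bL}^{pp}(X,L)$ is well-defined, i.e., that $(X,L)$ and $aH + bL$ satisfy condition (**). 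This is immediate from linearity of intersection numbers: the ratio $\frac{L|_{X_i}^{n-1}\cdot (aH+bL)|_{X_i}}{L|_{X_i}^n} = a\frac{H|_{X_i}\cdot L|_{X_i}^{n-1}}{L|_{X_i}^n} + b$ is independent of the irreducible component $X_i$ since it is so for $H$ by assumption.

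Next, I would use that the intersection-theoretic formula
\[
V(L)(\mathcal{J}^{H'})^{\mathrm{NA}}(\mathcal{X},\mathcal{L}) = p^*H' \cdot \overline{\mathcal{L}}^n - \frac{n\, H' \cdot L^{n-1}}{(n+1)L^n}\overline{\mathcal{L}}^{n+1}
\]
is $\mathbb{Q}$-linear in $H'$, together with the identity above, to obtain on every $\phi \in \mathcal{H}^{\mathrm{NA}}(L)$ the equality
\[
(\mathcal{J}^{aH+bL})^{\mathrm{NA}}(\phi) = a(\mathcal{J}^{H})^{\mathrm{NA}}(\phi) + b\bigl(I^{\mathrm{NA}}(\phi) - J^{\mathrm{NA}}(\phi)\bigr).
\]
Rearranging, the inequality $(\mathcal{J}^{aH+bL})^{\mathrm{NA}} \ge \delta(I^{\mathrm{NA}} - J^{\mathrm{NA}})$ is equivalent to $a(\mathcal{J}^{H})^{\mathrm{NA}} \ge (\delta - b)(I^{\mathrm{NA}} - J^{\mathrm{NA}})$.

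When $a > 0$, dividing through by $a$ and taking suprema over admissible $\delta$ yields $\Delta_{aH+bL}^{pp}(X,L) = a\,\Delta_{H}^{pp}(X,L) + b$ directly. The degenerate case $a = 0$ reduces to showing $\Delta_{bL}^{pp}(X,L) = b$, which is clear since $(\mathcal{J}^{bL})^{\mathrm{NA}} = b(I^{\mathrm{NA}} - J^{\mathrm{NA}})$ and $I^{\mathrm{NA}} - J^{\mathrm{NA}}$ is strictly positive on nontrivial metrics (Proposition 7.8 of \cite{BHJ}). The only conceptual input needed is well-definedness of the threshold itself, which is guaranteed by Corollary \ref{Lil}; there is no substantive obstacle, the lemma is essentially a formal consequence of the translation-type identity $(\mathcal{J}^{\lambda L})^{\mathrm{NA}} = \lambda(I^{\mathrm{NA}} - J^{\mathrm{NA}})$ and linearity in $H$.
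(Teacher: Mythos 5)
Your proof is correct and fills in exactly what the paper means by ``straightforward'': the paper's own proof consists solely of noting that condition (**) carries over to $aH+bL$, and your verification of that point together with the linearity of $(\mathcal{J}^{\bullet})^{\mathrm{NA}}$ in the superscript and the identity $(\mathcal{J}^{\lambda L})^{\mathrm{NA}}=\lambda(I^{\mathrm{NA}}-J^{\mathrm{NA}})$ is precisely the intended computation, including the separate treatment of $a=0$ via strict positivity of $I^{\mathrm{NA}}-J^{\mathrm{NA}}$ on nontrivial metrics.
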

   The proof of Lemma \ref{14} is straightforward. Note that all irreducible components of $(X,L)$ have the same average scalar curvature with respect to $aH+bL$ for $a,b\in\mathbb{R}$ by the assumption.
   
   Let $H$ be a $\mathbb{Q}$-line bundle on $X$ and $L$ is an ample $\mathbb{Q}$-line bundle. Suppose that $(X,L)$ and $H$ satisfy (**). Decompose the two dimensional subspace $\mathrm{Span}(H,L)$ of $NS(X)\otimes\mathbb{R}$ spanned by $H$ and $L$ into two components
   \[
   \mathrm{Span}(H,L)=\mathrm{Span}(H,L)^+\cup\mathrm{Span}(H,L)^-,
   \]
   where
   \[
   \mathrm{Span}(H,L)^+\coloneq\{ aH+bL: a\ge 0, b\in \mathbb{R}\}
   \]
   and
   \[
    \mathrm{Span}(H,L)^-\coloneq\{ aH+bL: a\le 0, b\in \mathbb{R}\}.
    \]
    Then the following holds.
   \begin{lem}[cf. Lemma 15 \cite{Sj}]\label{15}
   Notations as above. Suppose that $M_0$ and $M_1$ are $\mathbb{Q}$-line bundles on $X$ such that either $(M_0,M_1)\in \mathrm{Span}(H,L)^+\times\mathrm{Span}(H,L)^+$ or $(M_0,M_1)\in\mathrm{Span}(H,L)^-\times\mathrm{Span}(H,L)^-$. Then
   \[
   \Delta^{pp}_{(1-t)M_0+tM_1}(X,L)=(1-t)\Delta^{pp}_{M_0}(X,L)+t\Delta^{pp}_{M_1}(X,L)
   \]
   for any $t\in [0,1]\cap\mathbb{Q}$.
   \end{lem}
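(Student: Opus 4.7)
The plan is to reduce Lemma 15 to Lemma 14 by choosing a suitable ``base'' $\mathbb{Q}$-line bundle in the linear span. Write $M_i = a_i H + b_i L$ with $a_i, b_i \in \mathbb{Q}$ for $i = 0, 1$. By hypothesis, either $a_0, a_1 \ge 0$ (the $(+,+)$ case) or $a_0, a_1 \le 0$ (the $(-,-)$ case), so the convex combination $(1-t)M_0 + tM_1 = \bigl((1-t)a_0 + ta_1\bigr) H + \bigl((1-t)b_0 + tb_1\bigr) L$ has coefficient of $H$ of the same sign as the $a_i$'s. This uniformity is exactly what is needed to apply Lemma 14 to all three line bundles with a common base direction. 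Note that condition (**) is preserved under linear combinations in $\mathrm{Span}(H,L)$: the defining equality of the ratios $\frac{(L|_{X_j})^{n-1}\cdot (\cdot)|_{X_j}}{(L|_{X_j})^n}$ is linear in the argument, and is trivially satisfied by $L$ and by hypothesis by $H$, so it also holds for $aH+bL$ and for $-H$.

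In the $(+,+)$ case, Lemma 14 applied to the fixed line bundle $H$ yields
\[
\Delta^{pp}_{M_i}(X,L) = a_i \Delta^{pp}_H(X,L) + b_i, \quad i = 0, 1,
\]
and, since $(1-t)a_0 + ta_1 \ge 0$,
\[
\Delta^{pp}_{(1-t)M_0+tM_1}(X,L) = \bigl((1-t)a_0 + ta_1\bigr)\Delta^{pp}_H(X,L) + \bigl((1-t)b_0 + tb_1\bigr).
\]
A direct comparison shows the right-hand side is $(1-t)\Delta^{pp}_{M_0}(X,L) + t\Delta^{pp}_{M_1}(X,L)$. For the $(-,-)$ case, rewrite each $M_i$ as $|a_i|(-H) + b_i L$ with $|a_i| \ge 0$; since $-H$ together with $(X,L)$ also satisfies (**), the same argument applies verbatim with $H$ replaced by $-H$ as the fixed base, and the convex combination $(1-t)|a_0|+t|a_1|\ge 0$ is again nonnegative.

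No serious obstacle arises: the proof is essentially a mechanical unpacking of Lemma 14. The one conceptual subtlety, which motivates the decomposition $\mathrm{Span}(H,L) = \mathrm{Span}(H,L)^+ \cup \mathrm{Span}(H,L)^-$ in the statement, is the sign restriction built into Lemma 14. Namely, the formula $\Delta^{pp}_{aH+bL} = a\Delta^{pp}_H + b$ genuinely fails for $a < 0$ because dividing the defining inequality of $\Delta^{pp}$ by $a$ reverses its sense, turning a supremum into an infimum. Consequently a line segment crossing the hyperplane $\{a=0\}$ cannot be treated in a single step, and affinity of $\Delta^{pp}_{\bullet}(X,L)$ along segments can only be asserted inside one closed half-plane at a time.
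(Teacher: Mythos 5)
Your proof is correct and follows essentially the same route as the paper: in the $(+,+)$ case you apply Lemma~\ref{14} with fixed base $H$ to each of $M_0$, $M_1$, and the convex combination (whose $H$-coefficient is nonnegative), then compare affine-linear expressions; the $(-,-)$ case is reduced to the $(+,+)$ case by replacing $H$ with $-H$, exactly as in the paper. Your additional remarks — that (**) is stable under passing to $aH+bL$ and to $-H$, and that the sign restriction in Lemma~\ref{14} is what forces the half-plane decomposition — are sound and match the brief note the paper makes immediately after Lemma~\ref{14}.
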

   \begin{proof}
   First, we prove the lemma when $(M_0,M_1)\in \mathrm{Span}(H,L)^+\times\mathrm{Span}(H,L)^+$. Suppose that $M_0=a_0H+b_0L$ and $M_1=a_1H+b_1L$, where $a_0,a_1\in\mathbb{Q}_{\ge0}$ and $b_0,b_1\in\mathbb{Q}$. For each $t\in [0,1]\cap\mathbb{Q}$,
   \begin{align*}
   (\mathcal{J}^{(1-t)M_0+tM_1})^\mathrm{NA}&=(1-t)(\mathcal{J}^{M_0})^\mathrm{NA}+t(\mathcal{J}^{M_1})^\mathrm{NA}\\
   &=(a_0(1-t)+a_1t)(\mathcal{J}^{H})^\mathrm{NA}+(b_0(1-t)+b_1t)(I^\mathrm{NA}-J^\mathrm{NA}).
   \end{align*}
   Therefore, we have
   \begin{align*}
   \Delta^{pp}_{(1-t)M_0+tM_1}(X,L)&=(a_0(1-t)+a_1t)\Delta^{pp}_{H}(X,L)+(b_0(1-t)+b_1t)\\
   &=(1-t)\Delta^{pp}_{M_0}(X,L)+t\Delta^{pp}_{M_1}(X,L)
   \end{align*}
   by Lemma \ref{14}.
   
   On the other hand, if $(M_0,M_1)\in\mathrm{Span}(H,L)^-\times\mathrm{Span}(H,L)^-$, then $(M_0,M_1)\in \mathrm{Span}(-H,L)^+\times\mathrm{Span}(-H,L)^+$. Hence the lemma holds.
   \end{proof}
   
   \begin{proof}[Proof of Theorem \ref{thsjo}]
   Let $H_t=(1-t)H+tL$ for $t\in [0,1]\cap \mathbb{Q}$, $R(t)=\Delta_{H_t}^{pp}(X,L)$ and $L(t)=2\frac{H_t\cdot L}{L^2}-\inf\{\delta>0: \delta L-H_t \, \mathrm{is}\, \mathrm{ample}\}$. By Lemma \ref{15}, $R(t)$ is affine linear and extended to all of $[0,1]$. On the other hand, $L(t)$ is also affine linear and extended to all of $[0,1]$. It is easy to see that $R(1)=L(1)=1$. We have only to show that there exists $t_0<1$ such that $R(t_0)=L(t_0)$. This follows immediately from the following claim:
   \begin{claim}
   For $t\in [0,1]\cap \mathbb{Q}$, $L(t)<0$ if and only if $R(t)<0$.
   \end{claim}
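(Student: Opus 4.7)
The plan is to translate each side of the claim into a condition on the single divisor
\[
N_t \coloneq a_t L - H_t, \qquad a_t \coloneq 2\frac{H_t\cdot L}{L^2},
\]
and then observe that the two resulting conditions (``not nef'' versus ``not ample'') coincide.

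\textbf{Step 1: interpret $R(t)<0$.} By the definition of the stability threshold, $R(t)<0$ is equivalent to the failure of $(X,L)$ to be $\mathrm{J}^{H_t}$-semistable. Since $H$ is pseudoeffective and $L$ is ample, $H_t=(1-t)H+tL$ is pseudoeffective for every $t\in[0,1]\cap\mathbb{Q}$; moreover, the hypothesis that all irreducible components of $(X,L)$ share the same average scalar curvature with respect to $H$ passes to $H_t$ by linearity. Hence Corollary \ref{ms} applies and gives: $(X,L)$ is $\mathrm{J}^{H_t}$-semistable if and only if $N_t\cdot C\ge 0$ for every integral curve $C$, i.e.\ $N_t$ is nef. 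Thus $R(t)<0$ if and only if $N_t$ is not nef.

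\textbf{Step 2: interpret $L(t)<0$.} By definition $L(t)=a_t-\delta_0(t)$ with $\delta_0(t)\coloneq\inf\{\delta>0:\delta L-H_t\text{ ample}\}$. Since $\delta L-H_t=N_t+(\delta-a_t)L$ and $L$ is ample, $L(t)<0$ is equivalent to: $N_t$ is not ample.

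\textbf{Step 3: compare ``not nef'' and ``not ample''.} Since not nef immediately implies not ample, only the converse needs care. Suppose $N_t$ is nef. Then for every rational $\epsilon>0$ the divisor $N_t+\epsilon L=(a_t+\epsilon)L-H_t$ is the sum of a nef and an ample $\mathbb{Q}$-divisor on the projective deminormal surface $X$, hence is ample (apply Kleiman/Nakai componentwise after passing to the normalization). Thus $\delta_0(t)\le a_t+\epsilon$ for every $\epsilon>0$, giving $\delta_0(t)\le a_t$, i.e.\ $L(t)\ge 0$. Contrapositively, $L(t)<0$ forces $N_t$ not nef. Combining Steps 1--3 yields the claim.

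The main subtlety is verifying that Corollary \ref{ms} is genuinely available in Step 1: I need to ensure the (**) hypothesis transports from $(H,L)$ to $(H_t,L)$ and that $H_t$ remains pseudoeffective on every component of $X$. Both are immediate from linearity, but they are the hinges of the argument; without them the ``not nef $\Longleftrightarrow$ not $\mathrm{J}^{H_t}$-semistable'' step fails and one would be reduced to the weaker comparison through uniform stability, which is insufficient here because the very hypothesis of the theorem is that $(X,L)$ is not uniformly $\mathrm{J}^H$-stable.
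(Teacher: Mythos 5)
Your proof is correct and takes essentially the same approach as the paper: the paper's one-line argument also reduces $R(t)\ge 0$ to the nefness of $2\frac{H_t\cdot L}{L^2}L-H_t$ via Corollary \ref{ms}, and then identifies that nefness with $L(t)\ge 0$. You have simply made explicit the small checks the paper leaves implicit (that pseudoeffectivity and condition (**) persist along the segment $H_t=(1-t)H+tL$, and the cone argument that nef plus ample is ample).
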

   Indeed, by Corollary \ref{ms}, $R(t)\ge 0$ if and only if $2\frac{H_t\cdot L}{L^2}L-H_t$ is nef. The latter condition is equivalent to $L(t)\ge 0$.
   \end{proof}
   Finally, we remark that the following theorem:
   \begin{thm}[cf. Theorem 18 \cite{Sj}]
   Suppose that $(X,L)$ is a polarized deminormal surface. If $H$ is a $\mathbb{Q}$-line bundle on $X$ such that $(X,L)$ and $H$ satisfy (**) and $\Delta^{pp}_H(X,L)<\mathcal{T}(H,L)$, where
   \[
   \mathcal{T}(H,L)\coloneq \sup\{ \delta\in\mathbb{R} | H-\delta L\, \mathrm{is}\,\mathrm{pseudoeffective}\},
   \]
   then
   \begin{align}\label{thank}
   \Delta_H^{pp}(X,L)=2\frac{H\cdot L}{L^2}-\inf\{\delta>0: \delta L-H \, \mathrm{is}\, \mathrm{ample}\}.
   \end{align}
   \end{thm}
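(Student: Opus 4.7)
The plan is to establish the identity by bracketing $\Delta^{pp}_H(X,L)$ with matching upper and lower bounds, both obtained from Corollary \ref{ms} applied to the shifted $\mathbb{Q}$-line bundles $H-\delta L$ for rational $\delta$ approaching $\Delta^{pp}_H(X,L)$ from the two sides. Throughout I set $c^*(H)\coloneq\inf\{\delta\in\mathbb{R}:\delta L-H\text{ is ample}\}$; since on a surface the nef cone is the closure of the ample cone, $(2\tfrac{H\cdot L}{L^2}-\delta)L-H$ is nef if and only if $\delta\le 2\tfrac{H\cdot L}{L^2}-c^*(H)$.

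For the upper bound, I would fix any rational $\delta<\Delta^{pp}_H(X,L)$, noting that then $\delta<\mathcal{T}(H,L)$ a fortiori by the standing hypothesis. Hence $H-\delta L$ is pseudoeffective and still satisfies (**) with $L$, while Lemma \ref{14} gives $\Delta^{pp}_{H-\delta L}(X,L)=\Delta^{pp}_H(X,L)-\delta>0$, so $(X,L)$ is uniformly $\mathrm{J}^{H-\delta L}$-stable and in particular $\mathrm{J}^{H-\delta L}$-semistable. Corollary \ref{ms} then forces $(2\tfrac{H\cdot L}{L^2}-\delta)L-H$ to be nef, whence $\delta\le 2\tfrac{H\cdot L}{L^2}-c^*(H)$. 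Passing to the supremum over such rational $\delta$ yields $\Delta^{pp}_H(X,L)\le 2\tfrac{H\cdot L}{L^2}-c^*(H)$.

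For the lower bound the strict inequality $\Delta^{pp}_H<\mathcal{T}$ enters decisively: pick any rational $\delta\in(\Delta^{pp}_H(X,L),\mathcal{T}(H,L))$, a non-empty interval by hypothesis. Then $H-\delta L$ is still pseudoeffective (since $\delta<\mathcal{T}$) but $\Delta^{pp}_{H-\delta L}<0$, so $(X,L)$ fails to be $\mathrm{J}^{H-\delta L}$-semistable. The contrapositive of Corollary \ref{ms} now tells us $(2\tfrac{H\cdot L}{L^2}-\delta)L-H$ is not nef, forcing $\delta>2\tfrac{H\cdot L}{L^2}-c^*(H)$; taking the infimum over such $\delta$ gives $\Delta^{pp}_H(X,L)\ge 2\tfrac{H\cdot L}{L^2}-c^*(H)$. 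Combining both inequalities yields $\Delta^{pp}_H(X,L)=2\tfrac{H\cdot L}{L^2}-c^*(H)$, and identifying $c^*(H)$ with $\inf\{\delta>0:\delta L-H\text{ is ample}\}$---valid in the regime where the stated formula (\ref{thank}) has meaning, e.g.\ when $c^*(H)\ge 0$---closes the argument.

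The principal obstacle is precisely the invocability of the converse direction of Corollary \ref{ms} in the lower-bound step: it demands that $H-\delta L$ be pseudoeffective for $\delta$ just above $\Delta^{pp}_H$, and this is exactly what the strict hypothesis $\Delta^{pp}_H(X,L)<\mathcal{T}(H,L)$ guarantees, producing a non-empty testing interval $(\Delta^{pp}_H,\mathcal{T})$ on which failure of semistability can be paired with pseudoeffectivity. A secondary, routine technicality is that $\Delta^{pp}_H$ is defined as a supremum that may be neither attained nor rational, so both bounds must be obtained by letting rational $\delta$ approach $\Delta^{pp}_H$ from each side and passing to the limit via closedness of the nef cone and openness of the ample cone.
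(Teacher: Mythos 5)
Your proof is correct and takes a genuinely different, and in fact more elementary, route than the paper. The paper's proof reduces the statement to the previously established Theorem \ref{thsjo}: it shifts $H$ to $H-\delta_0 L$ for rational $\delta_0\in(\Delta^{pp}_H,\mathcal{T}]$ so as to land in the pseudoeffective-and-not-uniformly-$\mathrm{J}$-stable regime, then compares $R(t)=\Delta^{pp}_{H_t}$ and the candidate formula $L(t)$ along the segment $H_t=(1-t)(H-\delta_0L)+tL$, concluding equality from affine-linearity (Lemma \ref{15}) plus agreement at $t=0$ (Theorem \ref{thsjo}) and $t=1$, and finally evaluating at $t=\tfrac{\delta_0}{1+\delta_0}$. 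You instead bypass Theorem \ref{thsjo} and Lemma \ref{15} entirely: using only the special-case translation rule $\Delta^{pp}_{H-\delta L}=\Delta^{pp}_H-\delta$ (Lemma \ref{14}), you shift by rational $\delta$ just below and just above $\Delta^{pp}_H$, check that $H-\delta L$ remains pseudoeffective (the sole role of the hypothesis $\Delta^{pp}_H<\mathcal{T}$) and still satisfies (\textbf{**}), and then convert the resulting semistability/instability into nefness/non-nefness via the two directions of Corollary \ref{ms}, squeezing $\Delta^{pp}_H$ against $c^*(H)\coloneq\inf\{\delta\in\mathbb{R}:\delta L-H\text{ ample}\}$. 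The underlying engine (Corollary \ref{ms} together with $L$-translation invariance) is the same, but your bracketing argument is shorter, makes the role of each hypothesis transparent, and avoids the somewhat delicate claim that $L(t)$ is affine-linear along the segment. Your caveat about $\inf\{\delta>0\}$ versus $c^*(H)$ is well taken and not merely cosmetic: your argument establishes the formula with $c^*(H)$, which agrees with the stated one precisely when $c^*(H)\ge 0$; the paper's opening assertion ``$\mathcal{T}(H,L)\ge 0$'' is equivalent to $H$ being pseudoeffective (hence $c^*(H)>\tfrac{H\cdot L}{L^2}\ge 0$), so the paper is tacitly working in that regime, and outside it --- say $H$ anti-ample with $c^*(H)<0$ --- it is the $c^*(H)$ form you derive, not the $\inf\{\delta>0\}$ form, that is correct.
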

   \begin{proof}
  Note that $\mathcal{T}(H,L)\ge 0$ and the left and the right hand sides are positive homogeneous in $H$. Let $\delta_0\ge 0$ be a rational number such that $\Delta^{pp}_H(X,L)<\delta_0\le\mathcal{T}(H,L)$ and then we have $(X,L)$ is $\mathrm{J}^H$-unstable and $H-\delta_0 L$ is a pseudoeffective $\mathbb{Q}$-line bundle by the assumption. As in the proof of Theorem \ref{thsjo}, let $H_t=(1-t)(H-\delta_0L)+tL$, $R(t)=\Delta_{H_t}^{pp}(X,L)$ and $L(t)=2\frac{H_t\cdot L}{L^2}-\inf\{\delta>0: \delta L-H_t \, \mathrm{is}\, \mathrm{ample}\}$. Note that $L(t)$ and $R(t)$ is defined only on $t\in[0,1]$ such that $H_t$ is a $\mathbb{Q}$-line bundle. However, we can extend these to affine linear functions defined on $[0,1]$ similarly. Note that $L(1)=R(1)=1$. On the other hand, we can find $t_0<1$ such that $L(t_0)=R(t_0)$. Therefore, $L(t)=R(t)$ for any $t\in[0,1]$ and we complete the proof by $L(\frac{\delta_0}{1+\delta_0})=R(\frac{\delta_0}{1+\delta_0})$.
   \end{proof}

   \subsection{Generalized Lejmi-Sz\'{e}kelyhidi conjecture and K-stability of minimal models}
   In this subsection, we work over $\mathbb{C}$. We extend Lejmi-Sz\'{e}kelyhidi conjecture to the case when $X$ is deminormal and explain its applications to K-stability of klt minimal models. 
   
  \begin{thm}[Lejmi-Sz\'{e}kelyhidi conjecture for deminormal polarized schemes]\label{SW}
Let $(X,L)$ be a deminormal polarized scheme over $\mathbb{C}$ and $H$ be an ample (resp. nef) $\mathbb{Q}$-line bundle on $X$ such that $(X,L)$ and $H$ satisfy (**). Then the followings are equivalent.
\item[(1)] $(X,L)$ is uniformly $\mathrm{J}^H$-stable (resp. $\mathrm{J}^H$-semistable). In other words, there exists $\epsilon>0$ such that for any semiample test configuration $(\mathcal{X},\mathcal{L})$
\[
(\mathcal{J}^H)^\mathrm{NA}(\mathcal{X},\mathcal{L})\ge \epsilon J^\mathrm{NA}(\mathcal{X},\mathcal{L})\quad (\mathrm{resp}.\, \ge0).
\] 
\item[(2)] $(X,L)$ is uniformly slope $\mathrm{J}^H$-stable (resp. slope $\mathrm{J}^H$-semistable). In other words, there exists $\epsilon>0$ such that for any semiample deformation to the normal cone $(\mathcal{X},\mathcal{L})$ along any integral curve
\[
(\mathcal{J}^H)^\mathrm{NA}(\mathcal{X},\mathcal{L})\ge \epsilon J^\mathrm{NA}(\mathcal{X},\mathcal{L})\quad (\mathrm{resp}.\, \ge0).
\] 
\item[(3)] There exists $\epsilon>0$ such that for any $p$-dimensional subvariety $V$, 
\[
\left(n\frac{H\cdot L^{n-1}}{L^n}L-pH\right)\cdot L^{p-1}\cdot V\ge (n-p)\epsilon L^p\cdot V\quad (\mathrm{resp}.\, \ge0).
\]
\end{thm}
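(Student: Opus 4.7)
The strategy is to establish the three implications $(1) \Rightarrow (2) \Rightarrow (3) \Rightarrow (1)$. The first, $(1) \Rightarrow (2)$, is immediate since deformations to the normal cone of integral subvarieties are semiample test configurations. For $(2) \Rightarrow (3)$, one applies the direct generalization of Lemma \ref{unst2} to arbitrary $p$-dimensional subvarieties: if the numerical inequality in (3) fails on some $V$, then taking $(\mathcal{X}, L_{\mathbb{A}^1} - \delta E)$ to be the deformation to the normal cone along $V$, the leading $\delta^{n-p+1}$ coefficient of $(\mathcal{J}^H)^\mathrm{NA}(\mathcal{X}, L_{\mathbb{A}^1} - \delta E)$ is a positive multiple of $(n\frac{H\cdot L^{n-1}}{L^n}L - pH) \cdot L^{p-1} \cdot V$, contradicting (2).

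For $(3) \Rightarrow (1)$, first reduce to the case that $X$ is normal and irreducible. Applying Lemma \ref{demibunkai}, the functionals $(\mathcal{J}^H)^\mathrm{NA}$, $J^\mathrm{NA}$, and $I^\mathrm{NA}$ decompose as weighted sums over the irreducible components $X_i^\nu$ of the normalization of $X$; and since subvarieties of each $X_i^\nu$ are (after push-forward) subvarieties of $X$, condition (3) restricts to each $X_i^\nu$ with a slack of the same order. Hence it suffices to treat normal irreducible $X$.

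For such $X$, take a resolution $\pi : X' \to X$ with $X'$ smooth, and fix a $\pi$-ample $\mathbb{Q}$-divisor $F$ on $X'$. For small rational $\delta > 0$, both $\pi^*L + \delta F$ and $\pi^*H + \delta F$ are ample (resp. nef) on $X'$. Using the projection formula together with $\pi_*[V'] = \deg(\pi|_{V'})[\pi(V')]$ for $p$-dimensional $V' \subset X'$ that dominate their image, one lifts condition (3) from $X$ to $X'$ for the pair $(\pi^*L + \delta F, \pi^*H + \delta F)$ with a slack $\epsilon(\delta) > 0$ tending to $\epsilon$. Applying Fact \ref{h} (Chen--Datar--Pingali--Song) on the smooth K\"ahler manifold $X'$ then yields uniform $\mathrm{J}^{\pi^*H + \delta F}$-stability of $(X', \pi^*L + \delta F)$. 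Invoking the invariance of $(\mathcal{J}^H)^\mathrm{NA}$ and $J^\mathrm{NA}$ under pullback along the alternation $\pi$ (as in Remark \ref{imp}, where they scale by $\deg \pi$), we take $\delta \to 0$ exactly as in equation (\ref{eqn}) in the proof of Theorem \ref{impl} to descend uniform $\mathrm{J}^H$-stability to $(X, L)$. The semistability case is handled analogously by taking $\epsilon = 0$ throughout and arguing at each perturbation.

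The main obstacle is the lifting step: on $X'$ one must control the inequality in (3) for \emph{all} $p$-dimensional subvarieties $V'$, in particular those contracted by $\pi$, for which $(\pi^*L)^{p-1} \cdot V'$ either vanishes or is of lower order. On such $V'$ the effective inequality is dominated by the perturbation $\delta F$, so one must choose $F$ and regulate $\delta$ so that the perturbation-driven inequality holds with a positive slack that survives the limit $\delta \to 0$. The invariance of the non-Archimedean functionals under the alternation $\pi$ together with the uniform control of error terms of order $O(\delta)$ guaranteed by the decomposition formula of Theorem \ref{impl} is precisely what allows this limit to preserve a strictly positive constant in the uniform case.
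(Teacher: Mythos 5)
Your overall strategy matches the paper's: $(1)\Rightarrow(2)$ is trivial, $(2)\Rightarrow(3)$ follows from Lemma \ref{unst2}, and $(3)\Rightarrow(1)$ is done by reducing to normal irreducible $X$, passing to a smooth resolution, perturbing line bundles, invoking the smooth K\"ahler result, and taking limits as in Theorem \ref{impl}. You also correctly isolate the single nontrivial obstacle --- contracted subvarieties $V'\subset X'$.

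However, you do not actually resolve that obstacle; you only gesture at it. Writing that ``one must choose $F$ and regulate $\delta$ so that the perturbation-driven inequality holds with a positive slack that survives the limit'' leaves unexplained \emph{why} the perturbation terms are of the right sign on contracted $V'$. This is precisely where the paper's computation lives. If $E$ is an ample divisor on $\widetilde{X}$ and $V'$ has dimension $p$ with $\dim\pi(V')=q<p$, then after expanding
\[
\left(n\tfrac{H\cdot L^{n-1}}{L^n}(\pi^*L+\epsilon E)-p\,\pi^*H\right)\cdot (\pi^*L+\epsilon E)^{p-1}\cdot V'
\]
in powers of $\epsilon$, the terms of order $\epsilon^r$ with $p-r>q$ vanish (since $\pi^*$-classes cannot fill the dimension), and the remaining $\epsilon^r$-coefficient is, using the projection formula and the binomial identity $p\binom{p-1}{r}=(p-r)\binom{p}{r}$,
\[
\binom{p}{r}\left(n\tfrac{H\cdot L^{n-1}}{L^n}L-(p-r)H\right)\cdot L^{p-r-1}\cdot \pi_*(E^r\cdot V')\ge 0
\]
by applying hypothesis (3) to the positive $(p-r)$-cycle $\pi_*(E^r\cdot V')$ on $X$. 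This is the essential observation: the contracted-variety contributions are again instances of condition (3) on $X$ in lower dimension, not something to be beaten down by the size of $\delta$. The $\delta$-perturbation of $H$ (which in the paper is by $\delta(\pi^*L+\epsilon E)$, not by $\delta F$ with $F$ merely $\pi$-ample) is only needed to absorb the slope error term of size $O(\epsilon)$, which is independent of $V'$; it does not by itself produce the positivity on contracted cycles.

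A second, smaller issue: choosing $F$ merely $\pi$-ample and perturbing $H$ by $\delta F$ is a less convenient choice than the paper's. Since $F$ is $\pi$-ample but not ample, $\pi^*H+\delta F$ is only ample for sufficiently small $\delta$, and more importantly the slope $\frac{(\pi^*H+\delta F)\cdot(\pi^*L+\delta F)^{n-1}}{(\pi^*L+\delta F)^n}$ no longer shifts by a clean $\delta$; perturbing $H$ by a multiple of the perturbed polarization itself (as the paper does) is what makes the error term tractable and independent of the subvariety. You should also note that to begin the argument one perturbs $H$ to $H+aL$ to make (3) strict, and that the uniform ($\epsilon>0$) case requires an extra step: absorbing the uniform slack via the $(\mathcal{J}^{H-\epsilon L})^\mathrm{NA}$-decomposition as in the proof of Theorem \ref{fu}, rather than ``taking $\epsilon=0$ throughout and arguing at each perturbation.''
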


\begin{proof}
We have only to prove $(3) \Rightarrow (1)$ due to Lemma \ref{unst2}. First, we prove the assertion if $X$ is normal and irreducible and $\epsilon=0$. For sufficiently small $a>0$, $H+a L$ is ample and 
\[
\left(n\frac{(H+aL)\cdot L^{n-1}}{L^n}L-p(H+aL)\right)\cdot L^{p-1}\cdot V>0. 
\]
 Therefore, we may assume that $H$ and 
 \[
\left(n\frac{H\cdot L^{n-1}}{L^n}L-pH\right)\cdot L^{p-1}\cdot V>0. 
\]
 Take any semiample test configuration $(\mathcal{X},\mathcal{L})$ dominates $X_{\mathbb{A}^1}$ and a resolution of singularities $\widetilde{X}$ of $X$. Since we want to show $(\mathcal{J}^{H})^{\mathrm{NA}}$ is non-negative, we may assume that $\mathcal{L}$ is $\mathbb{A}^1$-ample by a small perturbation of $\mathcal{L}$. Let $E$ be an ample divisor on $\widetilde{X}$ and $\widetilde{\mathcal{X}}$ be the strict transformation of $\mathcal{X}$ in the normalization of $\mathcal{X}\times_X \widetilde{X}$. Now, we have the following commutative diagram.   $$
\begin{CD}
\widetilde{\mathcal{X}} @>{\widetilde{\pi}}>> \mathcal{X} \\
@V{\widetilde{\rho}}VV @V{\rho}VV \\
\widetilde{X}\times\mathbb{A}^1 @>{\pi\times \mathrm{id}_{\mathbb{A}^1}}>> X\times\mathbb{A}^1.
\end{CD}
$$
We can easily see that $(\widetilde{\mathcal{X}},\widetilde{\pi}^*\mathcal{L}+\epsilon \widetilde{\rho}^*E_{\mathbb{A}^1})$ is an ample test configuration of the polarized variety $(\widetilde{X},\pi^*L+\epsilon E)$ for any sufficiently small $\epsilon \in \mathbb{Q}_+$ since $\mathcal{L}$ is $\mathbb{A}^1$-ample and $\widetilde{\rho}^*E_{\mathbb{A}^1}$ is $\mathcal{X}$-ample. On the other hand, 
\[
\lim_{\delta,\epsilon\to 0}(\mathcal{J}^{(\pi^*H+\delta (\pi^*L+\epsilon E))})^\mathrm{NA}(\widetilde{\mathcal{X}},\widetilde{\pi}^*\mathcal{L}+\epsilon \widetilde{\rho}^*E_{\mathbb{A}^1}) =(\mathcal{J}^{H})^\mathrm{NA}(\mathcal{X},\mathcal{L}).
\]
Thus we only have to show that
\[
(\mathcal{J}^{(\pi^*H+\delta (\pi^*L+\epsilon E))})^\mathrm{NA}(\widetilde{\mathcal{X}},\widetilde{\pi}^*\mathcal{L}+\epsilon \widetilde{\rho}^*E_{\mathbb{A}^1}) \ge 0
\]
for rational $0<\epsilon\ll \delta\ll 1$ since $(\mathcal{J}^H)^\mathrm{NA}$-energy is linear in $H$. For any $p$-dimensional subvariety $V$ of $\widetilde{X}$, 
\begin{align*}
&\left(n\frac{\pi^*H\cdot (\pi^*L+\epsilon E)^{n-1}}{(\pi^*L+\epsilon E)^n}(\pi^*L+\epsilon E)-p\pi^*H\right)\cdot (\pi^*L+\epsilon E)^{p-1}\cdot V\\
&=\left((n\frac{H\cdot L^{n-1}}{L^n}+O(\epsilon))(\pi^*L+\epsilon E)-p\pi^*H\right)\cdot (\pi^*L+\epsilon E)^{p-1}\cdot V,
\end{align*}
where $O(\epsilon)$ is independent of $V$. On the other hand, we prove that
\[
\left(n\frac{H\cdot L^{n-1}}{L^n}(\pi^*L+\epsilon E)-p\pi^*H\right)\cdot (\pi^*L+\epsilon E)^{p-1}\cdot V\ge 0.
\]
Indeed, if $p-r>q$, the $\epsilon^r$-term of the left hand side of the above inequality is 0, where $q$ is the dimension of $\pi(V)$. Otherwise, we consider $\pi_*(E^r\cdot V)$ as a positive $(p-r)$-cycle of $X$ and the easy computation shows that the $\epsilon^r$-term is
\begin{align*}
&\epsilon^r\left(n\frac{H\cdot L^{n-1}}{L^n}\binom{p}{r}\pi^*L^{p-r}-p\binom{p-1}{r}(\pi^*H\cdot \pi^*L^{p-r-1})\right)\cdot (E^r\cdot V)\\
 &=\epsilon^r\binom{p}{r}\left(n\frac{H\cdot L^{n-1}}{L^n}L-(p-r)H\right)\cdot L^{p-r-1}\cdot \pi_*(E^r\cdot V)\ge 0
\end{align*}
by the assumption. Therefore, for any $\delta>0$, there exists $\epsilon_0>0$ such that 
\[
\left(n\frac{(\pi^*H+\delta(\pi^*L+\epsilon E))\cdot (\pi^*L+\epsilon E)^{n-1}}{(\pi^*L+\epsilon E)^n}(\pi^*L+\epsilon E)-p\pi^*H\right)\cdot (\pi^*L+\epsilon E)^{p-1}\cdot V> 0
\]
for any $0<\epsilon\le \epsilon_0$. Thanks to Theorem \ref{modLSconj}, 
\[
(\mathcal{J}^{(\pi^*H+\delta (\pi^*L+\epsilon E))})^\mathrm{NA}(\widetilde{\mathcal{X}},\widetilde{\pi}^*\mathcal{L}+\epsilon \widetilde{\rho}^*E_{\mathbb{A}^1}) \ge 0.
\]
Thus we prove $(3) \Rightarrow (1)$ when $\epsilon =0$ and $X$ is normal and irreducible.

 If $X$ is normal and irreducible, $H  $ is ample and $\epsilon>0$, uniform $\mathrm{J}^H$-stability follows immediately from the same argument of the proof of Theorem \ref{fu}. Finally, if $X$ is deminormal, the assertion follows immediately by taking its normalization.
\end{proof}
   
   \begin{rem}
   If $n=2$, Theorem \ref{SW} follows from Corollary \ref{jstable}. 
   \end{rem}
   
   \begin{rem}\label{remstable}
   The proof of Theorem \ref{SW} also shows that the following. If $(3)\Rightarrow (1)$ holds for all smooth polarized varieties $(X,L)$ over any algebraically closed field $k$ of characteristic 0 with nef $\mathbb{Q}$-Cartier divisors, then it also holds for all polarized deminormal schemes with nef $\mathbb{Q}$-line bundles that satisfy (**).
   \end{rem}
   
  Now, we have the following without assuming the log canonical divisor is semiample due to Theorem \ref{SW} with a more algebro-geometric approach than Sj\"{o}str\"{o}m Dyrefelt \cite{Zak} and J. Song \cite{S}. 
   
   \begin{thm}[cf. Theorem 1.1 of Jian-Shi-Song \cite{JSS}, Theorem 1.1 of Sj\"{o}str\"{o}m Dyrefelt \cite{Zak}, Corollary 1.3 of J. Song \cite{S}]\label{lastcor}
   Let $(X,\Delta ;L)$ be an $n$-dimensional polarized slc minimal model over $\mathbb{C}$ i.e. $K_{(X,\Delta)}$ is nef. Suppose that $(X,L)$ and $K_{(X,\Delta)}$ satisfy (**). If $K_{(X,\Delta)}$ is also big, $(X,\Delta ;K_{(X,\Delta)}+\epsilon L)$ is uniformly $\mathrm{J}^{K_{(X,\Delta)}}$-stable for sufficiently small $\epsilon>0$. Otherwise, suppose that $(X,\Delta)$ is klt. Then $(X,\Delta ;K_{(X,\Delta)}+\epsilon L)$ is uniformly K-stable for sufficiently small $\epsilon>0$. If $\Delta=0$ and $X$ is smooth, $(X,K_{X}+\epsilon L)$ also has a cscK metric.
   \end{thm}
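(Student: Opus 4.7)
The strategy is to reduce to Theorem \ref{SW}. Set $K=K_{(X,\Delta)}$ and $L'=K+\epsilon L$; since $K$ is nef and $L$ ample, $L'$ is ample for $\epsilon>0$. The identity $K=L'-\epsilon L$ lets one expand every intersection of $K$ with powers of $L'$: for any $p$-dimensional subvariety $V$ a direct substitution gives
\[
\bigl(n\tfrac{K\cdot L'^{n-1}}{L'^n}L'-pK\bigr)\cdot L'^{p-1}\cdot V=L'^p\cdot V\bigl[(n-p)-\epsilon(\alpha_L-p\beta'_{L,V})\bigr],
\]
where $\alpha_L:=nL\cdot L'^{n-1}/L'^n$ and $\beta'_{L,V}:=L\cdot L'^{p-1}\cdot V/L'^p\cdot V\ge0$. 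In the big case ($K^n>0$), $L'^n\to K^n>0$ as $\epsilon\to0$, so $\alpha_L$ stays bounded; for $\epsilon$ small the bracket is $\ge(n-p)/2$ uniformly in $V$, which is condition $(3)$ of Theorem \ref{SW} for $H=K$ with $\epsilon_0=1/2$, giving uniform $\mathrm{J}^K$-stability of $(X,\Delta;L')$.

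In the non-big klt case, we use the criterion recalled in \S\ref{Notat} (\cite{BHJ}, Proposition 9.16): it suffices to exhibit $\delta<\alpha(X,\Delta;L')$ with $(\mathcal{J}^K)^\mathrm{NA}\ge-\delta\cdot I^\mathrm{NA}$. To produce such a bound, apply Theorem \ref{SW} to the ample class $H_\eta:=(1-\eta)K+\eta L'=L'-(1-\eta)\epsilon L$ for a suitable $\eta\in(0,1)$. Once condition $(3)$ for $H_\eta$ is verified, linearity of $(\mathcal{J}^H)^\mathrm{NA}$ in $H$ together with $(\mathcal{J}^{L'})^\mathrm{NA}=I^\mathrm{NA}-J^\mathrm{NA}$ give
\[
(\mathcal{J}^K)^\mathrm{NA}=\tfrac{1}{1-\eta}\bigl((\mathcal{J}^{H_\eta})^\mathrm{NA}-\eta(I^\mathrm{NA}-J^\mathrm{NA})\bigr)\ge-\tfrac{\eta}{1-\eta}I^\mathrm{NA},
\]
and choosing $\eta$ so that $\eta/(1-\eta)<\alpha(X,\Delta;L')$ yields uniform K-stability. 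When additionally $\Delta=0$ and $X$ is smooth, the same lower bound with $\delta<\alpha(X,L')$ is exactly the hypothesis of C. Li's cscK existence criterion (\cite[Theorem 6.10]{Li}, quoted in \S\ref{Notat}), producing a cscK metric in $\mathrm{c}_1(L')$.

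The main obstacle lies in the non-big case: one must verify condition $(3)$ of Theorem \ref{SW} for $H_\eta$ while keeping $\eta/(1-\eta)<\alpha$. Repeating the bracket computation with $L'-H_\eta=(1-\eta)\epsilon L$ gives the requirement $(n-p)\ge(1-\eta)\epsilon(\alpha_L-p\beta'_{L,V})$, whose tightest instance $p=n-1$ forces $1-\eta\le 1/(\epsilon\alpha_L)$. Setting $k_0:=\max\{k:K^k\cdot L^{n-k}>0\}\le n-1$, an elementary expansion shows $\epsilon\alpha_L\to n-k_0$, so the compatibility condition becomes $\alpha>n-k_0-1$. The case $k_0=n-1$ (i.e.\ $K^{n-1}\cdot L>0$) is immediate for any $\alpha>0$; the general case $k_0<n-1$ is subtler and should require either an induction on $n-k_0$ via adjunction along a general divisor in $|mL|$ with $m\gg0$, or a uniform lower bound on $\alpha(X,\Delta;L')$ as $\epsilon\to0$.
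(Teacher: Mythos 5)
Your reduction to Theorem \ref{SW} and the treatment of the big case $K_{(X,\Delta)}^n>0$ are correct and follow the paper's route. The gap, which you flag but do not close, is in the non-big klt case, and it comes from discarding the positive term $p\beta'_{L,V}$ in the bracket: you assert that $p=n-1$ forces $1-\eta\le 1/(\epsilon\alpha_L)$, but this only holds after setting $\beta'_{L,V}=0$, which is never attained. In fact $\epsilon\beta'_{L,V}$ has a nontrivial limit controlled by the restricted numerical dimension $j:=\nu(K_{(X,\Delta)}|_V)\le\nu(K_{(X,\Delta)})=:m$, and one computes $\epsilon(\alpha_L-p\beta'_{L,V})\to (n-p)-(m-j)\le n-p$, so there is no barrier $\eta>1-1/(n-k_0)$ at all; the real problem is that this limit is not uniform over subvarieties $V$, so one cannot argue by simply letting $\epsilon\to0$.

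What the paper does, and what your sketch is missing, is an \emph{exact} nonnegativity statement that bypasses the limit: writing $L'=K_{(X,\Delta)}+\epsilon L$ and expanding $\bigl((m-p)K_{(X,\Delta)}+m\epsilon L\bigr)\cdot L'^{p-1}\cdot V$ as a polynomial in $\epsilon$, the coefficient of $\epsilon^{p-i}$ is $\binom{p}{i}(m-i)\,K_{(X,\Delta)}^i\cdot L^{p-i}\cdot V$, which is $\ge0$ for every $i$ because $m-i\ge0$ when $i\le j$ and the intersection number vanishes when $i>j$. Combined with a uniform $O(\epsilon)$ error estimate, this yields $(\mathcal{J}^{K_{(X,\Delta)}})^\mathrm{NA}\ge-C\epsilon(I^\mathrm{NA}-J^\mathrm{NA})$ for a constant $C$ independent of $V$, after which \cite[Proposition~9.16]{BHJ} applies with $C\epsilon<\alpha(X,\Delta;K_{(X,\Delta)}+\epsilon L)$, using $\alpha(X,\Delta;K_{(X,\Delta)}+\epsilon L)\ge\alpha(X,\Delta;K_{(X,\Delta)}+L)>0$ for $0<\epsilon<1$. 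Neither of your two proposed fixes supplies this: the $\alpha$ lower bound is already available and is not the obstruction, and no adjunction or induction on $n-k_0$ is involved; the essential point is that the J-energy deficit must be made $O(\epsilon)I^\mathrm{NA}$ rather than $O(1)I^\mathrm{NA}$, which requires retaining the $\beta'_{L,V}$ term and exploiting $\nu(K_{(X,\Delta)}|_V)\le\nu(K_{(X,\Delta)})$ subvariety by subvariety.
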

   
   \begin{proof}
  We may assume that $X$ is normal and irreducible. If $K_{(X,\Delta)}^{n}\ne 0$,
   \[
   n\frac{K_{(X,\Delta)} \cdot (K_{(X,\Delta)}+\epsilon L)^{n-1}}{(K_{(X,\Delta)}+\epsilon L)^n}(K_{(X,\Delta)}+\epsilon L)-(n-1)K_{(X,\Delta)}= (1+O(\epsilon))K_{(X,\Delta)}+\epsilon(n+O(\epsilon))L 
   \]
   is ample for sufficiently small $\epsilon>0$. Therefore, the first assertion follows from Theorem \ref{SW}. 
   
  Next, suppose that $(X,\Delta)$ is klt and irreducible. Let $m=\nu(K_X)\le n$. Suppose that $V\subset X$ is a $p$-dimensional subvariety and $\nu(K_X|_V)=j\le \min\{m,p \}$. Then for sufficiently small $\epsilon>0$, since
   \begin{align*}
   n\frac{K_{(X,\Delta)}\cdot(K_{(X,\Delta)}+\epsilon L)^{n-1}}{(K_{(X,\Delta)}+\epsilon L)^n}=m+O(\epsilon),
   \end{align*}
   we have
   \begin{align*}
  &\left(n\frac{K_{(X,\Delta)}\cdot(K_{(X,\Delta)}+\epsilon L)^{n-1}}{(K_{(X,\Delta)}+\epsilon L)^n}(K_{(X,\Delta)}+\epsilon L)-pK_{(X,\Delta)}\right)\cdot (K_{(X,\Delta)}+\epsilon L)^{p-1}\cdot V\\
   &=((m-p)K_{(X,\Delta)}+m\epsilon L+O(\epsilon)(K_{(X,\Delta)}+\epsilon L))\cdot(K_{(X,\Delta)}+\epsilon L)^{p-1}\cdot V,
   \end{align*}
  where $O(\epsilon)$ is independent of $V$. Hence, there exists a constant $C>0$ independent of $V$ such that 
   \begin{align*}
  &\left(n\frac{K_{(X,\Delta)}\cdot(K_{(X,\Delta)}+\epsilon L)^{n-1}}{(K_{(X,\Delta)}+\epsilon L)^n}(K_{(X,\Delta)}+\epsilon L)-pK_{(X,\Delta)}\right)\cdot (K_{(X,\Delta)}+\epsilon L)^{p-1}\cdot V\\
   &>((m-p)K_{(X,\Delta)}+m\epsilon L-C\epsilon(n-p)(K_{(X,\Delta)}+\epsilon L))\cdot(K_{(X,\Delta)}+\epsilon L)^{p-1}\cdot V.
   \end{align*}
   Now, we want to prove the following:
   \begin{claim}
   $V,m,p$ and $j$ as above. Then
   \[
   ((m-p)K_{(X,\Delta)}+m\epsilon L)\cdot(K_{(X,\Delta)}+\epsilon L)^{p-1}\cdot V\ge 0
   \]
   for sufficiently small $\epsilon>0$.
   \end{claim}
   The theorem follows from this claim. In fact, it is easy to see that $(X,K_{(X,\Delta)}+\epsilon L)$ is $\mathrm{J}^{K_{(X,\Delta)}+\epsilon C(K_{(X,\Delta)}+\epsilon L) }$-semistable by Theorem \ref{SW} since $K_{(X,\Delta)}+\epsilon C(K_{(X,\Delta)}+\epsilon L)$ is ample. Then \[
   (\mathcal{J}^{K_{(X,\Delta)}})^\mathrm{NA}\ge -C\epsilon (I^\mathrm{NA}-J^\mathrm{NA})
   \]
  on $\mathcal{H}^\mathrm{NA}(L)$. Therefore, the first assertion follows from the fact that
   \[ 
   \alpha(X,\Delta;K_{(X,\Delta)}+\epsilon L)\ge \alpha(X,\Delta;K_{(X,\Delta)}+ L )>0
   \]
   for $0<\epsilon <1$. Indeed, it is easy to see the fact holds by the definition of the alpha invariant. Then we have only to choose $\epsilon$ so small that $C\epsilon <\alpha(X,\Delta;K_{(X,\Delta)}+ L )$. The second assertion follows from \cite[Theorem 6.10]{Li}.
   
   Now, we prove the claim. If $m\ge p$, the claim holds. Otherwise, $j\le m<p$ and hence the coefficient of $\epsilon^{p-i}$-term of $((m-p)K_{(X,\Delta)}+m\epsilon L)\cdot(K_{(X,\Delta)}+\epsilon L)^{p-1}\cdot V$ is
   \[
   \left((m-p)\binom{p-1}{p-i}+m\binom{p-1}{p-i-1}\right)K_{(X,\Delta)}^{i}\cdot L^{p-i}\cdot V=\binom{p}{i}(m-i)K_{(X,\Delta)}^{i}\cdot L^{p-i}\cdot V\ge0
   \]
   for $1\le i\le j$ or zero for $i>j$. On the other hand, we can easily see the term is positive when $i=0$. 
   \end{proof}
   
   \begin{rem}
   If $n=2$ in Theorem \ref{lastcor}, we can replace Theorem \ref{SW} by Theorem \ref{d2} in the above proof. On the other hand, there is a purely algebro-geometric proof of uniform K-stability of $(X,\Delta ;L)$ such that $K_{(X,\Delta)}\equiv 0$ for any polarization (cf. \cite{Od}, \cite{D}, \cite{BHJ}). Thus we have a purely algebro-geometric proof of uniform K-stability of klt log minimal surfaces with certain polarizations. 
   \end{rem}
   
   \begin{rem}
   Jian-Shi-Song \cite{JSS} prove Theorem \ref{lastcor} when $\Delta=0$ and $K_X$ is semiample. Theorem \ref{lastcor} is also proved without the assumption that $K_X$ is semiample by Z. Sj\"{o}str\"{o}m Dyrefelt \cite{Zak} and by J. Song \cite{S}.
   \end{rem}
   
   \begin{rem}
   There is another application. We extend \cite[Theorem 1.2]{JSS} to the case for deminormal varieties in \cite{Hat}. On the other hand, we prove in \cite{Hat} the following another extension of \cite{JSS}:
   \begin{thm}
   Let $(X,\Delta ;H)$ be an $n$-dimensional polarized klt variety over $\mathbb{C}$. Suppose that $X$ admits a flat fibration $f: X\to B$ over a polarized smooth curve $(B,L)$ and $H\equiv K_{(X,\Delta)}+f^*L_0$, where $L_0$ is a line bundle on $B$. If \[
   n\frac{K_{(X,\Delta)}\cdot H^{n-1}}{H^n}-(n-1)\ge 0
   \]
     for normal and irreducible fibre $(X_b,\Delta_b;H_b)$ over general point $b$ of $B$, then $(X,\Delta;f^*L+\delta H)$ is uniformly K-stable for sufficiently small $\delta>0$. Furthermore, if $X$ is smooth and $\Delta =0$, $(X,f^*L+\delta H)$ has a cscK metric. 
   \end{thm}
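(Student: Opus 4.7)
The plan is to reduce the uniform K-stability of $(X,\Delta;M_{\delta})$, where $M_{\delta}:=f^{*}L+\delta H$, to verifying a Song--Weinkove-type positivity on $(X,M_{\delta})$ via Theorem~\ref{SW}, and then to exploit the fibration structure together with the identity $H_b\equiv K_{(X_b,\Delta_b)}$---which follows from $H\equiv K_{(X,\Delta)}+f^{*}L_0$ and adjunction on a general fibre, once the general fibre is normal and irreducible---to verify that positivity from the numerical hypothesis. Since uniform K-stability is invariant under scaling the polarization, we may replace $L$ by a large multiple so that $\deg L$ is as large as needed in the auxiliary choices below.

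First I would split
\[
M^{\mathrm{NA}}_{\Delta}=H^{\mathrm{NA}}_{\Delta}+(\mathcal{J}^{K_{(X,\Delta)}})^{\mathrm{NA}}
\]
on $\mathcal{H}^{\mathrm{NA}}(M_{\delta})$ and use klt-ness to get $H^{\mathrm{NA}}_{\Delta}\ge \alpha\, I^{\mathrm{NA}}$ with $\alpha:=\alpha(X,\Delta;M_{\delta})>0$. It would suffice to prove $(\mathcal{J}^{K_{(X,\Delta)}})^{\mathrm{NA}}\ge -c(I^{\mathrm{NA}}-J^{\mathrm{NA}})$ for some $0<c<\alpha$, since then
\[
M^{\mathrm{NA}}_{\Delta}\ \ge\ \alpha I^{\mathrm{NA}}-c(I^{\mathrm{NA}}-J^{\mathrm{NA}})\ =\ \alpha J^{\mathrm{NA}}+(\alpha-c)(I^{\mathrm{NA}}-J^{\mathrm{NA}})\ \ge\ \alpha J^{\mathrm{NA}},
\]
giving uniform K-stability. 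To produce this $c$, set $A:=K_{(X,\Delta)}+cM_{\delta}=(1+c\delta)H+f^{*}(cL-L_0)$; choosing $c$ with $c\deg L\ge \deg L_0$ (achievable with $c$ arbitrarily small after the rescaling of $L$) makes $cL-L_0$ nef on the curve $B$ and hence $A$ ample. If one can verify the Song--Weinkove positivity of $A$ on $(X,M_{\delta})$, then Theorem~\ref{SW} yields $\mathrm{J}^{A}$-semistability, i.e.\ $(\mathcal{J}^{A})^{\mathrm{NA}}=(\mathcal{J}^{K_{(X,\Delta)}})^{\mathrm{NA}}+c(I^{\mathrm{NA}}-J^{\mathrm{NA}})\ge 0$, as required.

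The substantive step is verifying, for every $p$-dimensional subvariety $V\subset X$ with $1\le p\le n-1$,
\[
c_{\delta}+(n-p)c\ \ge\ p\,\frac{K_{(X,\Delta)}\cdot M_{\delta}^{p-1}\cdot V}{M_{\delta}^{p}\cdot V},\qquad c_{\delta}:=n\,\frac{K_{(X,\Delta)}\cdot M_{\delta}^{n-1}}{M_{\delta}^{n}}.
\]
I would split into two cases. If $V\subset X_b$ lies in a general fibre, then $M_{\delta}|_{X_b}=\delta H_b$ and $K_{(X,\Delta)}|_{X_b}\equiv H_b$, so the inequality collapses to $(c_{\delta}+(n-p)c)\delta\ge p$. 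Expanding via $(f^{*}L)^{2}=0$,
\[
c_{\delta}\,\delta=(n-1)+\frac{\delta}{n\deg L\cdot H_b^{n-1}}\bigl(nK_{(X,\Delta)}\cdot H^{n-1}-(n-1)H^{n}\bigr)+O(\delta^{2}),
\]
the hypothesis $nK_{(X,\Delta)}\cdot H^{n-1}\ge(n-1)H^{n}$ makes the $O(\delta)$ coefficient non-negative, so $c_{\delta}\delta\ge(n-1)+O(\delta^{2})$. For $p\le n-2$ the gap $n-1-p\ge 1$ handles the inequality for small $\delta$; in the critical case $p=n-1$ the auxiliary slack $c\delta>0$ absorbs the $O(\delta^{2})$ error. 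If instead $V$ dominates $B$ with general fibre $V_b$ and $\deg(f|_V)=d$, expanding both sides in $(f^{*}L)$ and $\delta H$ using $(f^{*}L)^{2}=0$ and $K_{(X_b,\Delta_b)}\equiv H_b$ shows that they are each of order $\delta^{-1}$ with leading coefficients $n-1$ and $p-1$ respectively, so the defect equals $(n-p)/\delta+O(1)$, positive for $p\le n-1$; the case $p=n$ (i.e.\ $V=X$) is the tautological equality.

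Putting these estimates together yields $M^{\mathrm{NA}}_{\Delta}\ge \alpha J^{\mathrm{NA}}$ for all sufficiently small $\delta>0$, proving the uniform K-stability. For the final assertion, when $X$ is smooth and $\Delta=0$ the same bound combined with $I^{\mathrm{NA}}-J^{\mathrm{NA}}\le nJ^{\mathrm{NA}}\le nI^{\mathrm{NA}}$ gives $(\mathcal{J}^{K_{X}})^{\mathrm{NA}}\ge -cn\,I^{\mathrm{NA}}$, and after choosing $c$ so small that $cn<\alpha(X;M_{\delta})$ we may invoke \cite[Theorem 6.10]{Li} to produce a cscK metric in $\mathrm{c}_1(M_{\delta})$. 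The main obstacle is the critical fibre case $V=X_b$ with $p=n-1$: precisely there the leading $\delta^{-1}$ contribution to the Song--Weinkove defect vanishes identically, and the numerical hypothesis $nK_{(X,\Delta)}\cdot H^{n-1}\ge(n-1)H^{n}$ enters nontrivially to guarantee the non-negativity of the $O(\delta)$ correction, after which the auxiliary slack $c\delta$ secures strict positivity for small $\delta$.
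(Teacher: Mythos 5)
The paper only \emph{states} this theorem (in a remark in \S\ref{Fib}) and refers the proof to the preprint \cite{Hat}, which is ``in preparation,'' so there is no proof in this paper to compare against. Judged on its own merits, your strategy --- decompose $M^{\mathrm{NA}}_\Delta = H^{\mathrm{NA}}_\Delta + (\mathcal{J}^{K_{(X,\Delta)}})^{\mathrm{NA}}$, use klt-ness via the alpha invariant to absorb a small negative multiple of $I^{\mathrm{NA}}-J^{\mathrm{NA}}$, and control $(\mathcal{J}^{K_{(X,\Delta)}})^{\mathrm{NA}}$ via the Song--Weinkove criterion of Theorem~\ref{SW} applied to $A = K_{(X,\Delta)}+cM_\delta$ --- is exactly the mechanism the paper uses for Theorem~\ref{lastcor}, and your verification of the slope inequality is essentially correct. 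In particular the expansion $c_\delta\delta = (n-1) + \delta\cdot\frac{nK_{(X,\Delta)}\cdot H^{n-1}-(n-1)H^n}{n(\deg L)\,H_b^{n-1}} + O(\delta^2)$, the identification of $p=n-1$, $V\subset X_b$ as the critical case, and the use of $(f^*L)^2=0$ and $K_{(X,\Delta)}|_{X_b}\equiv H_b$ for the $V$-dominates-$B$ case are all sound, and the numerical hypothesis enters exactly where you say it does.

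However, there is a genuine gap in the step where you invoke Theorem~\ref{SW}. That theorem requires the reference class (your $A$) to be nef, and you correctly compute $A = (1+c\delta)H + f^*(cL-L_0)$, so nefness of $A$ forces $c\,\deg L\ge\deg L_0$. At the same time you need $c<\alpha(X,\Delta;M_\delta)$ for the klt absorption. When $\deg L_0>0$, these two constraints may be incompatible, and your proposed fix --- ``rescale $L$ by a large factor'' --- does not work: the alpha invariant is homogeneous of degree $-1$ in the polarization, so $\alpha\bigl(X,\Delta;f^*(kL)+\delta H\bigr)=\tfrac{1}{k}\,\alpha\bigl(X,\Delta;f^*L+\tfrac{\delta}{k}H\bigr)$, and $\alpha\bigl(X,\Delta;f^*L+\epsilon H\bigr)$ stays bounded as $\epsilon\to 0$ (it increases to $\alpha(X,\Delta;f^*L)$, which is finite because effective $\mathbb{Q}$-divisors numerically equivalent to $f^*L$ are vertical and $\sup_D v(D)\sim(\deg L)\,v(F_b)$). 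Hence the threshold $\tfrac{\deg L_0}{k\deg L}$ and the bound $\alpha(X,\Delta;f^*(kL)+\delta H)$ both decay like $1/k$, and the rescaling leaves their ratio unchanged. You also cannot fall back on Theorem~\ref{fu}, which tolerates non-nef $H$: for $A'=K_{(X,\Delta)}+cM_\delta$ the combination $n^2\frac{A'\cdot M_\delta^{n-1}}{M_\delta^n}M_\delta-(n^2-1)A'$ has $H$-coefficient $-(n-1)+O(\delta)<0$ for small $\delta$, so it is not nef. As written, the proof therefore only goes through under the extra assumption that $\deg L_0 \le 0$, or more generally when $\deg L_0/\deg L$ is smaller than the relevant alpha invariant, and you would need a new idea (a finer analysis of $(\mathcal{J}^{f^*L_0})^{\mathrm{NA}}$ using the $f$-triviality of $f^*L_0$, or a version of the Song--Weinkove criterion valid for non-nef reference classes) to close the argument in general.
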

   \end{rem}

 \end{document}